\newtheorem*{theorem*}{Theorem}
\tikzset{curve/.style={settings={#1},to path={(\tikztostart)
    .. controls ($(\tikztostart)!\pv{pos}!(\tikztotarget)!\pv{height}!270:(\tikztotarget)$)
    and ($(\tikztostart)!1-\pv{pos}!(\tikztotarget)!\pv{height}!270:(\tikztotarget)$)
    .. (\tikztotarget)\tikztonodes}},
    settings/.code={\tikzset{quiver/.cd,#1}
        \def\pv##1{\pgfkeysvalueof{/tikz/quiver/##1}}},
    quiver/.cd,pos/.initial=0.35,height/.initial=0}
\tikzset{tail reversed/.code={\pgfsetarrowsstart{tikzcd to}}}
\tikzset{2tail/.code={\pgfsetarrowsstart{Implies[reversed]}}}
\tikzset{2tail reversed/.code={\pgfsetarrowsstart{Implies}}}
\tikzset{no body/.style={/tikz/dash pattern=on 0 off 1mm}}
\DeclareMathOperator{\Top}{top}
\DeclareMathOperator{\im}{im}
\DeclareMathOperator{\field}{k}
\DeclareMathOperator{\op}{op}
\DeclareMathOperator{\rad}{rad}
\DeclareMathOperator{\dual}{D}
\DeclareMathOperator{\tw}{tw}
\DeclareMathOperator{\modu}{mod}
\DeclareMathOperator{\Ext}{Ext}
\DeclareMathOperator{\End}{End}
\DeclareMathOperator{\Hom}{Hom}
\DeclareMathOperator{\add}{add}
\DeclareMathOperator{\Sim}{Sim}
\DeclareMathOperator{\id}{id}
\DeclareMathOperator{\dualbar}{\mathbb{B}D}
\DeclareMathOperator{\Dual}{D}
\DeclareMathOperator{\tr}{tr}
\DeclareMathOperator{\twmod}{twmod}
\DeclareMathOperator{\complexes}{C}
\DeclareMathOperator{\chara}{char}
\DeclareMathOperator{\Binary}{Binary}
\DeclareMathOperator{\filt}{F}
\newtheorem{theorem}{Theorem}[section]
\newtheorem{definition}[theorem]{Definition}
\newtheorem{example}[theorem]{Example}
\newtheorem{corollary}[theorem]{Corollary}
\newtheorem{lemma}[theorem]{Lemma}
\newtheorem{remark}[theorem]{Remark}
\newtheorem{proposition}[theorem]{Proposition}
\title{Uniqueness up to Inner Automorphism of Regular Exact Borel Subalgebras}
\author{Anna Rodriguez Rasmussen}
\DeclareMathOperator{\trunc}{trunc}
\DeclareMathOperator{\spann}{span}
\DeclareMathOperator{\Ainfunit}{A-\infty_{\mathbb{L}}}
\begin{document}
\maketitle
\begin{abstract}
    In \cite{KKO}, Külshammer, König and Ovsienko proved that for any quasi-hereditary algebra $(A,\leq_A)$ there exists a Morita equivalent quasi-hereditary algebra $(R, \leq_R)$ containing a basic exact Borel subalgebra $B$. The Borel subalgebra $B$ constructed in \cite{KKO} is in fact a regular exact Borel subalgebra as defined in \cite{BKK}.
Later, Conde \cite{Conde} showed that given a quasi-hereditary algebra $(R,\leq_R)$ with a basic regular exact Borel subalgebra $B$ and a Morita equivalent quasi-hereditary algebra $(R',\leq_{R'})$ with a basic regular exact Borel subalgebra $B'$, the algebras $R$ and $R'$ are isomorphic, and Külshammer and Miemietz \cite{Miemietz} showed that there is even an isomorphism $\varphi:R\rightarrow R'$ such that $\varphi(B)=B'$.\\
In this article, we show that if $R=R'$, then $\varphi$ can be chosen to be an inner automorphism. Moreover, instead of just proving this for regular exact Borel subalgebras of quasi-hereditary algebras, we generalize this to an appropriate class of subalgebras of arbitrary finite-dimensional algebras.
As an application, we show that if $(A, \leq_A)$ is a finite-dimensional algebra and $G$ is a finite group acting on $A$ via automorphisms, then under some natural compatibility conditions, there is a  Morita equivalent quasi-hereditary algebra $(R, \leq_R)$ with a basic regular exact Borel subalgebra $B$ such that $g(B)=B$ for every $g\in G$.
\end{abstract}
\section{Introduction}
 Let $A$ be a finite-dimensional algebra over an algebraically closed field $\field$. The Wedderbun-Malcev theorem \cite{Wedderburn, Maltsev} states that there is a maximal semisimple subalgebra $L$ of $A$, which is unique up to conjugation.
Similarly, if $G$ is a connected linear algebraic group over $\field$ then $G$ has a Borel subgroup $B$, and by the Borel-Mozorov theorem  \cite{Borel, Morozov} this subgroup is unique up to conjugation. Correspondingly, if $\field$ has characteristic zero, any finite-dimensional semi-simple Lie algebra over $\field$ has a Borel subalgebra, which is again unique up to inner automorphism \cite[Ch. VI, Theorem 5]{serre}.\\
An important representation-theoretic object associated to such a Lie algebra $\mathfrak{g}$ is category $\mathcal{O}$, the highest weight category consisting of finitely generated $\mathfrak{g}$-modules which admit additional finiteness conditions on the action of its Borel and Cartan subalgebras. Category $\mathcal{O}$ decomposes into blocks, and each of its blocks is equivalent to the category of finite-dimensional modules over a quasi-hereditary algebra. Both the notion of a highest weight category and the notion of a quasi-hereditary algebra are due to Cline, Parshall and Scott \cite{CPS}.
In \cite{Koenig}, König introduced an analogon of Borel subalgebras of finite-dimensional Lie algebras for  quasi-hereditary algebras, so called exact Borel subalgebras.\\
However, in contrast to the setting of finite-dimensional Lie algebras, both existence and uniqueness fail in general \cite[Example 2.3]{Koenig}, even without the additional assumption of regularity.
Despite this, Külshammer, König and Ovsienko were able to show in \cite{KKO} that for any quasi-hereditary algebra $(A, \leq_A)$ there is a Morita equivalent quasi-hereditary algebra $(R, \leq_R)$ with a regular exact Borel subalgebra $(B, \leq_B)$. They were even able to give a procedure to construct $(R, \leq_R)$ and $(B, \leq_B)$ as well as the embedding $B\rightarrow R$, and to show that under this construction $B$ is basic, i.e. every simple $B$-module is one-dimensional.
The definition of a Borel subalgebra as well as the existence result were subsequently generalized by Goto \cite{Goto} to standardly stratified algebras and by Bautista--Perez--Salmerón \cite{BPS} to finite-dimensional algebras with strict homological systems. However, in the former case, the corresponding theorem yields not a regular exact Borel subalgebra, but a homological one.\\
Later, Külshammer and Miemietz established in \cite{Miemietz} that the pair  $(B, R)$ is unique up to isomorphism, that is, if $(R', \leq_{R'})$ is another quasi-hereditary algebra Morita equivalent to $(A, \leq_A)$ with basic regular exact Borel subalgebra $B'\subseteq R'$, then there is an isomorphism $\varphi:R\rightarrow R'$ such that $\varphi(B)=B'$.
Independently, a result obtained by Conde \cite{Conde} implies the uniqueness up to isomorphism of $R$ without requiring as much additional theory.
Nevertheless, it remained an open question whether, if $R=R'$, $\varphi$ could be chosen as an inner automorphism, that is, if $B\subseteq R$ was unique up to conjugation.
The main aim of this article is to assert this by proving the following theorem:
\begin{theorem*}(Theorem \ref{theorem_conjugation})\\
    Let $A$ be a finite-dimensional algebra and let $B, B'$ be two basic regular exact subalgebras of $A$ with simple modules $\{L_1^B, \dots , L_n^B\}=\Sim(B)$ and $\{L_1^{B'}, \dots , L_n^{B'}\}=\Sim(B')$ such that for every $1\leq i\leq n$ we have
\begin{align*}
	\Delta_i^A:=A\otimes_B L_i^B\cong A\otimes_{B'}L_i^{B'}.
\end{align*} Then there is an invertible element $a\in A$ such that $aBa^{-1}=B'$.
\end{theorem*}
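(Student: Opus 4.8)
The plan is to build the conjugating element $a$ in two stages: first match the maximal semisimple subalgebras of $B$ and $B'$ inside $A$, then correct their radical parts layer by layer along the radical filtration of $A$. For the first stage, fix maximal semisimple subalgebras $L\subseteq B$ and $L'\subseteq B'$; since $B$ and $B'$ are basic these are products of copies of $\field$, with primitive idempotents $e_1,\dots,e_n$ and $e_1',\dots,e_n'$ labelled so that $L_i^B=Be_i/\rad(B)e_i$ and $L_i^{B'}=B'e_i'/\rad(B')e_i'$. Since $B$ is an exact subalgebra one has $\rad(B)\subseteq\rad(A)$, and hence $Ae_i=A\otimes_B Be_i$ is a projective cover of $\Delta_i^A=A\otimes_B L_i^B=Ae_i/A\,\rad(B)\,e_i$; likewise $Ae_i'$ is a projective cover of $A\otimes_{B'}L_i^{B'}\cong\Delta_i^A$, so $Ae_i\cong Ae_i'$ for every $i$. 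Therefore the two complete systems of orthogonal idempotents $\{e_i\}$ and $\{e_i'\}$ have the same type in $A$ and are conjugate: there is an invertible $a_0\in A$ with $a_0e_i'a_0^{-1}=e_i$ for all $i$, whence $a_0L'a_0^{-1}=L$. Replacing $B'$ by $a_0B'a_0^{-1}$, which preserves all the hypotheses, we may assume from now on that $L$ is a common maximal semisimple subalgebra of $B$ and $B'$ with matching idempotents; in particular $B$ and $B'$ have the same image in $A/\rad(A)$.

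For the second stage I would argue by induction along $\rad(A)\supseteq\rad(A)^2\supseteq\cdots$, constructing units $a_m\in A$ for $m\geq1$, with $a_1=1$, such that $a_mBa_m^{-1}$ and $B'$ have the same image in $A/\rad(A)^m$; since $\rad(A)$ is nilpotent, for $m$ exceeding its nilpotency index this yields $a_mBa_m^{-1}=B'$, so $a:=a_m$ is the required element. The step from $m$ to $m+1$ consists in modifying $a_m$ to $a_{m+1}=ua_m$ by a unit $u$ --- at the first relevant layer a unit of $A$ implementing an independent change of basis at each of the idempotents $e_i$, at later layers a unit congruent to $1$ modulo $\rad(A)^m$ --- chosen to absorb the discrepancy between $a_mBa_m^{-1}$ and $B'$ at layer $m$. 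Whether such a $u$ exists is measured by an obstruction class in an extension group (equivalently, a Hochschild-type cohomology group) built from $B$, or equivalently $B'$, with coefficients in a subquotient of $\rad(A)$, and the whole argument hinges on this class vanishing. Note that conjugation by elements of $1+\rad(A)$ alone cannot alter the image of $B$ in $A/\rad(A)^2$, so passing to genuine units of $A$ is essential here.

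The vanishing of this obstruction is the heart of the matter and the step I expect to be the main obstacle; it is also exactly where regularity is indispensable, since e.g.\ Kronecker-type examples show that an automorphism of $A$ fixing every simple module need not be inner, so the statement is false for non-regular subalgebras and cannot be deduced from uniqueness up to isomorphism alone. The mechanism I would exploit is that regularity makes the cohomology of $B$ controlling the obstruction map isomorphically, via $A\otimes_B-$, onto the corresponding cohomology of $A$ computed on the standard modules $\Delta_i^A$; since the chain of isomorphisms $\Delta_i^A\cong A\otimes_B L_i^B\cong A\otimes_{B'}L_i^{B'}$ forces the obstruction to die in the latter, it must already vanish where it is defined. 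Concretely, I would express the discrepancy between the embeddings $B,B'\hookrightarrow A$ through the induction functors $A\otimes_B-$ and $A\otimes_{B'}-$ together with the minimal $A_\infty$-structure on $\bigoplus_{i,j}\Ext^{\ast}_A(\Delta_i^A,\Delta_j^A)$ underlying the reconstruction of a regular exact Borel subalgebra (available in the generality needed here from the earlier sections together with \cite{KKO, Conde, Miemietz}), conclude that the two embeddings differ only by a gauge transformation of this structure, and then lift that gauge transformation --- layer by layer, as above --- to conjugation by a genuine unit of $A$. Composing this unit with the idempotent-matching element $a_0$ of the first stage produces a single invertible element of $A$ conjugating $B$ onto $B'$, which is the assertion of Theorem \ref{theorem_conjugation}.
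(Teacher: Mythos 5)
Your proposal does not follow the paper's route (which builds an $A_\infty$-isomorphism $\Ext^*_{B'}(L^{B'},L^{B'})\to\Ext^*_B(L^B,L^B)$ over $\Ext^*_A(\Delta^A,\Delta^A)$ via the truncation adjunction, passes to twisted modules and Keller reconstruction to get an equivalence $G:\modu B'\to\modu B$ with $(A\otimes_B-)\circ G\cong A\otimes_{B'}-$, and extracts the unit $a$ from that natural isomorphism evaluated at the projective generators), and as written it has genuine gaps.

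First, stage one is incorrect in the stated generality. You assert that $\rad(B)\subseteq\rad(A)$ because $B$ is an exact subalgebra, and that $Ae_i$ is a projective cover of $\Delta_i^A$. Both claims fail for basic regular exact subalgebras of an arbitrary finite-dimensional algebra: in the paper's own second example, $B=\field Q_B$ sits inside $R=\End_A(P_1\oplus P_3'\oplus P_2\oplus P_3)^{\op}$ with $\iota(\beta')=f_{33'}$ an isomorphism $P_3\to P_{3'}$, so $\iota(\rad(B))\not\subseteq\rad(R)$; and $R\iota(e_1)\cong\Hom_A(P,P_1\oplus P_3')$ is a decomposable projective, hence not the projective cover of $\Delta_1^R$. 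The inclusion $A\rad(B)\subseteq\rad(A)$ is proved in the paper only for \emph{strong} exact Borel subalgebras of quasi-hereditary algebras (Lemma 3.6), using directedness and the fact that the $e_i$ remain primitive in $A$ — neither holds here. So your argument that $Ae_i\cong Ae_i'$, and hence that the idempotent systems are conjugate, does not go through; that conjugacy is a genuine output of the theorem, not an easy first reduction.

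Second, stage two is a programme rather than a proof. The obstruction group is never identified, its vanishing is asserted to follow from regularity but not derived, and the step of lifting a "gauge transformation" of the minimal $A_\infty$-structure on $\Ext^*_A(\Delta^A,\Delta^A)$ to conjugation by a genuine unit of $A$ is exactly the hard content of the theorem — you acknowledge this yourself. Note also that regularity only compares $\Ext^{>0}_B(L^B,L^B)$ with $\Ext^{>0}_A(\Delta^A,\Delta^A)$; turning that comparison into a statement about the embeddings $B,B'\hookrightarrow A$ requires the machinery of twisted modules (or an equivalent reconstruction device) that relates $\modu B$ to $\filt(\Delta^A)$ compatibly with induction, which is precisely what the paper spends Sections 4--6 establishing. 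Without carrying out the obstruction calculus and the lifting, the argument as proposed does not establish the result.
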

Here, a regular exact subalgebra of an arbitrary finite-dimensional algebra is a natural generalization of a regular exact Borel subalgebra of a quasi-hereditary algebra, proposed by Külshammer in \cite{surveyjulian} under the name ``bound quiver''. This term encompasses in particular regular exact Borel subalgebras of left/right standardly stratified algebras as well as regular exact Borel subalgebras of finite-dimensional algebras with homological systems. It should be remarked, however, that the existence result by Goto \cite{Goto} only yields homological exact Borel subalgebras, for which we do not obtain a uniqueness result.
Additionally, note that in the case where the modules $\Delta_i$ are the projective modules of $A$, this theorem recovers the uniqueness statement of the Wedderburn--Malcev theorem.\\
The proof given here is inspired by the proof of the existence result in \cite{KKO}. It makes extensive use of A-infinity algebras as defined by Stasheff \cite{Stasheff} and of the category of so-called twisted modules over an A-infinity algebra.
Twisted modules (or, more generally complexes) over a dg-algebra/category go back to a definition by Bondal and Kapranov \cite{BondalKapranov2}, and were generalized by  Lefèvre-Hasegawa to twisted modules/complexes over an A-infinity algebra/category \cite{LefHas}. As explained in \cite{BondalKapranov2} and \cite{LefHas}, the homology $H^0(\tw(\mathcal{A}))$ in degree zero of the A-infinity category of twisted complexes over an A-infinity algebra $\mathcal{A}$ can be interpreted as the full subcategory of the derived category generated by the regular module under sums, summands, shifts and triangles, and the homology $H^0(\twmod(\mathcal{A}))$ of the category of twisted modules as the full subcategory of the derived category generated by the regular module under sums, summands and extensions.\\
In the reconstruction theorem of Keller \cite[7.7]{Keller}, the A-infinity category of twisted modules is used in order to reconstruct a basic finite-dimensional algebra $B$ from the A-infinity algebra $\Ext_B^*(L^B, L^B)$, where $L^B$ is the semisimple part of $B$. This is somewhat analogous to the reconstruction of a Koszul algebra $B$ from its Yoneda algebra $\Ext_B^*(L^B, L^B)$ in classical Koszul duality, see \cite{Koszul}, whence it is sometimes referred to as A-infinity Koszul duality. \\
Similarly to this, in \cite{KKO}, the A-infinity subalgebra 
\begin{align*}
    L\oplus \Ext_A^{>0}(\Delta^A, \Delta^A)
\end{align*}
of the A-infinity algebra $ \Ext_A^*(\Delta^A, \Delta^A)$ is used to construct a regular exact Borel subalgebra $B$ of $A$.\\
Thus, given two basic regular exact subalgebras $B, B'$ with simples $\Sim(B)=\{L_1^B, \dots, L_n^B\}$ and  $\Sim(B')=\{L_1^{B'}, \dots, L_n^{B'}\}$ such that \begin{align*}
    \Delta^A_i:=A\otimes_B L_i^B\cong A\otimes_{B'}L_i^{B'},
\end{align*}
the strategy of our proof is to construct an $A$-infinity isomorphism 
\begin{align*}
	h:\Ext^*_B(L^B,L^B)\rightarrow \Ext^*_{B'}(L^{B'}, L^{B'})
\end{align*}
which makes the diagram 
% https://q.uiver.app/#q=WzAsMyxbMCwwLCJcXEV4dF9CXiooTF5CLCBMXkIpIl0sWzAsMiwiXFxFeHRfe0InfV4qKExee0InfSwgTF57Qid9KSJdLFsyLDEsIlxcRXh0XipfQShcXERlbHRhXkEsXFxEZWx0YV5BKSJdLFswLDIsImYiXSxbMSwyLCJmJyIsMl0sWzAsMSwiZyIsMl1d
\begin{equation}\label{diagram_2}
  \begin{tikzcd}[ampersand replacement=\&]
	{\Ext_B^*(L^B, L^B)} \\
	\&\& {\Ext^*_A(\Delta^A,\Delta^A)} \\
	{\Ext_{B'}^*(L^{B'}, L^{B'})}
	\arrow["f", from=1-1, to=2-3]
	\arrow["{f'}"', from=3-1, to=2-3]
	\arrow["h"', from=1-1, to=3-1]
\end{tikzcd}
\end{equation} commute, where $f$ and $f'$ are the $A$-infinity homomorphisms induced by the induction functors; and then to use a version of Keller's reconstruction theorem to see that this induces an equivalence $H:\modu B\rightarrow \modu B'$ and a diagram \label{diagramX}
\begin{equation}\label{diagram_final}
% https://q.uiver.app/#q=WzAsMyxbMCwwLCJcXG1vZHUgQiJdLFswLDIsIlxcbW9kdSBCJyJdLFsyLDEsIlxcbW9kdSBBIl0sWzAsMiwiQVxcb3RpbWVzX0IgLSJdLFsxLDIsIkFcXG90aW1lc197Qid9LSIsMl0sWzAsMSwiRiIsMl1d
\begin{tikzcd}[ampersand replacement=\&]
	{\modu B} \\
	\&\& {\modu A} \\
	{\modu B'}
	\arrow["{A\otimes_B -}", from=1-1, to=2-3]
	\arrow["{A\otimes_{B'}-}"', from=3-1, to=2-3]
	\arrow["F"', from=1-1, to=3-1]
\end{tikzcd}
\end{equation}
which commutes up to natural isomorphism.
By considering the endomorphism algebras of the projective generators $B$, $B'$ and $A\otimes_B B\cong A\cong A\otimes_{B'}B'$, this natural isomorphism then gives rise to an $a\in A^\times$ such that $aBa^{-1}=B'$.
In the course of this proof, we establish commutativity up to natural isomorphism of the diagram 
\begin{equation}\label{diagram_1}
	% https://q.uiver.app/?q=WzAsNCxbMCwwLCJcXG1vZHUgQiJdLFsyLDAsIkYoXFxEZWx0YSkiXSxbMCwxLCJIXjAoXFx0d21vZChcXEV4dF4qX0IoTF9CLCBMX0IpKSJdLFsyLDEsIkheMChcXHR3bW9kKFxcRXh0XipfQShcXERlbHRhX0EsIFxcRGVsdGFfQSkpIl0sWzAsMSwiQVxcb3RpbWVzX0ItIl0sWzMsMV0sWzIsMF0sWzIsMywiSF4wKFxcdHdtb2QoZikpIl1d
	\begin{tikzcd}[ampersand replacement=\&]
		{\modu B} \&\& {F(\Delta)} \\
		{H^0(\twmod(\Ext^*_B(L_B, L_B))} \&\& {H^0(\twmod(\Ext^*_A(\Delta_A, \Delta_A))}
		\arrow["{A\otimes_B-}", from=1-1, to=1-3]
		\arrow[from=2-3, to=1-3]
		\arrow[from=2-1, to=1-1]
		\arrow["{H^0(\twmod(f))}", from=2-1, to=2-3]
	\end{tikzcd}
	\end{equation}
In the final two sections, we give two additional applications of this commutative diagram, both to quasi-hereditary algebras.\\
In the first application, we deduce a stronger version of our main theorem in the quasi-hereditary case under the additional assumption that $A$ is basic. More precisely, we show that in this setting it suffices for one of the algebras $B$, $B'$ to be regular:
\begin{theorem*}(Theorem \ref{thm_strong})\\
    Let $(A, \leq_A)$ be a basic finite-dimensional  quasi-hereditary algebra, $B$ be a regular exact Borel  subalgebra of $A$ and $B'$ be an exact Borel subalgebra of $A$. Then there is an $a\in A^{\times}$ such that $aBa^{-1}=B'$.
\end{theorem*}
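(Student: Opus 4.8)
The plan is to deduce this from Theorem~\ref{theorem_conjugation}. Since $B$ and $B'$ are exact Borel subalgebras of the quasi-hereditary algebra $(A,\leq_A)$, induction along the respective embeddings carries simple modules to standard modules, so after renumbering $\Sim(B')$ we may assume that
\begin{align*}
	\Delta_i^A\;\cong\;A\otimes_B L_i^B\;\cong\;A\otimes_{B'}L_i^{B'}\qquad(1\le i\le n).
\end{align*}
Moreover $B$, being a regular exact Borel subalgebra of the basic algebra $A$, is a basic regular exact subalgebra of $A$. Hence it suffices to show that $B'$ is also a basic regular exact subalgebra of $A$: Theorem~\ref{theorem_conjugation} then supplies an $a\in A^{\times}$ with $aBa^{-1}=B'$. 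Thus everything reduces to the claim that an exact Borel subalgebra $B'$ of a basic quasi-hereditary algebra that possesses a regular exact Borel subalgebra is itself basic and regular.

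That $B'$ is basic is the easy half: $A$ is projective as a right $B'$-module and $A\otimes_{B'}-$ is right exact with $A\otimes_{B'}L_i^{B'}\cong\Delta_i^A$, so it carries indecomposable projective $B'$-modules to indecomposable projective $A$-modules (the top of $A\otimes_{B'}P_i^{B'}$ is again simple); since $A=A\otimes_{B'}B'$ and $A$ is basic, each indecomposable projective occurs in $B'$ with multiplicity one. For regularity, the key point is that, $A$ being projective over $B'$ and $A\otimes_{B'}-$ being exact, there are natural isomorphisms
\begin{align*}
	\Ext^{m}_{A}(\Delta_i^A,\Delta_j^A)\;\cong\;\Ext^{m}_{B'}\bigl(L_i^{B'},\,\Res_{B'}\Delta_j^A\bigr)\qquad(m\ge 0),
\end{align*}
under which the map induced by induction, $\Ext^{m}_{B'}(L_i^{B'},L_j^{B'})\to\Ext^{m}_{A}(\Delta_i^A,\Delta_j^A)$, is $\Ext^{m}_{B'}(L_i^{B'},-)$ applied to the canonical $B'$-map relating $L_j^{B'}$ and $\Res_{B'}\Delta_j^A$. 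Regularity of $B'$ amounts precisely to these maps satisfying the homological conditions from the definition of a regular exact subalgebra; in terms of the induced $A_\infty$-morphism $f'\colon\Ext^{*}_{B'}(L^{B'},L^{B'})\to\Ext^{*}_A(\Delta^A,\Delta^A)$ this says that $f'$ is injective on cohomology in degree one and an isomorphism in higher degrees, equivalently that $f'$ identifies $\Ext^{*}_{B'}(L^{B'},L^{B'})$ with the $A_\infty$-subalgebra $L\oplus\Ext_A^{>0}(\Delta^A,\Delta^A)$ of $\Ext^{*}_A(\Delta^A,\Delta^A)$.

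To prove this I would play $B'$ off against the regular exact Borel subalgebra $B$. By hypothesis $B$ is basic and regular, so the analysis of \cite{KKO} together with the uniqueness results of \cite{Conde,Miemietz} determines the graded algebra $\Ext^{*}_A(\Delta^A,\Delta^A)$ and, via the regularity of $B$, identifies $L\oplus\Ext_A^{>0}(\Delta^A,\Delta^A)$ with $\Ext^{*}_B(L^B,L^B)$; in particular all the dimensions $\dim_{\field}\Ext^{m}_A(\Delta_i^A,\Delta_j^A)$ are known. These dimensions coincide with $\dim_{\field}\Ext^{m}_{B'}(L_i^{B'},L_j^{B'})$ as soon as the comparison maps above are injective, and conversely injectivity together with this equality of dimensions forces bijectivity in the degrees relevant for regularity. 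So everything comes down to the injectivity of $\Ext^{m}_{B'}(L_i^{B'},L_j^{B'})\to\Ext^{m}_A(\Delta_i^A,\Delta_j^A)$, the decisive case being $m=1$: one must show that the quiver of $B'$ embeds into the $\Ext^1$-quiver of the standard modules, equivalently that the composite $\rad(B')/\rad^2(B')\to\rad(A)/\rad^2(A)\to\Ext^1_A(\Delta^A,\Delta^A)$ is injective. This is exactly the step for which the basicness of $A$ is indispensable — it rigidifies $A$ enough that the subalgebra $B'\subseteq A$ is faithfully reflected in $\Ext^1_A(\Delta^A,\Delta^A)$ — and I expect this injectivity, rather than the higher-degree bookkeeping or the passage to the conjugating element (which is then handed to us by Theorem~\ref{theorem_conjugation}), to be the main obstacle.
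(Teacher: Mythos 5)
Your proposed reduction to Theorem \ref{theorem_conjugation} is a legitimate alternative route -- indeed the paper itself remarks that Theorem \ref{thm_strong} follows from a result announced by Conde and K\"onig (that an exact Borel subalgebra of a basic quasi-hereditary algebra admitting a regular one is itself regular) combined with Theorem \ref{theorem_conjugation}. But that is precisely the problem: your proposal restates this route without supplying its key lemma, and you say so yourself (``I expect this injectivity \dots to be the main obstacle''). Concretely there are three gaps. First, to invoke Theorem \ref{theorem_conjugation} you need $B'$ to be a basic \emph{regular} exact subalgebra in the sense of Definition \ref{definition_boundquiver}, and regularity (Definition \ref{definition_regular}) includes \emph{normality} of $B'$ in $A$; your proposal never addresses normality. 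Second, the injectivity of $\Ext^1_{B'}(L^{B'},L^{B'})\to\Ext^1_A(\Delta^A,\Delta^A)$ is exactly the hard content and is left unproven -- note that the paper's Example \ref{example_yuehui} shows that a superficially similar claim from the literature fails, so this step genuinely requires an argument. Third, even granting injectivity in all degrees, your dimension count does not close: injectivity only yields $\dim\Ext^m_{B'}(L_i^{B'},L_j^{B'})\leq\dim\Ext^m_A(\Delta_i^A,\Delta_j^A)$, and surjectivity of the comparison maps is not automatic for exact Borel subalgebras (it is part of what regularity asserts), so bijectivity does not follow.

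The paper avoids proving regularity of $B'$ altogether. It uses regularity of $B$ \emph{alone}: Proposition \ref{proposition_ainftyhoms} only needs one of the two arrows into $\Ext^*_A(\Delta^A,\Delta^A)$ to be an isomorphism in positive degrees, and produces an $A_\infty$-morphism $g\colon\Ext^*_{B'}(L^{B'},L^{B'})\to\Ext^*_B(L^B,L^B)$ (not a priori invertible), hence a functor $G\colon\modu B'\to\modu B$ compatible with the induction functors. One then shows $G(B')\cong B$ using Lemma \ref{lemma_strong_top} together with normality of $B$, extracts $a\in A^\times$ with $aB'a^{-1}\subseteq B$ as in Theorem \ref{theorem_conjugation}, proves that $G$ is exact and that $aB'a^{-1}$ is an exact Borel subalgebra of the directed algebra $B$, and concludes $aB'a^{-1}=B$ from Lemma \ref{lemma_strong_of_directed}. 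If you want to pursue your route instead, the missing injectivity/surjectivity statement and the normality of $B'$ must each be proved, and together they constitute a result of comparable depth to the theorem itself.
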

In particular, $B'$ is regular exact, which is a result that was proven independently in a different way by Conde and König, in a preprint that is yet to appear. Note also that by \cite[Theorem 5.2]{Conde} a basic quasi-hereditary algebra $(A, \leq_A)$ admits a regular exact Borel subalgebra if and only if the radicals of the standard modules admit costandard filtrations.\\
The second is an application to skew group algebras of quasi-hereditary algebras. If $A$ is a finite-dimensional algebra and $G$ is a group acting on $A$ via automorphisms, then one can define the skew group algebra
\begin{align*}
    A*G:=A\otimes \field G
\end{align*}
as a $\field$-vector space together with the multiplication 
\begin{align*}
    (a\otimes g)\cdot (a'\otimes g'):=a g(a')\otimes gg'.
\end{align*}
Skew group algebras are a natural construction in representation theory, and have been studied extensively e.g. in \cite{ReitenRiedtmann}, \cite{demonet}, \cite{lemeur2}.
In previous work \cite{mypaper}, we have considered the setting where
$(A,\leq_A)$ is a quasi-hereditary algebra and $G$ is a finite group such that $\chara(k)$ does not divide $|G|$, acting on $A$ via automorphisms. Under a natural compatibility condition of $\leq_A$ with this action, see \cite[Definition 3.1]{mypaper}, we have established that there is an induced partial order $\leq_{A*G}$ on the simple modules of the skew group algebra $A*G$ such that $(A*G,\leq_{A*G})$ is quasi-hereditary. Moreover, if $B$ is a regular exact Borel subalgebra of $(A, \leq_A)$ such $g(B)=B$ for all $g\in G$, then $B*G$ is a regular exact Borel subalgebra of $(A*G, \leq_{A*G})$.\\
Since  \cite{KKO} provides an existence result for regular exact Borel subalgebras, this naturally raises the question whether, in the above setting, this can be extended to an existence result of regular exact Borel subalgebras invariant as a set under the group action. We answer this question in the affirmative by proving the following theorem:
\begin{theorem*}(Theorem \ref{thm_skew})\\
    Let $(A,\leq_A)$ be a quasi-hereditary algebra and $G$ be a finite group such that $\chara(k)$ does not divide $|G|$, acting on $A$ via automorphisms such that $\leq_A$ is compatible with this action as in \cite{mypaper}. Then there is a Morita equivalent quasi-hereditary algebra $(R, \leq_R)$ with  a $G$-action such that $R$ has a regular exact Borel subalgebra $B$ with $g(B)=B$ for all $g\in B$ and such that the Morita equivalence
    \begin{align*}
        F:\modu A\rightarrow\modu R
    \end{align*}
    is $G$-equivariant.
\end{theorem*}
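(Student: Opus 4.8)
The plan is to carry out the construction of \cite{KKO} $G$-equivariantly. The essential difficulty is that the direct sum $\Delta^A=\bigoplus_i\Delta^A_i$ of standard modules need not admit a $G$-equivariant structure --- the obstruction being a Schur-multiplier class of a stabilizer subgroup --- so one cannot simply average \cite{KKO}'s input; instead one works with a $G$-equivariant multiple of $\Delta^A$. Concretely, compatibility of $\leq_A$ with the action forces $G$ to permute the isomorphism classes of standard modules, say ${}^g\Delta^A_i\cong\Delta^A_{\sigma_g(i)}$, so that ${}^g\Delta^A\cong\Delta^A$ for every $g$. Hence $\mathcal D:=(A*G)\otimes_A\Delta^A$ is an $A*G$-module whose restriction to $A$ is $\bigoplus_{g\in G}{}^g\Delta^A\cong(\Delta^A)^{\oplus|G|}$, so that $\add_A\mathcal D=\add_A\Delta^A$ and the graded algebra $\Ext^*_A(\mathcal D,\mathcal D)\cong M_{|G|}(\Ext^*_A(\Delta^A,\Delta^A))$ is a matrix algebra over the one used in \cite{KKO}. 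Being the extension algebra of a genuine $A*G$-module, $\Ext^*_A(\mathcal D,\mathcal D)$ carries a \emph{strict} $G$-action. Taking the endomorphism dg-algebra of a $G$-equivariant projective $A$-resolution of $\mathcal D$ (restricted from one over $A*G$) together with a $G$-equivariant contraction onto its cohomology --- such a contraction exists since $\chara k\nmid|G|$ --- and transferring along it, one obtains a minimal $A_\infty$-model $\mathcal F$ on which $G$ still acts strictly by $A_\infty$-automorphisms.

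Next, applying a $G$-equivariant refinement of the Wedderburn--Malcev theorem (again valid because $\chara k\nmid|G|$) to $\mathcal F^0=\End_A(\mathcal D)$ with its $G$-action, one gets a $G$-stable maximal semisimple subalgebra $\mathsf L\subseteq\mathcal F^0$; choosing the $A_\infty$-structure strictly unital and relative to $\mathsf L$ (as \cite{KKO} do, and $G$-equivariantly), the positive-degree part of $\mathcal F$ together with $\mathsf L$ forms a $G$-stable $A_\infty$-subalgebra $\mathcal C:=\mathsf L\oplus\Ext^{>0}_A(\mathcal D,\mathcal D)$, which has exactly the shape from which \cite{KKO} build a regular exact Borel subalgebra and which is, up to $A_\infty$-isomorphism, $M_{|G|}$ of their distinguished subalgebra $L\oplus\Ext^{>0}_A(\Delta^A,\Delta^A)$. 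Feeding the pair $\mathcal C\subseteq\mathcal F$ through the reconstruction of \cite{KKO} --- equivalently, through the twisted-module reconstruction of \cite{Keller} in the refined form of the previous sections, which depends only on the $A_\infty$-isomorphism type of the input and is functorial in it --- yields finite-dimensional algebras $B\subseteq R$ together with a Morita equivalence $F\colon\modu A\to\modu R$ matching up standard modules; one checks that the output is $M_{|G|}$ of the pair \cite{KKO} produce from $A$ itself. Since quasi-heredity and the regular exact Borel property are preserved by $M_{|G|}(-)=(-)\otimes_k M_{|G|}(k)$, the algebra $(R,\leq_R)$ is quasi-hereditary, Morita equivalent to $(A,\leq_A)$, and $B$ is a regular exact Borel subalgebra of it. Finally, the input pair $\mathcal C\subseteq\mathcal F$ carries a strict $G$-action and every step of the reconstruction is functorial, so $G$ acts on $R$ by algebra automorphisms stabilizing $B$, giving $g(B)=B$ for all $g\in G$; and since $\mathcal D$ was chosen $A*G$-equivariant and each step respects the $G$-action, $F$ is $G$-equivariant.

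The step I expect to be the main obstacle is exactly the one just invoked: verifying that the reconstruction is functorial enough to carry a \emph{strict} (not merely up-to-homotopy) $G$-action all the way through. Each ingredient rests on non-canonical choices --- a $G$-equivariant projective resolution, the contraction transferring the $A_\infty$-structure, a lift of $\mathsf L$ to the dg-level, lifts of idempotents and of units --- and the argument goes through only if every such choice is made $G$-equivariantly, which is possible precisely because $\chara k\nmid|G|$: this is where Maschke-style averaging enters, together with the consequent vanishing of the relevant obstruction groups, which are the group cohomology of $k$-vector spaces or of pro-unipotent groups. An alternative organization that isolates the difficulty elsewhere would be to run \cite{KKO} with arbitrary choices, transport the $G$-action on $\modu A$ along the resulting equivalence to a $G$-action of $R$ ``up to inner automorphism'', correct it by means of Theorem \ref{theorem_conjugation} so that each $g$ fixes $B$, and then trivialize the residual $2$-cocycle, which then takes values in the normalizer of $B$ in $R^\times$ --- a group that, after dividing out its finite reductive quotient, is pro-unipotent --- once more using $\chara k\nmid|G|$. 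In either approach, note that the statement does not ask the $G$-action on $R$ to be compatible with $\leq_R$, so nothing more is needed on that point.
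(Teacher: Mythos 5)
Your overall strategy --- make the input of the \cite{KKO} construction $G$-equivariant and push a strict $G$-action through the reconstruction --- is not the paper's, and both of its pillars have real gaps. First, the detour through $\mathcal D=(A*G)\otimes_A\Delta^A$ is avoidable and costly. The paper first reduces to the case where $A$ is basic with a $G$-stable maximal semisimple subalgebra (Theorem \ref{theorem_Gactionbasic}); Lemma \ref{lemma_Gactiondelta} then shows that $\Delta^A=\bigoplus_i\Delta_i$ itself is an $A*G$-module, because each $\Delta_i=Ae_i/A(\sum_{j>i}e_j)Ae_i$ is carried onto $\Delta_{g(i)}$ by the action of $g$ on $A$ --- the comparison isomorphisms are canonical, so the Schur-multiplier obstruction you worry about is not present after this reduction. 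Working with $\mathcal D$ instead replaces the semisimple base $L\cong\field^n$ by $\prod_i M_{|G|}(\field)$, and the entire apparatus you want to feed it into --- $\twmod_L$, the dual bar construction, Theorem \ref{thm_Keller}, and the \cite{KKO} construction itself --- is developed only over $L\cong\field^n$. The assertions that this machinery extends to a noncommutative semisimple base and that its output is exactly $M_{|G|}(-)$ applied to the basic output are the load-bearing steps of your argument, and they are asserted rather than proved.

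Second, the obstacle you yourself flag --- making every non-canonical choice (resolution, contraction, Wedderburn--Malcev lift, idempotent lifts) equivariantly so that a \emph{strict} action survives --- is not dissolved by invoking Maschke averaging step by step; it is exactly what the paper's proof is engineered to avoid. The paper never equivariantizes the \cite{KKO} construction: it takes the non-equivariant output $(R,B)$, uses regularity to see that $f:\Ext^*_B(L^B,L^B)\to\Ext^*_R(\Delta^R,\Delta^R)$ is an isomorphism in positive degrees, and transports the strict $G$-action on $\Ext^*_A(\Delta^A,\Delta^A)$ across $f$ by the universal property of truncation (Proposition \ref{proposition_ainftyhoms}); Keller's reconstruction is then genuinely functorial (Proposition \ref{proposition_Keller_equivariant}) and produces a new algebra $B''$ with a strict action, from which $B':=\End_{B''}(B'')^{\op}\subseteq\End_A(H(B''))^{\op}$ is assembled. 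Your fallback route also breaks at its last step: after correcting each $g$ to preserve $B$ via Theorem \ref{theorem_conjugation}, the residual $2$-cocycle takes values in a group whose reductive quotient contains copies of $\field^\times$, and $H^2(G,\field^\times)$ is the Schur multiplier of $G$, which does not vanish merely because $\chara(\field)\nmid|G|$; the paper's final example exhibits an $R$ with a fixed compatible $G$-action admitting no invariant Borel subalgebra at all, so the cocycle genuinely need not be a coboundary. (A minor further point: the Morita equivalence one obtains is only weakly $G$-equivariant.)
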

The structure of the article is as follows:
In Section \ref{section_notation}, we introduce our notation, and in Section \ref{section_prelim} we give an introduction to quasi-hereditary algebras and their exact Borel subalgebras.
In Section \ref{section_dg} we recall twisted modules for dg-algebras, as well as the functor $\twmod$ assigning to a dg algebra the dg-category of twisted modules. We also recall an equivalence due to Bondal and Kapranov \cite{BondalKapranov} between $H^0(\twmod_L\End_B^*(P^\cdot(L^B)))$ and $\modu B$, where $P^\cdot(L^B)$ is a projective resolution of $B$, and establish a commutative diagram 
% https://q.uiver.app/#q=WzAsNCxbMCwwLCJIXjAoXFx0d21vZChcXEVuZF9CKFAoTF5CKSkiXSxbMiwwLCJIXjAoXFx0d21vZChcXEVuZF9BKEFcXG90aW1lc19CUChMXkIpKSJdLFswLDEsIlxcbW9kdSBCIl0sWzIsMSwiRihcXERlbHRhXkEpIl0sWzEsM10sWzAsMl0sWzIsMywiQVxcb3RpbWVzX0ItIl0sWzAsMSwiSF4wKFxcdHdtb2QoZykpIl1d
\begin{equation}\label{diagram_1.1}
  \begin{tikzcd}[ampersand replacement=\&]
	{H^0(\twmod(\End_B(P^\cdot(L^B))} \&\& {H^0(\twmod(\End_A(A\otimes_BP^\cdot(L^B))} \\
	{\modu B} \&\& {F(\Delta^A)}
	\arrow[from=1-3, to=2-3]
	\arrow[from=1-1, to=2-1]
	\arrow["{A\otimes_B-}", from=2-1, to=2-3]
	\arrow["{H^0(\twmod(g))}", from=1-1, to=1-3]
\end{tikzcd}
\end{equation}
\\
where $g:\End_B(P^\cdot(L^B))\rightarrow \End_A(A\otimes P^\cdot(L^B)), \varphi\mapsto \id_A\otimes \varphi$.\\
In Section \ref{section_ainfty}, we recall (strictly unital) A-infinity algebras and categories. Following \cite{LefHas}, we give the definition twisted modules over an A-infinity algebra and the extension of the functor $\twmod$ to the A-infinity setting, both due to Lefèvre-Hasegawa \cite{LefHas}. Moreover, we recall from \cite[Lemma 3.25]{Seidel} that if $f$ is a quasi-isomorphism of $A$-infinity algebras, then $H^0(\twmod(f))$ is an equivalence. Thus we obtain a commutative diagram as in Diagram \eqref{diagram_1}, where the vertical arrows are equivalences.\\
In Section \ref{section_trunc}, we discuss truncation for augmented A-infinity algebras and use this to construct the A-infinity isomorphism in Diagram \eqref{diagram_2}.
In Section \ref{section_proofmain}, we synthesize the previous sections to prove the main result, Theorem \ref{theorem_conjugation}.\\
In Section \ref{section_strong}, we give an alternative proof of a slightly stronger version, Theorem \ref{thm_strong}, of our result in the case that $A$ is basic. This theorem also follows from a result announced by Conde and König in conjunction with our main theorem. Additionally, a stronger version of this theorem was claimed in \cite{yuehui_zhang}; however, in the proof given therein, some statements are problematic, see \ref{example_yuehui}.\\
In  Section \ref{section_skew}, we prove our application, Theorem \ref{thm_skew}, concerning the existence of $G$-invariant regular exact Borel subalgebras for quasi-hereditary algebras with a group action.
\section{Notation}\label{section_notation}
Throughout, let $\field$ be an algebraically closed field, and let $L\cong \field^n$. All algebras are supposed to be finite-dimensional algebras over $\field$ and all modules are supposed to be finite-dimensional left modules. If $A$ is an algebra, we denote by $\Sim(A)$ a set of representatives of the isomorphism classes of the simple $A$-modules.\\ For any $A$-module $M$ we denote by $P(M)$ a projective cover of $M$. For any collection $M=(M_i)_{i\in I}$ of indecomposable $A$-modules we denote by $\filt(M)$ the full subcategory of $\modu A$ consisting of modules which admit a filtration by $M_i$, $i\in I$; that is, $N\in \filt(M)$ if and only if there is a chain
\begin{align*}
    (0)=N_0\subseteq N_1\subseteq \dots \subseteq N_{m-1}\subseteq N_m=N
\end{align*}
of $A$-submodules of $N$ such that for all $1\leq j\leq m$ there is $i=i_j\in I$ such that $N_j/N_{j-1}\cong M_i$.\\
For $X\in \modu L\otimes L^{\op}$ we denote by $\Dual X$ the $L$-$L$-bimodule given by the $\field$-dual $\Dual X:=\Hom_{\field}(X, \field)$.\\
We denote by $\modu^{\mathbb{Z}}L$ the category of degree-wise finite-dimensional $\mathbb{Z}$-graded $L$-modules and by  $\modu^{\mathbb{Z}}L\otimes L^{\op}$ the category of degree-wise finite-dimensional $\mathbb{Z}$-graded $L$-$L$-bimodules. For  $X=\bigoplus_{n\in \mathbb{Z}}X_n\in \modu^{\mathbb{Z}}L$ or $X=\bigoplus_{n\in \mathbb{Z}}X_n\in\modu^{\mathbb{Z}}L\otimes L^{\op}$ and $x\in X_n$ we denote by $|x|:=n$ the degree of $x$. Moreover, we denote by $sX$ the (positive degree) shift of $X$, i.e. $X=sX$ as a non-graded $L$-module resp. $L$-$L$-bimodule and for $x\in X$ homogeneous, we have
\begin{align*}
    |sx|=|x|+1.
\end{align*}
Finally, for $X=\bigoplus_{n\in \mathbb{Z}}X_n\in  \modu^{\mathbb{Z}}L\otimes L^{\op}$, we equip the $L$-$L$ bimodule $\Dual X$ with a  $\mathbb{Z}$-grading via
\begin{align*}
    (\Dual X)_n:=\Dual X_{-n}.
\end{align*}
\section{Preliminaries}\label{section_prelim}

In this section, we give an overview of our main motivation, the study of quasi-hereditary algebras and their exact Borel subalgebras, as well as of the generalized setting in which Theorem \ref{theorem_conjugation} is given.
For an introduction to quasi-hereditary algebras see for example \cite{DlabRingel}.\\
The data of a quasi-hereditary algebra consists of a finite-dimensional algebra $A$ together with a partial order  $\leq_A$ on $\Sim(A)$.
Given such a partial order, one can define the so-called standard modules $\Delta(L)$:
\begin{definition}\label{definition_standardmodules}
    Let $A$ be a finite-dimensional algebra and $\leq_A$ be a partial order on $\Sim(A)$. Then for every $L\in \Sim(A)$ we define
    \begin{align*}
        \Delta(L):=P(L)/\sum_{L'\nleq_A L, f\in \Hom_A(P_{L'}, P_L)}\im(f).
    \end{align*}
    $\Delta(L)$ is called the standard module associated to $L$ with respect to the partial order $\leq_A$.
    We denote by $\Delta:=(\Delta(L))_{L\in \Sim(A)}$ the collection of standard modules.
\end{definition}
In general, different partial orders may give rise to the same set of standard modules. In this case, they are called equivalent. Moreover, a Morita equivalence of  pairs $(A, \leq_A)$, $(R,\leq_R)$ is defined as an equivalence
\begin{align*}
    F:\modu A\rightarrow \modu R
\end{align*}
such that for every $L\in \Sim(A)$
\begin{align*}
    F(\Delta(L))\cong \Delta(F(L)).
\end{align*}
A pair $(A, \leq_A)$ is called quasi-hereditary if the following hold:
\begin{enumerate}
    \item $\End_A(\Delta(L))\cong \field$ for all $L\in \Sim(A)$.
    \item $A\in \filt(\Delta)$.
\end{enumerate}
Prominent examples include Schur algebras, algebras underlying blocks of Bernstein-Gelfand-Gelfand category $\mathcal{O}$ and algebras of global dimension less than or equal to two.
A particularly easy example is the case where $A=\field Q/I$ for some quiver $Q$ without oriented cycles, and the partial order of $A$ is given by 
\begin{align*}
    L\leq_A L' \Leftrightarrow \textup{ there is a path }p:e_L\rightarrow e_{L'}\textup{ in }Q.
\end{align*}
In this case $\Delta_L\cong L$ for every $L\in \Sim(A)$, so that $\filt(\Delta)=\modu A$ and $\End_A(\Delta(L))\cong \field$ by Schur's lemma.
In general, any quasi-hereditary algebra $(A, \leq_A)$ with simple standard modules is Morita equivalent to a pair $(\field Q/I, \leq)$
as described above. Such quasi-hereditary algebras are called directed.\\
If $(A, \leq_A)$ is a quasi-hereditary algebra, then the Dlab-Ringel Reconstruction Theorem  shows that $(A,\leq_A)$ is up to Morita equivalence already determined by the category $\filt(\Delta)$:
\begin{theorem}\cite[Theorem 2]{DlabRingel}
    Let  $(A, \leq_A)$ and  $(R, \leq_R)$ be quasi-hereditary algebras. Suppose there is an equivalence
    \begin{align*}
        F:\filt(\Delta^A)\rightarrow \filt(\Delta^R).
    \end{align*}
    Then $F$ extends to an equivalence
    \begin{align*}
        \overline{F}:\modu A\rightarrow \modu R.
    \end{align*}
\end{theorem}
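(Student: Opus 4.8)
The plan is to reconstruct $A$, up to Morita equivalence, from the exact category $\filt(\Delta^A)$ as the endomorphism ring of its relative projective generator, and then to upgrade $F$ to a Morita equivalence. First I would record the standard homological features of $\filt(\Delta^A)$: it is closed under extensions — hence inherits an exact structure from $\modu A$ — and under kernels of epimorphisms, since it coincides with the subcategory of modules $M$ with $\Ext^{>0}_A(M,\nabla^A(L))=0$ for all $L$, where $\nabla^A$ denotes the costandard modules; it contains $\add A$ because $A\in\filt(\Delta^A)$; and $\add A$ is exactly the class of relative projective objects, for if $X\in\filt(\Delta^A)$ is relative projective, its projective cover gives a conflation $0\to K\to P(X)\to X\to 0$ with $K\in\filt(\Delta^A)$ which must split. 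Dually, writing $T^A$ for the characteristic tilting module, $\add T^A=\filt(\Delta^A)\cap\filt(\nabla^A)$ is the class of relative injectives. In particular $\filt(\Delta^A)$ has enough relative projectives, so every $M\in\filt(\Delta^A)$ admits a resolution $\dots\to P_1\to P_0\to M\to 0$ with $P_j\in\add A$ and all syzygies in $\filt(\Delta^A)$, that is, a resolution by conflations.

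Next I would show that $F$ is exact for the natural exact structures. Any conflation $\sigma\colon 0\to X\xrightarrow{i}Y\xrightarrow{p}Z\to 0$ in $\filt(\Delta^A)$ is a kernel–cokernel pair in the category $\filt(\Delta^A)$: the module-theoretic kernel $\im i\cong X$ and cokernel $Y/\im i\cong Z$ also enjoy the corresponding universal properties against the objects of the full subcategory $\filt(\Delta^A)$. Hence $F\sigma$ is a kernel–cokernel pair in $\filt(\Delta^R)$. Probing the categorical kernel $\ker(Fp)$, and the morphism $Fp$, with the projective covers of their module-theoretic kernels — these covers lying in $\add R\subseteq\filt(\Delta^R)$ — forces $Fi$ to be injective and $\im(Fi)=\ker(Fp)$, so that $F\sigma$ produces a short exact sequence $0\to FX\to FY\to C'\to 0$ in $\modu R$ with $C'=\im(Fp)\subseteq FZ$. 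To see $C'=FZ$ I would count dimensions: for $N\in\filt(\Delta^R)$ the multiplicities $[N:\Delta^R(L')]$ are recoverable from the numbers $\dim_\field\Hom_R(N,T^R(L'))$ via the unitriangular matrix $\big([T^R(L'):\nabla^R(L'')]\big)_{L',L''}$, and these numbers are additive along conflations; since $F$ preserves $\dim_\field\Hom(-,-)$ and — this being the delicate input, discussed below — carries $\add T^A$ to $\add T^R$, the vectors $\big([FN:\Delta^R(L')]\big)_{L'}$ are additive along $F\sigma$, hence so is $\dim_\field$, forcing $\dim_\field C'=\dim_\field FZ$ and therefore $C'=FZ$. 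Thus $F$ sends conflations to short exact sequences, and in particular $F$ carries relative projectives to relative projectives.

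Knowing $F$ is exact, it restricts to an equivalence $\add A\xrightarrow{\sim}\add R$, so $F(A)$ is a projective generator of $\modu R$ and $\End_R(F(A))\cong\End_A(A)=A^{\op}$ via $F$; hence $\Hom_R(F(A),-)\colon\modu R\to\modu A$ is an equivalence, whose quasi-inverse $\overline F$ satisfies $\overline F|_{\add A}\cong F|_{\add A}$ by construction. Finally $\overline F$ extends $F$: for $M\in\filt(\Delta^A)$ choose a resolution by conflations with terms in $\add A$; applying the exact functor $F$ gives a resolution of $F(M)$ by objects of $\add R$, while applying the (exact) Morita equivalence $\overline F$ to the same complex computes $\overline F(M)$ from the same data in $\add R$; therefore $\overline F(M)\cong F(M)$ naturally in $M$, so $\overline F$ restricts to $F$ on $\filt(\Delta^A)$, completing the proof.

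The step I expect to be hardest is the one flagged above: the claim that an equivalence of the bare additive categories $\filt(\Delta^A)$ and $\filt(\Delta^R)$ must match up the relative injectives $\add T^A$ with $\add T^R$ (equivalently, that the natural exact structure on $\filt(\Delta)$ is intrinsic). This should follow from the Krull–Schmidt property and the Auslander–Reiten theory available on the functorially finite subcategory $\filt(\Delta)$, which allow one to single out $\add T^A$, and likewise $\add A$, purely categorically; but making this precise is the real content. Everything else — the kernel–cokernel and dimension arguments, the recovery of $A$ from its relative projective generator, and the extension of $F$ — is routine once this is in hand. (In every situation where this theorem is applied the functor $F$ is in fact produced as an exact equivalence, so this obstacle is absent; alternatively one may follow Dlab–Ringel and reconstruct $A$ directly from the characteristic module of $\filt(\Delta)$.)
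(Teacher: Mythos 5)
The paper offers no proof of this statement---it is quoted as a citation of Dlab--Ringel---so the only question is whether your argument is complete, and it is not: the step you flag at the end is a genuine gap, and it is exactly where the whole content of the theorem lives. Your exactness argument, the dimension count via $\dim_\field\Hom_R(FN,T^R(L'))$, and the identification $\End_R(F(A))\cong \End_A(A)$ all presuppose that the given equivalence of \emph{additive} categories matches $\add A$ with $\add R$ and $\add T^A$ with $\add T^R$---equivalently, that the exact structure induced on $\filt(\Delta)$ from $\modu A$ is recognizable from the additive category alone. Nothing in the proposal establishes this. The notions ``relative projective/injective'' and ``conflation'' are defined with respect to that exact structure, so using them to single out $\add A$ is circular; an abstract additive category can carry several exact structures; and the appeal to Krull--Schmidt and Auslander--Reiten theory does not resolve the issue, because almost split \emph{sequences} in $\filt(\Delta)$ are themselves defined relative to the induced exact structure (left/right almost split \emph{morphisms} are purely categorical, but converting them into a characterization of $\add A$ or $\add T$ still requires deciding which kernel--cokernel pairs are conflations).

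What you have, then, is a correct reduction of the theorem to this single recognition statement, surrounded by verifications that are sound once it is granted: the kernel--cokernel probing with free covers, the unitriangularity of $\bigl([T(L'):\nabla(L'')]\bigr)$, and the Morita-theoretic extension of $F$ along a presentation by objects of $\add A$ are all fine. But the intrinsic characterization of the exact structure of $\filt(\Delta)$ is precisely what the cited Dlab--Ringel argument supplies, and until you prove it (or restrict the hypothesis to an \emph{exact} equivalence, which is the form in which the theorem is actually used in this paper), the proof is incomplete at its decisive point.
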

Hence the subcategory $\filt(\Delta)$ is of particular interest in studying $(A,\leq_A)$.
In \cite{Koenig}, König defined the concept of an exact Borel subalgebra of a quasi-hereditary algebra:
\begin{definition}
    Let $(A,\leq_A)$ be a finite-dimensional algebra. A subalgebra $B$ is called an exact Borel subalgebra if
    \begin{enumerate}
        \item The induction functor  \begin{align*}
        A\otimes_B -:\modu B\rightarrow \modu A
    \end{align*}
    is exact.
    \item There is a bijection $\phi:\Sim(B)\rightarrow\Sim(A)$ such that for all $L\in \Sim(B)$ we have
    \begin{align*}
        A\otimes_B L\cong \Delta(\phi(L)).
    \end{align*}
    \item $(B, \leq_B)$ is directed with respect to the partial order
    \begin{align*}
        L\leq_B L':\Leftrightarrow \phi(L)\leq_A\phi(L').
    \end{align*}
    \end{enumerate}
\end{definition}
One often considers exact Borel subgalgebras with additional properties, see \cite[Definition 3.4]{BKK} and \cite[p. 405]{Koenig}:
\begin{definition}\label{definition_regular}
    \begin{enumerate}
        \item An exact Borel subalgebra $B$ of a quasi-hereditary algebra $(A, \leq_A)$ is called normal if the embedding $\iota:B\rightarrow A$ has a splitting as a right $B$-module homomorphism whose kernel is a right ideal in $A$.
        \item An exact Borel subalgebra $B$ of a quasi-hereditary algebra $(A, \leq_A)$ is called regular if it is normal and for every $n\geq 1$ the maps
        \begin{align*}
            \Ext^n_B(L^B, L^B)\rightarrow   \Ext^n_A(\Delta^A, \Delta^A), [f]\mapsto [\id_A\otimes_B f]
        \end{align*}
        are isomorphisms, where $L^B=\bigoplus_{L_i^B\in \Sim(B)}L_i^B$ and $\Delta^A=A\otimes_B L^B\cong \bigoplus_{L_i^A\in \Sim(A)}\Delta(L_i^A)$.
        \item An exact Borel subalgebra $B$ of a quasi-hereditary algebra $(A, \leq_A)$ is called strong if it contains a maximal semisimple subalgebra of $A$.
    \end{enumerate}
\end{definition} 
In particular, regular is a useful condition in order to transfer homological data from $\modu B$ to $\filt(\Delta)$ and the other way around.
Additionally, regular exact Borel subalgebras are of particular interest since the exact Borel subalgebras constructed in \cite{KKO} are regular.\\
While quasi-hereditary algebras and their regular exact Borel subalgebras are the main motivation for this article, it was observed in \cite{surveyjulian} that there is a natural generalization of this to arbitrary finite-dimensional algebras, and our results are formulated in this setting. Note that this definition also encompasses regular exact Borel subalgebras of standardly stratified algebras and of finite-dimensional algebras with homological systems as defined by Goto \cite{Goto} and Bautista--Perez--Salmerón \cite{BPS}, respectively.
\begin{definition}\label{definition_boundquiver}
    Let $A$ be a finite-dimensional algebra and $(M_i)_{i\in I}$ a collection of $A$-modules. Then a bound quiver for $\filt(M)$ is a subalgebra $B$ of $A$ such that
\begin{enumerate}
        \item The induction functor  
        \begin{align*}
        A\otimes_B -:\modu B\rightarrow \modu A
    \end{align*}
    is exact.
    \item There is a bijection $\phi:\Sim(B)\rightarrow I$ such that for all $L\in \Sim(B)$ we have
    \begin{align*}
        A\otimes_B L\cong M_{\phi(L)}
    \end{align*}
    \item The ring extension $A:B$ is regular in the sense of Definition \ref{definition_regular}, replacing $\Delta_i^A$ by $M_{\phi(i)}$.
    \end{enumerate}
    We call a bound quiver $B\subseteq A$ for some  $\filt(M)$ a regular exact subalgebra.
\end{definition}
Thus, if $(A, \leq_A)$ is a quasi-hereditary algebra with regular exact Borel subalgebra $B$, then $B$ is a bound quiver for $\filt(\Delta^A)$.
Note that in Definition \ref{definition_boundquiver} there is no criterion corresponding to the directedness of $B$. However, its was shown in \cite[p. 18]{surveyjulian} that any basic regular exact subalgebra $B$ of a quasi-hereditary algebra $A$ such that $A\otimes_B B/\rad(B)\cong\bigoplus_{L\in \Sim(A)}\Delta(L)^A$ is a regular exact Borel subalgebra.
As was already observed by König in \cite[Example 2.2]{Koenig}, this does not hold if one doesn't require regularity. However, in the definition of a strong exact Borel subalgebra \cite[p. 405]{Koenig}, König also notes that, if $A$ is basic, then again the directedness follows from the other two criteria. This relates to our result \ref{thm_strong}.
We expect the following lemma to be well-known, but could not find a reference, so that we provide a proof:
\begin{lemma}\label{lemma_radical}
    Let  $(A, \leq_A)$ be a  finite-dimensional quasi-hereditary algebra and  $B$ be a strong regular exact Borel subalgebra of $A$. Then $A\rad(B)\subseteq \rad(A)$.
\end{lemma}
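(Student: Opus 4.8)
The plan is to reduce the statement to a question about the top of $\rad(B)$ as a $B$-module, exploiting the splitting that normality provides. Since $B$ is normal, the inclusion $\iota\colon B\to A$ splits as a right $B$-module map with kernel a right ideal $C$ of $A$, so $A = B\oplus C$ as right $B$-modules with $C$ a right ideal. First I would observe that it suffices to show $\rad(B)\cdot$ (anything) lands in $\rad(A)$ in the appropriate sense; more precisely, since $\rad(A)$ is the intersection of the annihilators of the simple $A$-modules, it is enough to show that $A\rad(B)$ annihilates every simple $A$-module, equivalently every $\Delta(L)/\rad\Delta(L) = L$, i.e.\ that $A\rad(B)$ acts as zero on each simple $A$-module. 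Because $B$ is strong, a maximal semisimple subalgebra $L_A$ of $A$ lies in $B$, and I would use this to identify the simple $A$-modules restricted to $B$: a simple $A$-module $L$, restricted to $B$, has $\rad(B)$ acting by zero on its top, and the key point is that $L$ as an $A$-module is generated by its image under $L_A\subseteq B$.

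The main step is then the following: for a simple $A$-module $L = \Delta(\phi^{-1}(L))/\rad$, consider the standard module $\Delta := A\otimes_B L^B$ where $L^B$ is the corresponding simple $B$-module. We have $\Delta(L) = A\otimes_B L^B$, and $\rad(B)L^B = 0$, so in the tensor product $A\otimes_B L^B$ we can compute: $A\rad(B)\otimes_B L^B = A\otimes_B \rad(B)L^B = 0$. Wait — that shows $A\rad(B)$ annihilates each $\Delta(L)$, not just each simple. This is actually the cleaner route: since $A\in\filt(\Delta)$, every $A$-module relevant to computing $\rad(A)$ — in particular $A/\rad(A)$, or rather it suffices that $A\rad(B)$ kills all composition factors — hmm, let me be careful. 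The correct deduction is: $A\rad(B)$ is a two-sided ideal? It is a left ideal automatically; for it to be contained in $\rad(A)$ I would show $A\rad(B)$ is contained in the annihilator of $A/\rad(A)$, equivalently $A\rad(B)\cdot A \subseteq \rad(A)$, or more simply show $A\rad(B)$ is nilpotent, hence contained in $\rad(A)$. Nilpotence: I would argue $A\rad(B) = (B\oplus C)\rad(B) = \rad(B) \oplus C\rad(B)$, and using that $C$ is a right ideal and the filtration of $A$ by standard modules on which $\rad(B)$-multiplication is controlled, one gets that powers of $A\rad(B)$ eventually vanish.

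Concretely, the argument I favor is: $A\rad(B)$ is a left ideal of $A$; I claim it is contained in $\rad(A)$ because it acts by zero on every standard module $\Delta(L)$, and $\rad(A)$ is characterized (for quasi-hereditary $A$, since $A\in\filt(\Delta)$ and the $\Delta(L)$ have the simples as tops) as the intersection $\bigcap_L \mathrm{ann}_A(\Delta(L)/\rad\Delta(L))$; but in fact $A\rad(B)\cdot\Delta(L) = A\rad(B)\cdot A\otimes_B L^B$, and since $\rad(B)$ kills $L^B$ and $\rad(B)$ is a two-sided ideal of $B$, $\rad(B)\otimes_B L^B = 0$ inside $A\otimes_B L^B$ — so $A\rad(B)\cdot\Delta(L)=0$. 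Since the $\Delta(L)$ collectively have all simple $A$-modules among their composition factors and $A$ embeds in a direct sum of modules filtered by $\Delta$'s — actually simplest: $A\rad(B)$ annihilates $A/\rad(A)$ because every composition factor of $A/\rad(A)$ is a simple $A$-module which is a quotient of some $\Delta(L)$, hence annihilated by $A\rad(B)$; therefore $A\rad(B)\subseteq\mathrm{ann}_A(A/\rad(A)) = \rad(A)$.

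The main obstacle I anticipate is making rigorous the claim that $\rad(B)$, viewed inside $A$ and then pushed into $A\otimes_B L^B$, really does act as zero on $\Delta(L)$ — one must distinguish carefully between $\rad(B)$ as a subspace of $A$ and the two-sided ideal $\rad(B)$ of $B$ used in the tensor relation, and confirm that the $A$-module structure on $A\otimes_B L^B$ is such that left multiplication by an element $b r$ with $r\in\rad(B)$, $b\in A$, gives $br\otimes x = b\otimes rx = b\otimes 0 = 0$; this is where one uses that $\rad(B)L^B=0$ as $B$-modules and that the left $A$-action on $A\otimes_B L^B$ is by left multiplication on the first factor, which is compatible with moving $\rad(B)$-elements (which lie in $B\subseteq A$) across the tensor. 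The strongness hypothesis enters to guarantee $\Sim(A)$ and $\Sim(B)$ correspond appropriately and that the tops of standard modules are genuinely simple $A$-modules exhausting $\Sim(A)$, which is needed for the final annihilator identification.
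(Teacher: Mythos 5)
Your reduction at the start is fine: since $\rad(A)=\bigcap_{S\in\Sim(A)}\operatorname{ann}_A(S)$ and $\rad(A)$ is an ideal, it suffices to show that $\rad(B)$ kills every simple $A$-module. But the central step you then rely on --- that $A\rad(B)$ (or $\rad(B)$) acts by zero on each standard module $\Delta(L)=A\otimes_B L^B$ --- is false, and the computation offered for it does not compute the module action. Writing $br\otimes x=b\otimes rx=0$ only shows that the image of the element $br$ under the balanced map $A\to A\otimes_B L^B$, $a\mapsto a\otimes x$, vanishes. The action of $br$ on a general element is $(br)\cdot(a'\otimes x)=bra'\otimes x$, and since $a'\in A$ need not lie in $B$, the factor $r\in\rad(B)$ cannot be slid across $\otimes_B$; $ra'$ is just some element of $A$. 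You flag exactly this as "the main obstacle" but never resolve it. A sanity check shows the claim cannot be repaired: if $A\rad(B)$ annihilated every $\Delta(L)$, it would annihilate everything in $\filt(\Delta)$, in particular $A$ itself (as $A\in\filt(\Delta)$), forcing $\rad(B)=0$; yet the lemma is supposed to apply to Borel subalgebras with nonzero radical. What is true is only that $A\rad(B)$ maps $\Delta(L)$ into $\rad\Delta(L)$, which is the conclusion you are trying to prove, not a tool for proving it.

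Relatedly, strongness never does any real work in your argument, whereas it is the decisive hypothesis (the statement fails for general exact Borel subalgebras, where a simple $A$-module restricted to $B$ need not be semisimple). The paper's proof is short and idempotent-theoretic: by strongness the principal indecomposable orthogonal idempotents $e_1,\dots,e_n$ of $B$ remain principal indecomposable orthogonal idempotents of $A$; directedness of $B$ forces $e_j\rad(B)e_i=0$ unless $i<j$, so in particular $i\neq j$; and for non-isomorphic primitive idempotents one has $e_jAe_i\subseteq\rad(A)$. Hence $\rad(B)\subseteq\rad(A)$, and therefore $A\rad(B)\subseteq A\rad(A)\subseteq\rad(A)$. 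I would recommend abandoning the tensor-product route and arguing via the idempotents.
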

\begin{proof}
    Let $e_1, \dots, e_n$ be a set of principle indecomposable orthogonal idempotents in $B$. Then since $B$ is strong, they are also a set of principle indecomposable orthogonal idempotents in $A$.
    Let $x\in e_j\rad(B)e_i$ for some $1\leq i, j\leq n$. Then since $B$ is directed, $i<j$. In particular $i\neq j$, so that $e_j A e_i\subseteq \rad(A)$. Hence $x\in  e_j\rad(B)e_i\subseteq e_j A e_i\subseteq \rad(A)$.
\end{proof}
Using this, we can conclude the following result:
\begin{lemma}\label{lemma_strong_of_directed}
    Let $(A, \leq_A)$ be a basic directed quasi-hereditary algebra, and $B\subseteq A$ be an exact Borel subalgebra. Then $B=A$.
\end{lemma}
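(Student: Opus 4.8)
The plan is to use the interaction between the hypothesis that $A$ is basic and directed and the structural constraints forced on an exact Borel subalgebra $B$. First I would fix a basic presentation $A \cong \field Q/I$ with $Q$ having no oriented cycles (possible since $A$ is directed quasi-hereditary), and recall that because $A$ is directed we have $\Delta(L) \cong L$ for every $L \in \Sim(A)$, hence $\filt(\Delta^A) = \modu A$. Now let $B \subseteq A$ be an exact Borel subalgebra with the associated bijection $\phi \colon \Sim(B) \to \Sim(A)$. The key first observation is that $A \otimes_B L \cong \Delta(\phi(L)) = \phi(L)$ is a \emph{simple} $A$-module for each simple $B$-module $L$; in particular $\dim_\field(A \otimes_B L) = 1$ for every simple $L$, which already forces each simple $B$-module to be one-dimensional, i.e. $B$ is basic, and moreover forces $\dim_\field B = n$ where $n = |\Sim(A)| = |\Sim(B)|$, since $A \otimes_B B = A$ has a $\Delta$-filtration with exactly the multiplicities dictated by $[B : L]$ and $A$ is basic (so each $\Delta(L) = L$ occurs once in a composition series of $A$, forcing $B$ to be semisimple of dimension $n$).

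More carefully, the main step is to show $\rad(B) = 0$. Exactness of $A \otimes_B -$ means $A$ is flat, hence projective, as a right $B$-module; combined with the fact that $A \otimes_B B/\rad(B) \cong \bigoplus_{L \in \Sim(B)} \Delta(\phi(L)) = \bigoplus_{L \in \Sim(A)} L = A/\rad(A)$, a counting argument on $\field$-dimensions gives that the right $B$-module $\rad(B)$ induces up to $A \otimes_B \rad(B) \cong \rad(A)$, an isomorphism since $A$ is flat over $B$ and the short exact sequence $0 \to \rad(B) \to B \to B/\rad(B) \to 0$ stays exact after applying $A \otimes_B -$. But $A$ is basic, so $\dim_\field \rad(A) = \dim_\field A - n$; and $A \otimes_B \rad(B)$, being a submodule of $A$ whose top is a quotient of $A \otimes_B \rad(B)$, must have $\field$-dimension at least... here I would instead argue via the trace: since $A \otimes_B B \cong A$ and $A$ decomposes as a $B$-module, the directedness of $B$ (which holds as part of being an exact Borel subalgebra) forces $e_j \rad(B) e_i = 0$ whenever $i,j$ label simples with $\phi(e_i) \not<_A \phi(e_j)$; and since $A$ is basic and directed there is no room for nonzero such elements to survive induction — more precisely, if $x \in \rad(B)$ were nonzero, then $1 \otimes x \in A \otimes_B B$ would be a nonzero element of $\rad(A) \cdot (\text{idempotent part})$ by flatness, yet the dimension count $\dim_\field A \otimes_B \rad(B) = \dim_\field \rad(B)$ (flatness) together with $\dim_\field A \otimes_B B = \dim_\field A = n + \dim_\field \rad(A)$ and $\dim_\field B = n + \dim_\field \rad(B)$ forces $\dim_\field \rad(B) = \dim_\field \rad(A)$. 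Finally, directedness of $B$ and the no-oriented-cycles structure of $Q$, together with Lemma \ref{lemma_radical}'s argument giving $A\rad(B) \subseteq \rad(A)$, let me compare quiver arrows: an arrow in the quiver of $B$ between vertices $i,j$ would induce, after applying $A \otimes_B -$, a nonzero map between the simples $\phi(i)$ and $\phi(j)$, impossible in a directed basic algebra where $\operatorname{Ext}^1_A(\phi(i),\phi(j)) $ may be nonzero but the induced element must land in $\rad(A)/\rad^2(A)$ compatibly — and the dimension count already closes it. Hence $\rad(B) = 0$, so $B$ is semisimple with $\dim_\field B = n = \dim_\field L$.

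Therefore $B$ is a maximal semisimple subalgebra of $A$ of dimension $n$; since $A$ is basic, $A/\rad(A) \cong \field^n = L$, so any semisimple subalgebra has dimension at most $n$, with equality iff it maps isomorphically onto $A/\rad(A)$. But $B$ semisimple of dimension $n$ inside the basic algebra $A$ must equal a maximal semisimple subalgebra; and in a basic algebra every element outside $\rad(A)$ is needed, forcing... the cleanest finish: $\dim_\field B = n$ and $B \hookrightarrow A$, but also the exact Borel axioms combined with $A$ directed force $A \otimes_B B \cong A$ to have the same dimension as $B$ itself — indeed $A \otimes_B B \cong \bigoplus_i \Delta(\phi(L_i))^{\oplus [B:L_i]} = \bigoplus_i L_i^{\oplus \dim_\field Be_i}$, whose total dimension is $\sum_i \dim_\field Be_i = \dim_\field B$. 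Since $\dim_\field(A \otimes_B B) = \dim_\field A$, we get $\dim_\field B = \dim_\field A$, and as $B \subseteq A$ this forces $B = A$.

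\textbf{Main obstacle.} I expect the delicate point to be the dimension bookkeeping: making rigorous that $A \otimes_B B$ is computed by the $\Delta$-multiplicities $[B : L_i]$ as $B$-composition factors and that $\Delta(\phi(L_i)) = \phi(L_i)$ is one-dimensional, so that $\dim_\field(A\otimes_B B) = \dim_\field B$. Once that identity $\dim_\field(A \otimes_B B) = \dim_\field B$ is established, the inclusion $B \subseteq A$ and $A \otimes_B B \cong A$ immediately give $B = A$, and the intermediate discussion of $\rad(B)$ becomes unnecessary. So the real content is verifying that the induction functor, applied to the regular module of a basic directed $A$, is dimension-preserving precisely because every standard module is simple.
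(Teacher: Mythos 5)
Your final paragraph isolates the right idea, and it is a genuinely different route from the paper's: the paper first shows that $B$ is strong, invokes Lemma \ref{lemma_radical} to get $A\rad(B)\subseteq\rad(A)$, identifies $A/A\rad(B)$ with $A/\rad(A)$ using directedness, and then applies Nakayama's lemma to conclude $\rad(B)=\rad(A)$ and $B=L_B\oplus\rad(B)=A$; you instead count composition factors of $A\otimes_BB\cong A$. However, as written your proposal contains two genuine errors. First, the claim that $B$ must be semisimple of dimension $n$ ``since each $\Delta(L)=L$ occurs once in a composition series of $A$'' is false: basic means each indecomposable projective occurs once as a \emph{direct summand} of $A$, not that each simple occurs once as a composition factor (already for $A=\field Q$ with $Q\colon 1\to 2$ the regular module has a repeated composition factor, and $B=A$ is a non-semisimple exact Borel subalgebra of itself). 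Second, the deduction that $\dim_\field(A\otimes_BL)=1$ forces $\dim_\field L=1$ is not justified: writing $A_B\cong\bigoplus_j(e_jB)^{n_j}$ (exactness of induction makes $A_B$ projective), one gets $A\otimes_BL_i\cong(e_iL_i)^{n_i}$ with $\dim_\field e_iL_i=\dim_\field\End_B(L_i)=1$, so the hypothesis only tells you $n_i=1$, i.e.\ that $A_B$ is a basic projective generator; it says nothing directly about $\dim_\field L_i$. Consequently your closing identity $\dim_\field(A\otimes_BB)=\dim_\field B$ presupposes exactly what has not been shown, namely that $B$ is basic.

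The gap closes easily and your strategy survives: applying the exact functor $A\otimes_B-$ to a composition series of ${}_BB$ yields a filtration of $A\otimes_BB\cong A$ with factors $A\otimes_BL_i\cong\Delta(\phi(L_i))\cong\phi(L_i)$, each occurring $[B:L_i]$ times and each one-dimensional because $A$ is directed and basic. Hence
\begin{align*}
\dim_\field A=\sum_i[B:L_i]\leq\sum_i[B:L_i]\,\dim_\field L_i=\dim_\field B\leq\dim_\field A,
\end{align*}
the last inequality being $B\subseteq A$. Equality throughout gives $\dim_\field B=\dim_\field A$, hence $B=A$ (and, as a byproduct, $\dim_\field L_i=1$ for all $i$, so $B$ is basic after all). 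With this correction your argument is complete and avoids the Wedderburn--Malcev and Nakayama machinery of the paper's proof.
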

\begin{proof}
    Note that since $A$ is basic, $B$ is a strong exact Borel subalgebra, so that  $\rad(B)\subseteq \rad(A)$. Moreover, by assumption 
    \begin{align*}
       A/\rad(A)\cong \bigoplus_{L\in \Sim(A)}L\cong \bigoplus_{L\in \Sim(A)}\Delta^A(L)\\
       \cong  \bigoplus_{L\in \Sim(B)}A\otimes_B L\cong A\otimes_B (B/\rad(B))\cong A/A\rad(B).
    \end{align*}
     Thus $\rad(A)=A\rad(B)$. Let $L_B$ be a maximal semisimple subalgebra of $B$. Then by assumption it is a maximal semisimple subalgebra of $A$, so that $A=L_B\oplus \rad(A)$ and thus
     \begin{align*}
         \rad(A)=A\rad(B)=(L_B\oplus \rad(A))\rad(B)=L_B\rad(B)+\rad(A)\rad(B)=\rad(B)+\rad(A)\rad(B),
     \end{align*}
     so that by Nakayama's lemma, applied to the right $B$-module $\rad(A)$, we obtain $\rad(B)=\rad(A)$.
     Hence
    \[
     %\pushQED{\qed}
         A=L_B\oplus \rad(A)=L_B\oplus \rad(B)=B.\hfill\qedhere
      %   \popQED
    \]
\end{proof}
\section{Twisted Modules over dg-algebras}\label{section_dg}
In this section, we recall the definition of twisted modules over a dg-algebra due to Bondal and Kapranov \cite{BondalKapranov} and establish the commutativity of the diagram
% https://q.uiver.app/#q=WzAsNCxbMCwxLCJIXjAoXFx0d21vZF9MKFxcRW5kX0JeKihQKExeQikpIl0sWzIsMSwiSF4wKFxcdHdtb2RfTChcXEVuZF9CXiooUChMXkIpKSJdLFswLDAsIlxcbW9kdSBCIl0sWzIsMCwiRihcXERlbHRhXkEpIl0sWzAsMl0sWzEsM10sWzIsMywiQVxcb3RpbWVzX0ItIl0sWzAsMSwiSF4wKFxcdHdtb2RfTChmKSkiXV0=
\begin{equation}\label{diagram_1.11}
  \begin{tikzcd}[ampersand replacement=\&]
	{\modu B} \&\& {F(\Delta^A)} \\
	{H^0(\twmod_L(\End_B^*(P^\cdot(L^B))} \&\& {H^0(\twmod_L(\End_B^*(P^\cdot(L^B))}
	\arrow[from=2-1, to=1-1]
	\arrow[from=2-3, to=1-3]
	\arrow["{A\otimes_B-}", from=1-1, to=1-3]
	\arrow["{H^0(\twmod_L(g))}", from=2-1, to=2-3]
\end{tikzcd}
\end{equation}
For more on dg-algebras, categories and twisted modules, see also \cite{Drinfeld}.
Let us first recall the definition of a dg-algebra:
\begin{definition}
    A dg-algebra  $\mathcal{A}=(A, d)$ is a graded associative, not necessarily unital, algebra $A=\bigoplus_{n\in \mathbb{Z}} A_n$ together with a $\field$-linear map
    $d:A\rightarrow A$ which is homogeneous of degree $1$ such that for all $a\in A_n, b\in A$
    \begin{align*}
        d(ab)=d(a)b+(-1)^n d(b).
    \end{align*}
    A dg-algebra homomorphism 
    \begin{align*}
        f:\mathcal{A}=(A, d)\rightarrow \mathcal{A}'=(A', d')
    \end{align*}
    is an algebra homomorphism $f:A\rightarrow A'$ homogeneous of degree zero such that $d\circ f=f\circ d$.
    Composition is the composition of maps.\\
	We call a dg-algebra $\mathcal{A}$ unital over $L$ if there is a dg-algebra homomorphism $i_L:L\rightarrow \mathcal{A}$, where the differential on $L$ is trivial. We call a homomorphism 
 $$f:(\mathcal{A}, i_L)\rightarrow (\mathcal{A}', i_L')$$ of dg-algebras, which are unital over $L$, unital over $L$, if 
	$f\circ i_L=i'_L$.\\
 Let $\mathcal{A}=(A, d, \iota_L)$ be a dg-algebra unital over $L$. Then a dg-module over $\mathcal{A}$ is a pair $(M, d_M)$ consisting of a a graded $A$-module such that $\iota_L(1_L)\cdot m=m$ for all $m\in M$ and a homorphism $d_M:M\rightarrow M$ of $L$-modules, which is homogeneous of degree $1$, such that for all $m\in M$ and all homogeneous elements $a\in A_n$ of degree $n$ we have
 \begin{align*}
     d_M(am)=d(a)m+(-1)^n a d_M(m).
 \end{align*}
\end{definition}
\begin{remark}\label{remark_long}
	\begin{enumerate}
        \item Let $R$ be a unital associative algebra. We can view $R$ as a dg-algebra unital over $\field$ which is concentrated in degree zero. Let $X=((X_n)_n, d)$ be dg-module over $R$. Then $\End^*_R(X)=\bigoplus_{n\in \mathbb{Z}}\End^n_R(X)$ obtains the structure of a dg-algebra with multiplication given by composition and differential given by 
        \begin{align*}
            d_{\End^*_R(X)}(f):=d\circ f-(-1)^{n}f\circ d
        \end{align*}
        for every $f\in \End^n_R(X)$. Moreover, if $T:\modu R\rightarrow \modu R'$ is a functor, then $T$ induces a dg- algebra homomorphism
        \begin{align*}
            T_{\End^*_R(X)}:\End^*_R(X)\rightarrow \End^*_{R'}(T(X))
        \end{align*}
        where $T(X)=((T(X_n))_n, T(d))$.
		\item Suppose $M_1, \dots, M_n$ are modules in $\modu R$ for some finite-dimensional algebra $R$. For $1\leq i\leq n$ let $P^\cdot(M_i)$ be a projective resolution of $M_i$, let $M:=\bigoplus_{i=1^n}M_i$ and let $P^\cdot(M)=\bigoplus_{i=1}^n P^\cdot(M_i)$. Denote by $\varepsilon_i$ the $i$-th unit vector in $L=\field^n$. Then $$i_L: L\rightarrow \End_R^*(P^\cdot(M)), \varepsilon_i\mapsto \id_{P^\cdot(M_i)}$$ makes $\End_R^*(P^\cdot(M))$ unital over $L$. Moreover, if $T:\modu R\rightarrow \modu R'$ is a functor, then 
        \begin{align*}
            T_{\End^*_R(P^\cdot(M))}\circ i_L:L\rightarrow \End^*_{R'}(T(P^\cdot(M)))
        \end{align*}
        makes $ \End^*_{R'}(T(P^\cdot(M)))$ unital over $L$ in such a way that $T_{\End^*_R(P^\cdot(M))}$ is unital over $L$.
		\item If $\mathcal{A}$ is unital over $L$ then it canonically obtains the structure of an $L$-$L$-bimodule via restriction.
		\item If $X$ is a left $R$-module and $Y$ a left $R'$-module, then $\Hom_{\field}(X, Y)$ has the canonical structure of an $R'$-$R$-bimodule. In particular, if $R=R'=L$, then $\Hom_{\field}(X, Y)$ has in this way the structure of an $L$-$L$-bimodule. Since $L$ is commutative, there are other ways to associate a bimodule structure, but we always consider this one.
\end{enumerate}
\end{remark}
\begin{lemma}
    Let $\mathcal{A}$ be a dg-algebra. Then its homology $H^*(\mathcal{A})$ (with respect to its differential $d$) obtains the structure of an associative (not necessarily unital) algebra via
    \begin{align*}
        (a+\im(d))\cdot (b+\im(d))=ab+\im(d).
    \end{align*}
    Moreover, $H^*$ defines a functor from dg-algebras to (not necessarily unital) associative graded algebras.
    Similarly, $H^0$ defines a functor from dg-algebras to (not necessarily unital) associative algebras.\\
\end{lemma}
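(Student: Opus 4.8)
The plan is to check that the associative multiplication of $A$ descends to a well-defined multiplication on the homology, and then to verify functoriality; this is routine homological bookkeeping, provided one is careful with the Koszul signs in the Leibniz rule.

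First I would observe that, since $d^2 = 0$, we have $\im(d) \subseteq \ker(d)$, so that $H^*(\mathcal{A}) = \ker(d)/\im(d)$ makes sense. Working with homogeneous elements throughout, I would then show that $\im(d)$ is a two-sided ideal in the subspace $\ker(d)$. For homogeneous cocycles $a, b \in \ker(d)$ with $|a| = m$, the Leibniz rule $d(ab) = d(a)b + (-1)^m a\,d(b) = 0$ shows that $\ker(d)$ is a graded subalgebra of $A$. If moreover $a = d(a')$ with $|a'| = m-1$, then $d(a'b) = d(a')b + (-1)^{m-1} a'\,d(b) = ab$, so $ab \in \im(d)$; symmetrically, if $b = d(b')$, then $d(ab') = d(a)b' + (-1)^m a\,d(b') = (-1)^m ab$, giving $ab = (-1)^m d(ab') \in \im(d)$. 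Hence the formula $(a + \im(d))(b + \im(d)) = ab + \im(d)$ is independent of the choice of representatives. Associativity is inherited from $A$, and since a degree-$m$ cocycle times a degree-$n$ cocycle is a degree-$(m+n)$ cocycle, the multiplication respects the grading, making $H^*(\mathcal{A}) = \bigoplus_n H^n(\mathcal{A})$ a (not necessarily unital) graded algebra; its degree-zero component $H^0(\mathcal{A})$ is then closed under multiplication and is therefore a (not necessarily unital) associative algebra in its own right.

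For the functoriality statement, given a dg-algebra homomorphism $f : \mathcal{A} \to \mathcal{A}'$, the identity $d' \circ f = f \circ d$ yields $f(\ker d) \subseteq \ker d'$ and $f(\im d) \subseteq \im d'$, so $f$ descends to a degree-preserving linear map $H^*(f) : H^*(\mathcal{A}) \to H^*(\mathcal{A}')$; it is a homomorphism of graded algebras because $f$ is multiplicative and homogeneous of degree zero, and one checks directly on representatives that $H^*(\id_{\mathcal{A}}) = \id$ and $H^*(g \circ f) = H^*(g) \circ H^*(f)$. Passing to degree-zero components is compatible with composition and with the identities, so $H^0$ is likewise a functor into (not necessarily unital) associative algebras.

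I do not expect a genuine obstacle here; the only point requiring a little care is tracking the signs $(-1)^{|a|}$ in the Leibniz rule when verifying that $\im(d)$ absorbs products from both sides, which is why the argument must be carried out with homogeneous elements rather than arbitrary ones.
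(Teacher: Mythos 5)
Your verification is correct and is exactly the routine argument this lemma calls for; the paper states the lemma without proof, so there is nothing to compare against beyond noting that your sign bookkeeping in the Leibniz rule (showing $\im(d)$ absorbs products of cocycles from both sides) is accurate. The only implicit assumption is $d^2=0$, which the paper's printed definition of a dg-algebra omits (along with a factor of $a$ in its Leibniz rule) but clearly intends, and which you rightly use to get $\im(d)\subseteq\ker(d)$.
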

\begin{definition}
    A dg-algebra homomorphism $f:\mathcal{A}\rightarrow\mathcal{A}'$ is called a quasi-isomorphism if $H^*(f):H^*(\mathcal{A})\rightarrow H^*(\mathcal{A}')$ is an isomorphism of associative algebras.
\end{definition}
In order to define the category of twisted modules, we need to define the additive enlargement of a dg-algebra $\mathcal{A}$. This essentially turns our dg-algebra into a dg-category, in the same way that the projective modules $\add A$ form a category for an associative algebra $A$, where $L$ takes the place of a maximal semisimple subalgebra of $A$.
First, we recall the definition of a dg-category:
\begin{definition}
    A dg-category $\mathcal{A}$ over $\field$ is a collection of objects and for all objects $X, Y\in \mathcal{A}$, a graded $\field$-vector space $\mathcal{A}(X,Y)=(\mathcal{A}(X,Y)_n)_{n\in \mathbb{Z}}$ together with a differential \begin{align*}
        d_{\mathcal{A}(X,Y)}:\mathcal{A}(X,Y)\rightarrow \mathcal{A}(X,Y)
    \end{align*}
    of degree one,
    as well as for all objects $X, Y, Z$ a composition map
    \begin{align*}
        \circ:\mathcal{A}(Y, Z)\otimes_{\field}\mathcal{A}(X,Y)\rightarrow \mathcal{A}(X, Z)
    \end{align*}
    of degree zero,
    such that $\circ $ is associative and for all $f\in \mathcal{A}(Y,Z)_n$, $g\in \mathcal{A}(X,Y)$
    \begin{align*}
        d_{\mathcal{A}(X, Z)}(f\circ g)=d_{\mathcal{A}(Y, Z)}(f)\circ g+(-1)^n f\circ d_{\mathcal{A}(X,Y)}(g).
    \end{align*}
    A dg-category is called unital if for every object $X$ there is $\id_X\in \mathcal{A}(X, X)_0$ such that $ d_{\mathcal{A}(X,X)}(\id_X)=0$ and for all objects $Y$ and all $f\in \mathcal{A}(Y,X)$, $g\in \mathcal{A}(X,Y)$ we have $\id_X\circ f=f$ and $g\circ \id_X=g$.
\end{definition}
\begin{definition}
      Let $\mathcal{A}$ be a unital dg-category. Then its homology $H^*(\mathcal{A})$ is the ($\mathbb{Z}$-graded) category with objects being the objects of $\mathcal{A}$, morphism spaces 
      \begin{align*}
          H^*(\mathcal{A})(X,Y)=H^*(\mathcal{A}(X,Y))
      \end{align*}
      and composition induced by the composition in $\mathcal{A}$.
      A unital dg-functor $F:\mathcal{A}\rightarrow \mathcal{A}'$ gives rise to a functor 
      \begin{align*}
          H^*(F):H^*(\mathcal{A})\rightarrow H^*(\mathcal{A}'),\\ 
                X\mapsto F(X), f+\im(d)\mapsto F(f)+\im(d).
      \end{align*}
      In this way, $H^*$ becomes a functor from unital dg-categories to ($\mathbb{Z}$-graded) categories. 
      Similarly, $H^0$  becomes a functor from unital dg-categories to categories. 
\end{definition}
\begin{definition}
    A unital dg-functor $F$ is called a quasi-equivalence if $H^*(F)$ is an equivalence of categories.
\end{definition}
\begin{definition}
	Let $\mathcal{A}=(\mathcal{A}, d^{\mathcal{A}}, i_L)$ be a dg-algebra unital over $L$. Then $\add(\mathcal{A})$ is the unital dg-category whose objects are $L$-modules, whose homomorphisms are given by
	\begin{align*}
		\add(\mathcal{A})(X,Y)=\mathcal{A}\otimes_{L\otimes L^{\op}}\Hom_{\field}(X,Y),
	\end{align*}
	with grading induced by the grading of $\mathcal{A}$, whose composition is given by 
	\begin{align*}
		(a_1\otimes g_1)\circ (a_2\otimes g_2)=a_1a_2\otimes (g_1\circ g_2)
	\end{align*}
	and whose differential is given by
	\begin{align*}
		d^{\add(\mathcal{A})}(a\otimes g)=d^{\mathcal{A}}(a)\otimes g.
	\end{align*}
 Its units are given by
 \begin{align*}
     \id_X=1_L\otimes \id_X\in \mathcal{A}\otimes_{L\otimes L^{\op}} \End_{\field}(X).
 \end{align*}
\end{definition}

\begin{definition}
	If $f:\mathcal{A}\rightarrow \mathcal{A}'$ is a dg-algebra homomorphism unital over $L$, then there is an induced unital dg-functor
	\begin{align*}
		\add(f):\add(\mathcal{A})\rightarrow \add(\mathcal{A}'), X\mapsto X, a\otimes g\mapsto f(a)\otimes g.
	\end{align*}
	This gives rise to a functor $\add$ from dg-algebras unital over $L$ to unital dg-categories.
\end{definition}
\begin{definition}
    Let $Q$ be an $L$-$L$-bimodule and $X$ be a left $L$-module. Then an element
    $w\in Q\otimes_{L\otimes L^{\op}}\End_{\field}(X)$ is called triangular, if there is a presentation $w=\sum_{i=1}^m q_i\otimes f_i$ and an $N\in \mathbb{N}$ such that for all $i_1, \dots, i_N\in \{1, \dots m\}$ we have
    \begin{align*}
        f_{i_N}\circ\dots\circ f_{i_1}=0.
    \end{align*}
\end{definition}
\begin{lemma}
    Let $R$ be an algebra, $(M_j)_{j=1}^n$ be a set of $R$-modules. Let $M:=\bigoplus_{i=1}^n M_i$ and $\mathcal{A}:=\End_R(P^{\cdot}(M))$. Moreover, let $X$ be an $L$-module and $w_X\in (\mathcal{A}\otimes_{L}\End_{\field} (X))_1$. Then $w_X$ is triangular if and only if there is a sequence of submodules
    \begin{align*}
        (0)=X_0\subseteq X_1\subseteq\dots \subseteq X_N=X
    \end{align*}
    such that $w_X(P^\cdot(M)\otimes_L X_i)\subseteq P^\cdot(M)\otimes_L X_{i-1}$
    for $1\leq i\leq N$. 
\end{lemma}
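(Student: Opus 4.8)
The plan is to reduce the equivalence, after unwinding the realisation of $\add(\mathcal A)$, to the elementary correspondence between a nilpotent family of (block) matrices and an adapted flag. Concretely, write $\varepsilon_1,\dots,\varepsilon_n$ for the idempotents of $L=\field^n$; then $\add(\mathcal A)(X,X)=\bigoplus_{i,j}\Hom_R(P^\cdot(M_j),P^\cdot(M_i))\otimes_\field\Hom_\field(\varepsilon_jX,\varepsilon_iX)$, the object $X$ realises as $P^\cdot(M)\otimes_LX=\bigoplus_iP^\cdot(M_i)\otimes_\field\varepsilon_iX$, and a homogeneous summand $a\otimes f$ of $w_X$ acts on it by $p\otimes x\mapsto a(p)\otimes f(x)$. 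I take a presentation $w_X=\sum_l a_l\otimes f_l$ homogeneous for the $L$-$L$-bimodule grading, so that each $f_l$ lies in a single block $\Hom_\field(\varepsilon_{j(l)}X,\varepsilon_{i(l)}X)$ (equivalently, $w_X$ is recorded by the matrix of its block components), as is implicit in the definition of a triangular cochain. For such a presentation two things are immediate: the image of any iterated composition $f_{l_1}\circ\dots\circ f_{l_r}$ lies in the single block $\varepsilon_{i(l_1)}X$; and if $Y\subseteq X$ and $Y'\subseteq X$ are $L$-submodules with $f_l(Y)\subseteq Y'$ for every $l$, then $w_X(P^\cdot(M)\otimes_LY)\subseteq P^\cdot(M)\otimes_LY'$, since each $a_l\otimes f_l$ maps $p\otimes y\mapsto a_l(p)\otimes f_l(y)$ with $f_l(y)\in Y'$ lying in a single block.

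Now assume $w_X$ is triangular, witnessed by an $L$-homogeneous presentation $w_X=\sum_l a_l\otimes f_l$ with all $N$-fold compositions of the $f_l$ equal to $0$. Let $S\subseteq\End_\field(X)$ be the non-unital subalgebra generated by $\{f_l\}$, so $S^N=0$, and set $X_i:=\sum_{r\ge N-i}S^rX$ for $0\le i\le N$, with the convention $S^0X:=X$. These are nested, $X_0=S^NX=0$ and $X_N=X$; each $S^rX$ with $r\ge1$ is an $L$-submodule of $X$, because it is spanned by vectors $(f_{l_1}\circ\dots\circ f_{l_r})(x)$ each lying in a single block $\varepsilon_{i(l_1)}X$, so that each $\varepsilon_k$ acts on such a vector as $0$ or as the identity; and $f_l(X_i)=\sum_{r\ge N-i}f_l(S^rX)\subseteq\sum_{r\ge N-i+1}S^rX=X_{i-1}$ for every $l$. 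By the first paragraph, $w_X(P^\cdot(M)\otimes_LX_i)\subseteq P^\cdot(M)\otimes_LX_{i-1}$ for $1\le i\le N$.

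Conversely, assume a chain $(0)=X_0\subseteq\dots\subseteq X_N=X$ of $L$-submodules with $w_X(P^\cdot(M)\otimes_LX_i)\subseteq P^\cdot(M)\otimes_LX_{i-1}$. Since $L$ is semisimple, each $X_i$ splits as $\bigoplus_k\varepsilon_kX_i$, so I may choose a basis $\{b_\gamma\}$ of $X$ with each $b_\gamma$ lying in a single block $\varepsilon_{k(\gamma)}X$ and with $\{b_\gamma\}\cap X_i$ a basis of $X_i$ for every $i$; put $\ell(\gamma):=\min\{i:b_\gamma\in X_i\}\in\{1,\dots,N\}$. Writing $w_X=\sum_{\gamma,\delta}a_{\gamma\delta}\otimes E_{\gamma\delta}$ in terms of the matrix units $E_{\gamma\delta}\colon b_\delta\mapsto b_\gamma$ (the unique block components, $a_{\gamma\delta}\in\Hom_R(P^\cdot(M_{k(\delta)}),P^\cdot(M_{k(\gamma)}))$), the realisation of $a_{\gamma\delta}\otimes E_{\gamma\delta}$ sends $p\otimes b_\delta\mapsto a_{\gamma\delta}(p)\otimes b_\gamma$; feeding $b_\delta\in X_{\ell(\delta)}$ into the hypothesis and comparing the direct-sum components indexed by the $b_\gamma$ forces $a_{\gamma\delta}=0$ unless $\ell(\gamma)<\ell(\delta)$. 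Thus $w_X=\sum_{a_{\gamma\delta}\ne0}a_{\gamma\delta}\otimes E_{\gamma\delta}$ is a presentation in which every $N$-fold composition $E_{\gamma_N\delta_N}\circ\dots\circ E_{\gamma_1\delta_1}$ of the occurring matrix units vanishes: if it were nonzero then $\delta_{a+1}=\gamma_a$ for $1\le a<N$, whence $\ell(\delta_1)>\ell(\gamma_1)>\dots>\ell(\gamma_N)\ge1$, a strictly decreasing chain of $N+1$ positive integers bounded above by $N$, which is absurd. Hence $w_X$ is triangular.

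I expect the substantive point — as opposed to bookkeeping — to be the first implication: recognising that the required filtration comes not from reordering a fixed basis but from the descending flag $X=S^0X\supseteq SX\supseteq S^2X\supseteq\dots$ attached to the nilpotent algebra $S$ generated by the $f_l$, and that these subspaces really are $L$-submodules, which is precisely where the $L$-homogeneity of the presentation enters (for a non-homogeneous presentation the statement fails, e.g.\ already for a single nilpotent $f$ not respecting the blocks). The accompanying care — pinning down the realisation functor $\add(\mathcal A)\to\complexes(\modu R)$ and the $L\otimes L^{\op}$-tensor conventions, and working with the appropriate homogeneous presentations in the definition of triangularity — is routine but must be carried out consistently throughout.
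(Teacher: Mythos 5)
Your proof is correct and follows essentially the same route as the paper's: the forward direction uses the identical filtration $X_i=\sum_{r\ge N-i}S^rX$ by images of iterated compositions of the $f_l$, and the reverse direction is the paper's decomposition of $w_X$ into components adapted to an $L$-splitting of the flag, merely phrased via an adapted basis and matrix units $E_{\gamma\delta}$ instead of the projections $\pi_{Y_j}$ and inclusions $\iota_{Y_i}$. The one point where you go beyond the paper --- insisting on an $L$-homogeneous witnessing presentation so that the $X_i$ are genuinely $L$-submodules --- is a reasonable reading of the definition of triangularity that the paper's own argument tacitly relies on as well.
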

\begin{proof}
    Suppose $w_X$ is triangular.  Let $w=\sum_{i=1}^m g_i\otimes f_i$ be a presentation of $w$ and $N\in \mathbb{N}$ such that for all $i_1, \dots, i_N\in \{1, \dots m\}$ we have
    \begin{align*}
        f_{i_N}\circ\dots\circ f_{i_1}=0.
    \end{align*}
    For $0\leq i\leq N$ let 
    \begin{align*}
        X_{i}:=\sum_{l=N-i}^N\sum_{1\leq i_1, \dots, i_{l}\leq m}\im(f_{i_{l}}\circ \dots\circ f_{i_1})\subseteq X.
    \end{align*}
    Then by definition $X_i\subseteq X_{i+1}$. Moreover, $X_0=(0)$, $X_N=X$ and for $1\leq i\leq N$, $1\leq j\leq m$ we have $f_j(X_i)\subseteq X_{i-1}$.
    Hence $w_X(P^{\cdot}(M)\otimes_L X_i)\subseteq P^{\cdot}(M)\otimes_L X_{i+1}$.\\
    On the other hand, suppose there is a 
    sequence of submodules
    \begin{align*}
        (0)=X_0\subseteq X_1\subseteq\dots \subseteq X_N=X
    \end{align*}
    such that $w_X(P^\cdot(M)\otimes_L X_i)\subseteq P^\cdot(M)\otimes_L X_{i-1}$
    for $1\leq i\leq N$. 
    Since $L$ is semisimple, we can find a complement $Y_i$ for $X_{i-1}$ in $X_i$ for every $1\leq i\leq N$.
    Then we have $X_i=\bigoplus_{j=1}^i Y_i$ for all $1\leq i\leq N$. In particular,  $X=\bigoplus_{j=1}^N Y_i$.
    Denote by $\pi_{Y_i}: X\rightarrow Y_i$ the canonical projection and by  $\iota_{Y_i}:Y_i\rightarrow X$ the canonical embedding.
    For $1\leq i\leq j$ let $w_{ij}:=(\id_{P^\cdot(M)}\otimes_L\pi_{Y_j})\circ w_X\circ (\id_{P^\cdot(M)}\otimes_L\iota_{Y_i})$.
    Then, since  
    \begin{align*}
       w_X(P^{\cdot}(M)\otimes_L Y_i)\subseteq w_X(P^\cdot(M)\otimes_L X_i)\subseteq P^\cdot(M)\otimes_L X_{i-1}=\bigoplus_{j=1}^{i-1} P^\cdot(M)\otimes_L Y_j,
    \end{align*}
     we have $w_{ij}=0$ if $i\leq j$.\\
     Thus
     \begin{align*}
         w_X=\sum_{i,j=1}^N w_{ij}=\sum_{i=1}^N\sum_{j=1}^{i-1} w_{ij}.
     \end{align*}
     Let us write $w_X=\sum_{r=1}^m a_r\otimes f_r$ for $a_r\in \End_R(P^\cdot(M))$ and $f_r\in \Hom_{\field}(X,X)$. Then
     \begin{align*}
         w_X=\sum_{i=1}^N\sum_{j=1}^{i-1} w_{ij}
         =\sum_{i=1}^N\sum_{j=1}^{i-1}(\id_{P^\cdot(M)}\otimes_L\pi_{Y_j})\circ w_X\circ (\id_{P^\cdot(M)}\otimes_L\iota_{Y_i})\\
          =\sum_{i=1}^N\sum_{j=1}^{i-1}\sum_{r=1}^m (\id_{P^\cdot(M)}\otimes_L\pi_{Y_j})\circ(a_r\otimes f_r)\circ (\id_{P^\cdot(M)}\otimes_L\iota_{Y_i})
          =\sum_{i=1}^N\sum_{j=1}^{i-1}\sum_{r=1}^m a_r\otimes (\pi_{Y_j}\circ f_r\circ \iota_{Y_i}).
     \end{align*}
    Moreover, for all $K\in \mathbb{N}$, $1\leq r_1, \dots, r_K\leq m$, $1\leq i_1, \dots , i_K, j_{1}\dots j_K\leq N$, the composition
    \begin{align*}
         (\pi_{Y{j_K}}\circ f_{r_K}\circ \iota_{Y_{i_K}})\circ (\pi_{Y_{j_{K-1}}}\circ f_{r_{K-1}}\circ \iota_{Y_{i_{K-1}}}) \dots \circ (\pi_{Y{j_1}}\circ f_{r_1}\circ \iota_{Y_{i_1}})
    \end{align*}
     can be non-zero only if $j_K<i_K=j_{K-1}<i_{K-1}=j_{K-2}<\dots <i_2=j_1<i_1$.
    In particular, this is possible only if $j_K\leq i_1-K\leq N-K$. Hence, setting $K=N$, we obtain that 
    \begin{align*}
         (\pi_{Y{j_N}}\circ f_{r_N}\iota_{Y_{i_N}})\circ (\pi_{Y{j_{N-1}}}\circ f_{r_{N-1}}\iota_{Y_{i_{N-1}}}) \dots \circ (\pi_{Y{j_1}}\circ f_{r_1}\iota_{Y_{i_1}})=0
    \end{align*}
    for all $1\leq r_1, \dots, r_N\leq m$, $1\leq i_1, \dots , i_N, j_{1}\dots j_N\leq N$.
\end{proof}
We can now give the definition of twisted modules over a dg-algebra. 
\begin{definition}\cite[Definition 1]{BondalKapranov2}
	Let $\mathcal{A}=(\mathcal{A}, d_{\mathcal{A}}, i_L)$ be a dg-algebra unital over $L$. Then a twisted module over $\mathcal{A}$ is a pair consisting of an $L$-module $X$ together with an element $w_X\in \mathcal{A}_1\otimes_{L\otimes L^{\op}} \End_{\field}(X)=\add(\mathcal{A})(X,X)_1$ such that
    $w_X$ is triangular and
    fulfills the Maurer--Cartan equation 
	\begin{align*}
		d^{\add(\mathcal{A})}(w_X)+w_X\circ w_X=0.
	\end{align*}
	The category $\twmod_L(\mathcal{A})$ is the unital dg-category whose objects are twisted modules over $\mathcal{A}$, whose homomorphisms are given by 
	\begin{align*}
		\twmod(\mathcal{A})((X, w_X), (Y, w_Y))=\add(\mathcal{A})(X,Y)
	\end{align*}
	with the same grading,
	and whose differential is given by 
	\begin{align*}
		d^{\twmod(\mathcal{A})((X, w_X), (Y, w_Y))}(f)=d^{\add(\mathcal{A})}(f)+w_Y\circ f-(-1)^{|f|}f\circ w_X.
	\end{align*}
 Its units are the same as in $\add(\mathcal{A})$.
\end{definition}
The following can be found in \cite[Section 7.2., p. 166]{LefHas} for twisted complexes over A-infinity algebras, which generalize twisted modules over dg-algebras.
\begin{definition}
	Let $f:\mathcal{A}\rightarrow \mathcal{A}'$ be a dg-algebra homomorphism strictly unital over $L$. Then there is an induced unital dg-functor
	\begin{align*}
		\twmod(f):\twmod(\mathcal{A})&\rightarrow \twmod(\mathcal{A}'),\\
  (X, w_X)&\mapsto (X, \add(f)(w_X)), g\mapsto \add(f)(g).
	\end{align*}
	This gives rise to a functor $\twmod$ from dg-algebras unital over $L$ to unital dg-categories.
\end{definition}
The following equivalence $\filt(M)\cong H^0(\twmod_L(\End(P^\cdot(M))))$ for $M=\bigoplus_{i=1}^nM_i$ and $L\cong \field^n$ can be obtained from \cite[Section 4, Theorem 1]{BondalKapranov} specializing $E_i:=P^\cdot(M_i)$ in $\mathcal{E}:=\complexes(A)$, %to obtain an equivalence between the category $H^0(\twcom(\End_A(P^\cdot(L))$ and $\tr(\mathcal{E})\subseteq H^0(\mathcal{E})=\Hot(A)$, given by 
%\begin{align*}
%	(M, q)\mapsto H^0(\id_{P^\cdot(M)}\otimes_{L^{\mathbb{Z}}}M, d\otimes \id_M +q), [f]\mapsto H^0(f).
%\end{align*}
and then restricting this to the full subcategories generated by $(L_1, 0),\dots ,(L_n, 0)$ resp. $E_1, \dots , E_n$ via cones (but not shifts).\\
We give a proof, in order to illustrate that in this special case, this statement follows directly from the horseshoe lemma. However, readers familiar with this material might want to skip it.
\begin{theorem}\label{theorem_equivalence_End_F(M)}
	Let $R$ be an algebra, $(M_j)_{j=1}^n$ be a set of $R$-modules. Then there is an equivalence 
	\begin{align*}
		C_M: H^0(\twmod_L(\End(P^\cdot(M))))\rightarrow \filt(M)
	\end{align*} given by 
	\begin{align*}
		(X, w)\mapsto H^0(P^\cdot(M)\otimes_L X, d_{P^{\cdot}(M)}\otimes\id_X+w),
		[f]\mapsto H^0(f).
	\end{align*}
	where we identify $\End_A^*(P^\cdot(M))\otimes_{L\otimes L^{\op}}\Hom_{\field}(X, Y)$ with $\Hom_A^*(P^{\cdot}(M)\otimes_L X, P^{\cdot}(M)\otimes_L Y)$ via
\begin{align*}
	(f\otimes g)(a\otimes x)= f(a)\otimes g(x).
\end{align*}
	Note that the Koszul sign rule does not give rise to a sign here, since $X$ is ungraded and thus $g$ is viewed as being of degree zero.
\end{theorem}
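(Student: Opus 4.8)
The plan is to exhibit $C_M$ as a functor, check it lands in $\filt(M)$, and then verify it is fully faithful and essentially surjective, with the horseshoe lemma doing the work for the last point.

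First I would unwind the definitions to see that the target of $C_M$ makes sense. Under the stated identification of $\add(\End^*_R(P^\cdot(M)))(X,X)$ with $\Hom^*_R(P^\cdot(M)\otimes_L X, P^\cdot(M)\otimes_L X)$, the term $d^{\add(\mathcal{A})}(w)$ corresponds to $(d_{P^\cdot(M)}\otimes\id_X)\circ w + w\circ(d_{P^\cdot(M)}\otimes\id_X)$ (the sign works out because $X$ carries no grading, so the $\Hom_\field(X,X)$-factors contribute no Koszul signs), and since $(d_{P^\cdot(M)}\otimes\id_X)^2=0$ the Maurer--Cartan equation $d^{\add(\mathcal{A})}(w)+w\circ w=0$ is exactly the statement that $\partial:=d_{P^\cdot(M)}\otimes\id_X+w$ squares to zero. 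Because $X$ is a finite direct sum of the $\varepsilon_i$ and each $P^\cdot(M_i)$ is a bounded-above complex of finitely generated projectives, $(P^\cdot(M)\otimes_L X,\partial)$ is a bounded-above complex of projective $R$-modules. Triangularity of $w$, via the previous lemma, supplies a finite flag $0=X_0\subseteq\dots\subseteq X_N=X$ with $w(P^\cdot(M)\otimes_L X_i)\subseteq P^\cdot(M)\otimes_L X_{i-1}$, so each $P^\cdot(M)\otimes_L X_i$ is a subcomplex of $(P^\cdot(M)\otimes_L X,\partial)$ and the subquotients are isomorphic to $P^\cdot(M)\otimes_L(X_i/X_{i-1})$ with differential $d_{P^\cdot(M)}\otimes\id$, hence to finite direct sums of the resolutions $P^\cdot(M_j)$. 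Climbing the flag with the long exact homology sequence then shows the homology of $(P^\cdot(M)\otimes_L X,\partial)$ is concentrated in degree $0$ and that $C_M(X,w)=H^0(P^\cdot(M)\otimes_L X,\partial)$ carries a filtration with subquotients among the $M_j$; so $C_M(X,w)\in\filt(M)$ and, moreover, $(P^\cdot(M)\otimes_L X,\partial)$ is a projective resolution of $C_M(X,w)$.

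For morphisms, I would note that for $f$ of degree $0$ the twisted differential $d^{\twmod}(f)=d^{\add(\mathcal{A})}(f)+w_Y\circ f-f\circ w_X$ corresponds to $\partial_Y\circ f-f\circ\partial_X$, so closed degree-zero morphisms are precisely chain maps $(P^\cdot(M)\otimes_L X,\partial_X)\to(P^\cdot(M)\otimes_L Y,\partial_Y)$ and exact ones are precisely null-homotopic chain maps. Hence $\Hom_{H^0(\twmod_L(\End(P^\cdot(M))))}((X,w_X),(Y,w_Y))$ is the group of homotopy classes of chain maps of these two complexes, $[f]\mapsto H^0(f)$ is well defined and functorial, and fullness and faithfulness follow from the comparison theorem for projective resolutions: since $(P^\cdot(M)\otimes_L X,\partial_X)$ and $(P^\cdot(M)\otimes_L Y,\partial_Y)$ resolve $C_M(X,w_X)$ and $C_M(Y,w_Y)$, every $R$-homomorphism between the latter two modules lifts to a chain map, uniquely up to homotopy, and the lift induces the given map on $H^0$; thus $[f]\mapsto H^0(f)$ is a bijection onto $\Hom_R(C_M(X,w_X),C_M(Y,w_Y))$.

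For essential surjectivity I would take $N\in\filt(M)$ with a filtration $0=N_0\subseteq\dots\subseteq N_m=N$, $N_j/N_{j-1}\cong M_{i_j}$, and build a projective resolution of $N$ by applying the horseshoe lemma inductively along the short exact sequences $0\to N_{j-1}\to N_j\to M_{i_j}\to 0$, each time extending the resolution already constructed for $N_{j-1}$ and the fixed resolution $P^\cdot(M_{i_j})$. This produces, as a graded $R$-module, $\bigoplus_{j=1}^m P^\cdot(M_{i_j})=P^\cdot(M)\otimes_L X$ with $X:=\bigoplus_{j=1}^m\varepsilon_{i_j}$, equipped with a differential whose block-diagonal part is $d_{P^\cdot(M)}\otimes\id_X$ and whose remaining part $w$ is strictly upper triangular with respect to the flag $X_j:=\bigoplus_{k\le j}\varepsilon_{i_k}$, hence triangular in the required sense; that $(d_{P^\cdot(M)}\otimes\id_X+w)^2=0$ is the Maurer--Cartan equation, so $(X,w)$ is a twisted module with $C_M(X,w)\cong N$. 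The only genuine subtleties I anticipate are bookkeeping: matching the sign conventions in $d^{\add(\mathcal{A})}$, $d^{\twmod}$ and the Hom-complex differential (all forced once one fixes $(f\otimes g)(a\otimes x)=f(a)\otimes g(x)$ and recalls that $g$ is treated as degree zero), and checking that the horseshoe construction can be run so that the diagonal blocks are literally the fixed resolutions $P^\cdot(M_{i_j})$ rather than merely isomorphic ones, which it can since at each step one is free to extend chosen resolutions of the sub and of the quotient.
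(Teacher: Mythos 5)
Your proposal is correct and follows essentially the same route as the paper: Maurer--Cartan plus triangularity make $(P^\cdot(M)\otimes_L X,\partial)$ a projective resolution of its $H^0$ with an $M$-filtration, full faithfulness comes from identifying the twisted Hom-complex with the Hom-complex of these resolutions (your phrasing via the comparison theorem is equivalent to the paper's computation of $H^0$ of that complex), and essential surjectivity is the horseshoe lemma. The only cosmetic difference is that the paper organizes the last step as ``the image contains the $M_j$ and is closed under extensions'' rather than as a direct induction along the filtration of $N$, but the underlying argument, including the verification that the resulting $w$ is triangular, is the same.
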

\begin{proof}
    First, let us show that $C_M$ is well-defined.\\
	Let $(X, w)$ be an object in $H^0(\twmod(\End_A^*(P^\cdot(M))))$. Then $X$ is an $L$-module and 
 \begin{align*}
     w\in \End_A^1(P^{\cdot}(M))\otimes_{L\otimes L^{\op}} \End_L(X)\cong \End_A^1(P^{\cdot}(M)\otimes_L X).
 \end{align*} Since $w$ fulfills the Maurer--Cartan equation, 
	\begin{align*}
		0&=(d_{\End_A^*(P^{\cdot}(M))}\otimes\id_{\End_L(X)})(w)+w\circ w\\
		&=(d_{P^{\cdot}(M)}\otimes\id_{X})\circ w- (-1) w\circ (d_{P^{\cdot}(M)}\otimes\id_{X})+w\circ w\\
		&=(d_{P^{\cdot}(M)}\otimes \id_X)\circ (d_{P^{\cdot}(M)}\otimes\id_X)+(d_{P^{\cdot}(M)}\otimes\id_X)\circ w+w\circ (d_{P^{\cdot}(M)}\otimes\id_X)+w\circ w\\
		&=(d_{P^{\cdot}(M)}\otimes\id_X+w)\circ (d_{P^{\cdot}(M)}\otimes\id_X+w).
	\end{align*}
	Thus, $d_{P^{\cdot}(M)}\otimes\id_X+w$ is a differential on $P^\cdot(M)\otimes_L X$. 
 Moreover, by triangularity of $w$ there is a sequence
	\begin{align*}
		(0)=X_0\subseteq X_1\subseteq \dots \subseteq X_n=X
	\end{align*}
	of $L$-submodules of $X$ such that $w(P^\cdot(M)\otimes X_i)\subseteq P^{\cdot}(M)\otimes X_{i-1}$ for every $1\leq i\leq n$. In particular,
	$$(d_{P^{\cdot}(M)}\otimes\id_X+w)(P^\cdot(M)\otimes X_i)\subseteq P^{\cdot}(M)\otimes X_{i-1},$$ so that as a complex of $A$-modules, $(P^\cdot(M)\otimes X, d_{P^{\cdot}(M)}\otimes\id_X+w)$ has a series of subcomplexes
	\begin{align}\label{chainofsubcomplexes}
		(0)&=(P^{\cdot}(M)\otimes X_{0}, d_{P^{\cdot}(M)}\otimes\id_{X_0}+w_{|P^{\cdot}(M)\otimes X_{0}})\subseteq \dots \\
		& \subseteq (P^{\cdot}(M)\otimes X_{n}, d_{P^{\cdot}(M)}\otimes\id_{X_n}+w_{|P^{\cdot}(M)\otimes X_{n}})=(P^\cdot(M)\otimes X, d_{P^{\cdot}(M)}\otimes\id_X+w),
	\end{align}
	such that on every factor complex $P^{\cdot}(M)\otimes (X_i/X_{i-1})$ the differential is given by $d_{P^{\cdot}(M)}\otimes \id_{X_i/X_{i-1}}$.
  Exactness of $P^\cdot(M)$ thus implies that $$H^k(P^{\cdot}(M)\otimes (X_j/X_{j-1}), d_{P^{\cdot}(M)}\otimes \id_{X_j/X_{j-1}})=0$$ for every $1\leq j\leq n$, $k\neq 0$. Moreover, $(P^\cdot(M)\otimes X_i, d_{P^{\cdot}(M)}\otimes\id_{X_i}+w_{|P^{\cdot}(M)\otimes X_{i}}))$ has a filtration by the complexes $(P^{\cdot}(M)\otimes (X_j/X_{j-1}), d_{P^{\cdot}(M)}\otimes \id_{X_j/X_{j-1}})$, $1\leq j\leq n$.
 Since short exact sequences of complexes give rise to long exact sequences in homology, this implies that 
   $$H^k(P^\cdot(M)\otimes X_i,  d_{P^{\cdot}(M)}\otimes\id_{X_i}+w_{|P^{\cdot}(M)\otimes X_{i}})=0$$ for every $1\leq i\leq n$, $k\neq 0$. Thus $(P^\cdot(M)\otimes X_i, d_{P^{\cdot}(M)}\otimes\id_{X_i}+w_{|P^{\cdot}(M)\otimes X_{i}})$ is a projective resolution of its degree zero homology $H^0(P^\cdot(M)\otimes X_i,  d_{P^{\cdot}(M)}\otimes\id_{X_i}+w_{|P^{\cdot}(M)\otimes X_{i}})$ for every $1\leq i\leq n$.\\
   In particular, applying $H^0$ to the chain of subcomplexes  \eqref{chainofsubcomplexes}, we obtain a chain of $A$-submodules
	\begin{align*}
		(0)&=H^0(P^{\cdot}(M)\otimes X_{0}, d_{P^{\cdot}(M)}\otimes\id_{X_0}+w_{|P^{\cdot}(M)\otimes X_{0}})
		 \subseteq \dots \\ & \subseteq H^0(P^{\cdot}(M)\otimes X_{n}, d_{P^{\cdot}(M)}\otimes\id_{X_n}+w_{|P^{\cdot}(M)\otimes X_{n}})=H^0(P^\cdot(M)\otimes X, d_{P^{\cdot}(M)}\otimes\id_X+w),
	\end{align*}
 of $H^0((P^\cdot(M)\otimes X, d_{P^{\cdot}(M)}\otimes\id_X+w))$,
	such that the factor modules are isomorphic to $$H^0(P^{\cdot}(M)\otimes (X_i/X_{i-1}), d_{P^{\cdot}(M)}\otimes \id_{X_i/X_{i-1}})=M\otimes_L (X_i/X_{i-1}),$$ i.e. to a direct sum of $M_j$. Thus $$H^0((P^\cdot(M)\otimes X, d_{P^{\cdot}(M)}\otimes\id_X+w))\in \filt(M).$$ This implies that $C_M$ is well-defined.
  Since $\twmod(\End^*_A(P^\cdot(M))))((X, w), (X', w'))$ is the complex  
\begin{align*}
	\twmod(\End^*_A(P^\cdot(M)))((X, w), (X', w'))&=\End^*_A(P^\cdot(M))\otimes_{L\otimes L^{\op}} \Hom_{\field}(X, X')\\
	&\cong \Hom_A^*(P^\cdot(M)\otimes_L X, P^\cdot(M)\otimes_L X')
\end{align*} 
	with differential 
	\begin{align*}
		f\mapsto &d_{\End_A^*(P^\cdot(M))}\otimes \id_{\Hom_{\field} (X, X')}(f)+f\circ w-(-1)^{|f|}w'\circ f\\
		&=(d_{P^\cdot(M)}\otimes\id_{X'}+w')\circ f-(-1)^{|f|}f\circ (d_{P^\cdot(M)}\otimes\id_{X}+w),
	\end{align*}
	its homology $H^0(\twmod(\End^*_A(P^\cdot(M))))((X, w), (X', w')))$ is exactly $$\Hom_A(H^0(P^\cdot(M)\otimes_L X, d_{P^\cdot(M)}\otimes\id_{X}+w), H^0(P^\cdot(M)\otimes_L X', d_{P^\cdot(M)}\otimes\id_{X'}+w')),$$ so that $C_M$ is fully faithful.\\
It remains to be shown that it is dense.
Clearly, $M_j$ is in the essential image of $C_M$ as it is isomorphic to $C_M((L_i, 0))$. Hence it suffices to show that the essential image is closed under extensions. So suppose $(X', w')$ and $(X'', w'')$ are objects in $H^0(\twmod(\End_A^*(P^\cdot(M))))$ and we have an exact sequence
\begin{align*}
	(0)\rightarrow H^0((P^\cdot(M)\otimes_L X', d_{P^{\cdot}(M)}\otimes\id_{X'}+w'))\rightarrow Y\rightarrow H^0((P^\cdot(M)\otimes_L X'', d_{P^{\cdot}(M)}\otimes\id_{X''}+w'')\rightarrow (0).
\end{align*}
Since $(P^\cdot(M)\otimes_L X', d_{P^{\cdot}(M)}\otimes\id_{X'}+w')$ and $(P^\cdot(M)\otimes_L X'', d_{P^{\cdot}(M)}\otimes\id_{X''}+w'')$ are projective resolutions of $H^0((P^\cdot(M)\otimes_L X', d_{P^{\cdot}(M)}\otimes\id_{X'}+w'))$ and $H^0((P^\cdot(M)\otimes_L X'', d_{P^{\cdot}(M)}\otimes\id_{X''}+w'')$ respectively, the horseshoe lemma implies that there is a projective resolution of $Y$ given by $P^\cdot(M)\otimes_L X'\oplus P^\cdot(M)\otimes_L X''$ with differential
\begin{align*}
	\begin{pmatrix}
		d_{P^{\cdot}(M)}\otimes\id_{X'}+w'& w'''\\
		0 & d_{P^{\cdot}(M)}\otimes\id_{X''}+w''
	\end{pmatrix}
	=\begin{pmatrix}
		d_{P^{\cdot}(M)}\otimes\id_{X'}& 0\\
		0 & d_{P^{\cdot}(M)}\otimes\id_{X''}
	\end{pmatrix}
	+ \begin{pmatrix}
		w'& w'''\\
		0 & w''
	\end{pmatrix}
\end{align*}
for some $w'''\in \Hom_R^1(P^\cdot(M)\otimes_L X'',P^\cdot(M)\otimes_L X')=\End_R^1(P^\cdot(M))\otimes_{L\otimes L^{\op}}\Hom_{\field}(X'', X')$.
As a complex of $R$-modules, this is isomorphic to $P^\cdot(M)\otimes_L (X'\oplus X'')$ with differential $d_{P^\cdot(M)}\otimes\id_{X'\oplus X''}+w$ where $w=\begin{pmatrix}
	w'& w'''\\
	0 & w''
\end{pmatrix}$.
Since $d_{P^\cdot(M)}\otimes\id_{X'\oplus X''}+w$ is a differential, $w\in \End_R^1(P^\cdot(M)\otimes_L(X'\oplus X''))=\End_R^1(P^\cdot(M))\otimes_{L\otimes L^{\op}}\End_{\field}(X'\oplus X'')$ fulfills the Maurer--Cartan equation. Finally, if
\begin{align*}
	(0)=X'_0\subseteq \dots \subseteq X'_n=X'
\end{align*}
and 
\begin{align*}
	(0)=X''_0\subseteq \dots \subseteq X''_m=X''_m
\end{align*}
are filtrations of $X'$ and $X''$ as $L$-modules such that
\begin{align*}
	w'(P^\cdot(M)\otimes_L X'_{i})\subseteq P^\cdot(M)\otimes_L X'_{i-1}
\end{align*}
and
\begin{align*}
	w''(P^\cdot(M)\otimes_L X''_{i})\subseteq P^\cdot(M)\otimes_L X''_{i-1}
\end{align*}
then 
\begin{align*}
	(0)=X_0:=X'_0&\subseteq \dots \subseteq X_n:=X'_n=X'=X'\oplus X''_0\\
	&\subseteq X_{n+1}:=X'\oplus X''_1\subseteq \dots \subseteq X_{n+m}:=X'\oplus X''_m=X'\oplus X''
\end{align*}
is a filtration of $X:=X'\oplus X''$ as an $L$-module, and since 
\begin{align*}
\begin{pmatrix}
	0& w'''\\
	0 & 0
\end{pmatrix}(P^\cdot(M)\otimes X')=(0)
\end{align*}
and 
\begin{align*}
	\begin{pmatrix}
		0& w'''\\
		0 & 0
	\end{pmatrix}(P^\cdot(M)\otimes X'')\subseteq P^\cdot(M)\otimes X',
	\end{align*}
	we have
	\begin{align*}
		w(P^\cdot(M)\otimes_L X_i)=\begin{pmatrix}
			w'& w'''\\
			0 & w''
		\end{pmatrix}(P^\cdot(M)\otimes_L X'_i)=w'(P^\cdot(M)\otimes_L X'_i)\subseteq P^\cdot(M)\otimes_L X'_{i-1}
	\end{align*}
	for $0\leq i\leq n$ and
	\begin{align*}
		w(P^\cdot(M)\otimes_L X_{n+i})=\begin{pmatrix}
			w'& w'''\\
			0 & w''
		\end{pmatrix}(P^\cdot(M)\otimes_L (X'\oplus X''_i))\\\subseteq w'(P^\cdot(M)\otimes_L X')+ \begin{pmatrix}
			0& w'''\\
			0 & 0
		\end{pmatrix}(P^\cdot(M)\otimes X'')+w''(P^\cdot(M)\otimes X''_i)\\
		\subseteq P^\cdot(M)\otimes_L X'+P^\cdot(M)\otimes_L X'+P^\cdot(M)\otimes_L X''_{i-1}\\=P^\cdot(M)\otimes_L (X'+X''_{i-1})=P^\cdot(M)\otimes X_{n+i-1}
		\end{align*}
		for $1\leq i\leq m$. Hence
	$w(P^\cdot(M)\otimes_L X_i)\subseteq P^\cdot(M)\otimes_L X_{i-1}$, so that $w$ is triangular.
	Therefore, $(X, w)\in H^0(\twmod(\End_A^*(P^\cdot(M))))$ and $Y\cong C_M(X, w)$.
\end{proof}
 The following lemma shows that the equivalence above commutes with an exact functor $T$ that preserves projectives, a statement which we could not find in the literature, although we expect this to be generally known.
 \begin{lemma}
    Let $R, R'$ be algebras and let  $(M_j)_{j=1}^n$ be a set of $R$-modules.
     Suppose $T:\modu R\rightarrow \modu R'$ is an exact functor, preserving projectives. Then we obtain a diagram
	% https://q.uiver.app/#q=WzAsNCxbMCwwLCJcXGZpbHQoTSkiXSxbMywwLCJcXGZpbHQoVChNKSkiXSxbMCwxLCJIXjAoXFx0d21vZChcXEVuZF9SXiooUF5cXGNkb3QoTSkpKSkiXSxbMywxLCJIXjAoXFx0d21vZChcXEVuZF97Uid9XiooVChQXlxcY2RvdChNKSkpKSkiXSxbMiwwLCJDX00iXSxbMywxLCJDX3tUKE0pfSJdLFswLDEsIlQiXSxbMiwzLCJIXjAoXFx0d21vZChUX3tcXEVuZF9SXiooUF5cXGNkb3QoTSkpfSkpIiwyXV0=
\[\begin{tikzcd}[ampersand replacement=\&]
	{\filt(M)} \&\&\& {\filt(T(M))} \\
	{H^0(\twmod(\End_R^*(P^\cdot(M))))} \&\&\& {H^0(\twmod(\End_{R'}^*(T(P^\cdot(M)))))}
	\arrow["{C_M}", from=2-1, to=1-1]
	\arrow["{C_{T(M)}}", from=2-4, to=1-4]
	\arrow["T", from=1-1, to=1-4]
	\arrow["{H^0(\twmod(T_{\End_R^*(P^\cdot(M))}))}"', from=2-1, to=2-4]
\end{tikzcd}\]
that commutes up to natural isomorphism $\alpha: T\circ C_M\rightarrow C_{T(M)}\circ T$ given by the canonical isomorphisms
\begin{align*}
	\alpha_X:T(H^0(P^\cdot (M)\otimes X, d_{P^{\cdot}(M)}\otimes\id_X+w))&\rightarrow 
	H^0(T(P^\cdot(M)\otimes X), T(d_{P^{\cdot}(M)}\otimes\id_X+w)).
\end{align*}
%H^0(T(P^\cdot(M))\otimes X, d_{T(P^{\cdot}(M))}\otimes\id_X+T(w))&=
 \end{lemma}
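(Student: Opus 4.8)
The plan is to verify directly that the family of maps $\alpha_X$ is a natural isomorphism making the square commute, by unwinding both composite functors on objects and morphisms. First I would check well-definedness on objects: given a twisted module $(X,w)$ over $\End_R^*(P^\cdot(M))$, applying $C_M$ yields the $A$-module $H^0(P^\cdot(M)\otimes_L X, d_{P^\cdot(M)}\otimes\id_X + w)$, and applying $T$ gives $T\bigl(H^0(P^\cdot(M)\otimes_L X, d_{P^\cdot(M)}\otimes\id_X+w)\bigr)$. Since $T$ is exact, it commutes with taking homology of a complex of $R$-modules, so this is naturally isomorphic to $H^0\bigl(T(P^\cdot(M)\otimes_L X),\, T(d_{P^\cdot(M)}\otimes\id_X+w)\bigr)$. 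On the other hand, going the other way around the square, $H^0(\twmod(T_{\End_R^*(P^\cdot(M))}))$ sends $(X,w)$ to $(X, \add(T_{\End_R^*(P^\cdot(M))})(w))$, i.e.\ to the twisted module over $\End_{R'}^*(T(P^\cdot(M)))$ with the same underlying $L$-module $X$ and twist $T(w)$ under the identification of Remark \ref{remark_long}; then $C_{T(M)}$ produces $H^0\bigl(T(P^\cdot(M))\otimes_L X,\, d_{T(P^\cdot(M))}\otimes\id_X + T(w)\bigr)$. The key point is that the canonical iso $T(P^\cdot(M)\otimes_L X)\cong T(P^\cdot(M))\otimes_L X$ (which holds because $X$ is a finite-dimensional $L$-module, hence $P^\cdot(M)\otimes_L X$ is a finite direct sum of copies of the $P^\cdot(M_i)$, and $T$ is additive) intertwines $T(d_{P^\cdot(M)}\otimes\id_X)$ with $d_{T(P^\cdot(M))}\otimes\id_X$ and $T(w)$ (as an endomorphism of the complex) with $T(w)$ (as the twist), so the two differentials agree under this identification.

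Next I would check naturality and functoriality on morphisms. A morphism $[f]$ in $H^0(\twmod(\End_R^*(P^\cdot(M))))$ is the class of an $f\in \End_R^*(P^\cdot(M))\otimes_{L\otimes L^{\op}}\Hom_\field(X,Y)$ that is a cocycle for the twisted differential; under $C_M$ it goes to $H^0(f)$, viewing $f$ as a chain map $(P^\cdot(M)\otimes_L X, \dots)\to (P^\cdot(M)\otimes_L Y,\dots)$. Applying $T$ gives $T(H^0(f)) = H^0(T(f))$, again by exactness of $T$. Going the other way, $[f]$ maps to $[T(f)]$ and then to $H^0$ of the induced chain map on $T(P^\cdot(M))\otimes_L X$. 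The square $\alpha_Y\circ T(C_M[f]) = C_{T(M)}(H^0(\twmod(T_{\dots}))[f])\circ\alpha_X$ then amounts to the statement that the canonical isomorphisms $T(P^\cdot(M)\otimes_L X)\cong T(P^\cdot(M))\otimes_L X$ are natural in $X$ and compatible with morphisms of the form $f = a\otimes g$, which is immediate from the explicit description $(a\otimes g)(p\otimes x) = a(p)\otimes g(x)$ and additivity of $T$.

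The only genuine subtlety — and hence the step I'd flag as the main obstacle — is bookkeeping around signs and the identification in Theorem \ref{theorem_equivalence_End_F(M)}: one must confirm that applying $T$ to the complex $(P^\cdot(M)\otimes_L X, d_{P^\cdot(M)}\otimes\id_X + w)$ really produces $(T(P^\cdot(M))\otimes_L X, d_{T(P^\cdot(M))}\otimes\id_X + T(w))$ on the nose, i.e.\ that $T$ applied to the differential $d_{P^\cdot(M)}$ of the resolution is the differential of $T(P^\cdot(M))$ (true by definition of $T(P^\cdot(M))$ as in Remark \ref{remark_long}), and that the Koszul sign conventions used to identify $\End_R^*(P^\cdot(M))\otimes_{L\otimes L^{\op}}\Hom_\field(X,Y)$ with $\Hom_R^*(P^\cdot(M)\otimes_L X, P^\cdot(M)\otimes_L Y)$ are preserved by $T$; since $T$ is additive and acts only on the $P^\cdot(M)$-factor while $X$ stays ungraded, no new signs appear, so this is routine but needs to be stated carefully. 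Once this is pinned down, the naturality of $\alpha$ and the commutativity of the square are formal. In fact, since $T$ preserves projectives, each $(T(P^\cdot(M))\otimes_L X, d_{T(P^\cdot(M))}\otimes\id_X + T(w))$ is again a projective resolution of its $H^0$, so the whole argument runs exactly parallel to the proof of Theorem \ref{theorem_equivalence_End_F(M)}, and I would organize the write-up to reuse that structure rather than repeat it.
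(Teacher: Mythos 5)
Your proposal is correct and follows essentially the same route as the paper: unwind both composites, use additivity and exactness of $T$ to identify $T(\sum_j h_j\otimes f_j)$ with $\sum_j T(h_j)\otimes f_j$ and to commute $T$ past $H^0$, and then verify the naturality square for the canonical isomorphisms $\alpha_X$. The paper's written proof only carries out the morphism-level computation explicitly (leaving the object-level identification implicit in the word ``canonical''), so your additional care about $T(P^\cdot(M)\otimes_L X)\cong T(P^\cdot(M))\otimes_L X$ and the absence of Koszul signs (since $X$ is ungraded) is consistent with, and slightly more complete than, what the paper records.
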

 \begin{proof}
     Let 
     \begin{align*}
         f&\in H^0(\twmod_L(\End_R^*(P^\cdot(M)))((X,w_X), (Y, w_Y))\\
         &=H^0(\End_R^*(P^\cdot(M)\otimes_{L\otimes L^{\op}} \End_{\field}(X, Y))).
     \end{align*}
     Write $f=\left[  \sum_{j=1}^m h_j\otimes f_j \right]$ for $h_j\in \End_R^*(P^\cdot(M))$ and $f_j\in \End_{\field}(X,Y)$.
     Then by definition $C_M(f)$ is just the image of $f$ under the canonical isomorphism
     \begin{align*}
         H^0(\End_R^*(P^\cdot(M))\otimes_{L\otimes L^{\op}} \End_{\field}(X, Y))\rightarrow 
        H^0(\Hom_R^*(P^\cdot(M)\otimes_L X, &P^\cdot(M)\otimes_L Y)\\
       & \rightarrow \Hom_R(H^0(P^\cdot(M)\otimes_L X,  P^\cdot(M)\otimes_L Y)).
     \end{align*}
     More concretely,
     \begin{align*}
         C_M(f)=\left[  \sum_{j=1}^m h_j\otimes f_j \right]:H^0(P^\cdot(M)\otimes_L X)&\rightarrow H^0(P^\cdot(M)\otimes_L Y),\\
         \left[\sum_{i=1}^s p_i\otimes x_i\right]&\mapsto  \left[\sum_{j=1}^m\sum_{i=1}^s h_j(p_i)\otimes f_j(x_i)\right]
         =\left[\left(\sum_{j=1}^m h_j\otimes f_j\right)\left(\sum_{i=1}^s p_i\otimes x_i\right)\right].\\
    \end{align*} 
    Hence, $T(C_M(f))=T\left(\left[\sum_{j=1}^m h_j\otimes f_j\right]\right)$.
    On the other hand,
    \begin{align*}
        H^0(\twmod_L(T_{\End_R^*(P^\cdot(M))}))(f)&=  H^0(\twmod_L(T_{\End_R^*(P^\cdot(M))})\left(\left[  \sum_{j=1}^m h_j\otimes f_j \right]\right)\\
        &=\left[  \sum_{j=1}^m \twmod_L(T_{\End_R^*(P^\cdot(M))})( h_j\otimes f_j ) \right]\\
        &=\left[ \sum_{j=1}^m T( h_j)\otimes f_j  \right].
    \end{align*}
    Moreover, since $T$ is additive and maps $M_i$ to $T(M_i)$, we have
    \begin{align*}
         \sum_{j=1}^m T( h_j)\otimes f_j =  \sum_{j=1}^m T( h_j\otimes f_j)=  T\left(\sum_{j=1}^m  h_j\otimes f_j\right)
    \end{align*}
    and thus
    \begin{align*}
         C_M(H^0( \twmod_L(T_{\End_R^*(P^\cdot(M))}))(f))=\left[T\left(\sum_{j=1}^m  h_j\otimes f_j\right)\right].
    \end{align*}
    Therefore, we have a commutative diagram
    % https://q.uiver.app/#q=WzAsNCxbMSwwLCJIXjAoVChQXlxcY2RvdChNKVxcb3RpbWVzX0wgWCkpIl0sWzEsMSwiSF4wKFQoUF5cXGNkb3QoTSlcXG90aW1lc19MIFkpKSJdLFswLDAsIlQoSF4wKFBeXFxjZG90KE0pXFxvdGltZXNfTCBYKSkiXSxbMCwxLCJUKEheMChQXlxcY2RvdChNKVxcb3RpbWVzX0wgWSkpIl0sWzAsMSwiQ197VChNKX1cXGNpcmMgSF4wKFxcdHdtb2RfTChUX3tcXEVuZF9SXiooUF5cXGNkb3QoTSkpfSkpKGYpIl0sWzIsMywiVFxcY2lyYyBDX00oZikiXSxbMywxLCJcXGFscGhhX1kiXSxbMiwwLCJcXGFscGhhX1giXV0=
\[\begin{tikzcd}[ampersand replacement=\&]
	{T(H^0(P^\cdot(M)\otimes_L X))} \& {H^0(T(P^\cdot(M)\otimes_L X))} \\
	{T(H^0(P^\cdot(M)\otimes_L Y))} \& {H^0(T(P^\cdot(M)\otimes_L Y))}
	\arrow["{C_{T(M)}\circ H^0(\twmod_L(T_{\End_R^*(P^\cdot(M))}))(f)}", from=1-2, to=2-2]
	\arrow["{T\circ C_M(f)}", from=1-1, to=2-1]
	\arrow["{\alpha_Y}", from=2-1, to=2-2]
	\arrow["{\alpha_X}", from=1-1, to=1-2]
\end{tikzcd}\]
 \end{proof}
\section{Twisted Modules over A-infinity Algebras}\label{section_ainfty}
%Lefevre Hasegawa 6.1.3.4
%Lefevre Hasegawa page 166
In this section, we give the generalization of the construction of twisted modules from the previous section to A-infinity algebras, and recall that quasi-isomorphisms of A-infinity algebras give rise to equivalences of $H^0(\twmod)$. Thus we obtain a commutative diagram where the vertical arrows are equivalences. This section is largely based on \cite{LefHas}, and we follow their sign conventions.
\subsection{A-infinity algebras}
\begin{definition}
  An A-infinity algebra is a graded vector space $\mathcal{A}=\bigoplus_{n\in \mathbb{Z}}\mathcal{A}_n$ together with $\field$-linear maps
  \begin{align*}
    m_n:\mathcal{A}^{\otimes n}\rightarrow \mathcal{A}
  \end{align*}
  of degree $2-n$ for every $n\in \mathbb{N}$ such that
  \begin{align*}
    \sum_{r+s+t=n}(-1)^{rs+t}m_{r+t+1}(1^{\otimes r}\otimes m_s\otimes 1^{\otimes t})=0.
  \end{align*}
  An A-infinity homomorphism $f:\mathcal{A}\rightarrow\mathcal{A}'$ is a family $(f_n)_n$ of $\field$-linear maps
  \begin{align*}
    f_n:\mathcal{A}^{\otimes_{\field} n}\rightarrow \mathcal{A}'
  \end{align*}
  of degree $1-n$ such that
	\begin{align*}
		\sum_{r+s+t=n}(-1)^{rs+t}f_{r+t+1}(1^{\otimes r}\otimes m_s\otimes 1^{\otimes t})=\sum_{j_1+\dots+j_k=n}(-1)^{\sum_{l=1}^k (1-j_l) \sum_{l'=1}^l j_{l'}}m_k(f_{j_1}\otimes \dots \otimes f_{j_k}).
	\end{align*} An  A-infinity homomorphism $f$ is called strict if $f_n=0$ for $n\neq 1$.
 Suppose $g=(g_n)_n:\mathcal{A}\rightarrow\mathcal{A}'$, $f=(f_n)_n:\mathcal{A}'\rightarrow\mathcal{A}''$ are A-infinity homomorphisms. Then their composition $f\circ g$ is defined as
 \begin{align*}
     (f\circ g)_n:=\sum_{j_1+\dots+j_k=n}(-1)^{\sum_{l=1}^k (1-j_l) \sum_{l'=1}^l j_{l'}}f_k(g_{j_1}\otimes \dots \otimes g_{j_k}).
 \end{align*}
 This is again an A-infinity homomorphism, and composition is associative, so that we obtain a category; the category of A-infinity algebras.
\end{definition}
\begin{remark}
  Note that a dg-algebra $\mathcal{A}$ with differential $d$ and multiplication $m$ is an A-infinity algebra via $m_1=d$, $m_2=m$, $m_n=0$ for $n>2$, and that dg-algebra homomorphisms are strict A-infinity homomorphisms. Moreover, composition of two strict A-infinity homomorphisms $f=f_1$, $g=g_1$ is the strict A-infinity homomorphism $f\circ g=f_1\circ g_1$. Hence the category of dg-algebras and dg-algebra homomorphisms is a (non-full) subcategory of the category of A-infinity algebras.
\end{remark}
\begin{lemma}\label{lemma_ainftyhominvertible}(see for example \cite[Proposition 11]{markl})
    An A-infinity homomorphism $f=(f_n)_n:\mathcal{A}\rightarrow \mathcal{A}'$ is invertible if and only if $f_1$ is invertible as a $\field$-linear map.
\end{lemma}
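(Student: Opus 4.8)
The plan is to prove the two implications separately; the forward one is formal and the backward one is the substantive part.

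For the ``only if'' direction, suppose $f$ has a two-sided inverse $g=(g_n)_n:\mathcal{A}'\to\mathcal{A}$, so that $f\circ g=\id_{\mathcal{A}'}$ and $g\circ f=\id_{\mathcal{A}}$, where the identity A-infinity homomorphism has first component the identity map and all higher components zero. In the composition formula the only summand contributing to the degree-one (tensor-length-one) component is the one with $k=1$, $j_1=1$, namely $f_1\circ g_1$; hence $f_1\circ g_1=\id_{\mathcal{A}'}$ and symmetrically $g_1\circ f_1=\id_{\mathcal{A}}$, so $f_1$ is invertible.

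For the ``if'' direction, suppose $f_1$ is invertible. I would first build, by induction on tensor length, a family $(g_n)_n$ of maps of degree $1-n$ with $f\circ g=\id_{\mathcal{A}'}$: put $g_1:=f_1^{-1}$, and for $n\geq2$ read off from $(f\circ g)_n=0$ the identity
\[
f_1\circ g_n=-\sum_{\substack{j_1+\dots+j_k=n\\ k\geq 2}}(-1)^{\sum_{l=1}^k(1-j_l)\sum_{l'=1}^l j_{l'}}\,f_k(g_{j_1}\otimes\dots\otimes g_{j_k}),
\]
whose right-hand side involves only $g_1,\dots,g_{n-1}$; since $f_1$ is invertible this determines $g_n$, and a degree count shows $|g_n|=1-n$. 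The remaining --- and main --- point is that this $g$ is genuinely an A-infinity homomorphism, i.e.\ compatible with the $m_n$ on both sides.

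The cleanest way to obtain this, and the route I would take, is to pass to the bar construction: an A-infinity algebra $\mathcal{A}$ is the same datum as a square-zero coderivation $b_{\mathcal{A}}$ on the tensor coalgebra $T^c(s\mathcal{A})$, an A-infinity homomorphism $f:\mathcal{A}\to\mathcal{A}'$ is the same as a morphism of dg-coalgebras $F:(T^c(s\mathcal{A}),b_{\mathcal{A}})\to(T^c(s\mathcal{A}'),b_{\mathcal{A}'})$, and composition of A-infinity homomorphisms matches composition of these coalgebra morphisms. With respect to the filtration by tensor length, $F$ is triangular with associated graded $f_1^{\otimes n}$ on the length-$n$ piece; hence $F$ is invertible as a coalgebra morphism exactly when $f_1$ is invertible, and its inverse is precisely the coalgebra morphism encoded by the family $(g_n)_n$ above. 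Finally, if $F$ commutes with the codifferentials then so does $F^{-1}$, because $F^{-1}b_{\mathcal{A}'}=F^{-1}b_{\mathcal{A}'}FF^{-1}=F^{-1}Fb_{\mathcal{A}}F^{-1}=b_{\mathcal{A}}F^{-1}$; thus $g$ is an A-infinity homomorphism and a two-sided inverse of $f$. If one prefers to avoid the bar language, one can instead verify the A-infinity homomorphism equations for the recursively defined $g$ by a second induction and then note that the symmetric construction starting from $f_1^{-1}$ yields a left inverse, which must agree with $g$; but the sign bookkeeping this requires is exactly what the coalgebra reformulation handles automatically, so I expect that to be the main technical obstacle along the elementary route.
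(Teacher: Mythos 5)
The paper gives no proof of this lemma at all---it simply cites \cite[Proposition 11]{markl}---and your argument is correct and is essentially the standard one found there and in \cite{LefHas}: the ``only if'' direction by reading off the tensor-length-one component of the composition, and the ``if'' direction by recursively solving $(f\circ g)_n=0$ for $g_n$ (using that the $k=1$ term is exactly $f_1\circ g_n$ and all other terms involve only $g_1,\dots,g_{n-1}$) and then invoking the bar construction to see that the resulting two-sided inverse of the coalgebra morphism automatically commutes with the codifferentials, hence is again an A-infinity homomorphism. Your identification of the coalgebra step as the place where the real content sits is accurate, and the conjugation identity $F^{-1}b_{\mathcal{A}'}=b_{\mathcal{A}}F^{-1}$ disposes of it correctly.
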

\begin{definition}
    Let $\mathcal{A}$ be an A-infinity algebra. Then its homology $H^*(\mathcal{A})$ (with respect to the differential $m_1^{\mathcal{A}}$) obtains the structure of a graded associative (not necessarily unital) algebra via 
    \begin{align*}
        (a+\im(m_1))\cdot (b+\im(m_1))=m_2(a,b)+\im(m_1).
    \end{align*}
    Moreover, $H^*$ defines a functor from A-infinity algebras to (not necessarily unital) associative graded algebras.\\
    An A-infinity homomorphism $f=(f_n)_n$ is called an A-infinity quasi-isomorphism if $H^*(f_1)$ is an isomorphism of graded vector spaces.
    Similarly, $H^0$ defines a functor from A-infinity algebras to (not necessarily unital) associative algebras.\\
\end{definition}
The following theorem is known as Kadeishvili's theorem, or alternatively, as the homological perturbation lemma, and is an important statement in the study of A-infinity algebras.
\begin{theorem}\cite[Theorem 1]{kadeishvili}(see also \cite[Theorem 3.4]{merkulov})\label{thm_kadeishvili}
    Let $\mathcal{A}$ be an A-infinity algebra. Then  $H^*(\mathcal{A})$ obtains the structure of an A-infinity algebra such that there is a quasi-isomorphism $f=(f_n)_n:H^*(\mathcal{A})\rightarrow \mathcal{A}$
    with $H^*(f_1)=\id_{H^*(\mathcal{A})}$. Moreover, this structure is unique up to A-infinity isomorphism.
\end{theorem}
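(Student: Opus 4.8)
The plan is to establish existence by the \emph{homotopy transfer} (``sum over trees'') construction, which produces the higher products on $H^*(\mathcal{A})$ and the components of the quasi-isomorphism at the same time. Since $\field$ is a field, the complex $(\mathcal{A}, m_1)$ splits as $\mathcal{A} = \im(m_1) \oplus \mathcal{H} \oplus C$ with $\ker(m_1) = \im(m_1) \oplus \mathcal{H}$, which gives a \emph{homotopy retract} of $\mathcal{A}$ onto $H^*(\mathcal{A})$: an inclusion $i\colon H^*(\mathcal{A}) \to \mathcal{A}$ (identifying $H^*(\mathcal{A})$ with $\mathcal{H}$), a projection $p\colon \mathcal{A} \to H^*(\mathcal{A})$ with $pi = \id$, and a degree $-1$ map $h\colon \mathcal{A} \to \mathcal{A}$ with $ip - \id_{\mathcal{A}} = m_1 h + h m_1$, which may moreover be normalized so that $h^2 = 0$, $hi = 0$ and $ph = 0$. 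Setting $f_1 := i$ and observing that the differential $m_1^H$ of the structure to be built on $H^*(\mathcal{A})$ will be $0$, the requirement $H^*(f_1) = \id_{H^*(\mathcal{A})}$ holds automatically, since $H^*(i) = \id$.

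I would then define, by recursion on $n \geq 2$, maps $f_n\colon H^*(\mathcal{A})^{\otimes n} \to \mathcal{A}$ of degree $1-n$ and $m_n^H\colon H^*(\mathcal{A})^{\otimes n} \to H^*(\mathcal{A})$ of degree $2-n$ by
\[ f_n = -\,h\circ\!\!\sum_{\substack{k \geq 2,\ j_1 + \dots + j_k = n}}\!\!\pm\, m_k\circ(f_{j_1}\otimes\dots\otimes f_{j_k}), \qquad m_n^H = p\circ\!\!\sum_{\substack{k \geq 2,\ j_1 + \dots + j_k = n}}\!\!\pm\, m_k\circ(f_{j_1}\otimes\dots\otimes f_{j_k}), \]
the signs being exactly the Koszul signs occurring in the A-infinity morphism identity recalled in this section; for $n = 2$ this gives $m_2^H = p\circ m_2\circ(i\otimes i)$, which is precisely the induced associative product on $H^*(\mathcal{A})$. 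The recursion is well posed because in each summand on the right, since $k \geq 2$, only the components $f_j$ with $j < n$ appear (together with the given products $m_k$ of $\mathcal{A}$).

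The core of the argument is to verify that $(H^*(\mathcal{A}), (m_n^H)_n)$ satisfies the Stasheff identities and that $(f_n)_n$ satisfies the morphism identities. There are two routes: \textbf{(i)} a direct induction on $n$, substituting the A-infinity relations for $\mathcal{A}$ and the side conditions $h^2 = hi = ph = 0$ into the defining recursion; or \textbf{(ii)} passing to the bar construction, where an A-infinity structure on $\mathcal{A}$ is the same datum as a square-zero degree $1$ coderivation $b$ of the tensor coalgebra $T^c(s\mathcal{A})$ and an A-infinity morphism is a dg-coalgebra map --- the retract $(i,p,h)$ lifts to $T^c(s\mathcal{A})$, the transferred coderivation $b^H$ is given by an explicit geometric series in $h$ and $b$, and $(b^H)^2 = 0$ together with the compatibility with $f$ become formal consequences of $b^2 = 0$. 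I expect the \emph{sign and tree bookkeeping} to be the main obstacle: route (i) requires reconciling the signs $(-1)^{rs+t}$ in the relations for $\mathcal{A}$ with the Koszul signs generated by the recursion, and route (ii) trades this for the (routine but lengthy) check that the transferred bar differential is square-zero. This is in essence Kadeishvili's original argument \cite{kadeishvili}; compare also \cite{merkulov} and \cite[Ch.~1]{LefHas}.

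For uniqueness, suppose $(H^*(\mathcal{A}), \widetilde{m})$ together with a quasi-isomorphism $\widetilde{f}\colon (H^*(\mathcal{A}), \widetilde{m}) \to \mathcal{A}$ is a second structure as in the theorem. Using the standard fact (see \cite[Ch.~1]{LefHas}) that every A-infinity quasi-isomorphism admits a quasi-inverse, pick a quasi-isomorphism $g\colon \mathcal{A} \to (H^*(\mathcal{A}), \widetilde{m})$ and set $\varphi := g\circ f\colon (H^*(\mathcal{A}), m) \to (H^*(\mathcal{A}), \widetilde{m})$, which is again an A-infinity quasi-isomorphism. Since both source and target carry the zero differential, the homology is the underlying space itself, so $\varphi_1 = H^*(\varphi_1)$ is an isomorphism of $\field$-vector spaces; by Lemma \ref{lemma_ainftyhominvertible} it follows that $\varphi$ is an A-infinity isomorphism. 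Hence the transferred A-infinity structure on $H^*(\mathcal{A})$ is unique up to A-infinity isomorphism.
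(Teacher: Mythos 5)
This theorem is imported by the paper from \cite{kadeishvili} and \cite{merkulov} without proof, so there is no in-paper argument to compare against; your proposal is a correct sketch of the standard homotopy-transfer proof from those references (and \cite[Ch.~1]{LefHas}): split $(\mathcal{A},m_1)$ over the field to get a retract with side conditions, define $f_n$ and $m_n^H$ by the tree recursion, and verify the Stasheff and morphism identities either by direct induction or via the bar construction. The only point worth making explicit in the uniqueness step is that the requirement $H^*(\widetilde{f}_1)=\id_{H^*(\mathcal{A})}$ already forces the competing structure to be minimal (i.e.\ to have vanishing differential), which is what licenses the identification $\varphi_1=H^*(\varphi_1)$; with that observed, the reduction to Lemma \ref{lemma_ainftyhominvertible} is exactly right.
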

\begin{definition}
  An A-infinity algebra strictly unital over $L$ is an A-infinity algebra $\mathcal{A}$ together with a strict $A$-infinity homomorphism $L\rightarrow \mathcal{A}$ such that for all $x_1,\dots x_n\in \mathcal{A}$ with $x_j\in L$ for some $j$, we have
  \begin{align*}
    m_n(x_1\otimes\dots\otimes x_n)=0
  \end{align*} 
  unless $n=2$, and 
  \begin{align*}
    m_2(x\otimes 1_L)=m_2(1_L\otimes x)=x
  \end{align*}
  for $x\in \mathcal{A}$, where we identify $L$ with its image in $\mathcal{A}$.\\
	An A-infinity homomorphism $f:\mathcal{A}\rightarrow\mathcal{A}'$ between two A-infinity algebras which are strictly unital over $L$ is called strictly unital over $L$ if the diagram 
	% https://q.uiver.app/#q=WzAsMyxbMCwwLCJcXG1hdGhjYWx7QX0iXSxbMiwwLCJcXG1hdGhjYWx7QX0nIl0sWzEsMSwiTCJdLFswLDEsImYiXSxbMiwwXSxbMiwxXV0=
\[\begin{tikzcd}[ampersand replacement=\&]
	{\mathcal{A}} \&\& {\mathcal{A}'} \\
	\& L
	\arrow["f", from=1-1, to=1-3]
	\arrow[from=2-2, to=1-1]
	\arrow[from=2-2, to=1-3]
\end{tikzcd}\]
commutes.
\end{definition}
\begin{remark}\label{remark_L-L-bimods}
Note that if $\mathcal{A}$ is strictly unital over $L$, then for all $y\in L$, $x_1, \dots, x_n \in \mathcal{A}$ it follows from the A-infinity equation that
\begin{align*}
	m_{n} (x_1\otimes \dots \otimes m_2(x_i\otimes y)\otimes x_{i+1}\otimes\dots\otimes x_n )=m_{n} (x_1\otimes \dots \otimes x_i\otimes m_2(y\otimes x_{i+1})\otimes\dots\otimes x_n ).
\end{align*} 
Thus, A-infinity algebras strictly unital over $L$  are in one-to-one correspondence with strictly unital A-infinity algebras in the monoidal category of $L$-$L$-bimodules.
Since this is a semisimple monoidal category, Since this is a semisimple monoidal category, \cite[Proposition 3.2.4.1]{LefHas} implies that Theorem \ref{thm_kadeishvili} holds analogously in the category of A-infinity algebras strictly unital over $L$, and moreover, by \cite[Lemma 3.2.4.5]{LefHas}, the given quasi-equivalence has a quasi-inverse that is strictly unital over $L$.
Additionally, by Lemma \cite[Lemma 3.2.4.6]{LefHas}, Lemma \ref{lemma_ainftyhominvertible} also holds analogously in the category of A-infinity algebras strictly unital over $L$. \\ 
All A-infinity algebras we consider are strictly unital over $L$; note that the embedding $L\rightarrow \mathcal{A}$ is part of the data. We denote by $\Ainfunit$ the category of strictly unital A-infinity algebras over $L$, whose morphism are the A-infinity homomorphisms which are strictly unital over $L$.
\end{remark}
\begin{example}\label{example_Ext}
	Let $\mathcal{A}$ be a dg-algebra strictly unital over $L$. Then by Kadeishvili's theorem \ref{thm_kadeishvili}, \ref{remark_L-L-bimods}, $H^*(\mathcal{A})$ obtains the structure of an A-infinity algebra strictly unital over $L$ and there is a quasi-isomorphism $f=(f_n)_n:H^*(\mathcal{A})\rightarrow \mathcal{A}$ which is strictly unital over $L$ and has a strictly unital quasi-inverse, such that $H^*(f_1)=\id_{H^*(\mathcal{A})}$.\\
	In particular, consider an exact functor $T:\modu R\rightarrow \modu R'$ preserving projective modules and an $R$-module $M=\bigoplus_{i=1}^n M_i$. Let $P^\cdot(M)$ be a projective resolution of $M$. Then $T(P^\cdot(M))$ is a projective resolution of $T(M)$ and by \ref{remark_long}, $T$ induces a a dg-algebra homomorphism
	\begin{align*}
		\End_R^*(P^\cdot (M))\rightarrow \End_R^*(T(P^\cdot (M))), \varphi\mapsto T(\varphi)
	\end{align*}
	which is strictly unital over $L=k^n$. By the above, $\Ext_R^*(M, M)\cong H^*(\End_R^*(P^\cdot(M)))$ and $\Ext_{R'}^*(T(M), T(M))\cong H^*(\End_{R'}^*(T(P^\cdot (M)))$ thus obtain the structure of A-infinity algebras strictly unital over $L$, and we have $A$-infinity quasi-isomorphisms
	\begin{align*}
		f:\Ext_R^*(M, M)\rightarrow \End_R^*(P^\cdot (M))
	\end{align*}
	and
	\begin{align*}
		f':\Ext_{R'}^*(T(M), T(M))\rightarrow \End_{R'}^*(T(P^\cdot (M)))
	\end{align*}
	which are strictly unital over $L$ with strictly unital quasi-inverses.\\ Hence they give rise to an A-infinity homomorphism
	\begin{align*}
		g:\Ext_R^*(M, M)\rightarrow \Ext_{R'}^*(T(M), T(M))
	\end{align*}
	which is strictly unital over $L$ such that the diagram
% https://q.uiver.app/#q=WzAsNCxbMCwwLCJcXEVuZF9SXiooUF5cXGNkb3QoTSkpIl0sWzIsMCwiXFxFbmRfe1InfV4qKFQoUF5cXGNkb3QoTSkpKSJdLFswLDEsIlxcRXh0X1JeKihNLCBNKSJdLFsyLDEsIlxcRXh0Xipfe1InfShUKE0pLCBUKE0pKSJdLFswLDEsIlxcdmFycGhpXFxtYXBzdG8gVChcXHZhcnBoaSkiXSxbMywxLCJmJyIsMl0sWzIsMCwiZiJdLFsyLDMsImciXV0=
\[\begin{tikzcd}[ampersand replacement=\&]
	{\End_R^*(P^\cdot(M))} \&\& {\End_{R'}^*(T(P^\cdot(M)))} \\
	{\Ext_R^*(M, M)} \&\& {\Ext^*_{R'}(T(M), T(M))}
	\arrow["{\varphi\mapsto T(\varphi)}", from=1-1, to=1-3]
	\arrow["{f'}"', from=2-3, to=1-3]
	\arrow["f", from=2-1, to=1-1]
	\arrow["g", from=2-1, to=2-3]
\end{tikzcd}\]
commutes. In particular, by the composition law for A-infinity homomorphisms, $$f'_1\circ g_1([\varphi])=T(f_1([\varphi])),$$ so that
\begin{align*}
    g_1([\varphi])=H^*(f_1')(g_1([\varphi]))= H^*(f_1'\circ g_1)([\varphi])=[T(\varphi)].
\end{align*}
\end{example}
\subsection{A-infinity categories}
\begin{definition}
    An A-infinity category $\mathcal{A}$  (over $\field$) is given by a class $\mathcal{A}$ of objects and for any two objects $X, Y\in \mathcal{A}$ a graded vector space $\mathcal{A}(X,Y)=\bigoplus_{n\in \mathbb{Z}}\mathcal{A}(X,Y)_n$ together with a collection of maps
    \begin{align*}
        (m_n^{\mathcal{A}})_{X_0, X_1, \dots , X_n}:\mathcal{A}(X_{n-1}, X_n)\otimes \mathcal{A}(X_{n-2}, X_{n-1})\otimes \dots \otimes \mathcal{A}(X_0, X_1)\rightarrow \mathcal{A}(X_0, X_n)
    \end{align*}
    for any objects $X_0,\dots , X_n$, which are homogeneous of degree $2-n$ such that the A-infinity equation
     \begin{align*}
    \sum_{r+s+t=n}(-1)^{rs+t}m_{r+t+1}(1^{\otimes r}\otimes m_s\otimes 1^{\otimes t})=0.
  \end{align*}
    is satisfied. Note that we usually omit the second subscript, and, if the A-infinity algebra in question  is clear, also the superscript of $(m_n^{\mathcal{A}})_{X_1, \dots , X_n}$.\\
    An A-infinity functor $F:\mathcal{A}\rightarrow \mathcal{A}'$ is a collection $F=(F_n)_{n\geq 0}$ of maps
    \begin{align*}
        F_0:\mathcal{A}\rightarrow \mathcal{A}'\\
        (F_n)_{X_0, \dots, X_n}: \mathcal{A}(X_{n-1}, X_n)\otimes \mathcal{A}(X_{n-2}, X_{n-1})\otimes \dots \otimes \mathcal{A}(X_0, X_1)\rightarrow \mathcal{A}'(F_0(X_0), F_0(X_n)) \textup{ for }n\geq 1,
    \end{align*}
    where $F_n$ is homogeneous of degree $1-n$ for $n\geq 1$ and such that for all $n\geq 1$
    \begin{align*}
        \sum_{r+s+t=n, s\geq 1}(-1)^{rs+t}F_{r+t+1}(1^{\otimes r}\otimes m_s\otimes 1^{\otimes t})=\sum_{j_1+\dots+ j_k=n, j_l\geq 1}(-1)^{\sum_{l=1}^k(1-j_l)\sum_{l'=1}^l j_{l'}}m_k(F_{j_1}\otimes \dots \otimes F_{j_k}).
    \end{align*}
    If $f, g$ are A-infinity functors then their composition is the A-infinity functor given by
    \begin{align*}
        (F\circ G)_0&=F_0\circ G_0\\
        (F\circ G)_n&=\sum_{j_1+\dots+j_k=n, j_l\geq 1}(-1)^{\sum_{l=1}^k(1-j_l)\sum_{l'=1}^l j_{l'}}F_k(G_{l_1}\otimes\dots\otimes G_{l_k}) \textup{ for }n\geq 1.
    \end{align*}
\end{definition}
\begin{remark}
    Let $\mathcal{A}$ be an A-infinity category. Then for every object $X\in \mathcal{A}$, $((m_n^{\mathcal{A}})_{X, \dots , X})_{n\in \mathbb{N}}$ endows $\mathcal{A}(X,X)$ with the structure of an A-infinity algebra.
\end{remark}
\begin{definition}
   An A-infinity category $\mathcal{A}$ is called strictly unital if for every object $X$ there is $\id_X\in \mathcal{A}(X,X)_0$ such that
   \begin{itemize}
    \item $m_2^{\mathcal{A}}(y\otimes \id_X)=y$ for all $y\in \mathcal{A}(X,Y)$,
    \item $m_2^{\mathcal{A}}(\id_X\otimes y)=y$ for all $y\in \mathcal{A}(Y,X)$,
    \item $m_n^{\mathcal{A}}(y_n\otimes \dots \otimes y_{k+1}\otimes \id_X\otimes y_{k-1}\otimes \dots \otimes y_1)=0$ for all $n\neq 2$, $y_i\in \mathcal{A}(X_{i-1}, X_i)$, where $X_k=X_{k-1}=X$.
   \end{itemize}
    An A-infinity functor $F:\mathcal{A}\rightarrow \mathcal{A}'$ between two strictly unital A-infinity categories is called strictly unital if
    \begin{itemize}
        \item $F_1(\id_X)=\id_{F_0(X)}$ for all $X\in \mathcal{A}$,
        \item $F_n(y_n\otimes \dots \otimes y_{k+1}\otimes \id_X\otimes ,y_{k-1}\otimes \dots \otimes y_1)=0$ for all $n>1$, $y_i\in \mathcal{A}(X_{i-1}, X_i)$, where $X_k=X_{k-1}=X$.
    \end{itemize}
    
\end{definition}
\begin{definition}\label{definition_homology_ainfty_cat}
    Let $\mathcal{A}$ be a strictly unital A-infinity category. Then $H^*(\mathcal{A})$ is the ($\mathbb{Z}$-graded) category whose objects are the objects of $\mathcal{A}$, whose morphisms are given by 
    \begin{align*}
        H^*(\mathcal{A})(X,Y)=H^*(\mathcal{A}(X, Y), (m_1^{\mathcal{A}})_{X,Y})
    \end{align*}
    and where composition of morphisms is induced by $m_2^{\mathcal{A}}$.
    A strictly unital A-infinity functor $F:\mathcal{A}\rightarrow \mathcal{A}'$ gives rise to a functor
    \begin{align*}
        H^*(F):H^*(\mathcal{A})\rightarrow H^*(\mathcal{A}'), X\mapsto f_0(X), \varphi+ \im(m_1)\mapsto f_1(\varphi)+\im(m_1). 
    \end{align*}
    of $\mathbb{Z}$-graded categories.
    In this way, $H^*$ becomes a functor from strictly unital A-infinity categories to $\mathbb{Z}$-graded categories.
    $F$ is called a quasi-equivalence if $H^*(F)$ is an equivalence of categories.\\
    Similarly, $H^0$ defines a functor from strictly unital A-infinity categories to  categories.
\end{definition}

\subsection{Twisted Modules}
\begin{definition}\cite[Definition 7.2.0.2, 6.1.2.2]{LefHas}
	Let $\mathcal{A}$ be an A-infinity algebra strictly unital over $L$. The A-infinity category $\add^{\mathbb{Z}}(\mathcal{A})$ is the A-infinity category whose objects are graded $L$-modules and whose homomorphisms are given by
	\begin{align*}
		\add^{\mathbb{Z}}(\mathcal{A})(X,Y)=\mathcal{A}\otimes_{L\otimes L^{\op}}\End_{\field}(X, Y)
	\end{align*}
	with the grading being the usual grading on the tensor product of graded spaces, and the A-infinity multiplications being
	\begin{align*}
		m_n^{\add^{\mathbb{Z}}(\mathcal{A})}((a_1\otimes g_1)\otimes \dots\otimes (a_n\otimes g_n)):=(-1)^{n \sum_{i=1}^n|g_i|+\sum_{i<j} |a_i||g_j|}m_n(a_1\otimes\dots\otimes a_n)\otimes (g_1\circ\dots\circ g_n).
	\end{align*}
	The category of twisted complexes over $\mathcal{A}$ is the A-infinity category whose objects are pairs $(X, w_X)$ of a graded $L$-module $X$ and an element $w_X\in (\add^{\mathbb{Z}}(\mathcal{A})(X,Y))_1$ such that
	\begin{enumerate}
		\item $w_X$ is triangular
		\item \begin{align*}
			\sum_{n=1}^\infty m_n^{\add^{\mathbb{Z}}(\mathcal{A})}(w_X^{\otimes n})=0,
		\end{align*}
    where the sum is finite since  $w_X$ is triangular,
	\end{enumerate}
	with A-infinity multiplications given by 
	\begin{align*}
		m_n^{\tw}(x_1\otimes \dots \otimes x_n):=\sum_{k=n}^\infty \sum_{j_0+\dots +j_n=k-n}(-1)^{\sum_{l=1}^n lj_l} m_k^{\add{\mathbb{Z}}(\mathcal{A})}(w_{X_0}^{\otimes j_0}\otimes x_1\otimes w_{X_1}^{\otimes j_1}\otimes \dots\otimes  w_{X_{n-1}}^{\otimes j_{n-1}}\otimes x_n \otimes w_{X_n}^{j_n}).
	\end{align*}
	 The category of twisted modules is the full subcategory of $\tw(\mathcal{A})$ given by the objects $(X, w_X)$ where $X$ is concentrated in degree zero.
\end{definition}
\begin{remark}\label{remark_A-infty-cat_end}
    Note that we have a strict A-infinity isomorphism
    \begin{align*}
        \mathcal{A}\rightarrow \add^{\mathbb{Z}}(\mathcal{A})(L, L)=\mathcal{A}\otimes_{L\otimes L^{\op}}\End_{\field}(L), a\mapsto a\otimes\id_L,
    \end{align*}
    where $L$ is viewed as being concentrated in degree zero.
\end{remark}
\begin{lemma}\label{lemma_twfunctor}\cite[Section 7.2, p.166]{LefHas}
	Let $f:\mathcal{A}\rightarrow\mathcal{A}'$ be an A-infinity homomorphism strictly unital over $L$. Then we obtain induced A-infinity functors
	\begin{align*}
		\add^{\mathbb{Z}}(f):\add^{\mathbb{Z}}(\mathcal{A})&\rightarrow \add^{\mathbb{Z}}(\mathcal{A}'),\\
		 X&\mapsto X,\\
		 \add^{\mathbb{Z}}(f)_n((a_1\otimes g_1)\otimes \dots \otimes (a_n\otimes g_n))&=(-1)^{(n-1)\sum_{i=1}^n|g_i|+\sum_{i< j}|a_i||g_j|}f_n(a_1\otimes \dots \otimes a_n)\otimes (g_1\circ \dots\circ g_n),
	\end{align*}
	as well as
	\begin{align*}
		\tw(f):\tw(\mathcal{A})&\rightarrow \tw(\mathcal{A}'),\\
		 (X, w_X)&\mapsto \left(X, \sum_{n=1}^\infty \add^{\mathbb{Z}}(f)_n(w_X)\right),\\
		 \tw(f)_n(x_1\otimes \dots \otimes x_n)&=\sum_{k=n}^\infty\sum_{j_0+\dots+j_n=k}(-1)^{\sum_{l=1}^n lj_l}\add^{\mathbb{Z}}(f)_k(w_X^{\otimes j_0}\otimes x_1\otimes w_X^{\otimes j_1}\dots \otimes w_X^{\otimes j_{n-1}}\otimes x_n\otimes  w_X^{\otimes j_{n}} ),
	\end{align*}
	and its restriction
	\begin{align*}
		\twmod(f):\twmod(\mathcal{A})\rightarrow \twmod(\mathcal{A}'), (X, w_X)\mapsto \left(X, \sum_n \add^{\mathbb{Z}}(f)_n(w_X)\right), \twmod(f)_n=\tw(f)_n.
	\end{align*}
	This gives rise to functors
	\begin{align*}
		\add^{\mathbb{Z}}, \tw, \twmod
	\end{align*}
	from A-infinity algebras strictly unital over $L$ to strictly unital A-infinity categories.\\
	 Note that the restriction of $\twmod$ as defined here to the subcategory of $\Ainfunit$ given by strictly unital dg-algebras and strictly unital dg-algebra homomorphisms is indeed the functor $\twmod$ from the previous section.
\end{lemma}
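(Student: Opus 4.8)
The plan is to verify the three induced functors and the functoriality claim in stages, following \cite[Section 7.2]{LefHas} but making explicit the points specific to the strictly-unital-over-$L$ setting. First I would handle $\add^{\mathbb{Z}}(f)$. The structural observation is that $m_n^{\add^{\mathbb{Z}}(\mathcal{A})}$ factors as ``apply $m_n$ to the $\mathcal{A}$-tensor factors, compose $g_1\circ\dots\circ g_n$ on the $\Hom_{\field}$-tensor factors, times a Koszul sign depending only on the degrees of the $a_i$ and $g_i$'', and that $\add^{\mathbb{Z}}(f)_n$ has exactly the same shape with $m_n$ replaced by $f_n$. Substituting the formula for $\add^{\mathbb{Z}}(f)_n$ into the A-infinity functor equations, the $\Hom_{\field}$-factors on both sides collapse to the single iterated composition $g_1\circ\dots\circ g_n$ and cancel, so the identity reduces---after a one-time bookkeeping check that the Koszul signs match, the sign conventions in the two definitions being arranged precisely for this---to the defining A-infinity homomorphism equation for $f$. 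Strict unitality of $\add^{\mathbb{Z}}(f)$ is immediate from that of $f$: since $\id_X=1_L\otimes\id_X$ one has $\add^{\mathbb{Z}}(f)_1(\id_X)=f_1(1_L)\otimes\id_X=1_L\otimes\id_X=\id_X$, while for $n>1$ any tensor string containing $1_L\otimes\id_X$ is killed because $f_n$ vanishes on strings containing $1_L$. Likewise $\add^{\mathbb{Z}}(\id_{\mathcal{A}})=\id$ and $\add^{\mathbb{Z}}(g\circ f)=\add^{\mathbb{Z}}(g)\circ\add^{\mathbb{Z}}(f)$ reduce, by the same factor-wise argument, to the composition law for A-infinity homomorphisms.

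Next I would pass to $\tw(f)$. One must check that $(X,w_X)\mapsto\bigl(X,\ \sum_n\add^{\mathbb{Z}}(f)_n(w_X)\bigr)$ is well-defined, i.e. that $w_X^{f}:=\sum_n\add^{\mathbb{Z}}(f)_n(w_X)$ is again triangular and again solves the Maurer--Cartan equation. For triangularity, fix a presentation $w_X=\sum_i a_i\otimes g_i$ with $g_{i_N}\circ\dots\circ g_{i_1}=0$ for all index tuples; every term of $\add^{\mathbb{Z}}(f)_n(w_X)$ has the form $(\text{element of }\mathcal{A}')\otimes(g_{i_n}\circ\dots\circ g_{i_1})$, so $w_X^{f}$ has a presentation whose $\Hom_{\field}$-parts lie in the span of iterated compositions of the $g_i$, and the same $N$ witnesses triangularity. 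The Maurer--Cartan assertion is the statement that an A-infinity functor sends Maurer--Cartan elements to Maurer--Cartan elements: expand $\sum_k m_k^{\add^{\mathbb{Z}}(\mathcal{A}')}\bigl((w_X^{f})^{\otimes k}\bigr)$, substitute $w_X^{f}=\sum_n\add^{\mathbb{Z}}(f)_n(w_X)$, regroup the resulting sum according to which $\add^{\mathbb{Z}}(f)_{j_l}$-block each tensor factor came from, apply the A-infinity functor equations for $\add^{\mathbb{Z}}(f)$, and collapse the remainder using $\sum_n m_n^{\add^{\mathbb{Z}}(\mathcal{A})}(w_X^{\otimes n})=0$. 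That $\tw(f)$ is itself an A-infinity functor is the same kind of computation: the formula for $m_n^{\tw}$ is ``apply $m_k^{\add^{\mathbb{Z}}}$ with curvature strings $w^{\otimes j}$ inserted in the $n+1$ gaps'' and $\tw(f)_n$ is ``apply $\add^{\mathbb{Z}}(f)_k$ with such curvature insertions''; plugging these into the A-infinity functor equation for $\tw(f)$ and reorganizing the double sum over compositions and gap-insertions reduces it to the A-infinity functor equation for $\add^{\mathbb{Z}}(f)$ together with repeated use of the Maurer--Cartan equation for $w_X$, which is exactly what permits merging adjacent curvature strings. Functoriality of $\tw$ follows by applying the same reorganization to $\tw(g\circ f)$.

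Finally, $\twmod(f)$ is the restriction of $\tw(f)$ to the full subcategory of objects $(X,w_X)$ with $X$ concentrated in degree zero; since $\add^{\mathbb{Z}}(f)_n$ does not change the underlying graded $L$-module, such objects go to such objects, so $\twmod(f)$ is well-defined and inherits functoriality and strict unitality from $\tw$. For the compatibility with Section \ref{section_dg}: if $f$ is a strictly unital dg-algebra homomorphism then $m_n=0$ for $n>2$ on both sides and $f_n=0$ for $n>1$, so $\add^{\mathbb{Z}}(f)_n=0$ for $n>1$ and $\add^{\mathbb{Z}}(f)_1(a\otimes g)=f(a)\otimes g$, which is exactly $\add(f)$; hence $w_X^{f}=\add(f)(w_X)$, $\twmod(f)_1(x)=\add(f)(x)$, and $\twmod(f)_n=0$ for $n\geq 2$, recovering verbatim the dg-algebra definition of $\twmod(f)$ given earlier.

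The main obstacle is the bookkeeping in the middle stage: matching all Koszul signs and controlling the combinatorics of curvature insertions in the A-infinity functor equation for $\tw(f)$. This is carried out in \cite[Section 7.2]{LefHas}; conceptually it becomes formal in the bar-construction language, where $\add^{\mathbb{Z}}$, $\tw$ and $\twmod$ are realized on cofree conilpotent coalgebras and $\add^{\mathbb{Z}}(f)$, $\tw(f)$ are simply the induced morphisms of codifferential coalgebras, so that all equations above reduce to ``a composite of coalgebra maps is a coalgebra map''.
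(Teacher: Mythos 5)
Your proposal is correct. The paper itself gives no proof of this lemma -- it is quoted directly from L\'ef\`evre-Hasegawa \cite[Section 7.2]{LefHas} -- and your outline (reduce the A-infinity functor equations for $\add^{\mathbb{Z}}(f)$ to the A-infinity homomorphism equations for $f$ by collapsing the $\Hom_{\field}$-factors, check triangularity and Maurer--Cartan preservation for the image twist, reorganize curvature insertions for $\tw(f)$, and observe that everything degenerates to the dg-level definition when $f$ is strict) is exactly the standard verification carried out in that reference, with the bar-construction remark giving the conceptually clean packaging. One incidental point your dg-compatibility check implicitly corrects: the inner sum in the stated formula for $\tw(f)_n$ should run over $j_0+\dots+j_n=k-n$ (as in the definition of $m_n^{\tw}$), not $j_0+\dots+j_n=k$; with that reading your degeneration argument goes through verbatim.
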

Of most importance to us is the following result, which can be found in \cite[Lemma 3.25]{Seidel}:

\begin{proposition}\label{proposition_twmod_equiv}
	Let $f:\mathcal{A}\rightarrow \mathcal{A}'$ be an A-infinity quasi-isomorphism. Then $\add^{\mathbb{Z}}(f), \tw(f)$ and $\twmod(f)$ are quasi-equivalences.
\end{proposition}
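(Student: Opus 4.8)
The plan is to establish the three assertions in the order $\add^{\mathbb{Z}}(f)$, $\tw(f)$, $\twmod(f)$, each building on the previous. For $\add^{\mathbb{Z}}(f)$: it is the identity on objects, so it suffices to see that on each morphism space $\add^{\mathbb{Z}}(f)_1 = f_1\otimes_{L\otimes L^{\op}}\id$ is a quasi-isomorphism. Since $L$ is semisimple, $-\otimes_{L\otimes L^{\op}}\End_{\field}(X,Y)$ is exact, so $H^*$ commutes with it and $H^*(\add^{\mathbb{Z}}(f)_1)=H^*(f_1)\otimes\id$ is an isomorphism; hence $H^*(\add^{\mathbb{Z}}(f))$ is an equivalence.

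For the full faithfulness of $H^*(\tw(f))$ — and hence, since $\twmod$ has the same morphism complexes, of $H^*(\twmod(f))$ — fix $(X,w_X)$ and $(Y,w_Y)$. Triangularity of $w_X$ and $w_Y$ provides finite flags of sub-$L$-modules of $X$ and $Y$ that are strictly lowered by the respective twists; these induce a finite exhaustive filtration on the graded space $\tw(\mathcal{A})((X,w_X),(Y,w_Y))=\mathcal{A}\otimes_{L\otimes L^{\op}}\End_{\field}(X,Y)$ with respect to which $m_1^{\tw}$ and $\tw(f)_1$ are filtered, and on whose associated graded all terms involving a twist drop out, leaving precisely $m_1^{\mathcal{A}}\otimes\id$ and $f_1\otimes\id$. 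The latter is a quasi-isomorphism by the first step, so the comparison theorem for finitely filtered complexes — equivalently, induction on the filtration length together with the five lemma applied to the long exact homology sequences — shows that $\tw(f)_1$ is a quasi-isomorphism. Thus $H^*(\tw(f))$ and $H^*(\twmod(f))$ are fully faithful.

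For essential surjectivity I would induct on the length $N$ of the triangular filtration of an object $(Y,w_Y)$ of $\tw(\mathcal{A}')$. If $N\le 1$ then $w_Y=0$ and $(Y,0)=\tw(f)_0(Y,0)$ lies in the image. If $N>1$, split $Y=Y_{N-1}\oplus Z$ as $L$-modules with $Z=Y/Y_{N-1}$; in block form $w_Y=\left(\begin{smallmatrix} v&\delta\\ 0&0\end{smallmatrix}\right)$, where $v:=w_Y|_{Y_{N-1}}$ is a Maurer--Cartan element triangular of length $N-1$, and, by unwinding the Maurer--Cartan equation for $w_Y$, $\delta$ is a degree-one cocycle of the complex $\tw(\mathcal{A}')((Z,0),(Y_{N-1},v))$ — so $(Y,w_Y)$ is the extension of $(Z,0)$ by $(Y_{N-1},v)$ determined by $\delta$. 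By the inductive hypothesis there are $(X',v^X)\in\tw(\mathcal{A})$ and an $H^0$-isomorphism $\phi\colon\tw(f)_0(X',v^X)\to(Y_{N-1},v)$, while $\tw(f)_0(Z,0)=(Z,0)$. Full faithfulness of $H^*(\tw(f))$ then yields a degree-one cocycle $\tilde\delta\in\tw(\mathcal{A})((Z,0),(X',v^X))$ with $[\tw(f)_1(\tilde\delta)]=[\phi]^{-1}\circ[\delta]$; the pair $(\tilde Y,\tilde w):=\bigl(X'\oplus Z,\left(\begin{smallmatrix} v^X&\tilde\delta\\ 0&0\end{smallmatrix}\right)\bigr)$ is then a twisted complex over $\mathcal{A}$ (the block computation run backwards supplies triangularity of length $N$ and the Maurer--Cartan equation), and computing $\tw(f)_0(\tilde Y,\tilde w)$ block-wise identifies it with the extension determined by the cocycle $\tw(f)_1(\tilde\delta)$. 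Since $[\phi]\circ[\tw(f)_1(\tilde\delta)]=[\delta]$ and $\phi$ is an isomorphism in $H^0$, that extension is $H^0$-isomorphic to the extension determined by $\delta$, i.e. to $(Y,w_Y)$; hence $(Y,w_Y)$ is in the essential image. If moreover $(Y,w_Y)$ lies in $\twmod(\mathcal{A}')$, then $Y_{N-1}$, $Z$, $X'$ and $\tilde Y$ are all concentrated in degree zero and $\delta,\tilde\delta$ automatically lie in the degree-one parts of the relevant tensor products, so the induction stays inside $\twmod$; thus $H^0(\twmod(f))$ is essentially surjective as well.

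The step I expect to be the main obstacle is this last one: verifying that $\tw(f)_0$ carries the extension object $(\tilde Y,\tilde w)$ to an object isomorphic to the extension determined by $\tw(f)_1(\tilde\delta)$ requires tracking the block decomposition of $\sum_n\add^{\mathbb{Z}}(f)_n(\tilde w^{\otimes n})$ through the Koszul signs, together with the routine but slightly delicate fact that two extensions determined by cocycle morphisms agreeing after composition with an $H^0$-isomorphism are themselves isomorphic in $H^0$. By contrast, the filtered-complex argument for full faithfulness, and the statement for $\add^{\mathbb{Z}}(f)$, are essentially formal.
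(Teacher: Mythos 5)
The paper does not prove this proposition at all: it is quoted directly from Seidel \cite[Lemma 3.25]{Seidel}, so there is no in-text argument to compare against. Your proof is correct and is essentially the standard argument given in that reference — the length/triangularity filtration on the morphism complexes reduces full faithfulness of $H^*(\tw(f))$ to the quasi-isomorphism $f_1\otimes\id$ on the associated graded, and essential surjectivity follows by induction on the length of the flag, realizing each twisted object as an iterated cone and using that cones of cohomologous cocycles transported along an $H^0$-isomorphism remain isomorphic (the point you rightly flag as the only delicate step, which is handled by writing down the closed block morphism $\bigl(\begin{smallmatrix}\phi & h\\ 0 & \id_Z\end{smallmatrix}\bigr)$ for a homotopy $h$ witnessing $[\phi]\circ[\tw(f)_1(\tilde\delta)]=[\delta]$).
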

\section{Truncation of A-infinity algebras}\label{section_trunc}
In this section, we consider truncation and augmentation of A-infinity algebras, and show a universal property of the augmentation. This is useful later, since for a basic finite-dimensional algebra $B$ with semisimple part $L^B$,  $\Ext_B^*(L^B, L^B)$ is augmented over $L^B$.
\begin{definition}
    Denote by $\Ainfunit^+$ the full subcategory of $\Ainfunit$ given by the A-infinity algebras $\mathcal{A}$ such that $\mathcal{A}_n=(0)$ for $n<0$ and $\mathcal{A}_0=L$. Such A-infinity algebras are sometimes referred to as strictly coconnected, see for example \cite{coconnected}.
    Let 
    \begin{align*}
        I_L:\Ainfunit^+\rightarrow \Ainfunit
    \end{align*}
    be the canonical embedding.
\end{definition}
\begin{proposition}\label{proposition_restriction}
	There is a well-defined functor
	\begin{align*}
		\trunc_L: \Ainfunit\rightarrow \Ainfunit^+, \mathcal{A}\mapsto L\oplus \mathcal{A}^{>0}, f\mapsto f_{|L\oplus\mathcal{A}^{>0}}
	\end{align*}
\end{proposition}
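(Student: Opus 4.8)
The plan is to prove everything by a grading count, using that $m_n$ is homogeneous of degree $2-n$, that the components $f_n$ of an A-infinity homomorphism are homogeneous of degree $1-n$, and that strict unitality over $L$ annihilates every $m_n$ with $n\neq 2$ and every $f_n$ with $n\neq 1$ as soon as one of their arguments lies in $L$. The single observation that makes the proof go through is that $L\oplus\mathcal{A}^{>0}$ is closed under all the operations $m_n$ and under all components $f_n$ of strictly unital homomorphisms; once this is known, the A-infinity equations in $\mathcal{A}$, respectively the A-infinity homomorphism equations between $\mathcal{A}$ and $\mathcal{A}'$, restrict verbatim to the truncations and there is nothing left to check.

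First I would show $\trunc_L(\mathcal{A})=L\oplus\mathcal{A}^{>0}\in\Ainfunit^+$. Let $x_1,\dots,x_n\in L\oplus\mathcal{A}^{>0}$. If some $x_i\in L$, strict unitality forces $m_n(x_1\otimes\dots\otimes x_n)=0$ unless $n=2$, and for $n=2$ the unit axioms, together with the fact that the structure map $L\to\mathcal{A}$ is a strict A-infinity homomorphism (so that $m_2$ agrees on $L\otimes L$ with the multiplication of $L$) and that $m_2$ makes $\mathcal{A}$ a graded $L$-$L$-bimodule (Remark \ref{remark_L-L-bimods}, so that $m_2(y\otimes-)$ and $m_2(-\otimes y)$ preserve degree for $y\in L$), show that the output again lies in $L\oplus\mathcal{A}^{>0}$. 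If instead all $x_i\in\mathcal{A}^{>0}$, then $m_n(x_1\otimes\dots\otimes x_n)$ has degree at least $n+(2-n)=2$, so it lies in $\mathcal{A}^{>0}$. Hence $L\oplus\mathcal{A}^{>0}$ is a sub-A-infinity-algebra of $\mathcal{A}$, strictly unital over $L$ via the same unit; since it is by construction zero in negative degrees and equal to $L$ in degree $0$, it is an object of $\Ainfunit^+$.

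Next I would check that for $f=(f_n)_n\colon\mathcal{A}\to\mathcal{A}'$ strictly unital over $L$ the restricted family $\bigl(f_n|_{(L\oplus\mathcal{A}^{>0})^{\otimes n}}\bigr)_n$ is a morphism $\trunc_L(\mathcal{A})\to\trunc_L(\mathcal{A}')$ in $\Ainfunit^+$. The same case split applies to the arguments: if one of them lies in $L$ and $n\geq 2$ then $f_n$ vanishes there; if one lies in $L$ and $n=1$ then $f_1$ restricts to the canonical inclusion $L\hookrightarrow\mathcal{A}'$; and if all arguments lie in $\mathcal{A}^{>0}$ then $f_n(x_1\otimes\dots\otimes x_n)$ has degree at least $n+(1-n)=1$. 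In all cases the value lands in $L\oplus(\mathcal{A}')^{>0}$, so the target is correct; the A-infinity homomorphism equations for the restricted family are the restrictions of those for $(f_n)_n$, all of whose intermediate terms already take values in the appropriate truncations by the previous paragraph, and strict unitality over $L$ is inherited.

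Finally, functoriality is immediate. One has $\trunc_L(\id_{\mathcal{A}})=\id_{\trunc_L(\mathcal{A})}$ because the identity morphism is strict with first component $\id_{\mathcal{A}}$, and $\trunc_L(f\circ g)=\trunc_L(f)\circ\trunc_L(g)$ because the composition formula $(f\circ g)_n=\sum_{j_1+\dots+j_k=n}(-1)^{\sum_{l=1}^k(1-j_l)\sum_{l'=1}^l j_{l'}}f_k(g_{j_1}\otimes\dots\otimes g_{j_k})$ only ever feeds the $f_k$ with elements of the truncation of the middle algebra, so restricting it to $L\oplus\mathcal{A}^{>0}$ reproduces precisely the analogous formula for $\trunc_L(f)$ and $\trunc_L(g)$. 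I do not expect a genuine obstacle in this proposition; the only point requiring care is the bookkeeping around the degree-zero summand, namely invoking strict unitality consistently so that the part of $\mathcal{A}_0$ not contained in $L$ (and everything in negative degrees) never re-enters through an operation, which is exactly the closure statement isolated at the start.
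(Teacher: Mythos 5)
Your proposal is correct and follows essentially the same argument as the paper: a degree count using $|m_n|=2-n$ and $|f_n|=1-n$ for inputs in $\mathcal{A}^{>0}$, combined with strict unitality over $L$ to handle inputs from $L$, showing $L\oplus\mathcal{A}^{>0}$ is closed under all structure maps and restricted homomorphism components. Your explicit verification of functoriality (identity and composition) is a point the paper leaves implicit, but the core argument is the same.
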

\begin{proof}
	Suppose $\mathcal{A}$ is a strictly unital A-infinity algebra over $L$.
	Then for $n\in \mathbb{N}$, $x_1, \dots, x_n \in L\oplus \mathcal{A}^{>0}$ we have
	\begin{itemize}
		\item $|m_n(x_1\otimes\dots\otimes x_n)|\geq n+|m_n|=n+2-n=2$; for  $x_1, \dots , x_j\in \mathcal{A}^{>0}$,
		\item $m_n(x_1\otimes\dots\otimes x_n)=0$ for $n=1$ or $n>2$ and $x_j\in L$ for some $1\leq j\leq n$
		\item $|m_2(x_1\otimes x_2)|\geq 1$ for $\max\{|x_1|, |x_2|\}\geq 1$, and
		\item $m_2(x_1, x_2)\in L$ for $x_1, x_2\in L$.
	\end{itemize}
	Hence $L\oplus \mathcal{A}^{>0}$ is a well-defined strictly unital A-infinity subalgebra of $\mathcal{A}$.
	Suppose \begin{align*}
		f=(f_n)_n: \mathcal{A}\rightarrow \mathcal{B}
	\end{align*}
	is a strictly unital A-infinity homomorphism.
	Then for $n\in \mathbb{N}$, $x_1, \dots, x_n \in L\oplus \mathcal{A}^{>0}$ we have
	\begin{itemize}
		\item $|f_n(x_1\otimes\dots\otimes x_n)|\geq n+|f_n|=n+1-n=1$; for  $x_1, \dots , x_j\in \mathcal{A}^{>0}$,
		\item $f_n(x_1\otimes\dots\otimes x_n)=0$ for $n>2$ and $x_j\in L$ for some $1\leq j\leq n$,
		\item $f_1(x_1)=x_1$ for $x\in L$, and
		\item $|f_1(x_1)|=|x_1|\geq 1$ for $|x_1|\geq 1$.
	\end{itemize}
	Thus 
	\begin{align*}
		f_{|L\oplus\mathcal{A}^{>0}}:L\oplus \mathcal{A}^{>0}\rightarrow L\oplus \mathcal{B}^{>0}
	\end{align*}
	is a well-defined strictly unital A-infinity homomorphism over $L$.
\end{proof}
Moreover, we have the following:
\begin{proposition}\label{proposition_adjoint_truncation}
    $\trunc_L$ is right adjoint to the canonical inclusion  $I_L:\Ainfunit^+\rightarrow \Ainfunit$, with the counit of the adjunction given by the canonical inclusion
    \begin{align*}
        \varepsilon:I_L\circ \trunc_L\rightarrow \id_{\Ainfunit}, \\
         \varepsilon_\mathcal{A}:L\oplus \mathcal{A}^{>0}\rightarrow \mathcal{A},
         (\varepsilon_\mathcal{A})_1(a)=a, (\varepsilon_{\mathcal{A}})_n=0 \textup{ for }n>1.
    \end{align*}
    and the unit given by the identity
    \begin{align*}
        \eta:\id_{\Ainfunit^+}\rightarrow \trunc_L\circ I_L, \\
         \eta_\mathcal{B}=\id_{\mathcal{B}}:\mathcal{B}=L\oplus \mathcal{B}^{>0}\rightarrow L\oplus \mathcal{B}^{>0}.
    \end{align*}
\end{proposition}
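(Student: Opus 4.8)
The plan is to establish the adjunction by producing a bijection of hom-sets
\[
  \Hom_{\Ainfunit}(I_L\mathcal{B}, \mathcal{A}) \;\cong\; \Hom_{\Ainfunit^{+}}(\mathcal{B}, \trunc_L\mathcal{A}),\qquad \mathcal{B}\in\Ainfunit^{+},\ \mathcal{A}\in\Ainfunit,
\]
natural in both arguments, and then to read off that the unit and counit it induces are exactly the transformations $\eta$ and $\varepsilon$ written in the statement. Equivalently one could just verify the two triangle identities for the given $\eta$, $\varepsilon$; I will comment on this at the end.

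The one step that is not purely formal --- and hence the ``hard part'', though it is short --- is the claim that every strictly unital A-infinity homomorphism $f=(f_n)_n\colon I_L\mathcal{B}\to\mathcal{A}$ automatically has image inside $L\oplus\mathcal{A}^{>0}=\trunc_L\mathcal{A}$. This is precisely the degree bookkeeping already done in the proof of Proposition \ref{proposition_restriction}: if some argument lies in $L$, then strict unitality kills $f_n$ for $n>1$ and makes $f_1$ act as the identity on $L$; and if all arguments lie in $\mathcal{B}^{>0}$, then since $f_n$ has degree $1-n$ its value has degree $\geq n+(1-n)=1$. I would therefore define the forward map by $f\mapsto \bar f$, the corestriction of $f$ to $\trunc_L\mathcal{A}$; this lands in $\Ainfunit^{+}$ because $\mathcal{B}=\trunc_L(I_L\mathcal{B})$ already, $\mathcal{B}$ being concentrated in degrees $\geq 0$ with $\mathcal{B}_0=L$. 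For the inverse I would use that, again by Proposition \ref{proposition_restriction}, $L\oplus\mathcal{A}^{>0}$ is a strictly unital A-infinity subalgebra of $\mathcal{A}$, so its inclusion is the well-defined \emph{strict} homomorphism $\varepsilon_{\mathcal{A}}$, and send $g\mapsto \varepsilon_{\mathcal{A}}\circ g$. Since $\varepsilon_{\mathcal{A}}$ is strict, the composition formula collapses to $(\varepsilon_{\mathcal{A}}\circ g)_n=(\varepsilon_{\mathcal{A}})_1\circ g_n$ with sign $(-1)^{(1-n)n}=1$ (the exponent is always even), so $\varepsilon_{\mathcal{A}}\circ g$ is just $g$ viewed as landing in $\mathcal{A}$; the two assignments are then visibly mutually inverse.

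Next I would check naturality. In $\mathcal{B}$ it is immediate, as precomposition with $I_L(e)$ does not alter the underlying family of maps. In $\mathcal{A}$ it amounts to $\trunc_L(h)\circ\bar f=\overline{h\circ f}$ for $h\colon\mathcal{A}\to\mathcal{A}'$, which holds because $\trunc_L(h)$ is by construction $h$ restricted-and-corestricted along the truncations (well-defined by the same degree count), so both sides equal the corestriction of $h\circ f$. A hom-set bijection natural in both variables is exactly an adjunction $I_L\dashv\trunc_L$. Finally, the counit at $\mathcal{A}$ is the preimage of $\id_{\trunc_L\mathcal{A}}$, namely $\varepsilon_{\mathcal{A}}\circ\id=\varepsilon_{\mathcal{A}}$, and the unit at $\mathcal{B}$ is the image of $\id_{I_L\mathcal{B}}$, namely its corestriction $=\id_{\mathcal{B}}$ under $\trunc_L(I_L\mathcal{B})=\mathcal{B}$, matching the statement. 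As an alternative to the whole hom-set computation, one can instead verify directly that $\varepsilon_{I_L\mathcal{B}}\circ I_L(\eta_{\mathcal{B}})=\id_{I_L\mathcal{B}}$ and $\trunc_L(\varepsilon_{\mathcal{A}})\circ\eta_{\trunc_L\mathcal{A}}=\id_{\trunc_L\mathcal{A}}$; since $\eta$ is the identity transformation and $\trunc_L$ is idempotent on the objects appearing here, both reduce to the statement that $\varepsilon_{\mathcal{A}}$ restricts and corestricts to an identity. Either way, the only genuine content is the image-containment lemma together with keeping the strict-unitality hypotheses straight, with the sign check above the one computational wrinkle.
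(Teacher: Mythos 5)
Your proposal is correct. The paper's own proof takes the route you only mention in your last sentences: it simply observes that $\varepsilon$ and $\eta$ are natural and verifies the two triangle identities $\trunc_L(\varepsilon_{\mathcal{A}})\circ\eta_{\trunc_L(\mathcal{A})}=\id$ and $\varepsilon_{I_L(\mathcal{B})}\circ I_L(\eta_{\mathcal{B}})=\id$, both of which are immediate because $\eta$ is the identity and $\trunc_L\circ I_L=\id$ on the nose; all the real work (well-definedness of $\trunc_L$ on morphisms, i.e.\ the image-containment you isolate as the ``hard part'') is delegated to Proposition \ref{proposition_restriction}, which is already proved. Your primary route via the hom-set bijection $f\mapsto\bar f$, $g\mapsto\varepsilon_{\mathcal{A}}\circ g$ is formally equivalent and equally valid; it is slightly longer because you must check naturality in both variables and then extract the unit and counit, but it has the advantage of making explicit \emph{why} the adjunction holds (every strictly unital morphism out of a strictly coconnected algebra factors uniquely through the truncation), which is exactly the universal property the paper records afterwards as a corollary. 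Your sign check that $(\varepsilon_{\mathcal{A}}\circ g)_n=g_n$ because $(1-n)n$ is always even is correct and is a detail the paper does not spell out.
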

\begin{proof}
Clearly, both $\varepsilon$ and $\eta$ are natural transformations. Moreover, for $\mathcal{A}\in \Ainfunit$
\begin{align*}
\trunc_L(\varepsilon_{\mathcal{A}})\circ \eta_{\trunc_L(\mathcal{A})}=\trunc_L(\id_{\mathcal{A}})\circ \id_{\trunc_L(\mathcal{A})}=\id_{\trunc_L(\mathcal{A})}
\end{align*}
and for any $\mathcal{B}=L\oplus \mathcal{B}^{>0}\in \Ainfunit^+$
\begin{equation*}
   \varepsilon_{I_L({\mathcal{B}})}\circ I_L(\eta_{\mathcal{B}}) = \varepsilon_{I_L({\mathcal{B}})}\circ I_L(\id_{\mathcal{B}})= \varepsilon_{I_L({\mathcal{B}})}=\id_{I_L({\mathcal{B}})}.\qedhere
\end{equation*}
\end{proof}
Reformulating this adjunction gives us the following property:
\begin{corollary}
    Let $\mathcal{A}\in \Ainfunit$ be an A-infinity algebra strictly unital over $L$. Then $\trunc_L(\mathcal{A})$ has the following universal property:\\
    For any $\mathcal{B}\in \Ainfunit^+$ and any A-infinity homomorphism $f:\mathcal{B}\rightarrow \mathcal{A}$ there is a unique A-infinity homomorphism $h:\mathcal{B}\rightarrow \trunc_L \mathcal{A}$
    such that $f=\varepsilon_{\mathcal{A}}\circ h$.
\end{corollary}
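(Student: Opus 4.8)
The plan is to obtain the statement as the standard reformulation of the adjunction $I_L \dashv \trunc_L$ established in Proposition~\ref{proposition_adjoint_truncation}: the asserted factorization is nothing but the universal property of its counit $\varepsilon$. Spelled out, an adjunction with unit $\eta$ and counit $\varepsilon$ furnishes, for every $\mathcal{B} \in \Ainfunit^+$ and every $\mathcal{A} \in \Ainfunit$, a natural bijection
\[
\Hom_{\Ainfunit^+}(\mathcal{B},\, \trunc_L \mathcal{A}) \;\xrightarrow{\sim}\; \Hom_{\Ainfunit}(I_L(\mathcal{B}),\, \mathcal{A}), \qquad h \longmapsto \varepsilon_{\mathcal{A}} \circ I_L(h);
\]
and since $I_L$ is the inclusion of a full subcategory, $I_L(\mathcal{B}) = \mathcal{B}$ and $I_L(h) = h$, so the right-hand side is just $\varepsilon_{\mathcal{A}} \circ h$. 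Surjectivity of this map is the existence of an $h$ with $f = \varepsilon_{\mathcal{A}} \circ h$, and injectivity is its uniqueness. Strictly speaking, then, nothing beyond Proposition~\ref{proposition_adjoint_truncation} is required, and in the write-up I would phrase the corollary simply as this reformulation.

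For a self-contained argument I would instead check the factorization directly. First note that $\varepsilon_{\mathcal{A}}$ is exactly the strict inclusion $\iota \colon L \oplus \mathcal{A}^{>0} \hookrightarrow \mathcal{A}$; as $\iota$ is strict, the composition law for A-infinity homomorphisms collapses to $(\varepsilon_{\mathcal{A}} \circ h)_n = \iota \circ h_n$. Hence the equation $f = \varepsilon_{\mathcal{A}} \circ h$ forces $h_n$ to be $f_n$ with codomain corestricted to $L \oplus \mathcal{A}^{>0}$, and since $\iota$ is injective this pins $h$ down uniquely, giving uniqueness. For existence one must check that each $f_n$ genuinely takes values in $L \oplus \mathcal{A}^{>0}$; granting this, the resulting family $h = (h_n)_n$ is automatically an A-infinity homomorphism strictly unital over $L$, because $f = \iota \circ h$ by construction and applying the injective strict A-infinity homomorphism $\iota$, whose image is the A-infinity subalgebra $\trunc_L \mathcal{A}$ of Proposition~\ref{proposition_restriction}, transports the A-infinity relations and strict unitality of $f$ back to $h$.

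The point that needs care is thus the claim that $\im(f_n) \subseteq L \oplus \mathcal{A}^{>0}$ for every $n$. If all arguments lie in $\mathcal{B}^{>0}$ this is a degree count, $|f_n(x_1 \otimes \cdots \otimes x_n)| = 1 - n + \sum_i |x_i| \geq 1$, so the value lands in $\mathcal{A}^{>0}$; if some argument lies in the degree-zero part $L$ of $\mathcal{B}$, then either $n = 1$ and strict unitality identifies $f_1$ on that argument with the inclusion $L \hookrightarrow \mathcal{A}$, so the value lies in $L$, or $n \geq 2$ and strict unitality over $L$ — read in the monoidal category of $L$-$L$-bimodules of Remark~\ref{remark_L-L-bimods}, where such an argument can be pulled off as a unit $1_L$ that then annihilates the higher component $f_n$ — forces the value to be $0$. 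I expect this unitality bookkeeping to be the only genuinely nontrivial step of the direct route, while the categorical route via Proposition~\ref{proposition_adjoint_truncation} sidesteps it entirely.
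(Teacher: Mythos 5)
Your primary route is exactly the paper's: the paper derives the corollary purely as the standard reformulation of the adjunction $I_L\dashv \trunc_L$ from Proposition \ref{proposition_adjoint_truncation} via the universal property of the counit, which is what you do. The additional self-contained verification you sketch (corestricting $f_n$, with the degree count and strict-unitality bookkeeping) is sound but not needed; it essentially re-proves Proposition \ref{proposition_restriction} together with the adjunction.
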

Since A-infinity homomorphisms are isomorphisms whenever they are isomorphisms in degree zero \ref{lemma_ainftyhominvertible}, we have the following:
\begin{proposition}\label{proposition_ainftyhoms}
  Suppose $f:\mathcal{B}\rightarrow \mathcal{A}, f':\mathcal{B}'\rightarrow \mathcal{A}$ are two strictly unital A-infinity homomorphisms over $L$ between strictly unital A-infinity algebras over $L$ concentrated in non-negative degree such that $f_1$ is an isomorphism in degree greater than zero and $ \mathcal{B}$ and $\mathcal{B}'$ have only $L$ in degree zero. Then there is a strictly unital A-infinity homomorphism $g:\mathcal{B}'\rightarrow \mathcal{B}$ over $L$ that makes the diagram 
	% https://q.uiver.app/#q=WzAsMyxbMiwwLCJcXG1hdGhjYWx7Qn0iXSxbMCwwLCJcXG1hdGhjYWx7Qn0nIl0sWzEsMSwiXFxtYXRoY2Fse0F9Il0sWzAsMiwiZiIsMl0sWzEsMiwiZiciXSxbMSwwLCJnIl1d
\[\begin{tikzcd}[ampersand replacement=\&]
	{\mathcal{B}'} \&\& {\mathcal{B}} \\
	\& {\mathcal{A}}
	\arrow["f"', from=1-3, to=2-2]
	\arrow["{f'}", from=1-1, to=2-2]
	\arrow["g", from=1-1, to=1-3]
\end{tikzcd}\]
	commute. Moreover, if $f_1'$ is also an isomorphism in positive degree, then $g$ is an isomorphism.
\end{proposition}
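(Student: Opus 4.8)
The plan is to route both $f$ and $f'$ through the truncation $\trunc_L(\mathcal{A})=L\oplus\mathcal{A}^{>0}$ and then invert one of them. Since $\mathcal{B}$ and $\mathcal{B}'$ are concentrated in non-negative degrees with only $L$ in degree zero, they already lie in $\Ainfunit^+$, so $\trunc_L(\mathcal{B})=\mathcal{B}$ and $\trunc_L(\mathcal{B}')=\mathcal{B}'$. Applying the functor $\trunc_L$ to $f$ and $f'$ and using naturality of the counit $\varepsilon$ of the adjunction $I_L\dashv\trunc_L$ from Proposition~\ref{proposition_adjoint_truncation}, together with $\varepsilon_{\mathcal{B}}=\id_{\mathcal{B}}$ and $\varepsilon_{\mathcal{B}'}=\id_{\mathcal{B}'}$, one obtains strictly unital A-infinity homomorphisms $h:=\trunc_L(f)\colon\mathcal{B}\to\trunc_L(\mathcal{A})$ and $h':=\trunc_L(f')\colon\mathcal{B}'\to\trunc_L(\mathcal{A})$ over $L$ with $\varepsilon_{\mathcal{A}}\circ h=f$ and $\varepsilon_{\mathcal{A}}\circ h'=f'$.

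Next I would inspect the linear part of $h$. By the definition of $\trunc_L$ on morphisms, $h_1$ is simply $f_1$ with domain $\mathcal{B}=L\oplus\mathcal{B}^{>0}$ and codomain corestricted to $L\oplus\mathcal{A}^{>0}$; this is legitimate because $f_1$ has degree zero and, being strictly unital over $L$, carries the degree-zero part $L$ of $\mathcal{B}$ identically onto the degree-zero part $L$ of $\trunc_L(\mathcal{A})$. Thus $h_1$ is the identity on $L$ in degree zero, an isomorphism in each positive degree by the hypothesis on $f_1$, and zero in negative degrees; hence it is an isomorphism of graded $L$-$L$-bimodules. By the version of Lemma~\ref{lemma_ainftyhominvertible} valid for A-infinity homomorphisms strictly unital over $L$ (Remark~\ref{remark_L-L-bimods}, via \cite[Lemma 3.2.4.6]{LefHas}), $h$ is therefore invertible in $\Ainfunit$, with a strictly unital inverse $h^{-1}\colon\trunc_L(\mathcal{A})\to\mathcal{B}$.

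Finally I would put $g:=h^{-1}\circ h'\colon\mathcal{B}'\to\mathcal{B}$; this is strictly unital over $L$ as a composite of such maps, and $f\circ g=\varepsilon_{\mathcal{A}}\circ h\circ h^{-1}\circ h'=\varepsilon_{\mathcal{A}}\circ h'=f'$, so the required triangle commutes. If moreover $f_1'$ is an isomorphism in positive degree, then the same reasoning applied to $h'$ shows that $h_1'$ is a graded bimodule isomorphism, so $h'$ is invertible, and then $g=h^{-1}\circ h'$ is a composite of isomorphisms and hence itself an isomorphism.

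The only step that needs genuine attention is the middle one: verifying that the linear component of $\trunc_L(f)$ is exactly the corestriction of $f_1$, and seeing that the two standing hypotheses on $\mathcal{B},\mathcal{B}'$ — being concentrated in non-negative degrees and having only $L$ in degree zero — are precisely what promotes ``$f_1$ an isomorphism in positive degree'' to ``$h_1$ an isomorphism in all degrees''. Without the degree-zero hypothesis, $h_1$ could fail to be surjective in degree zero and the argument would collapse. Everything else is a formal consequence of the adjunction $I_L\dashv\trunc_L$ and Lemma~\ref{lemma_ainftyhominvertible}.
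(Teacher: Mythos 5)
Your proposal is correct and follows essentially the same route as the paper: factor $f$ and $f'$ through $\trunc_L(\mathcal{A})$ via the counit of the adjunction $I_L\dashv\trunc_L$, observe that the hypotheses force the linear part of the factorization of $f$ to be an isomorphism of graded $L$-$L$-bimodules, invert it using Lemma \ref{lemma_ainftyhominvertible} in its strictly unital form, and set $g:=h^{-1}\circ h'$. The point you single out as needing attention --- that strict unitality plus the degree-zero hypothesis on $\mathcal{B}$, $\mathcal{B}'$ upgrades ``isomorphism in positive degrees'' to ``isomorphism everywhere'' --- is exactly the step the paper's proof also makes explicit.
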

\begin{proof}
    By the universal property of $\trunc_L(\mathcal{A})$, we have an A-infinity homomorphism $h:\mathcal{B}\rightarrow \trunc_L(\mathcal{A})$ such that $f=\varepsilon_{\mathcal{A}}\circ h$. In particular,
    $f_1=(\varepsilon_{\mathcal{A}})_1\circ h_1$. Since $f_1$ is by assumption an isomorphism in degree greater than zero, and $(\varepsilon_{\mathcal{A}})_1$ is by definition the canonical embedding
    $$L\oplus \mathcal{A}^{>0}\rightarrow \mathcal{A}$$ and thus an isomorphism in degree greater than zero, $h_1$ is an isomorphism in degree greater than zero. Moreover, since $h$ is strictly unital, $h_1(L)=L$. Thus
    \begin{align*}
        h_1:\mathcal{B}=L\oplus \mathcal{B}^{>0}\rightarrow \trunc_L(\mathcal{A})=L\oplus \mathcal{A}^{>0}
    \end{align*}
    is an isomorphism, so by  \ref{lemma_ainftyhominvertible}, $h$ is an A-infinity isomorphism.\\
    On the other hand, by the universal property of $\trunc_L(\mathcal{A})$, there is $h':\mathcal{B}'\rightarrow \trunc_L(\mathcal{A})$ such that $f'=\varepsilon_{\mathcal{A}}\circ h'$.
    Now for $g:=h^{-1}\circ h'$ we have
    \begin{align*}
        f\circ g=f\circ h^{-1}\circ h'=\varepsilon_{\mathcal{A}}\circ h\circ h^{-1}\circ h'=\varepsilon_{\mathcal{A}}\circ h'=f'.
    \end{align*}
    Moreover, if $f'$ is an isomorphism in degree greater than zero, then, as for $f$, we obtain that $h'$ is an isomorphism, so that $g=h^{-1}\circ h'$ is an isomorphism.
\end{proof}
\section{Proof of the main result}\label{section_proofmain}
In this section, we prove the main result on conjugation-uniqueness of basic regular exact Borel subalgebras, Theorem \ref{theorem_conjugation}.
\begin{theorem}\label{theorem_commutative_main}
Let $A$ be a finite-dimensional $\field$-algebra, and let $B, B'$ be two basic regular exact subalgebras of $A$ with simple modules $\{L_1^B, \dots , L_n^B\}=\Sim(B)$ and $\{L_1^{B'}, \dots , L_n^{B'}\}=\Sim(B')$ such that for every $1\leq i\leq n$ we have
\begin{align*}
	\Delta_i^A:=A\otimes_B L_i^B\cong A\otimes_{B'}L_i^{B'}.
\end{align*}
  Then there is an equivalence $G:\modu B\rightarrow \modu B'$ that makes the diagram 
	% https://q.uiver.app/#q=WzAsMyxbMiwwLCJcXG1vZHUgQiJdLFswLDAsIlxcbW9kdSBCJyJdLFsxLDEsIlxcbW9kdSBBIl0sWzAsMiwiQVxcb3RpbWVzX0IgLSJdLFsxLDIsIkFcXG90aW1lc197Qid9LSIsMl0sWzEsMCwiRyJdXQ==
\[\begin{tikzcd}[ampersand replacement=\&]
	{\modu B'} \&\& {\modu B} \\
	\& {\modu A}
	\arrow["{A\otimes_B -}", from=1-3, to=2-2]
	\arrow["{A\otimes_{B'}-}"', from=1-1, to=2-2]
	\arrow["G", from=1-1, to=1-3]
\end{tikzcd}\]
	commute up to natural isomorphism.
\end{theorem}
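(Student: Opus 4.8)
The plan is to transport the whole problem to the strictly coconnected A-infinity algebras $\Ext_B^*(L^B,L^B)$ and $\Ext_{B'}^*(L^{B'},L^{B'})$, build there an A-infinity isomorphism compatible with the induction functors using the truncation adjunction of Section~\ref{section_trunc}, and then push it back to the module categories via Theorem~\ref{theorem_equivalence_End_F(M)} and Proposition~\ref{proposition_twmod_equiv}. Throughout, I would use that $B$, $B'$ basic forces their semisimple parts to be copies of $L\cong\field^n$ with $\Hom_B(L^B,L^B)=L=\Hom_{B'}(L^{B'},L^{B'})$, that $\modu B=\filt(L^B)$ and $\modu B'=\filt(L^{B'})$ since every finite-dimensional module has a composition series, and I would fix, using the hypothesis, $A$-module isomorphisms $A\otimes_{B'}L_i^{B'}\xrightarrow{\sim}\Delta_i^A=A\otimes_BL_i^B$ for each $i$, so that $\Delta^A:=\bigoplus_i\Delta_i^A$ is identified as a graded $A$-module with both $A\otimes_BL^B$ and $A\otimes_{B'}L^{B'}$.

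First I would invoke Example~\ref{example_Ext}: since the induction functors $A\otimes_B-$ and $A\otimes_{B'}-$ are exact (Definition~\ref{definition_boundquiver}(1)) and preserve projectives, they induce A-infinity homomorphisms $f\colon\Ext_B^*(L^B,L^B)\to\Ext_A^*(\Delta^A,\Delta^A)$ and $f'\colon\Ext_{B'}^*(L^{B'},L^{B'})\to\Ext_A^*(\Delta^A,\Delta^A)$ strictly unital over $L$ (for $f'$ after composing with the isomorphism induced by the fixed identification $A\otimes_{B'}L^{B'}\cong\Delta^A$), and by the last formula of Example~\ref{example_Ext} the degree-one component of $f$ is the comparison map $[\varphi]\mapsto[\id_A\otimes\varphi]$, and likewise for $f'$. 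All three A-infinity algebras are concentrated in non-negative degrees, and $\Ext_B^*(L^B,L^B)$, $\Ext_{B'}^*(L^{B'},L^{B'})$ have only $L$ in degree zero; regularity of $B$ and of $B'$ (Definition~\ref{definition_boundquiver}(3)) says precisely that $f_1$ and $f_1'$ are isomorphisms in every positive degree. Proposition~\ref{proposition_ainftyhoms} then applies and yields a strictly unital A-infinity homomorphism $g\colon\Ext_{B'}^*(L^{B'},L^{B'})\to\Ext_B^*(L^B,L^B)$ with $f\circ g=f'$, which, since $f_1'$ is also an isomorphism in positive degrees, is an A-infinity isomorphism; this is the commuting triangle~\eqref{diagram_2}.

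Next I would apply $H^0(\twmod(-))$ to this triangle: by Proposition~\ref{proposition_twmod_equiv}, $H^0(\twmod(g))$ is an equivalence, and $H^0(\twmod(f))\circ H^0(\twmod(g))=H^0(\twmod(f'))$ by functoriality. Then I would glue on both sides the Kadeishvili quasi-isomorphisms $\Ext_B^*(L^B,L^B)\to\End_B^*(P^\cdot(L^B))$ and $\Ext_{B'}^*(L^{B'},L^{B'})\to\End_{B'}^*(P^\cdot(L^{B'}))$ (again from Example~\ref{example_Ext}, whose compatibility squares relate them to the dg-homomorphisms $\varphi\mapsto\id_A\otimes\varphi$ and to $f$, resp.\ $f'$), pass to $H^0(\twmod(-))$, and finally invoke Theorem~\ref{theorem_equivalence_End_F(M)} together with the lemma that the equivalences $C_M$ are natural with respect to exact projective-preserving functors (diagrams~\eqref{diagram_1.11} and~\eqref{diagram_1.1}). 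Chaining these squares produces an equivalence $G\colon\modu B'\to\modu B$ together with a natural isomorphism $(A\otimes_B-)\circ G\cong A\otimes_{B'}-$, which is exactly the asserted commutativity (and establishes diagram~\eqref{diagram_1}).

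The conceptually decisive steps are the passage to $\Ext$-algebras and the application of Proposition~\ref{proposition_ainftyhoms}; these are essentially forced once the earlier sections are in place. The real work, and what I expect to be the main obstacle, is the bookkeeping of the last paragraph: assembling the long chain of equivalences so that, after pushing everything forward into $\modu A$ along $A\otimes_B-$ and $A\otimes_{B'}-$, the various natural isomorphisms coming from Example~\ref{example_Ext} and from the naturality of $C_M$ fit together coherently. In particular one must reconcile the two projective resolutions $A\otimes_BP^\cdot(L^B)$ and $A\otimes_{B'}P^\cdot(L^{B'})$ of $\Delta^A$ (they agree only up to homotopy equivalence, hence the A-infinity structure on $\Ext_A^*(\Delta^A,\Delta^A)$ only up to a strictly unital isomorphism that is the identity on cohomology, which must be absorbed without disturbing the degree-one components used above) and verify that the dg-homomorphism ``$\varphi\mapsto\id_A\otimes\varphi$'' corresponds under these quasi-isomorphisms to $f$, respectively $f'$. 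This diagram-chase, rather than any single hard idea, is where the difficulty lies.
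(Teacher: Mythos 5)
Your proposal is correct and follows essentially the same route as the paper: Example \ref{example_Ext} to produce $f$ and $f'$, Proposition \ref{proposition_ainftyhoms} (via the truncation adjunction) to obtain the A-infinity isomorphism $g$ with $f\circ g=f'$, then $H^0(\twmod(-))$, Proposition \ref{proposition_twmod_equiv}, and the commutative diagram \eqref{diagram_1} built from Theorem \ref{theorem_equivalence_End_F(M)} and its naturality lemma. Your closing remarks on reconciling the two projective resolutions of $\Delta^A$ flag a bookkeeping point the paper's proof passes over silently, but this is elaboration of the same argument rather than a different one.
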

\begin{proof}
Let $L^B=\bigoplus_{i=1}^n L_i^B$,  $L^{B'}=\bigoplus_{i=1}^n L_i^{B'}$ and $\Delta^A=\bigoplus_{i=1}^n \Delta_i^A=\Delta$ and let $L=\field^n$.
	As in Example \ref{example_Ext} we have a diagram 
	% https://q.uiver.app/#q=WzAsMyxbMiwwLCJcXEV4dF9CXiooTF9CLCBMX0IpIl0sWzAsMCwiXFxFeHRfe0InfV4qKExfe0InfSwgTF97Qid9KSJdLFsxLDEsIlxcRXh0X0FeKihcXERlbHRhLCBcXERlbHRhKSJdLFsxLDIsImYnIiwyXSxbMCwyLCJmIl1d
\[\begin{tikzcd}[ampersand replacement=\&]
	{\Ext_{B'}^*(L_{B'}, L_{B'})} \&\& {\Ext_B^*(L_B, L_B)} \\
	\& {\Ext_A^*(\Delta, \Delta)}
	\arrow["{f'}"', from=1-1, to=2-2]
	\arrow["f", from=1-3, to=2-2]
\end{tikzcd}\]
where $f, f'$ are strictly unital A-infinity homomorphisms over $L$ which are isomorphisms in degree greater than zero.\\
	Since ${\Ext_B^*(L_B, L_B)}$ and ${\Ext_{B'}^*(L_{B'}, L_{B'})}$ have only $L$ in degree zero, Proposition \ref{proposition_ainftyhoms} yields a strictly unital A-infinity isomorphism $g: \Ext^*_{B'}(L^{B'}, L^{B'})\rightarrow \Ext^*_B(L^B, L^B)$ over $L$ that makes the diagram 
	% https://q.uiver.app/#q=WzAsMyxbMiwwLCJcXEV4dF9CXiooTF9CLCBMX0IpIl0sWzAsMCwiXFxFeHRfe0InfV4qKExfe0InfSwgTF97Qid9KSJdLFsxLDEsIlxcRXh0X0FeKihcXERlbHRhLCBcXERlbHRhKSJdLFsxLDIsImYnIiwyXSxbMCwyLCJmIl0sWzEsMCwiZyJdXQ==
\[\begin{tikzcd}[ampersand replacement=\&]
	{\Ext_{B'}^*(L_{B'}, L_{B'})} \&\& {\Ext_B^*(L_B, L_B)} \\
	\& {\Ext_A^*(\Delta, \Delta)}
	\arrow["{f'}"', from=1-1, to=2-2]
	\arrow["f", from=1-3, to=2-2]
	\arrow["g", from=1-1, to=1-3]
\end{tikzcd}\]
	commute.\\
	Hence we obtain a commutative diagram
	% https://q.uiver.app/#q=WzAsMyxbMiwwLCJIXjAoXFx0d21vZChcXEV4dF9CXiooTF9CLCBMX0IpKSkiXSxbMCwwLCJIXjAoXFx0d21vZChcXEV4dF97Qid9XiooTF97Qid9LCBMX3tCJ30pKSkiXSxbMSwxLCJIXjAoXFx0d21vZChcXEV4dF9BXiooXFxEZWx0YSwgXFxEZWx0YSkpKSJdLFsxLDIsIkheMChcXHR3bW9kKGcpKSIsMix7ImxhYmVsX3Bvc2l0aW9uIjoyMH1dLFswLDIsIkheMChcXHR3bW9kKGYpKSIsMCx7ImxhYmVsX3Bvc2l0aW9uIjoyMH1dLFsxLDAsIkheMChcXHR3bW9kKGcpKSJdXQ==
\[\begin{tikzcd}[ampersand replacement=\&]
	{H^0(\twmod(\Ext_{B'}^*(L_{B'}, L_{B'})))} \&\& {H^0(\twmod(\Ext_B^*(L_B, L_B)))} \\
	\& {H^0(\twmod(\Ext_A^*(\Delta, \Delta)))}
	\arrow["{H^0(\twmod(g))}"'{pos=0.2}, from=1-1, to=2-2]
	\arrow["{H^0(\twmod(f))}"{pos=0.2}, from=1-3, to=2-2]
	\arrow["{H^0(\twmod(g))}", from=1-1, to=1-3]
\end{tikzcd}\]
Using the commutative diagram \eqref{diagram_1} which was established in Section \ref{section_dg} and Section \ref{section_ainfty} for $B$ and $B'$, as well as Proposition \ref{proposition_twmod_equiv}, we thus obtain the result.
\end{proof}
\begin{theorem}\label{theorem_conjugation} Let $A$ be a finite-dimensional $\field$-algebra, and let $B, B'$ be two basic regular exact subalgebras of $A$ with simple modules $\{L_1^B, \dots , L_n^B\}=\Sim(B)$ and $\{L_1^{B'}, \dots , L_n^{B'}\}=\Sim(B')$ such that for every $1\leq i\leq n$ we have
\begin{align*}
	\Delta_i^A:=A\otimes_B L_i^B\cong A\otimes_{B'}L_i^{B'}.
\end{align*} 
Then there is an $a\in A^{\times}$ such that $B'=a^{-1}Ba$.
\end{theorem}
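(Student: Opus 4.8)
The plan is to deduce Theorem~\ref{theorem_conjugation} from Theorem~\ref{theorem_commutative_main} by tracking the natural isomorphism witnessing commutativity of the triangle through the endomorphism rings of the regular modules. So let $G\colon\modu B\to\modu B'$ be the equivalence and $\eta\colon A\otimes_B-\;\Rightarrow\;(A\otimes_{B'}-)\circ G$ the natural isomorphism provided by Theorem~\ref{theorem_commutative_main}. The first point is that $G(B)\cong B'$ as $B'$-modules: an equivalence preserves projectivity and indecomposability, so $G$ permutes the isomorphism classes of indecomposable projectives, and since $B$ is basic, ${}_BB$ is the direct sum of the $n$ pairwise non-isomorphic indecomposable projective $B$-modules, each with multiplicity one; hence $G(B)$ is the direct sum of the $n$ pairwise non-isomorphic indecomposable projective $B'$-modules, each with multiplicity one, which is ${}_{B'}B'$ up to isomorphism because $B'$ is basic. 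Fix an isomorphism $\theta\colon G(B)\to{}_{B'}B'$ of $B'$-modules.

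Next I would pass to endomorphism rings. For an algebra $C$ write $\rho_c$ for the endomorphism of ${}_CC$ given by right multiplication by $c$, so that $\End_C({}_CC)\cong C^{\op}$ via $\rho_c\mapsto c$; the same convention identifies $\End_A({}_AA)\cong A^{\op}$. One checks directly, using the isomorphism ${}_AA\cong A\otimes_BB$, that the induction functor $A\otimes_B-$ sends $\rho_b\in\End_B({}_BB)$ to $\rho_b\in\End_A({}_AA)$; that is, on endomorphism rings of the regular modules $A\otimes_B-$ realizes the structural inclusion $B\hookrightarrow A$, and likewise $A\otimes_{B'}-$ realizes $B'\hookrightarrow A$. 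Conjugation by $\theta$ together with $G$ yields an algebra isomorphism $\gamma\colon B\to B'$ determined by $\theta\circ G(\rho_b)\circ\theta^{-1}=\rho_{\gamma(b)}$. Finally, transporting the component $\eta_B\colon A\otimes_BB\to A\otimes_{B'}G(B)$ along ${}_AA\cong A\otimes_BB$ on the source and along $A\otimes_{B'}G(B)\xrightarrow{\ \id_A\otimes\theta\ }A\otimes_{B'}B'\cong{}_AA$ on the target turns $\eta_B$ into an automorphism of ${}_AA$, hence into $\rho_a$ for a unique $a\in A^{\times}$.

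The remaining step is to read off the conjugation from naturality of $\eta$ at the morphism $\rho_b\colon{}_BB\to{}_BB$. Under the identifications above, the naturality square becomes the identity of linear endomorphisms of $A$
\begin{align*}
\rho_a\circ\rho_b=\rho_{\gamma(b)}\circ\rho_a,
\end{align*}
i.e. $xba=xa\,\gamma(b)$ for all $x\in A$; evaluating at $x=1$ gives $\gamma(b)=a^{-1}ba$. Since $b$ ranges over $B$ and $\gamma$ is a bijection onto $B'$, this yields $a^{-1}Ba=\gamma(B)=B'$, which is exactly the assertion (equivalently $(a^{-1})B(a^{-1})^{-1}=B'$, matching the formulation with $aBa^{-1}=B'$).

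Given Theorem~\ref{theorem_commutative_main}, the argument is essentially formal; the main thing requiring care is the consistent handling of opposite algebras and of the direction of $\eta$, together with the verification that the two induction functors act on the endomorphism rings of the regular modules precisely as the inclusions $B\hookrightarrow A$ and $B'\hookrightarrow A$ — this is what upgrades the abstract algebra isomorphism $\gamma$ to an honest conjugation \emph{inside} $A$. The substantial work has already been done upstream, in producing the A-infinity isomorphism of Diagram~\eqref{diagram_2} and transporting it through $H^0(\twmod(-))$ to obtain $G$.
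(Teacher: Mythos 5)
Your proposal is correct and follows essentially the same route as the paper: the paper likewise invokes Theorem~\ref{theorem_commutative_main}, normalizes the equivalence so that it sends one regular module to the other (by structure transport rather than carrying an explicit $\theta$), and then reads off the conjugating unit from the component of the natural isomorphism at the regular module via the identifications $\End_C({}_CC)\cong C^{\op}$ and $A\otimes_BB\cong A$. The only difference is cosmetic bookkeeping (direction of the equivalence and whether $\theta$ is absorbed into $G$), so the two arguments coincide in substance.
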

\begin{proof}
By Theorem \ref{theorem_commutative_main}, there is an equivalence $G:\modu B'\rightarrow \modu B$
 that makes the diagram 
	% https://q.uiver.app/#q=WzAsMyxbMiwwLCJcXG1vZHUgQiJdLFswLDAsIlxcbW9kdSBCJyJdLFsxLDEsIlxcbW9kdSBBIl0sWzAsMiwiQVxcb3RpbWVzX0IgLSJdLFsxLDIsIkFcXG90aW1lc197Qid9LSIsMl0sWzEsMCwiRyJdXQ==
\[\begin{tikzcd}[ampersand replacement=\&]
	{\modu B'} \&\& {\modu B} \\
	\& {\modu A}
	\arrow["{A\otimes_B -}", from=1-3, to=2-2]
	\arrow["{A\otimes_{B'}-}"', from=1-1, to=2-2]
	\arrow["G", from=1-1, to=1-3]
\end{tikzcd}\]
	commute up to natural isomorphism.
	Since $G$ is an equivalence and $B$ and $B'$ are basic projective generators in $\modu B$, $\modu B'$ respectively, $G(B')\cong B$.
	We pick an isomorphism $\xi:G(B')\rightarrow B$ and define $G'$ via structure transport to be the unique functor that makes the family of maps $(\tau_X)_X$ given by $\tau_X=\id_X$ for all $X\neq B'$ and $\tau_{B'}=\xi$ a natural isomorphism between $G$ and $G'$.
 More concretely, we set
	\begin{align*}
		G':\modu B'\rightarrow \modu B,\\
		 G'(X):=G(X) \textup{ for all objects }X\neq B'\\
		G'(B')=B\\
		 G'(f)=G(f) \textup{ for all }f\in \Hom_{B'}(X, Y) \textup{ where }X, Y\neq B'\\
		G'(f)=\xi\circ G(f) \textup{ for all }f\in \Hom_{B'}(X, B') \textup{ where }X \neq B'\\
		G'(f)=G(f) \circ \xi^{-1} \textup{ for all }f\in \Hom_{B'}(B', Y) \textup{ where }Y \neq B'\\
		G'(f)=\xi\circ G(f)\circ \xi^{-1} \textup{ for all }f\in \Hom_{B'}(B', B').
	\end{align*}
	Then, since $G'$ is by definition naturally isomorphic to $G$, the diagram 
% https://q.uiver.app/#q=WzAsMyxbMCwwLCJcXG1vZHUgQiciXSxbMiwwLCJcXG1vZHUgQiJdLFsxLDEsIlxcbW9kdSBBIl0sWzAsMSwiRyciXSxbMSwyLCJBXFxvdGltZXNfQiAtIl0sWzAsMiwiQVxcb3RpbWVzX3tCJ30tIiwyXV0=
\[\begin{tikzcd}[ampersand replacement=\&]
	{\modu B'} \&\& {\modu B} \\
	\& {\modu A}
	\arrow["{G'}", from=1-1, to=1-3]
	\arrow["{A\otimes_B -}", from=1-3, to=2-2]
	\arrow["{A\otimes_{B'}-}"', from=1-1, to=2-2]
\end{tikzcd}\]
	commutes up to natural isomorphism, so that, replacing $G$ by $G'$ if necessary, we can assume that $G(B')=B$.
 Denote by $\alpha:(A\otimes_{B'}-)\rightarrow (A\otimes_{B}-)\circ G$ the natural isomorphism making the diagram commute.
Then $\alpha$  gives an isomorphism of left $A$-modules $\alpha_{B'}:A\otimes_{B'} B'\rightarrow A\otimes_{B}B$ such that the diagram
	% https://q.uiver.app/#q=WzAsNCxbMCwxLCJcXEVuZF9CKEIpIl0sWzAsMCwiXFxFbmRfe0InfShCJykiXSxbMSwwLCJcXEVuZF9BKEFcXG90aW1lc197Qid9QicpIl0sWzEsMSwiXFxFbmRfQShBXFxvdGltZXNfe0J9QikiXSxbMSwwLCJmXFxtYXBzdG8gRyhmKSIsMl0sWzEsMiwiXFxpZF9BXFxvdGltZXMtIl0sWzAsMywiXFxpZF9BXFxvdGltZXMtIiwyXSxbMiwzLCJcXHJob197XFxhbHBoYV97Qid9fSJdXQ==
\[\begin{tikzcd}[ampersand replacement=\&]
	{\End_{B'}(B')} \& {\End_A(A\otimes_{B'}B')} \\
	{\End_B(B)} \& {\End_A(A\otimes_{B}B)}
	\arrow["{f\mapsto G(f)}"', from=1-1, to=2-1]
	\arrow["{\id_A\otimes-}", from=1-1, to=1-2]
	\arrow["{\id_A\otimes-}"', from=2-1, to=2-2]
	\arrow["{\rho_{\alpha_{B'}}}", from=1-2, to=2-2]
\end{tikzcd}\]
commutes, where $\rho_{\alpha_{B'}}$ denotes conjugation by $\alpha_{B'}$. 
Composing with the canonical isomorphisms $e:A\otimes_B B\rightarrow A$, and $e':A\otimes_{B'}B'\rightarrow A$ of left $A$-modules and using that we have algebra isomorphisms $B^{\op}\rightarrow \End_B(B), b\mapsto r_b$, where $r_b$ denotes right multiplication by $b$, we obtain an isomorphism $\beta=e\circ \alpha_B\circ (e')^{-1}:A\rightarrow A$ of left $A$-modules such that the diagram
% https://q.uiver.app/#q=WzAsMTAsWzAsMSwiQl57XFxvcH0iXSxbMCwwLCIoQicpXntcXG9wfSJdLFs0LDAsIlxcRW5kX0EoQSkiXSxbNCwxLCJcXEVuZF9BKEEpIl0sWzYsMSwiQV57XFxvcH0iXSxbNiwwLCJBXntcXG9wfSJdLFszLDEsIlxcRW5kX0EoQVxcb3RpbWVzX0IgQikiXSxbMywwLCJcXEVuZF9BKEFcXG90aW1lc197Qid9QicpIl0sWzIsMSwiXFxFbmRfQihCKSJdLFsyLDAsIlxcRW5kX3tCJ30oQicpIl0sWzMsNCwicl9hXFxtYXBzdG8gYSJdLFsyLDUsInJfYVxcbWFwc3RvIGEiXSxbNiwzLCJcXHJob19lIl0sWzcsMiwiXFxyaG9fe2UnfSJdLFs3LDYsIlxccmhvX3tcXGFscGhhX3tCJ319IiwyXSxbMiwzLCJcXHJob197XFxiZXRhfSIsMl0sWzAsOCwiYlxcbWFwc3RvIHJfe2J9Il0sWzEsOSwiYidcXG1hcHN0byByX3tiJ30iXSxbOSw3LCJcXGlkX0FcXG90aW1lcy0iXSxbOCw2LCJcXGlkX0FcXG90aW1lcy0iXSxbOSw4LCJmXFxtYXBzdG8gRyhmKSIsMl0sWzUsNCwiXFx2YXJwaGkiLDJdLFsxLDAsImcnIiwyXV0=
\[\begin{tikzcd}[ampersand replacement=\&]
	{(B')^{\op}} \&\& {\End_{B'}(B')} \& {\End_A(A\otimes_{B'}B')} \& {\End_A(A)} \&\& {A^{\op}} \\
	{B^{\op}} \&\& {\End_B(B)} \& {\End_A(A\otimes_B B)} \& {\End_A(A)} \&\& {A^{\op}}
	\arrow["{r_a\mapsto a}", from=2-5, to=2-7]
	\arrow["{r_a\mapsto a}", from=1-5, to=1-7]
	\arrow["{\rho_e}", from=2-4, to=2-5]
	\arrow["{\rho_{e'}}", from=1-4, to=1-5]
	\arrow["{\rho_{\alpha_{B'}}}"', from=1-4, to=2-4]
	\arrow["{\rho_{\beta}}"', from=1-5, to=2-5]
	\arrow["{b\mapsto r_{b}}", from=2-1, to=2-3]
	\arrow["{b'\mapsto r_{b'}}", from=1-1, to=1-3]
	\arrow["{\id_A\otimes-}", from=1-3, to=1-4]
	\arrow["{\id_A\otimes-}", from=2-3, to=2-4]
	\arrow["{f\mapsto G(f)}"', from=1-3, to=2-3]
	\arrow["\varphi"', from=1-7, to=2-7]
	\arrow["{g'}"', from=1-1, to=2-1]
\end{tikzcd}\]
commutes, where $g':(B')^{\op}\rightarrow B^{\op}$ and $\varphi:A^{\op}\rightarrow A^{\op}$ are the isomorphisms induced by the first, respectively last, square. Note that the composition of the vertical arrows are the canonical inclusions, and that, since $\beta$ is an automorphism of $A$ as an $A$-module, $\beta=r_{a}$ for some unit $a\in A^{\times}$. Since  
$$\rho_{r_a}(r_x)=r_a^{-1}\circ r_x\circ r_a=r_{axa^{-1}},$$
the induced isomorphism $\varphi$ is given by 
\begin{align*}
    \varphi(x)=axa^{-1}.
\end{align*}
Thus we obtain a commutative diagram
% https://q.uiver.app/#q=WzAsNCxbMiwwLCJCIl0sWzAsMCwiQiciXSxbMiwxLCJBIl0sWzAsMSwiQSJdLFswLDJdLFsxLDAsImcnIl0sWzEsM10sWzMsMiwiXFxyaG9fe2Feey0xfX0iXV0=
\[\begin{tikzcd}[ampersand replacement=\&]
	{B'} \&\& B \\
	A \&\& A
	\arrow[from=1-3, to=2-3]
	\arrow["{g'}", from=1-1, to=1-3]
	\arrow[from=1-1, to=2-1]
	\arrow["{\rho_{a^{-1}}}", from=2-1, to=2-3]
\end{tikzcd}\]
where the vertical arrows are the canonical inclusions and $g'$ is an isomorphism. Thus $B'=a^{-1}Ba$.
\end{proof}
\section{Strong Regular Exact Borel Subalgebras}\label{section_strong}
In this section, we consider the case where  $(A, \leq_A)$ is a quasi-hereditary algebra with a basic strong regular exact Borel subalgebra $B$, or equivalently, since strongness implies that $B$ is basic if and only if $A$ is basic, where $(A, \leq_A)$ is a basic quasi-hereditary algebra with a regular exact Borel subalgebra $B$. In general, the basic representative does not admit a regular exact Borel subalgebra \cite[Example]{Koenig}. In \cite{Conde}, Conde gives a numerical as well as a homological criterion for when it does, and proves that if the basic representative admits a regular exact Borel subalgebra, then so does any representative, merely by matching the multiplicities of the projectives. These exact Borel subalgebras then contain a maximal semisimple subalgebra of the algebra and are thus strong.\\
First, we establish some basic properties of strong regular exact Borel subalgebras. We expect these to be generally known to the community, however, as we were not able to find references for these statements, we provide proofs.
\begin{lemma}\label{lemma_strong_top}
    Let $(A, \leq_A)$ be a finite-dimensional quasi-hereditary algebra and  $B$ be a basic regular exact Borel subalgebra of $A$. Let $\Sim(B)=\{L_1^B, \dots , L_n^B\}$ and $\Sim(A)=\{L_1^A, \dots , L_n^A\}$ such that 
    \begin{align*}
        \Delta_i^A:=A\otimes_B L_i^B\cong \Delta^A(L_i^A).
    \end{align*}
  Let $M\in \modu B$. Then we have 
  \begin{align*}
      [\Top(A\otimes_B M):L_i^A]\geq[\Top(M):L_i^B]
  \end{align*} for every $1\leq i\leq n$.
  Moreover, if $B$ is strong, then 
   \begin{align*}
      [\Top(A\otimes_B M):L_i^A]=[\Top(M):L_i^B]
  \end{align*} for every $1\leq i\leq n$.
\end{lemma}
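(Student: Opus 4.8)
The plan is to compare $\Top(A \otimes_B M)$ with $A \otimes_B \Top(M)$, using only right-exactness of induction for the inequality and Lemma \ref{lemma_radical} for the equality.

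First I would compute $A\otimes_B\Top(M)$. Since $B$ is basic, $\Top(M) \cong \bigoplus_{i=1}^n (L_i^B)^{m_i}$ with $m_i := [\Top(M):L_i^B]$; as induction commutes with finite direct sums and $A\otimes_B L_i^B \cong \Delta_i^A \cong \Delta^A(L_i^A)$, this gives $A\otimes_B\Top(M) \cong \bigoplus_{i=1}^n (\Delta_i^A)^{m_i}$. Standard modules over a quasi-hereditary algebra have simple top (immediate from Definition \ref{definition_standardmodules}, since the submodule that is quotiented out lies in the radical of the projective cover), so $\Top(A\otimes_B\Top(M)) \cong \bigoplus_{i=1}^n (L_i^A)^{m_i}$. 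Next, applying $A\otimes_B -$ to the short exact sequence $0 \to \rad(B)M \to M \to \Top(M) \to 0$ and using right-exactness yields a surjection $q: A\otimes_B M \twoheadrightarrow A\otimes_B\Top(M)$ whose kernel $N$ is the image of $A\otimes_B\rad(B)M$, that is, the $A$-submodule of $A\otimes_B M$ spanned by the elements $ar\otimes m$ with $a\in A$, $r\in\rad(B)$, $m\in M$. A surjection induces a surjection on tops, so $\Top(A\otimes_B M) \twoheadrightarrow \Top(A\otimes_B\Top(M)) \cong \bigoplus_i (L_i^A)^{m_i}$, and comparing isotypic multiplicities of semisimple modules gives $[\Top(A\otimes_B M):L_i^A] \geq m_i$ for every $i$. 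This is the first assertion.

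For the equality when $B$ is strong, I would invoke Lemma \ref{lemma_radical}, which gives $A\rad(B)\subseteq\rad(A)$. Then each spanning element $ar\otimes m$ of $N$ with $r\in\rad(B)$ satisfies $ar\in A\rad(B)\subseteq\rad(A)$, so $ar\otimes m = ar\cdot(1\otimes m) \in \rad(A)(A\otimes_B M)$; hence $N\subseteq\rad(A)(A\otimes_B M)$. Therefore $q$ descends to a surjection $A\otimes_B\Top(M) = (A\otimes_B M)/N \twoheadrightarrow (A\otimes_B M)/\rad(A)(A\otimes_B M) = \Top(A\otimes_B M)$. Since the target is semisimple it is a quotient of $\Top(A\otimes_B\Top(M)) \cong \bigoplus_i (L_i^A)^{m_i}$, so $[\Top(A\otimes_B M):L_i^A] \leq m_i$; combined with the first part, equality holds.

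The main obstacle is pinning down the kernel $N$ of the induced map $A\otimes_B M \to A\otimes_B\Top(M)$ precisely as the span of the $ar\otimes m$ with $r\in\rad(B)$, and then obtaining the clean containment $N\subseteq\rad(A)(A\otimes_B M)$ in the strong case; this is exactly where strongness of $B$ (through Lemma \ref{lemma_radical}, and ultimately the directedness of $B$) enters, and without it only the inequality survives. The remaining steps are routine bookkeeping with tops and right-exact functors.
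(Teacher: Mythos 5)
Your proof is correct and follows essentially the same route as the paper: both compare $\Top(A\otimes_B M)$ with $A\otimes_B\Top(M)\cong\bigoplus_i\Delta_i^{A\,m_i}$, obtain the inequality from the induced surjection onto $A\otimes_B\Top(M)$, and in the strong case use Lemma \ref{lemma_radical} to push the kernel into $\rad(A)(A\otimes_B M)$ and reverse the inequality. The only cosmetic difference is that you track the kernel explicitly and need only right-exactness for the first part, while the paper phrases the same steps via direct summands of tops.
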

\begin{proof}
  We have a projection 
  \begin{align*}
   \pi: M\rightarrow \Top(M),
  \end{align*}
  and since $A\otimes_B -$ is exact, this gives rise to a projection
  \begin{align*}
    \id_A\otimes \pi:A\otimes_B M\rightarrow A\otimes_B \Top(M),
  \end{align*}
  so that $\Top(A\otimes_B \Top(M))$ is a direct summand of $\Top(A\otimes_B M)$.
  Since $B$ is an exact Borel subalgebra of $A$, we have 
  \begin{align*}
    A\otimes_B \Top(M)\cong \bigoplus_{i=1}^n [\Top(M):L_i^B]A\otimes_B L_i^B\cong \bigoplus_{i=1}^n [\Top(M):L_i^B]\Delta^A_i.
  \end{align*}
  so that $\Top(A\otimes_B \Top(M))\cong \bigoplus_{i=1}^n[\Top(M):L_i^B]L^A_i$. Hence
  \begin{align*}
    [\Top(A\otimes_B M):L_i^A]\geq [\Top(M):L_i^B]
  \end{align*}
  for every $i\in \{1,\dots , n\}$.\\
  Assume that $B$ is strong. Then, by Lemma \ref{lemma_radical}, we have $\rad(B)\subseteq \rad(A)$. Thus
  \begin{align*}
    A\otimes_B \rad(M)\subseteq \rad(A\otimes_B M)
  \end{align*} 
  and hence, since $A\otimes_B -$ is exact, $\Top(A\otimes_B M)$ is a semisimple factor module of 
 \begin{align*}
  A\otimes_B \Top(M)\cong \bigoplus_{i=1}^n [\Top(M):L_i^B]\Delta_i^A.
 \end{align*}
Thus $\Top(A\otimes_B M)$ is a direct summand of
\begin{align*}
  \Top\left(\bigoplus_{i=1}^n [\Top(M):L_i^B]\Delta_i^A\right)\cong \bigoplus_{i=1}^n [\Top(M):L_i^B]L_i^A,
\end{align*}
so that \begin{align*}
  [\Top(A\otimes_B M):L_i^A]\leq [\Top(M):L_i^B]
\end{align*}
for every $i\in \{1,\dots , n\}$.
\end{proof}
\begin{remark}
     Let $(A, \leq_A)$ be a finite-dimensional quasi-hereditary algebra and  $B$ be a basic regular exact Borel subalgebra of $A$.  Let $\Sim(B)=\{L_1^B, \dots , L_n^B\}$ and $\Sim(A)=\{L_1^A, \dots , L_n^A\}$ such that 
    \begin{align*}
        \Delta_i^A:=A\otimes_B L_i^B\cong \Delta^A(L_i^A).
    \end{align*} Denote by $P_i^A$ the projective cover $L_i^A$ and by $P_i^B$ the projective cover of $L_i^B$. Moreover, let $n_i^A$ be the multiplicity of $P_i^A$ in $A$. Then we have
     \begin{align*}
         n_i^A=[\Top(A):L_i^A]=[\Top(A\otimes_B B):L_i^A]
     \end{align*}
     and
     \begin{align*}
         [\Top(B):L_i^B]=1
     \end{align*}
     since $B$ is basic. Hence the equality 
      \begin{align*}
         [\Top(A\otimes_B M):L_i^A]=[\Top(M):L_i^B]
     \end{align*} in \ref{lemma_strong_top} holds for every $M\in \modu B$ if and only if $A$ is basic.
\end{remark}
The following theorem is a stronger version of our main theorem, Theorem \ref{theorem_conjugation}, in the case where $A$ is basic. Crucially, it is not assumed that $B'$ is regular. This result also follows from a result announced by Conde and König in conjunction with our main theorem, Theorem \ref{theorem_conjugation}. Here, we propose an alternative proof relying on similar methods as those in our main theorem.\\
The claim of this theorem, and in fact, even a stronger version, was also made in \cite{yuehui_zhang}. However, there are problems in the proof given therein. For example, in the proof of \cite[Lemma 2.2]{yuehui_zhang}, it was claimed that for any exact Borel subalgebra $B$ of $A=\field Q/I$ containing all idempotents corresponding to vertices of $Q$,  and for any path  $\alpha\in A$ between vertices $i$ and $j$ such that $\alpha\notin B$, we have $ \alpha \otimes_B L_i^B\neq 0$, where $L_i^B$ is the simple module corresponding to the vertex $i$. Note here that our left-right conventions are reversed. The following example shows that this claim does not hold in general:
\begin{example}\label{example_yuehui}
Consider the quiver $Q$ given by
% https://q.uiver.app/#q=WzAsMyxbMCwwLCIxIl0sWzEsMCwiMyJdLFsyLDAsIjIiXSxbMCwxLCJcXGFscGhhIl0sWzEsMiwiXFxiZXRhIl1d
\[\begin{tikzcd}[ampersand replacement=\&]
	1 \& 3 \& 2
	\arrow["\alpha", from=1-1, to=1-2]
	\arrow["\beta", from=1-2, to=1-3]
\end{tikzcd}\]
and let $A=\field Q$. Then we have an exact Borel subalgebra $B=\field Q'$ for $Q'$ given by
% https://q.uiver.app/#q=WzAsMyxbMCwwLCIxIl0sWzEsMCwiMyJdLFsyLDAsIjIiXSxbMCwxLCJcXGFscGhhIl1d
\[\begin{tikzcd}[ampersand replacement=\&]
	1 \& 3 \& 2
	\arrow["\alpha", from=1-1, to=1-2]
\end{tikzcd}\]
In particular, $\beta\alpha\notin B$, but $\beta\alpha\otimes_B L_1=0$, where $L_1^B$ is the  simple module corresponding to the vertex $1$.
\end{example}
\begin{theorem}\label{thm_strong}
Let $(A, \leq_A)$ be a finite-dimensional quasi-hereditary $\field$-algebra, $B$ be a strong basic regular exact Borel subalgebra and $B'$ be an exact Borel subalgebra of $A$.
  There is an $a\in A^\times$ such that $aB'a^{-1}\cong B$.
\end{theorem}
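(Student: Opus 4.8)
The plan is to reduce to the situation of Theorem~\ref{theorem_conjugation}. Rather than verifying directly that $B'$ is a regular exact subalgebra (which would seem to require controlling the normality of $B'$), I will produce a functor $G\colon\modu B'\to\modu B$ commuting with the induction functors and then show it is an equivalence by dimension bookkeeping; regularity of $B'$ is then recovered a posteriori. First the easy reductions. Since $B$ is strong, $A$ is basic; applying the first inequality of Lemma~\ref{lemma_strong_top} to the exact Borel subalgebra $B'$ and $M=B'$ (that inequality uses only that $B'$ is an exact Borel subalgebra) gives $1=[\Top(A):L_i^A]=[\Top(A\otimes_{B'}B'):L_i^A]\ge[\Top(B'):L_i^{B'}]$, so every simple $B'$-module is one-dimensional and $B'$ is basic. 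A maximal semisimple subalgebra of $B'$ then has $\field$-dimension $n=\dim_\field(A/\rad A)$, so it is maximal semisimple in $A$ and $B'$ is strong; hence, by the proof of Lemma~\ref{lemma_radical}, $\rad(B')\subseteq\rad(A)$. After conjugating $B'$ by a unit of $A$ I may assume $B$ and $B'$ contain a common maximal semisimple subalgebra with matching primitive idempotents, so that $A\otimes_{B'}L_i^{B'}\cong\Delta_i^A\cong A\otimes_B L_i^B$ as $A$-modules with compatible indexing.

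The heart of the argument is a dimension count. Because $\rad(B)\subseteq\rad(A)$ and $\rad(B')\subseteq\rad(A)$, applying the exact functor $A\otimes_B-$ to a minimal projective resolution of $L_i^B$ yields a \emph{minimal} projective resolution of $\Delta_i^A$ over $A$, and likewise for $B'$; by uniqueness of minimal projective resolutions these two complexes are isomorphic, so comparing degree-$n$ terms by Krull--Schmidt (using $A\otimes_B P_j^B\cong P_j^A\cong A\otimes_{B'}P_j^{B'}$) gives $\dim\Ext_{B'}^n(L_i^{B'},L_j^{B'})=\dim\Ext_B^n(L_i^B,L_j^B)$ for all $i,j,n$. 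Since $(A,\le_A)$ is quasi-hereditary it has finite global dimension $N$, so $\Ext_A^n(\Delta^A,\Delta^A)=0$ for $n>N$; regularity of $B$ then forces $\Ext_B^n(L^B,L^B)=0$ for $n>N$, and by the previous sentence also $\Ext_{B'}^n(L^{B'},L^{B'})=0$ for $n>N$. Thus $B$ and $B'$ both have finite global dimension, so their Cartan matrices are each recovered from the Euler form $\sum_n(-1)^n\dim\Ext^n(L_i,L_j)$, which is the same for the two algebras; hence $[P_j^{B'}:L_i^{B'}]=[P_j^B:L_i^B]$ for all $i,j$.

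Now the A-infinity step. As in Example~\ref{example_Ext} the induction functors give strictly unital A-infinity homomorphisms $f\colon\Ext_B^*(L^B,L^B)\to\Ext_A^*(\Delta^A,\Delta^A)$ and $f'\colon\Ext_{B'}^*(L^{B'},L^{B'})\to\Ext_A^*(\Delta^A,\Delta^A)$ over $L$, with $f_1$ an isomorphism in positive degrees by regularity of $B$; Proposition~\ref{proposition_ainftyhoms} yields a strictly unital A-infinity homomorphism $g\colon\Ext_{B'}^*(L^{B'},L^{B'})\to\Ext_B^*(L^B,L^B)$ over $L$ with $f\circ g=f'$. Exactly as in the proof of Theorem~\ref{theorem_commutative_main} (via Diagram~\eqref{diagram_1} for $B$ and $B'$, the reconstruction equivalence of Theorem~\ref{theorem_equivalence_End_F(M)}, and Proposition~\ref{proposition_twmod_equiv}), $H^0(\twmod(g))$ corresponds to a functor $G\colon\modu B'\to\modu B$ with $(A\otimes_B-)\circ G\cong A\otimes_{B'}-$; since $g$ is not yet known to be invertible, $G$ is so far only a functor. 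Both induction functors are exact (by definition of an exact Borel subalgebra) and faithful (since $B$, resp.\ $B'$, is a direct summand of $A$ as a right module over itself, each indecomposable projective occurring with multiplicity $\dim\Delta_i^A$), so $G$ is exact and faithful, and $G$ sends $L_i^{B'}$ to $L_i^B$ because $g$ is strictly unital over $L$. Consequently $A\otimes_B G(P_i^{B'})\cong A\otimes_{B'}P_i^{B'}\cong P_i^A$; since $B$ is strong, $A\otimes_B-$ carries minimal projective resolutions to minimal projective resolutions, so a $B$-module with projective induction is itself projective, whence $G(P_i^{B'})$ is projective. As $A\otimes_B-$ sends nonzero modules to nonzero modules and $P_i^A$ is indecomposable, $G(P_i^{B'})$ is indecomposable, and it surjects onto $L_i^B$ because $G$ is exact; hence $G(P_i^{B'})\cong P_i^B$. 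Then $G$ is an injection $\Hom_{B'}(P_i^{B'},P_j^{B'})\hookrightarrow\Hom_B(P_i^B,P_j^B)$ between spaces of the same dimension $[P_j^{B'}:L_i^{B'}]=[P_j^B:L_i^B]$, hence a bijection; so $G$ restricts to an equivalence $\add(B')\to\add(B)$ and is therefore an equivalence $\modu B'\to\modu B$. Finally $G$ is an equivalence commuting with induction, so the endomorphism-algebra argument at the end of the proof of Theorem~\ref{theorem_conjugation} produces $a\in A^\times$ with $aB'a^{-1}=B$.

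I expect the main work to be the dimension count in the second paragraph: one must check carefully that inducing a minimal resolution along $A\otimes_B-$ stays minimal (this is exactly where strongness, via $\rad(B)\subseteq\rad(A)$ and $\rad(B')\subseteq\rad(A)$, is used) and then invoke finiteness of global dimension of $B$ and $B'$ to pass from equality of Ext-dimensions to equality of Cartan matrices. Conceptually, the point is to avoid proving injectivity of $\Ext_{B'}^*(L^{B'},L^{B'})\to\Ext_A^*(\Delta^A,\Delta^A)$ head-on — which is not clear without already knowing $B'$ is normal — by instead transporting everything through $G$ and recovering normality, indeed regularity, of $B'$ from the conjugacy $aB'a^{-1}=B$.
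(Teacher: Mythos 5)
Your proposal is correct, and while its first half (producing $G\colon\modu B'\to\modu B$ with $(A\otimes_B-)\circ G\cong A\otimes_{B'}-$ via Proposition \ref{proposition_ainftyhoms} and Diagram \eqref{diagram_1}) coincides with the paper's, the endgame is genuinely different. The paper never shows that $G$ is an equivalence up front: it proves $G(B')\cong B$ using Lemma \ref{lemma_strong_top} together with normality of $B$, deduces $aB'a^{-1}\subseteq B$, and then closes the gap by showing that $aB'a^{-1}$ is itself an exact Borel subalgebra of the basic directed algebra $B$, so that Lemma \ref{lemma_strong_of_directed} forces equality. You instead establish the numerical identity $[P_j^{B'}:L_i^{B'}]=[P_j^{B}:L_i^{B}]$ beforehand --- via the observation that induction along a strong directed subalgebra preserves minimality of projective resolutions (this is exactly where $A\rad(B)\subseteq\rad(A)$ and its analogue for $B'$ enter; note that the proof of Lemma \ref{lemma_radical} uses only strongness and directedness, so it does apply to $B'$), uniqueness of the minimal resolution of $\Delta_i^A$, and the Euler-form/Cartan-matrix relation for algebras of finite global dimension --- and use it to upgrade $G$ to an equivalence, after which the conjugation argument of Theorem \ref{theorem_conjugation} applies verbatim. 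Your route replaces the paper's structural directedness argument by pure bookkeeping; in fact your Cartan equality already yields $\dim_\field B'=\dim_\field B$, which together with the inclusion $aB'a^{-1}\subseteq B$ would finish the proof even without upgrading $G$ to an equivalence. (Also, $B$ and $B'$ are directed by definition, so their finite global dimension is immediate and the detour through $\Ext_A^{>N}(\Delta^A,\Delta^A)=0$ is not needed.) Two small points to tighten: faithfulness of $A\otimes_{B'}-$ should not be justified by a direct-summand property of $A_{B'}$ as if it were normality --- it follows simply from exactness together with $A\otimes_{B'}L_i^{B'}\cong\Delta_i^A\neq 0$, since an exact functor annihilating no nonzero object is faithful; and the passage from ``$G$ restricts to an equivalence on $\add$'' to ``$G$ is an equivalence'' should be routed through Eilenberg--Watts (as the paper does) or through projective presentations.
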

\begin{proof}
  Since $B, B'$ are exact Borel subalgebras of $A$, we have a diagram 
 % https://q.uiver.app/#q=WzAsMyxbMiwwLCJcXEV4dF9CXiooTF5CLCBMXkIpIl0sWzAsMCwiXFxFeHRfe0InfV4qKExee0InfSwgTF57Qid9KSJdLFsxLDEsIlxcRXh0X0FeKihcXERlbHRhXkEsIFxcRGVsdGFeQSkiXSxbMCwyLCJbZl1cXG1hcHN0byBbXFxpZF9BXFxvdGltZXMgZl0iLDAseyJsYWJlbF9wb3NpdGlvbiI6MH1dLFsxLDIsIltmXVxcbWFwc3RvIFtcXGlkX0FcXG90aW1lcyBmXSIsMix7ImxhYmVsX3Bvc2l0aW9uIjowfV1d
\[\begin{tikzcd}[ampersand replacement=\&]
	{\Ext_{B'}^*(L^{B'}, L^{B'})} \&\& {\Ext_B^*(L^B, L^B)} \\
	\& {\Ext_A^*(\Delta^A, \Delta^A)}
	\arrow["{[f]\mapsto [\id_A\otimes f]}"{pos=0}, from=1-3, to=2-2]
	\arrow["{[f]\mapsto [\id_A\otimes f]}"'{pos=0}, from=1-1, to=2-2]
\end{tikzcd}\]
  which, by Proposition \ref{proposition_ainftyhoms} and regularity of $B$, we can complete to a commutative diagram 
  % https://q.uiver.app/#q=WzAsMyxbMiwwLCJcXEV4dF9CXiooTF5CLCBMXkIpIl0sWzAsMCwiXFxFeHRfe0InfV4qKExee0InfSwgTF57Qid9KSJdLFsxLDEsIlxcRXh0X0FeKihcXERlbHRhXkEsIFxcRGVsdGFeQSkiXSxbMCwyLCJbZl1cXG1hcHN0byBbXFxpZF9BXFxvdGltZXMgZl0iLDAseyJsYWJlbF9wb3NpdGlvbiI6MH1dLFsxLDIsIltmXVxcbWFwc3RvIFtcXGlkX0FcXG90aW1lcyBmXSIsMix7ImxhYmVsX3Bvc2l0aW9uIjowfV0sWzEsMCwiZyJdXQ==
\[\begin{tikzcd}[ampersand replacement=\&]
	{\Ext_{B'}^*(L^{B'}, L^{B'})} \&\& {\Ext_B^*(L^B, L^B)} \\
	\& {\Ext_A^*(\Delta^A, \Delta^A)}
	\arrow["{[f]\mapsto [\id_A\otimes f]}"{pos=0}, from=1-3, to=2-2]
	\arrow["{[f]\mapsto [\id_A\otimes f]}"'{pos=0}, from=1-1, to=2-2]
	\arrow["g", from=1-1, to=1-3]
\end{tikzcd}\]
 where $g$ is an A-infinity homomorphism strictly unital over $L$. As in the proof of Theorem \ref{theorem_commutative_main} we use Proposition \ref{proposition_ainftyhoms}, to obtain a diagram 
  % https://q.uiver.app/#q=WzAsMyxbMiwwLCJcXG1vZHUgQiJdLFswLDAsIlxcbW9kdSBCJyJdLFsxLDEsIkYoXFxEZWx0YV5BKSJdLFsxLDIsIkFcXG90aW1lc197Qid9LSIsMl0sWzAsMiwiQVxcb3RpbWVzX0ItIl0sWzEsMCwiRyJdXQ==
\[\begin{tikzcd}[ampersand replacement=\&]
	{\modu B'} \&\& {\modu B} \\
	\& {\filt(\Delta^A)}
	\arrow["{A\otimes_{B'}-}"', from=1-1, to=2-2]
	\arrow["{A\otimes_B-}", from=1-3, to=2-2]
	\arrow["G", from=1-1, to=1-3]
\end{tikzcd}\]
that commutes up to natural isomorphism.
Note that $A\otimes_B G(B')\cong A\otimes_{B'}B'\cong A$. In particular, $\Top(A\otimes_B G(B'))=L^A$. By Lemma \ref{lemma_strong_top}, this implies that $\Top(G(B'))\cong L^B$. Hence there is an exact sequence 
% https://q.uiver.app/#q=WzAsNSxbMiwwLCJCIl0sWzMsMCwiRyhCJykiXSxbNCwwLCIoMCkiXSxbMSwwLCJLIl0sWzAsMCwiKDApIl0sWzAsMSwiXFxwaSJdLFsxLDJdLFszLDAsIlxcaW90YSJdLFs0LDNdXQ==
\[\begin{tikzcd}[ampersand replacement=\&]
	{(0)} \& K \& B \& {G(B')} \& {(0)}
	\arrow["\pi", from=1-3, to=1-4]
	\arrow[from=1-4, to=1-5]
	\arrow["\iota", from=1-2, to=1-3]
	\arrow[from=1-1, to=1-2]
\end{tikzcd}\]
which, since $A\otimes_B -$ is exact, gives rise to an exact sequence
% https://q.uiver.app/#q=WzAsNSxbMiwwLCJBXFxvdGltZXNfQkIiXSxbMywwLCJBXFxvdGltZXNfQiBHKEInKSJdLFs0LDAsIigwKSJdLFsxLDAsIkFcXG90aW1lc19CIEsiXSxbMCwwLCIoMCkiXSxbMCwxLCJcXGlkX0FcXG90aW1lc1xccGkiXSxbMSwyXSxbMywwLCJcXGlkX0FcXG90aW1lc1xcaW90YSJdLFs0LDNdXQ==
\[\begin{tikzcd}[ampersand replacement=\&]
	{(0)} \& {A\otimes_B K} \& {A\otimes_B B} \& {A\otimes_B G(B')} \& {(0)}
	\arrow["{\id_A\otimes\pi}", from=1-3, to=1-4]
	\arrow[from=1-4, to=1-5]
	\arrow["{\id_A\otimes\iota}", from=1-2, to=1-3]
	\arrow[from=1-1, to=1-2]
\end{tikzcd}\]
Since $A\otimes_B B\cong A\otimes_B G(B')$, these have the same dimension, so that $\id_A\otimes \pi$ is an isomorphism. Hence $A\otimes_B K=(0)$. Moreover, normality, or alternatively, \cite[Lemma 3.7]{BKK}, implies that $B$ is a direct summand of $A$ as a right $B$-module. This yields that $K=(0)$.
Thus $\pi$ is an isomorphism. Now we can argue as in the proof of Theorem \ref{theorem_conjugation} to assume without loss of generality that $G(B')=B$. Let $\alpha: (A\otimes_B -)\circ G\rightarrow (A\otimes_{B'}-)$ be the natural isomorphism between the functors. Then again as in Theorem \ref{theorem_conjugation}, we obtain an $a\in A^{\times}$ making the diagram 
% https://q.uiver.app/#q=WzAsMTAsWzAsMSwiQl57XFxvcH0iXSxbMCwwLCIoQicpXntcXG9wfSJdLFs0LDAsIlxcRW5kX0EoQSkiXSxbNCwxLCJcXEVuZF9BKEEpIl0sWzYsMSwiQV57XFxvcH0iXSxbNiwwLCJBXntcXG9wfSJdLFszLDEsIlxcRW5kX0EoQVxcb3RpbWVzX0IgQikiXSxbMywwLCJcXEVuZF9BKEFcXG90aW1lc197Qid9QicpIl0sWzIsMSwiXFxFbmRfQihCKSJdLFsyLDAsIlxcRW5kX3tCJ30oQicpIl0sWzMsNCwicl9hXFxtYXBzdG8gYSJdLFsyLDUsInJfYVxcbWFwc3RvIGEiXSxbNiwzLCJcXHJob19lIl0sWzcsMiwiXFxyaG9fe2UnfSJdLFs3LDYsIlxccmhvX3tcXGFscGhhX3tCJ319IiwyXSxbMiwzLCJcXHJob197XFxiZXRhfSIsMl0sWzAsOCwiYlxcbWFwc3RvIHJfe2J9Il0sWzEsOSwiYidcXG1hcHN0byByX3tiJ30iXSxbOSw3LCJcXGlkX0FcXG90aW1lcy0iXSxbOCw2LCJcXGlkX0FcXG90aW1lcy0iXSxbOSw4LCJmXFxtYXBzdG8gRyhmKSIsMl0sWzUsNCwiXFx2YXJwaGk9XFxyaG9fe2Feey0xfX0iLDJdLFsxLDAsImcnIiwyXV0=
\[\begin{tikzcd}[ampersand replacement=\&]
	{(B')^{\op}} \&\& {\End_{B'}(B')} \& {\End_A(A\otimes_{B'}B')} \& {\End_A(A)} \&\& {A^{\op}} \\
	{B^{\op}} \&\& {\End_B(B)} \& {\End_A(A\otimes_B B)} \& {\End_A(A)} \&\& {A^{\op}}
	\arrow["{r_a\mapsto a}", from=2-5, to=2-7]
	\arrow["{r_a\mapsto a}", from=1-5, to=1-7]
	\arrow["{\rho_e}", from=2-4, to=2-5]
	\arrow["{\rho_{e'}}", from=1-4, to=1-5]
	\arrow["{\rho_{\alpha_{B'}}}"', from=1-4, to=2-4]
	\arrow["{\rho_{\beta}}"', from=1-5, to=2-5]
	\arrow["{b\mapsto r_{b}}", from=2-1, to=2-3]
	\arrow["{b'\mapsto r_{b'}}", from=1-1, to=1-3]
	\arrow["{\id_A\otimes-}", from=1-3, to=1-4]
	\arrow["{\id_A\otimes-}", from=2-3, to=2-4]
	\arrow["{f\mapsto G(f)}"', from=1-3, to=2-3]
	\arrow["{\varphi=\rho_{a^{-1}}}"', from=1-7, to=2-7]
	\arrow["{g'}"', from=1-1, to=2-1]
\end{tikzcd}\]
commute, where $\rho_{\alpha_{B'}}$ is conjugation by $\alpha_{B'}$, i.e. $\rho_{\alpha_{B'}}(f)=\alpha_{B'}^{-1}\circ f\circ \alpha_{B'}$, $\rho_{e}$ is conjugation by the canonical isomorphism
\begin{align*}
	e: A\otimes_B B\rightarrow A, a\otimes b\mapsto ab
\end{align*}
and 
$\rho_{e'}$ is conjugation by the canonical isomorphism
\begin{align*}
	e': A\otimes_{B'} B'\rightarrow A, a\otimes b'\mapsto ab',
\end{align*}
and $g':(B')^{\op}\rightarrow B^{\op}$ and $\varphi:A^{\op}\rightarrow A^{\op}$ are the homomorphisms induced by the first, respectively last, square.
 In particular, we have a commutative diagram of finite-dimensional algebras
 % https://q.uiver.app/#q=WzAsNCxbMCwwLCJCJyJdLFsxLDAsIkEiXSxbMCwxLCJCIl0sWzEsMSwiQSJdLFsxLDMsIlxccmhvX3thXnstMX19Il0sWzAsMiwiZyciXSxbMiwzXSxbMCwxXV0=
\[\begin{tikzcd}[ampersand replacement=\&]
	{B'} \& A \\
	B \& A
	\arrow["{\rho_{a^{-1}}}", from=1-2, to=2-2]
	\arrow["{g'}", from=1-1, to=2-1]
	\arrow[from=2-1, to=2-2]
	\arrow[from=1-1, to=1-2]
\end{tikzcd}\]
where the horizontal arrows are the canonical inclusions. Thus
 $aB'a^{-1}\subseteq B$ and $g':B'\rightarrow B$ is given by $g'(b')=ab'a^{-1}$.
 In particular, by definition of $g'$ we have 
 \begin{align*}
     G(r_{b'})=r_{g'(b')}=r_{ab'a^{-1}}
 \end{align*}
 for all $b'\in B'$.
 We show that $aB'a^{-1}$ is a strong exact Borel subalgebra of $B$. 
First, let 
% https://q.uiver.app/#q=WzAsNSxbMCwwLCIoMCkiXSxbMSwwLCJNJyJdLFsyLDAsIk0iXSxbMywwLCJNJyciXSxbNCwwLCIoMCkiXSxbMCwxXSxbMSwyLCJmJyJdLFsyLDMsImYnJyJdLFszLDRdXQ==
\[\begin{tikzcd}[ampersand replacement=\&]
	{(0)} \& {M'} \& M \& {M''} \& {(0)}
	\arrow[from=1-1, to=1-2]
	\arrow["{f'}", from=1-2, to=1-3]
	\arrow["{f''}", from=1-3, to=1-4]
	\arrow[from=1-4, to=1-5]
\end{tikzcd}\]
be an exact sequence in $\modu B'$. Then we obtain a commutative diagram
% https://q.uiver.app/#q=WzAsMTAsWzAsMCwiKDApIl0sWzEsMCwiQVxcb3RpbWVzX3tCJ30gTSciXSxbMiwwLCJBXFxvdGltZXNfe0InfSBNIl0sWzMsMCwiQVxcb3RpbWVzX3tCJ30gTScnIl0sWzQsMCwiKDApIl0sWzEsMSwiQVxcb3RpbWVzX0IgRyhNJykiXSxbMiwxLCJBXFxvdGltZXNfQiBHKE0pIl0sWzMsMSwiQVxcb3RpbWVzX0IgRyhNJycpIl0sWzQsMSwiKDApIl0sWzAsMSwiKDApIl0sWzAsMV0sWzEsMiwiXFxpZF9BXFxvdGltZXMgZiciXSxbMiwzLCJcXGlkX0FcXG90aW1lcyBmJyciXSxbMyw0XSxbMSw1XSxbNiw3LCJcXGlkX0FcXG90aW1lcyBHKGYnJykiXSxbNSw2LCJcXGlkX0FcXG90aW1lcyBHKGYnKSJdLFs3LDhdLFs5LDVdLFsyLDZdLFszLDddXQ==
\[\begin{tikzcd}[ampersand replacement=\&, column sep =large]
	{(0)} \& {A\otimes_{B'} M'} \& {A\otimes_{B'} M} \& {A\otimes_{B'} M''} \& {(0)} \\
	{(0)} \& {A\otimes_B G(M')} \& {A\otimes_B G(M)} \& {A\otimes_B G(M'')} \& {(0)}
	\arrow[from=1-1, to=1-2]
	\arrow["{\id_A\otimes f'}", from=1-2, to=1-3]
	\arrow["{\id_A\otimes f''}", from=1-3, to=1-4]
	\arrow[from=1-4, to=1-5]
	\arrow[from=1-2, to=2-2]
	\arrow["{\id_A\otimes G(f'')}", from=2-3, to=2-4]
	\arrow["{\id_A\otimes G(f')}", from=2-2, to=2-3]
	\arrow[from=2-4, to=2-5]
	\arrow[from=2-1, to=2-2]
	\arrow[from=1-3, to=2-3]
	\arrow[from=1-4, to=2-4]
\end{tikzcd}\]
where the upper row is exact, since $B'$ is an exact Borel subalgebra of $A$, and the vertical arrows are the isomorphisms given by $\alpha$. Hence the lower row is exact. As before, since $B$ is normal resp. by \cite[Lemma 3.7]{BKK}, $A$ decomposes as a right $B$-module into a direct sum $A_B \cong B_B \oplus V$. Thus, as a sequence of $\field$-vector spaces, the lower row decomposes into a direct sum of two exact sequences 
% https://q.uiver.app/#q=WzAsNSxbMSwwLCJCXFxvdGltZXNfQiBHKE0nKSJdLFszLDAsIkJcXG90aW1lc19CIEcoTScpIl0sWzUsMCwiQlxcb3RpbWVzX0IgRyhNJykiXSxbNiwwLCIoMCkiXSxbMCwwLCIoMCkiXSxbMSwyLCJcXGlkX0JcXG90aW1lcyBHKGYnJykiXSxbMCwxLCJcXGlkX0JcXG90aW1lcyBHKGYnKSJdLFsyLDNdLFs0LDBdXQ==
\begin{equation}
\label{exactseq1}\begin{tikzcd}[ampersand replacement=\&]
	{(0)} \& {B\otimes_B G(M')} \&\& {B\otimes_B G(M')} \&\& {B\otimes_B G(M')} \& {(0)}
	\arrow["{\id_B\otimes G(f'')}", from=1-4, to=1-6]
	\arrow["{\id_B\otimes G(f')}", from=1-2, to=1-4]
	\arrow[from=1-6, to=1-7]
	\arrow[from=1-1, to=1-2]
\end{tikzcd}\end{equation}
and 
% https://q.uiver.app/#q=WzAsNSxbMSwwLCJWXFxvdGltZXNfQiBHKE0nKSJdLFszLDAsIlZcXG90aW1lc19CIEcoTScpIl0sWzUsMCwiVlxcb3RpbWVzX0IgRyhNJykiXSxbNiwwLCIoMCkiXSxbMCwwLCIoMCkiXSxbMSwyLCJcXGlkX1ZcXG90aW1lcyBHKGYnJykiXSxbMCwxLCJcXGlkX1ZcXG90aW1lcyBHKGYnKSJdLFsyLDNdLFs0LDBdXQ==
\begin{equation}
\begin{tikzcd}[ampersand replacement=\&]
	{(0)} \& {V\otimes_B G(M')} \&\& {V\otimes_B G(M')} \&\& {V\otimes_B G(M')} \& {(0).}
	\arrow["{\id_V\otimes G(f'')}", from=1-4, to=1-6]
	\arrow["{\id_V\otimes G(f')}", from=1-2, to=1-4]
	\arrow[from=1-6, to=1-7]
	\arrow[from=1-1, to=1-2]
\end{tikzcd}
\end{equation}
The upper sequence is, as  a sequence of $\field$-vector spaces, isomorphic to the sequence
% https://q.uiver.app/#q=WzAsNSxbMSwwLCIgRyhNJykiXSxbMywwLCJHKE0nKSJdLFs1LDAsIkcoTScpIl0sWzYsMCwiKDApIl0sWzAsMCwiKDApIl0sWzEsMiwiIEcoZicnKSJdLFswLDEsIkcoZicpIl0sWzIsM10sWzQsMF1d
\begin{equation}
\label{exactseq2}
\begin{tikzcd}[ampersand replacement=\&]
	{(0)} \& { G(M')} \&\& {G(M')} \&\& {G(M')} \& {(0).}
	\arrow["{ G(f'')}", from=1-4, to=1-6]
	\arrow["{G(f')}", from=1-2, to=1-4]
	\arrow[from=1-6, to=1-7]
	\arrow[from=1-1, to=1-2]
\end{tikzcd}
\end{equation}
Since the sequence \eqref{exactseq1} is exact, so is the sequence \eqref{exactseq2}.
Thus $G$ is an exact functor. In particular, the Eilenberg-Watts theorem yields a $B$-$B'$-bimodule $X$ such that $G\cong X\otimes_{B'}-$. 
Note that
$X\cong X\otimes_{B'}B'\cong G(B')=B$ as a $B$-$B'$-bimodule,  where the right module structure on $G(B')=B$ is given by
\begin{align*}
     b\cdot b':=G(r_{b'})(b)=bab'a^{-1}.
\end{align*}
Hence $X\cong Ba$ as a  $B$-$B'$-bimodule.
Consider the canonical equivalence
 \begin{align*}
	H:\modu B' \rightarrow \modu aB'a^{-1}, M\mapsto aM, f\mapsto (am\mapsto af(m)).
 \end{align*}
 Then the diagram 
% https://q.uiver.app/#q=WzAsMyxbMCwwLCJcXG1vZHUgQiciXSxbMiwwLCJcXG1vZHUgYUInYV57LTF9Il0sWzEsMSwiXFxtb2R1IEIiXSxbMCwyLCIgQmFcXG90aW1lc197Qid9LSIsMix7ImxhYmVsX3Bvc2l0aW9uIjowfV0sWzEsMiwiQlxcb3RpbWVzX3thQidhXnstMX19IiwwLHsibGFiZWxfcG9zaXRpb24iOjIwfV0sWzAsMSwiSCJdXQ==
\[\begin{tikzcd}[ampersand replacement=\&]
	{\modu B'} \&\& {\modu aB'a^{-1}} \\
	\& {\modu B}
	\arrow["{ Ba\otimes_{B'}-}"'{pos=0}, from=1-1, to=2-2]
	\arrow["{B\otimes_{aB'a^{-1}}-}"{pos=0.2}, from=1-3, to=2-2]
	\arrow["H", from=1-1, to=1-3]
\end{tikzcd}\]
commutes up to natural isomorphism given by
\begin{align*}
    \gamma:(B\otimes_{aB'a^{-1}}-)\circ H&\rightarrow Ba\otimes_{B'}-\\
    \gamma_M:B\otimes_{aB'a^{-1}}aM&\rightarrow Ba\otimes_{B'}M\\
            b\otimes am&\mapsto ba\otimes m.
\end{align*}
Hence 
% https://q.uiver.app/#q=WzAsMyxbMCwwLCJcXG1vZHUgQiciXSxbMiwwLCJcXG1vZHUgYUInYV57LTF9Il0sWzEsMSwiXFxtb2R1IEIiXSxbMCwyLCJHIiwyLHsibGFiZWxfcG9zaXRpb24iOjB9XSxbMSwyLCJCXFxvdGltZXNfe2FCJ2Feey0xfX0iLDAseyJsYWJlbF9wb3NpdGlvbiI6MjB9XSxbMCwxLCJIIl1d
\[\begin{tikzcd}[ampersand replacement=\&]
	{\modu B'} \&\& {\modu aB'a^{-1}} \\
	\& {\modu B}
	\arrow["G"'{pos=0}, from=1-1, to=2-2]
	\arrow["{B\otimes_{aB'a^{-1}}}"{pos=0.2}, from=1-3, to=2-2]
	\arrow["H", from=1-1, to=1-3]
\end{tikzcd}\]
commutes up to some natural equivalence $\gamma':(B\otimes_{aB'a^{-1}}-)\circ H\rightarrow G$.
Since $H$ is an equivalence and $G$ is exact, this implies that  $(B\otimes_{aB'a^{-1}}-)$ is exact.
Moreover, note that
\begin{align*}
  A\otimes_B G(L_i^{B'})\cong A\otimes_{B'}L_i^{B'}\cong \Delta_i^A\cong A\otimes_B L_i^B
\end{align*}
In particular, $\Top(A\otimes_B G(L_i^B))\cong \Top(\Delta_i^A)\cong L_i^A$ for every $1\leq i\leq n$, so that by Lemma \ref{lemma_strong_top} $\Top(G(L_i^{B'}))\cong L_i^B$. Thus, there are projections
\begin{align*}
  \pi_i: G(L_i^{B'})\rightarrow L_i^B.
\end{align*}
Again, these give rise to short exact sequences
% https://q.uiver.app/#q=WzAsNSxbMCwwLCIoMCkiXSxbMSwwLCI+QVxcb3RpbWVzX0IgS19pIl0sWzIsMCwiQVxcb3RpbWVzX0IgRyhMX2lee0InfSkiXSxbMywwLCJBXFxvdGltZXNfQiBMX2leQiJdLFs0LDAsIigwKSJdLFswLDFdLFsxLDIsIlxcaWRfQVxcb3RpbWVzXFxpb3RhX2kiXSxbMiwzLCJcXGlkX0FcXG90aW1lc19CIFxccGlfaSJdLFszLDRdXQ==
\[\begin{tikzcd}[ampersand replacement=\&]
	{(0)} \& {A\otimes_B K_i} \& {A\otimes_B G(L_i^{B'})} \& {A\otimes_B L_i^B} \& {(0)}
	\arrow[from=1-1, to=1-2]
	\arrow["{\id_A\otimes\iota_i}", from=1-2, to=1-3]
	\arrow["{\id_A\otimes_B \pi_i}", from=1-3, to=1-4]
	\arrow[from=1-4, to=1-5]
\end{tikzcd}\]
Since $A\otimes_B G(L_i^{B'})\cong {A\otimes_B L_i^B}$, $\id_A\otimes_B \pi_i$ is an isomorphism for dimension reasons, we again conclude $A\otimes_B K_i=(0)$. As before, normality of $B$ in $A$ implies that $K_i=(0)$ and thus $\pi_i$ becomes an isomorphism.
For $1\leq i\leq n$ let $L_i^{aB'a^{-1}}:=H(L_i^{B'})$. Then $\{L_1^{aB'a^{-1}}, \dots ,L_n^{aB'a^{-1}}\}$ is a complete set of representatives of the isomorphism classes of the simple $aB'a^{-1}$-modules, and by the above
\begin{align*}
    B\otimes_{a'Ba^{-1}}L_i^{aB'a^{-1}}= B\otimes_{aB'a^{-1}}H(L_i^{B'})\cong G(L_i^{B'})\cong L_i^B.
\end{align*}
Thus $aB'a^{-1}$ is an exact Borel subalgebra of $B$. Since $B$ is directed, Lemma \ref{lemma_strong_of_directed} now implies  $aB'a^{-1}=B$.
\end{proof}
\section{G-equivariance}\label{section_skew}
In this section, we consider a finite group $G$ acting on a quasi-hereditary algebra $(A, \leq_A)$ via algebra automorphisms, such that the characteristic of $\field$ does not divide the order of $G$. We want to investigate the existence of $G$-invariant regular exact Borel subalgebras. To this end, we consider the induced $G$-actions on the categories involved in the diagram \eqref{diagram_1} and examine the functors with respect to their $G$-equivariance. \\
In particular, we consider $G$-actions on A-infinity algebras and categories, which are defined in analogy to the $G$-actions on dg-algebras and categories considered in \cite{Lemeur}.
\subsection{Preliminaries of Group Actions}
For an introduction to group actions on categories see for example \cite{ReitenRiedtmann, Poon}.\\
\begin{definition}
	Let $\mathcal{C}$ be a category. We say that $G$ acts on $\mathcal{C}$ if there is a group homomorphism from $G$ into the autoequivalences of $\mathcal{C}$.
	In this setting we write $gX$ and $g\varphi$ for the image of an object $X$ and a morphism $\varphi$ in $\mathcal{C}$ under the autoequivalence given by a group element $g\in G$.\\  
	We call a functor $F:\mathcal{C}\rightarrow \mathcal{C}'$ between two categories with $G$-actions (strongly) $G$-equivariant, if 
	\begin{align*}
		F(gX)=gF(X)
	\end{align*}
	for all $X\in\mathcal{C}$ and 
	\begin{align*}
		F(g\varphi)=gF(\varphi)
	\end{align*}
	for all $\varphi\in \mathcal{C}(X, Y)$, i.e. if $F\circ g=g\circ F$ for all $g\in G$. We call $F$ weakly $G$-equivariant if there is a collection of natural isomorphisms
	\begin{align*}
		\alpha_g:F\circ g\rightarrow g\circ F
	\end{align*} 
	such that $g((\alpha_h)_{X})\circ (\alpha_g)_{hX}=\alpha_{ghX}$ for all $h,g\in G$, $X\in \mathcal{C}$; and $\alpha_e=\id$ is the identity.\\
	Similarly, let $\mathcal{E}$ be a strictly unital A-infinity category. Then we say that $G$ acts on $\mathcal{E}$ if $G$ acts on $\mathcal{E}$ via strictly unital A-infinity autoequivalences, and we call an A-infinity functor  $F=(F_n)_n:\mathcal{E}\rightarrow \mathcal{E}'$ between two A-infinity categories with $G$-actions (strongly) $G$-equivariant if $F\circ g=g\circ F$ for all $g\in G$.
	\end{definition}
 In this article, we don't need the notion of weakly $G$-equivariant A-infinity functors.\\
 Note that being $G$-equivariant is stable under composition, but not under taking inverses and under natural isomorphism of functors, while begin weakly $G$-equivariant is stable under all of these.
 \begin{lemma}
     If $\mathcal{E}$ is a strictly unital A-infinity category with a $G$-action, then there is an induced $G$-action on the category $H^0(\mathcal{E})$. 
     Moreover, if $$F=(F_n)_n:\mathcal{E}\rightarrow \mathcal{E}'$$ is a strictly unital, $G$-equivariant A-infinity functor between two strictly unital A-infinity categories $\mathcal{E}$ and $\mathcal{E}'$ with $G$-actions, then 
     \begin{align*}
         H^0(F):\mathcal{E}\rightarrow \mathcal{E}'
     \end{align*}
     is a strongly $G$-equivariant functor.
 \end{lemma}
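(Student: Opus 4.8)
The plan is to deduce everything from the functoriality of $H^0$ recorded in Definition~\ref{definition_homology_ainfty_cat}. Recall that a $G$-action on $\mathcal{E}$ is by definition a group homomorphism $g\mapsto\Phi_g$ from $G$ into the strictly unital A-infinity autoequivalences of $\mathcal{E}$, so that $\Phi_g\circ\Phi_h=\Phi_{gh}$ and $\Phi_e=\id_{\mathcal{E}}$; in particular each $\Phi_g$ has the strict two-sided inverse $\Phi_{g^{-1}}$. First I would note that the composition of strictly unital A-infinity functors is again strictly unital and $\id_{\mathcal{E}}$ is strictly unital, so these autoequivalences form a group, and that $H^0$ restricts to a functor on the (non-full) subcategory of strictly unital A-infinity categories with strictly unital A-infinity functors, exactly as stated after Definition~\ref{definition_homology_ainfty_cat}.

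Applying $H^0$ to the homomorphism $g\mapsto\Phi_g$ then produces, for each $g\in G$, an ordinary functor $H^0(\Phi_g)\colon H^0(\mathcal{E})\rightarrow H^0(\mathcal{E})$, and functoriality of $H^0$ gives $H^0(\Phi_g)\circ H^0(\Phi_h)=H^0(\Phi_{gh})$ and $H^0(\Phi_e)=\id_{H^0(\mathcal{E})}$. Hence $H^0(\Phi_g)$ is an isomorphism of categories with inverse $H^0(\Phi_{g^{-1}})$, in particular an autoequivalence, and $g\mapsto H^0(\Phi_g)$ is a group homomorphism from $G$ into the autoequivalences of $H^0(\mathcal{E})$, i.e.\ the induced $G$-action.

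For the second statement, write $\Phi_g$ and $\Phi'_g$ for the autoequivalences giving the $G$-actions on $\mathcal{E}$ and $\mathcal{E}'$. Strong $G$-equivariance of $F$ means $F\circ\Phi_g=\Phi'_g\circ F$ for every $g\in G$, and applying the functor $H^0$ to this identity yields
\begin{align*}
	H^0(F)\circ H^0(\Phi_g)=H^0(F\circ\Phi_g)=H^0(\Phi'_g\circ F)=H^0(\Phi'_g)\circ H^0(F).
\end{align*}
Since the induced actions on $H^0(\mathcal{E})$ and $H^0(\mathcal{E}')$ are precisely $g\mapsto H^0(\Phi_g)$ and $g\mapsto H^0(\Phi'_g)$, this is exactly the assertion that $H^0(F)$ is strongly $G$-equivariant.

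I do not expect a genuine obstacle: the statement is a formal consequence of the functoriality of $H^0$. The only mild points requiring (routine) care are that strictly unital A-infinity functors are closed under composition and contain the identities — so that a $G$-action through such functors is a well-posed notion and $H^0$ is functorial on them — and that an A-infinity equivalence admitting a strict two-sided inverse is carried by $H^0$ to an honest autoequivalence; both are immediate from the definitions and from Definition~\ref{definition_homology_ainfty_cat}.
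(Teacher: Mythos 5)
Your proposal is correct and follows essentially the same route as the paper: both arguments deduce the induced $G$-action and the strong $G$-equivariance of $H^0(F)$ directly from the functoriality of $H^0$ on strictly unital A-infinity categories, applying $H^0$ to the composition law of the autoequivalences and to the identity $F\circ g=g\circ F$. Your additional remarks on closure of strictly unital functors under composition and on strict inverses are routine checks the paper leaves implicit.
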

 \begin{proof}
     Suppose $G$ acts on $\mathcal{E}$ via the A-infinity autoequivalences given by $g=(g_n)_n$ for every $g\in G$. Then since $H^0$ is a functor by Definition \ref{definition_homology_ainfty_cat}, we have induced equivalences
     \begin{align*}
         H^0(g):H^0(\mathcal{E})\rightarrow H^0(\mathcal{E}),\\ X\mapsto g_0(X),
         a+\im(m_1)\mapsto g_1(a)+\im(m_1),
     \end{align*}
     and
     \begin{align*}
         H^0(g\circ h)=H^0(g)\circ H^0(h).
     \end{align*}
     Moreover, if $$F=(F_n)_n:\mathcal{E}\rightarrow \mathcal{E}'$$ is a strictly unital, $G$-equivariant A-infinity functor, then we have for every $g\in G$
     \begin{align*}
         F\circ g=g\circ F
     \end{align*}
     and thus
     \begin{equation*}
         H^0(F)\circ H^0(g)=H^0(F\circ g)=H^0(g\circ F)=H^0(g)\circ H^0(F).\qedhere
     \end{equation*}
 \end{proof}
The following is a classical example of a group action on a category, and was extensively studied in  \cite{ReitenRiedtmann}.
\begin{example}\label{example_A*G}
	Let $A$ be a finite-dimensional algebra and suppose $G$ acts on $A$ via algebra automorphisms. Then $G$ acts on $\modu A$ via defining for an $A$-module $M$ the $A$-module $gM$ as the vector space given by formal products $\{gm: m\in M\}$ with addition and scalar multiplication $gm+gm'=g(m+m')$ and $\lambda gm=g\lambda m$ and with multiplication $$a gm=g (g^{-1}(a)m),$$ and for a module homomorphism $\varphi:M\rightarrow N$ setting $$g\varphi(gm)=g\varphi(m).$$
    In this setting, a $G$-action on an $A$-module $M$ is a collection of $A$-module isomorphisms
    \begin{align*}
        \tr_g^M:gM\rightarrow M
    \end{align*}
    for $g\in G$
    such that $\tr_e^M=\id_M$ and for all $g,h\in G$ 
    \begin{align*}
        \tr_g^M\circ g(\tr_h^M)=\tr_{gh}^M.
    \end{align*}
    The category of $A$-modules with a $G$-action is the category whose objects are pairs $(M, (\tr_g^M)_{g\in G})$ consisting of an $A$-module $M$ and a $G$-action $(\tr_g^M)_{g\in G}$ on $M$, and whose morphisms $$f:(M, (\tr_g^M)_{g\in G})\rightarrow (N, (\tr_g^N)_{g\in G})$$ are $A$-linear maps
    $f:M\rightarrow N$ such that
    \begin{align*}
        \tr_g^N\circ g(f)=f\circ \tr_g^M
    \end{align*}
    for all $g\in G$.
    By \cite[Proposition 4.8]{Poon} there is an equivalence of categories between the category of $A$-modules with a $G$-action and the category of $A*G$-modules, where $A*G$ is the skew group algebra, which is given by
$A*G:=A\otimes_{\field}\field G$ as $\field$-vector spaces together with the multiplication
\begin{align*}
    (a\otimes g)\cdot (a'\otimes g'):=ag(a')\otimes gg'.
\end{align*}
\end{example}
\begin{remark}
    If $\mathcal{C}$ is a category with a $G$-action, then we can generalize the definition of an $A$-module with a $G$-action to an object in $\mathcal{C}$ with a $G$-action, see for example \cite[Definition 4.7]{Poon}.
\end{remark}
\begin{lemma}\label{lemma_Gstructuretransport}\cite[Lemma 4]{MartinezVilla}
    Let $\mathcal{C}$ be a category with a $G$-action and suppose $M\in \mathcal{C}$ is an object with a $G$-action $(\tr_g^M)_{g\in G}$. Then $G$ acts on $\mathcal{C}(M, M)$ via the algebra automorphisms
    \begin{align*}
        f\mapsto \tr_g^M\circ gf\circ (\tr_g^M)^{-1}.
    \end{align*}
    Moreover, if $F:\mathcal{C}\rightarrow \mathcal{C}'$ is a weakly $G$-equivariant functor with corresponding natural isomorphisms
    \begin{align*}
		\alpha_g:F\circ g\rightarrow g\circ F,
	\end{align*} 
    then there is an induced $G$-action on $F(M)$ given by
    \begin{align*}
        \tr_g^{F(M)}=F(\tr_g^M)\circ \alpha_{g, M}^{-1}: gF(M)\rightarrow F(M),
    \end{align*}
    and 
    \begin{align*}
        F_{M, M}:\mathcal{C}(M, M)\rightarrow \mathcal{C}'(F(M), F(M))
    \end{align*}
is $G$-equivariant with respect to the induced actions on $\mathcal{C}(M, M)$ and $\mathcal{C}'(F(M), F(M))$.
\end{lemma}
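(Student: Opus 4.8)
The plan is to verify the three assertions of the lemma --- that the displayed formula defines a $G$-action on the algebra $\mathcal{C}(M,M)$, that $(\tr_g^{F(M)})_{g\in G}$ is a $G$-action on $F(M)$, and that $F_{M,M}$ is equivariant for the resulting actions --- by elementary diagram chasing. The only inputs needed are functoriality of the autoequivalences $g\colon\mathcal{C}\to\mathcal{C}$, $g\colon\mathcal{C}'\to\mathcal{C}'$ and of $F$; naturality of the $\alpha_g$; the cocycle identity $\tr_g^M\circ g(\tr_h^M)=\tr_{gh}^M$ for the given action on $M$; the cocycle identity $g(\alpha_{h,X})\circ\alpha_{g,hX}=\alpha_{gh,X}$ built into the notion of a weakly $G$-equivariant functor; and the normalizations $e\mapsto\id$, $\tr_e^M=\id_M$, $\alpha_e=\id$.

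First I would treat the action on $\mathcal{C}(M,M)$. Writing $\phi_g(f)=\tr_g^M\circ g(f)\circ(\tr_g^M)^{-1}$, one notes that $\phi_g$ is the composite of the algebra isomorphism $g\colon\mathcal{C}(M,M)\to\mathcal{C}(gM,gM)$ with conjugation by the isomorphism $\tr_g^M\colon gM\to M$, hence an algebra automorphism; that $\phi_e=\id$ since $e$ acts as $\id_{\mathcal{C}}$ and $\tr_e^M=\id_M$; and that $\phi_g\circ\phi_h=\phi_{gh}$, which upon expanding and using $g(h(f))=(gh)(f)$ collapses exactly via the cocycle identity for $\tr^M$.

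Next I would check that $\tr_g^{F(M)}=F(\tr_g^M)\circ\alpha_{g,M}^{-1}\colon gF(M)\to F(M)$ defines a $G$-action on $F(M)$. Each $\tr_g^{F(M)}$ is an isomorphism as a composite of isomorphisms, and $\tr_e^{F(M)}=\id_{F(M)}$ is immediate from $\alpha_e=\id$ and $\tr_e^M=\id_M$. For the cocycle condition one expands $\tr_g^{F(M)}\circ g(\tr_h^{F(M)})=F(\tr_g^M)\circ\alpha_{g,M}^{-1}\circ g(F(\tr_h^M))\circ g(\alpha_{h,M})^{-1}$; the key move is to slide $\alpha_{g,M}^{-1}$ past $g(F(\tr_h^M))$ by naturality of $\alpha_g$ at the morphism $\tr_h^M\colon hM\to M$, which rewrites this factor as $F(g(\tr_h^M))\circ\alpha_{g,hM}^{-1}$. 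Then functoriality of $F$ together with the cocycle identity for $\tr^M$ gives $F(\tr_g^M)\circ F(g(\tr_h^M))=F(\tr_{gh}^M)$, while $\alpha_{g,hM}^{-1}\circ g(\alpha_{h,M})^{-1}=\bigl(g(\alpha_{h,M})\circ\alpha_{g,hM}\bigr)^{-1}=\alpha_{gh,M}^{-1}$ by the $\alpha$-cocycle identity; multiplying these out yields $F(\tr_{gh}^M)\circ\alpha_{gh,M}^{-1}=\tr_{gh}^{F(M)}$.

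Finally, for $G$-equivariance of $F_{M,M}$ I would apply naturality of $\alpha_g$ at $f\colon M\to M$ to write $F(g(f))=\alpha_{g,M}^{-1}\circ g(F(f))\circ\alpha_{g,M}$, substitute this into $F\bigl(\tr_g^M\circ g(f)\circ(\tr_g^M)^{-1}\bigr)=F(\tr_g^M)\circ F(g(f))\circ F(\tr_g^M)^{-1}$, and regroup the factors as $\bigl(F(\tr_g^M)\circ\alpha_{g,M}^{-1}\bigr)\circ g(F(f))\circ\bigl(F(\tr_g^M)\circ\alpha_{g,M}^{-1}\bigr)^{-1}=\tr_g^{F(M)}\circ g(F(f))\circ(\tr_g^{F(M)})^{-1}$; the right-hand side is precisely the image of $F_{M,M}(f)$ under the action attached, via the first part of the lemma, to $F(M)$ with its induced $G$-action. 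Since each of the three verifications is a one- or two-line computation once the conventions are pinned down, I expect the only genuine obstacle to be bookkeeping: keeping the two variances straight and applying the naturality squares of $\alpha_g$ to the correct morphisms, in particular orienting the $\alpha$-cocycle identity so that the inverses compose as required in the second step.
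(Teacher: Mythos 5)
Your verification is correct and complete: all three steps (the cocycle check for $\phi_g$, the cocycle check for $\tr^{F(M)}$ via naturality of $\alpha_g$ at $\tr_h^M$ together with the identity $g((\alpha_h)_X)\circ(\alpha_g)_{hX}=\alpha_{gh,X}$, and the equivariance of $F_{M,M}$ via naturality of $\alpha_g$ at $f$) are exactly the computations required. The paper itself gives no proof, citing the lemma from Mart\'inez-Villa, so your direct diagram chase is the standard argument and there is nothing to compare it against.
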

\begin{lemma}
    If $A$ is a finite-dimensional algebra with a $G$-action, and we consider $\modu A$ with the induced $G$-action, then $A\in \modu A$ is an object with a $G$-action given by
    \begin{align*}
        \tr_g^A: gA\rightarrow A, ga\mapsto g(a).
    \end{align*}
    Moreover, the induced $G$-action on $\End_A(A)$ is given by
    \begin{align*}
        (g\cdot r_a)(x)=r_{g(a)}(x),
    \end{align*}
    for $a,x\in A$, $g\in G$, where $r_a$ denotes right multiplication with $a$.
    In particular, the isomorphism $A\rightarrow \End_A(A)^{\op}, a\mapsto r_a$ is $G$-equivariant. 
\end{lemma}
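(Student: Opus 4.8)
The plan is to verify the three assertions in turn; each is a direct unwinding of the $G$-action on $\modu A$ described in Example~\ref{example_A*G} together with the induced action on endomorphism rings from Lemma~\ref{lemma_Gstructuretransport}, so I expect no genuine difficulty beyond careful bookkeeping.

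First I would check that $\tr_g^A\colon gA\to A,\ ga\mapsto g(a)$ is an isomorphism of left $A$-modules and that $(\tr_g^A)_{g\in G}$ is a $G$-action on the object $A\in\modu A$. Since the left $A$-module structure on $gA$ is $b\cdot(ga)=g(g^{-1}(b)a)$ and $g$ is an algebra automorphism, one gets $\tr_g^A(b\cdot(ga))=g(g^{-1}(b)a)=g(g^{-1}(b))\,g(a)=b\cdot\tr_g^A(ga)$, so $\tr_g^A$ is $A$-linear, and it is bijective with inverse $c\mapsto g\cdot g^{-1}(c)$. The equality $\tr_e^A=\id_A$ is immediate, and for the cocycle identity one traces a formal symbol through $g(hA)\xrightarrow{\,g(\tr_h^A)\,}gA\xrightarrow{\,\tr_g^A\,}A$, namely $g\cdot(ha)\mapsto g\cdot h(a)\mapsto g(h(a))=(gh)(a)$, which under the canonical identification $g(hA)=(gh)A$ agrees with $\tr_{gh}^A$.

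Next, $A$ being an object of $\modu A$ with this $G$-action, Lemma~\ref{lemma_Gstructuretransport} applied to $\mathcal{C}=\modu A$ and $M=A$ gives the induced $G$-action on $\End_A(A)$ as $f\mapsto \tr_g^A\circ g(f)\circ(\tr_g^A)^{-1}$. I would then evaluate this on $f=r_a$: for $x\in A$,
\[(g\cdot r_a)(x)=\tr_g^A\big(g(r_a)(g\cdot g^{-1}(x))\big)=\tr_g^A\big(g\cdot(g^{-1}(x)\,a)\big)=g(g^{-1}(x)\,a)=x\,g(a)=r_{g(a)}(x),\]
where the middle step uses $g(\varphi)(gm)=g(\varphi(m))$; hence $g\cdot r_a=r_{g(a)}$.

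Finally, $a\mapsto r_a$ is the algebra isomorphism $A\to\End_A(A)^{\op}$ already used in the previous sections, and since an algebra automorphism of $\End_A(A)$ is also an algebra automorphism of $\End_A(A)^{\op}$, the induced action passes to $\End_A(A)^{\op}$ verbatim; the identity $r_{g(a)}=g\cdot r_a$ then says exactly that $a\mapsto r_a$ intertwines the given $G$-action on $A$ with this induced action, i.e.\ it is $G$-equivariant. The only point requiring care throughout is to keep the formal symbols $ga\in gA$ distinct from the genuine elements $g(a)\in A$, and to make explicit the identification $g(hA)=(gh)A$ in the cocycle computation — this is the closest the argument comes to an actual obstacle.
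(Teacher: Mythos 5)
Your proposal is correct and follows essentially the same route as the paper: the key computation $(g\cdot r_a)(x)=\tr_g^A\bigl(g(r_a)(g\cdot g^{-1}(x))\bigr)=x\,g(a)$ is identical to the one given there. The only difference is that you verify the first assertion (that $(\tr_g^A)_{g\in G}$ is a $G$-action on the object $A$) directly, whereas the paper cites \cite[Theorem 10]{MartinezVilla} for it; your verification is sound.
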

\begin{proof}
    The first part can be found in \cite[Theorem 10]{MartinezVilla}. Moreover, the $G$-action on $\End_A(A)$ can be explicitly calculated as follows:
     \begin{align*}
        (g\cdot r_a)(x)=\tr_g^A\circ gr_a\circ (\tr_g^A)^{-1}(x)=\tr_g^A\circ (gr_a)(g \cdot (g^{-1}(x)))=\tr_g^A(g\cdot (g^{-1}(x)a))=g(g^{-1}(x)a)=xg(a)=r_{g(a)}(x).
    \end{align*}
    Now the $G$-equivariance of the canonical isomorphism  $A\rightarrow \End_A(A)^{\op}, a\mapsto r_a$ is obvious.
\end{proof}
For A-infinity algebras, as usual, we want everything to be strictly unital over $L=\field^n$. Let us thus fix a $G$-action on $L$.
\begin{definition}
	Let $\mathcal{E}$ be an A-infinity algebra strictly unital over $L$. Then we say that $G$ acts on $\mathcal{E}$ if $G$ acts on $\mathcal{E}$ via A-infinity automorphisms which restrict to the fixed strict action on $L$. We say that a strictly unital A-infinity homomorphism $\varphi=(\varphi_n)_n:\mathcal{E}\rightarrow \mathcal{E}'$ is $G$-equivariant if $\varphi\circ g=g\circ \varphi$ for all $g\in G$.\\
\end{definition}
\begin{remark}
	Consider the category of $L$-$L$-bimodules with a $G$-action, where the $G$-action on $L\otimes L^{\op}$ is given by $g(x\otimes y)=g(x)\otimes g^{-1}(y)$. Then since $(L\otimes L^{\op})*G$ is semisimple by \cite[Theorem 1.3]{ReitenRiedtmann}, this is a semisimple category, and endowing it with the tensor product over $L$ turns it into a monoidal category. Then, similarly to Example \ref{example_A*G}, A-infinity algebras strictly unital over $L$ with a $G$-action are in one-to-one correspondence with strictly unital A-infinity algebras in the category of $L$-$L$-bimodules with a $G$-action. Moreover, an A-infinity homomorphism is $G$-equivariant if and only if it corresponds to an A-infinity homomorphism in  the category of $L$-$L$-bimodules with a $G$-action.
\end{remark}
The following lemma shows that a $G$-action on an A-infinity algebra induces a $G$-action on the twisted module category.
\begin{lemma}\label{lemma_Ginvariant_twmod}
	Let $\mathcal{E}$ be an A-infinity algebra strictly unital over $L$ with a $G$-action. Then there is an induced $G$-action on $\add^{\mathbb{Z}}(\mathcal{E})$ via A-infinity autoequivalences 
\begin{align*}
	g^{\mathbb{Z}}:\add^{\mathbb{Z}}(\mathcal{E})&\rightarrow \add^{\mathbb{Z}}(\mathcal{E}),\\
	X&\mapsto gX\\
	g^{\mathbb{Z}}_n((a_1\otimes \varphi_1)\otimes \dots \otimes (a_n\otimes \varphi_n))&=\sum_{i=1}^n (-1)^{(n-1)\sum_{i=1}^n|\varphi_i|+\sum_{i< j}|a_i||\varphi_j|}g_n(a_1\otimes \dots \otimes a_n)\otimes (g\varphi_1\circ \dots\circ g\varphi_n),
\end{align*}
for every $g\in G$,
where $X\in \modu^{\mathbb{Z}}(L)$, $a_1,\dots, a_n\in \mathcal{E}$ and $\varphi_i\in \Hom_{\field}(X_{i}, X_{i-1})$ for some $X_0, \dots, X_n\in \modu^{\mathbb{Z}}(L)$;
as well as on $\tw(\mathcal{E})$ via A-infinity autoequivalences
	\begin{align*}
		g^{\tw}:\tw(\mathcal{E})&\rightarrow \tw(\mathcal{E}),\\
		(X, w_X)&\mapsto \left(gX, \sum_{n=1}^\infty g^{\mathbb{Z}}_n(w_X^{\otimes n})\right),\\
		g^{\tw}_n(x_1\otimes \dots \otimes x_n)&=\sum_{k=n}^\infty\sum_{j_0+\dots+j_n=k-n}(-1)^{\sum_{l=1}^n lj_l}g^{\mathbb{Z}}_k(w_X^{\otimes j_0}\otimes x_1\otimes w_X^{\otimes j_1}\dots \otimes w_X^{\otimes j_{n-1}}\otimes x_n\otimes  w_X^{\otimes j_{n}}),
	\end{align*}
    where $X\in \modu^{\mathbb{Z}}(L)$, $w_X\in \add^{\mathbb{Z}}(\mathcal{E})(X,X)_1$ is triangular, and $x_i\in \tw(\mathcal{E})(X_i, X_{i-1})=\add^{\mathbb{Z}}(\mathcal{E})(X_{i}, X_{i-1})$ for some $X_0, \dots, X_n\in \modu^{\mathbb{Z}}(L)$;
	and on $\twmod(\mathcal{E})$ via the restriction $g^{\twmod}$ of the functors above.\\
	Moreover, if $\mathcal{E}'$ is another A-infinity algebra strictly unital over $L$ with a $G$-action, and $f=(f_n)_n:\mathcal{E}\rightarrow \mathcal{E}'$ is a strictly unital A-infinity homomorphism commuting with the $G$-action, then the induced functors $\add^{\mathbb{Z}}(f)$, $\tw(f)$ and $\twmod(f)$ are $G$-equivariant.
\end{lemma}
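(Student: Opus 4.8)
The plan is to recognise each $g^{\mathbb{Z}}$, $g^{\tw}$, $g^{\twmod}$ as a \emph{twisted} version of the functors $\add^{\mathbb{Z}}(g)$, $\tw(g)$, $\twmod(g)$ of Lemma \ref{lemma_twfunctor} associated to the A-infinity automorphism $g$ of $\mathcal{E}$, the twist being the relabelling $X\mapsto gX$ of the underlying graded $L$-modules together with the replacement $\varphi\mapsto g\varphi$ on $\field$-linear maps (which is forced on us precisely because $g$ need not fix $L$ pointwise, only restrict to the fixed strict $G$-action on $L$). First I would check that the displayed formulas are well defined, i.e. that $a\otimes\varphi\mapsto \pm\, g(a)\otimes g\varphi$ descends to a map $\mathcal{E}\otimes_{L\otimes L^{\op}}\Hom_{\field}(X,Y)\to\mathcal{E}\otimes_{L\otimes L^{\op}}\Hom_{\field}(gX,gY)$; this is exactly where one uses that the $G$-action on $\mathcal{E}$ restricts to the fixed action on $L$, together with the compatibility of the $G$-actions on $\Hom_{\field}(X,Y)$ and on $gX$ with the twisted action $g(x\otimes y)=g(x)\otimes g^{-1}(y)$ on $L\otimes L^{\op}$ recorded in the remark preceding this lemma.

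Next I would verify that $g^{\mathbb{Z}}$ is a strictly unital A-infinity functor. The computation is formally identical to the proof that $\add^{\mathbb{Z}}(f)$ is an A-infinity functor in \cite[Section~7.2]{LefHas}: the A-infinity functor equations only use that $g=(g_n)_n$ satisfies the A-infinity homomorphism equations, the associativity of composition of $\field$-linear maps together with $g(\varphi\circ\psi)=g\varphi\circ g\psi$, and the Koszul sign bookkeeping — and the relabelling $X\mapsto gX$ interferes with none of this. Strict unitality holds because $g$ fixes the units. The same argument, transcribed from \cite[Section~7.2]{LefHas}, shows that $g^{\tw}$ and $g^{\twmod}$ are strictly unital A-infinity functors; here one additionally observes that $g^{\mathbb{Z}}$ preserves triangularity and, being an A-infinity functor, carries solutions of the Maurer--Cartan equation to solutions (the same point that makes $\tw(f)$ well defined in Lemma \ref{lemma_twfunctor}), so that $(X,w_X)\mapsto\bigl(gX,\sum_n g^{\mathbb{Z}}_n(w_X^{\otimes n})\bigr)$ indeed lands in $\tw(\mathcal{E})$, and that $gX$ is concentrated in degree zero precisely when $X$ is, so that this restricts to $\twmod(\mathcal{E})$.

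To see that these functors define a $G$-action I would check $g^{\mathbb{Z}}\circ h^{\mathbb{Z}}=(gh)^{\mathbb{Z}}$ and $e^{\mathbb{Z}}=\id$ directly from the formulas, the first reducing to the identity $g\circ h=gh$ of A-infinity automorphisms of $\mathcal{E}$ and to $g(h\varphi)=(gh)\varphi$ on $\field$-linear maps, the second being immediate; in particular each $g^{\mathbb{Z}}$ is an autoequivalence with inverse $(g^{-1})^{\mathbb{Z}}$, and the same holds verbatim for $\tw$ and $\twmod$. Finally, for a $G$-equivariant strictly unital A-infinity homomorphism $f\colon\mathcal{E}\to\mathcal{E}'$, writing $g'$ for the action on $\mathcal{E}'$, the equality $\add^{\mathbb{Z}}(f)\circ g^{\mathbb{Z}}={g'}^{\mathbb{Z}}\circ\add^{\mathbb{Z}}(f)$ is checked on objects, where both sides send $X\mapsto gX$, and on morphisms, where unwinding the two composite formulas reduces it to the identity $f\circ g=g'\circ f$ of A-infinity homomorphisms, which is exactly the $G$-equivariance of $f$; propagating this through the defining formulas of $\tw(f)$ and $\twmod(f)$, which are built from $\add^{\mathbb{Z}}(f)$ and the twisting elements $w_X$ (themselves sent to $\sum_n g^{\mathbb{Z}}_n(w_X^{\otimes n})$ by $g^{\mathbb{Z}}$), yields the $G$-equivariance of $\tw(f)$ and $\twmod(f)$. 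The main obstacle is purely bookkeeping: one must verify that the Koszul signs in the definitions of $g^{\mathbb{Z}}_n$, $g^{\tw}_n$ (and of $\add^{\mathbb{Z}}(f)$, $\tw(f)$) are arranged so that all of these composite identities hold on the nose rather than merely up to sign; conceptually nothing beyond Lemma \ref{lemma_twfunctor} and the compatibility of $g$ with the fixed action on $L$ is needed.
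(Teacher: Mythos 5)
Your proposal is correct and follows essentially the same route as the paper: the paper realises each $g^{\mathbb{Z}}$ (resp.\ $g^{\tw}$, $g^{\twmod}$) as the composite of the functor $\add^{\mathbb{Z}}(g)$ from Lemma \ref{lemma_twfunctor} (viewed as landing in the category built from the unital structure twisted by $g$) with a strict ``transport'' equivalence $X\mapsto gX$, $a\otimes\varphi\mapsto a\otimes g\varphi$, which is exactly your ``twisted version'' of $\add^{\mathbb{Z}}(g)$, and the only substantive checks — well-definedness of $a\otimes\varphi\mapsto a\otimes g\varphi$ over $L\otimes L^{\op}$ using that $g$ restricts to the fixed action on $L$, compatibility with the $m_n$, and the deduction of the group-action and equivariance properties from functoriality — are the ones you identify. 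The minor difference is purely organisational: you verify the composite formula directly rather than factoring through the intermediate categories, which changes nothing of substance.
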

\begin{proof}
	By definition, every $g\in G$ gives rise to an A-infinity homomorphism 
	\begin{align*}
		g: \mathcal{E}\rightarrow \mathcal{E}.
	\end{align*}
	This becomes strictly unital over $L$, if on the left side we consider the usual embedding $\iota:L\rightarrow \mathcal{E}$ and on the right side we precompose it with the action of $g$ on $L$, i.e. we consider $\iota\circ g:L\rightarrow \mathcal{E}$. 
 Denote for every $g\in G$ by  $\add^{\mathbb{Z}}_g(\mathcal{E})$, $\tw_g(\mathcal{E})$ and $\twmod_g(\mathcal{E})$ the categories constructed with respect to the strictly unital structure given by the embedding $g\circ \iota$,
 and denote by $\add^{\mathbb{Z}}(\mathcal{E})$, $\tw(\mathcal{E})$ and $\twmod(\mathcal{E})$ categories constructed with respect to the strictly unital structure given by the embedding $\iota$.
 Let $X, Y\in \modu L$ and $\varphi\in \Hom_{\field}(X,Y)$. Then we write
 \begin{align*}
     g\varphi:gX\rightarrow gY, gx\mapsto g(\varphi(x)).
 \end{align*}
 Then we have for every $l\in L$, $x\in X$
 \begin{align*}
     (g(l\cdot\varphi))(gx)=g(l\varphi(x))=g(l)g(\varphi(x)),
 \end{align*}
 so that $g(l\cdot\varphi)=g(l)g\varphi$, and
  \begin{align*}
     (g(\varphi\cdot l))(gx)= g(\varphi(lx))=(g\varphi)(glx)=(g\varphi)(g(l)\cdot gx)=((g\varphi)\cdot g(l))(gx),
 \end{align*}
 so that $g(\varphi\cdot l)=(g\varphi)\cdot g(l)$.
 We claim that we have strict A-infinity equivalences
	\begin{align*}
		\tr_g^{\mathbb{Z}}:\add^{\mathbb{Z}}_g(\mathcal{E})\rightarrow \add^{\mathbb{Z}}(\mathcal{E}), X\mapsto gX, a\otimes \varphi\mapsto a\otimes g\varphi,
	\end{align*}
as well as
\begin{align*}
	\tr^{\tw}_g:\tw_g(\mathcal{E})&\rightarrow \tw(\mathcal{E}),\\
	 \left(X, w_X\right)&\mapsto \left(gX,\tr_g^{\mathbb{Z}}(w_X) \right),\\
		x &\mapsto \tr_g^{\mathbb{Z}}(x)
\end{align*}
and
	\begin{align*}
		\tr_g^{\twmod}:\twmod_g(\mathcal{E})&\rightarrow \twmod(\mathcal{E})
	\end{align*}
	given by the restriction of the functor $\tr_g^{\tw}$.
 We show this for $\add^{\mathbb{Z}}$.
 First, let us show that $\tr_g^{\mathbb{Z}}$ is well defined. Let
 $a\in \mathcal{A}$, $X, Y\in \modu^{\mathbb{Z}} L$, $\varphi\in \Hom_{\field}(X,Y)$ and $l\in L$.
 Then we have
 \begin{align*}
    a\otimes g(l\cdot \varphi)=a\otimes g(l) g\varphi=g(l)a\otimes g\varphi
 \end{align*}
 and
  \begin{align*}
    a\otimes g( \varphi\cdot l)=a\otimes  g\varphi g(l)=ag(l)\otimes g\varphi.
 \end{align*}
 Thus $\tr_g^{\mathbb{Z}}$ is well-defined.\\
 Let $a_1, \dots , a_n\in \mathcal{E}$, $X_0, \dots, X_n\in \modu^{\mathbb{Z}} L$ and for $1\leq i\leq n$ let $\varphi_i\in \Hom_{\field}(X_{n-i+1}, X_{i-n})$.
 Then 
 \begin{align*}
     m_n^{\add^{\mathbb{Z}}(\mathcal{E})}(\tr_g^{\mathbb{Z}}(a_1\otimes \varphi_1)\otimes \dots \otimes \tr_g^{\mathbb{Z}}(a_n\otimes\varphi_n))
     &=m_n^{\add^{\mathbb{Z}}(\mathcal{E})}((a_1\otimes g\varphi_1)\otimes \dots \otimes (a_n\otimes g\varphi_n))\\
     &=(-1)^{n\sum_{i=1}^n |\varphi_i|+\sum_{i<j}|a_i||\varphi_j|}m_n^{\mathcal{E}}(a_1\otimes\dots \otimes a_n)\otimes (g\varphi_1\circ \dots \circ g\varphi_n)\\
     &=(-1)^{n\sum_{i=1}^n |\varphi_i|+\sum_{i<j}|a_i||\varphi_j|}m_n^{\mathcal{E}}(a_1\otimes\dots \otimes a_n)\otimes g(\varphi_1\circ\dots\circ \varphi_n)\\
     &=\tr_g^{\mathbb{Z}}((-1)^{n\sum_{i=1}^n |\varphi_i|+\sum_{i<j}|a_i||\varphi_j|}m_n^{\mathcal{E}}(a_1\otimes\dots \otimes a_n)\otimes (\varphi_1\circ\dots\circ \varphi_n))\\
     &=\tr_g^{\mathbb{Z}}(m_n^{\add_g^{\mathbb{Z}}(\mathcal{E})}((a_1\otimes \varphi_1)\otimes \dots \otimes (a_n\otimes\varphi_n))).
 \end{align*}
The calculation for $\tw_g$ is similar, and the case $\twmod_g$ follows from $\tw_g$  by restriction.\\
	Thus we obtain A-infinity autoequivalences
	\begin{align*}
		\tr_g^{\mathbb{Z}}\circ \add^{\mathbb{Z}}(g)&:\add^{\mathbb{Z}}(\mathcal{E})\rightarrow\add^{\mathbb{Z}}(\mathcal{E})\\
		\tr_g^{\tw}\circ \tw(g)&:\tw(\mathcal{E})\rightarrow\tw(\mathcal{E})\\
		\tr_g^{\twmod}\circ \twmod(g)&:\twmod(\mathcal{E})\rightarrow\twmod(\mathcal{E}).
	\end{align*}
	The explicit formulas follow directly from the definition and from the explicit formulas given in Lemma \ref{lemma_twfunctor}. The fact that this yields a $G$-action and that $G$-equivariant A-infinity homomorphisms give rise to $G$-equivariant A-infinity functors, on the other hand, is a consequence of the functoriality of $\add^{\mathbb{Z}}, \tw$ and $\twmod$.
\end{proof}
The case where $\mathcal{E}$ is a dg-algebra unital over $L$ is of particular importance in Proposition \ref{proposition_ginvariant_End_F(M)} to show that the functor $C_M$ from Theorem \ref{theorem_equivalence_End_F(M)} is weakly $G$-equivariant. We thus explicitly calculate the induced $G$-action on $\twmod(\mathcal{E})$ for a dg-algebra $\mathcal{E}$.
\begin{example}\label{example_gaction_twdg}
	Let $\mathcal{E}$ be a dg-algebra unital over $L$ and suppose $G$ acts via dg-algebra automorphisms restricting to the usual action on $L$. By the previous lemma, there is an induced action on $\twmod(\mathcal{E})$. In this case, $\twmod(g)$ is the functor
	\begin{align*}
		\twmod(g):\twmod(\mathcal{E})&\rightarrow \twmod_g(\mathcal{E})\\
		 \left(X, \sum_{i=1}^m a_i\otimes \varphi_i\right)&\mapsto \left(X, \sum_{i=1}^m g(a_i)\otimes \varphi_i\right),
	\end{align*}
 where $X\in \modu L$, $a_1, \dots, a_n\in \mathcal{E}_1, \varphi_1,\dots ,\varphi_n\in \End_{\field}(X)$ such that $\sum_{i=1}^m a_i\otimes \varphi_i\in \add(\mathcal{E})(X,X)=\mathcal{E}_1\otimes_{L\otimes L^{\op}}\End_{\field}(X)$ is triangular.
	 Therefore, the $G$-action is given by the composition 
	\begin{align*}
		\tr_g\circ \twmod(g):\twmod(\mathcal{E})&\rightarrow \twmod(\mathcal{E})\\
		 \left(X, \sum_{i=1}^m a_i\otimes \varphi_i\right)&\mapsto \left(gX, \sum_{i=1}^m g(a_i)\otimes g\varphi_i\right),\\
		a\otimes \varphi &\mapsto g(a)\otimes g\varphi.
	\end{align*}
\end{example}
\begin{remark}\label{remark_Gstructure}
Suppose $G$ is a group acting on an algebra $A$ via algebra automorphisms and let $M=\bigoplus_{i=1}^n M_i$ be an $A*G$-module. Let $P^\cdot(M)$ be a projective resolution of $M$ as an $A*G$-module. Then, by Lemma \ref{lemma_Gstructuretransport}, $G$ acts on $\End_A^*(P^\cdot(M))$ via $g\cdot \varphi= g\varphi(g^{-1}\cdot)$.\\
If, with respect to this action and the fixed action on $L$, the algebra homomorphism $L\rightarrow  \End_A^*(P^{\cdot}(M))$ from (2.) in Remark \ref{remark_long} becomes $G$-equivariant, then $G$ acts on $\End_A^*(P^\cdot(M))$ via unital dg-algebra automorphisms, that is, $G$ acts on the unital dg-algebra $\End_A^*(P^\cdot(M))$.\\
Kadeishvili's theorem over the semisimple monoidal category of $L$-$L$-bimodules with a $G$-action now yields that there is an A-infinity structure on $\Ext^*_A(M, M)$ strictly unital over $L$ with a $G$-action restricting to the fixed action on $L$, such that we have a strictly unital $G$-equivariant A-infinity quasi-isomorphism $$f=(f_n)_n:\Ext_A^*(M, M)\rightarrow \End_A^*(P^\cdot(M)),$$ which in every component is a homomorphism of $L$-$L$-bimodules with a $G$-action, and which is strictly unital over $L$.\\
The proposition above tells us that in this setting, we have $G$-actions on $\twmod(\End_A^*(P^\cdot(M)))$ as well as on $\twmod(\Ext_A^*(M, M))$ and a $G$-equivariant A-infinity quasi-equivalence
\begin{align*}
	\twmod(f):\twmod(\Ext_A^*(M, M))\rightarrow \twmod(\End_A^*(P^\cdot(M))).
\end{align*}
Taking homology in degree zero gives us $G$-actions on the categories $H^0(\twmod(\End_A^*(P^\cdot(M))))$ and\\ $H^0(\twmod(\Ext_A^*(M, M)))$ and a $G$-equivariant equivalence
\begin{align*}
	H^0(\twmod(f)):H^0(\twmod(\Ext_A^*(M, M)))\rightarrow H^0(\twmod(\End_A^*(P^\cdot(M)))).
\end{align*}
\end{remark}
The following proposition shows that in this setting the equivalence $H^0(\twmod(\End_A^*(P^\cdot(M))))\rightarrow \filt(M)$ from Theorem \ref{theorem_equivalence_End_F(M)} is also $G$-equivariant.
\begin{proposition}\label{proposition_ginvariant_End_F(M)}
	Suppose $G$ is a group acting on $A$ via algebra automorphisms and $M=\bigoplus_{i=1}^n M_i$ is an $A*G$-module. Let $P^\cdot(M)$ be a projective resolution of $M$ as an $A*G$-module, and suppose the unit map $L\rightarrow \End_A(P^\cdot(M))$ from (2.) in Remark \ref{remark_long} is $G$-equivariant. Then the equivalence
	\begin{align*}
		C=C_M:\twmod_L(\End_A(P^\cdot(M)))\rightarrow \filt(M)
	\end{align*}
	from Theorem \ref{theorem_equivalence_End_F(M)} is weakly $G$-equivariant.
\end{proposition}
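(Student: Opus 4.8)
The plan is to exhibit, for each $g\in G$, a natural isomorphism $\alpha_g : C_M\circ g^{\twmod}\to g\circ C_M$ between functors $\twmod_L(\End_A(P^\cdot(M)))\to\filt(M)$, and then check the cocycle condition $g((\alpha_h)_{(X,w_X)})\circ(\alpha_g)_{h(X,w_X)}=(\alpha_{gh})_{(X,w_X)}$ together with $\alpha_e=\id$. Recall from Example \ref{example_gaction_twdg} that the $G$-action on $\twmod(\End_A(P^\cdot(M)))$ sends $(X,w_X)$ with $w_X=\sum_i a_i\otimes\varphi_i$ to $(gX,\sum_i g(a_i)\otimes g\varphi_i)$, where $g(a_i)=g\,a_i\,(g^{-1}\cdot)$ is the structure-transported action on $\End_A(P^\cdot(M))$ coming from the $G$-equivariant structure on the $A*G$-projective resolution $P^\cdot(M)$. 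So the first step is to compute both composites explicitly: $C_M(g^{\twmod}(X,w_X))=H^0(P^\cdot(M)\otimes_L gX,\;d_{P^\cdot(M)}\otimes\id+g^{\mathbb Z}(w_X))$, while $g(C_M(X,w_X))$ is the $A$-module $H^0(P^\cdot(M)\otimes_L X,\;d_{P^\cdot(M)}\otimes\id+w_X)$ twisted by $g$ in the sense of Example \ref{example_A*G}.

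The key step is to produce the comparison isomorphism at the chain level. The $G$-action on $P^\cdot(M)$ as an $A*G$-module gives isomorphisms of complexes of $A$-modules $\tr_g^{P^\cdot(M)} : g(P^\cdot(M))\to P^\cdot(M)$ compatible with $d_{P^\cdot(M)}$; since the unit map $L\to\End_A(P^\cdot(M))$ is $G$-equivariant, these are $L$-equivariant in the appropriate twisted sense, so I get an isomorphism of $L$-modules, hence of graded vector spaces,
\begin{align*}
    P^\cdot(M)\otimes_L gX \;\xrightarrow{\;\;(\tr_g^{P^\cdot(M)})^{-1}\otimes\id\;\;}\; g(P^\cdot(M))\otimes_L gX \;\cong\; g\bigl(P^\cdot(M)\otimes_L X\bigr),
\end{align*}
where the last identification is the canonical one for the twist of a tensor product. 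The point is then to verify that under this identification the twisted differential $d_{P^\cdot(M)}\otimes\id+g^{\mathbb Z}(w_X)$ on the left matches $g(d_{P^\cdot(M)}\otimes\id+w_X)$ on the right: the $d_{P^\cdot(M)}$-part matches because $\tr_g^{P^\cdot(M)}$ is a chain map, and the $w_X$-part matches precisely because $g^{\mathbb Z}(w_X)=\sum_i g(a_i)\otimes g\varphi_i$ was defined using the same structure transport $g(a_i)=\tr_g^{P^\cdot(M)}\circ(g a_i)\circ(\tr_g^{P^\cdot(M)})^{-1}$. Taking $H^0$ then yields $(\alpha_g)_{(X,w_X)}$, and naturality in $(X,w_X)$ follows because $\tr_g^{P^\cdot(M)}$ does not depend on $(X,w_X)$ and the chain-level map is given by tensoring fixed maps, exactly as in the (non-equivariant) commutation lemma for $C_M$ proved earlier.

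Finally I would check the $2$-cocycle identity: this reduces to the identity $\tr_g^{P^\cdot(M)}\circ g(\tr_h^{P^\cdot(M)})=\tr_{gh}^{P^\cdot(M)}$, which holds by definition of a $G$-action on the $A*G$-module $P^\cdot(M)$, together with the compatibility of the canonical twist-of-tensor-product isomorphisms with composition of group elements; and $\alpha_e=\id$ because $\tr_e^{P^\cdot(M)}=\id$. I expect the main obstacle to be purely bookkeeping: tracking the several layers of structure transport (the action on $\End_A(P^\cdot(M))$, the action on $\add^{\mathbb Z}$ and $\twmod$ from Lemma \ref{lemma_Ginvariant_twmod}, and the action on $\filt(M)\subseteq\modu A$ from Example \ref{example_A*G}) and confirming that all the ``canonical'' identifications — of $g(P^\cdot(M))\otimes_L gX$ with $g(P^\cdot(M)\otimes_L X)$, and of $\End$-tensor-$\Hom$ with $\Hom$ of tensor products — are mutually compatible and sign-free (the latter because $X$ is ungraded, as already noted in Theorem \ref{theorem_equivalence_End_F(M)}), so that no spurious signs or twists obstruct the chain-level matching. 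Once the chain-level statement is in place, passing to $H^0$ and verifying the cocycle condition are formal.
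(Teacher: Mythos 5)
Your proposal is correct and follows essentially the same route as the paper: the comparison isomorphism you build from $(\tr_g^{P^\cdot(M)})^{-1}\otimes\id$ followed by the canonical twist-of-tensor-product identification is exactly the paper's $(\alpha_g)_{(X,w_X)}\colon a\otimes gx\mapsto g(g^{-1}(a)\otimes x)$, and your verification of the differential matching and of the cocycle identity via $\tr_g^{P^\cdot(M)}\circ g(\tr_h^{P^\cdot(M)})=\tr_{gh}^{P^\cdot(M)}$ mirrors the paper's direct computation.
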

\begin{proof}
	Using Example \ref{example_gaction_twdg}, on objects we have 
	\begin{align*}
		C\left(g\left(X, \sum_{i=1}^m a_i\otimes \varphi_i\right)\right)=C\left(gX, \sum_{i=1}^n g(a_i)\otimes g\varphi_i\right)=H^0\left(P^\cdot(M)\otimes_L gX, d_{P^\cdot(M)}\otimes \id_{gX}+\sum_{i=1}^m g(a_i)\otimes g\varphi_i\right)\\
	\end{align*}
	and 
	\begin{align*}
		g\left(C\left(X, \sum_{i=1}^m a_i\otimes \varphi_i\right)\right)&=g\left(P^\cdot(M)\otimes_L X, d_{P^\cdot(M)}\otimes \id_{X}+\sum_{i=1}^m a_i\otimes \varphi_i\right)\\
  &=\left(gP^\cdot(M)\otimes_L X, g(d_{P^\cdot(M)}\otimes \id_{X})+g\sum_{i=1}^m a_i\otimes \varphi_i\right).\\
	\end{align*}
	Note we have natural isomorphisms $\alpha_g:C\circ g\rightarrow g\circ C$ given by 
	\begin{align*}
		(\alpha_g)_{\left(X,  \sum_{i=1}^m a_i\otimes \varphi_i\right)}: C\circ g\left(X,  \sum_{i=1}^m a_i\otimes \varphi_i\right)&=\left(P^\cdot(M)\otimes_L gX, d_{P^\cdot(M)}\otimes \id_{gX}+\sum_{i=1}^n g(a_i)\otimes g\varphi_i \right)\\
    \rightarrow &g\circ C\left(X,  \sum_{i=1}^m a_i\otimes \varphi_i\right)=\left(gP^\cdot(M)\otimes_L X, g(d_{P^\cdot(M)}\otimes \id_{X})+g\sum_{i=1}^m a_i\otimes \varphi_i\right),\\
		 &a\otimes gx\mapsto g(g^{-1}(a)\otimes x).
	\end{align*}
	Clearly, $\alpha_e$ is the identity. Moreover, for any $(X, w_X)=\left(X,  \sum_{i=1}^m a_i\otimes \varphi_i\right)\in H^0(\twmod(\Ext^*_B(L^B, L^B))$, and for all $g,h\in G$, $a\in P^\cdot(M)$ and $x\in X$ we have
 \begin{align*}
     g((\alpha_h)_{(X, w_X)})\circ (\alpha_g)_{h(X, w_X))}(a\otimes ghx)=g((\alpha_h)_ {(X, w_X)})( g(g^{-1}(a)\otimes hx))\\
     =g((\alpha_h)_{(X, w_X)}(g^{-1}(a)\otimes hx))=g(h(h^{-1}(g^{-1}(a))\otimes x))=(\alpha_{gh})_{(X, w_X)}(a\otimes x).
 \end{align*}
 Hence $C$ is weakly $G$-equivariant.
\end{proof}
To construct a $G$-action on our regular exact Borel subalgebra $B$, we also need to take a closer look at Keller's reconstruction theorem.
\begin{definition}
    Let $\mathcal{A}$ be a a degree-wise finite-dimensional A-infinity algebra strictly unital over $L$.  Then the dual bar construction (or A-infinity Koszul dual) of $\mathcal{A}$ is the dg-algebra, whose underlying algebra is the tensor algebra
    \begin{align*}
        \dualbar s\mathcal{A}:=\textup{T}_L(\dual s\mathcal{A})
    \end{align*}
    together with the differential
    \begin{align*}
        d: \dualbar s\mathcal{A}\rightarrow  \dualbar s\mathcal{A},\\
        d(\varphi)=\sum_{n=1}^\infty \varphi\circ s\circ m_n\circ (s^{-1})^{\otimes n}\textup{ for }\varphi\in \dual s \mathcal{A}.
    \end{align*}
\end{definition}
\begin{theorem}(Keller's Reconstruction Theorem \cite[7.7]{Keller}, see also \cite[Proposition 6.3]{KKO}\label{thm_Keller}).\\
	Let $B$ be a finite-dimensional algebra and let $\{L_1^B, \dots, L_n^B\}=\Sim(B)$ be a set of representatives of the simple $B$-modules. Let $L^B:=\bigoplus_{L\in \Sim(B)}L$ and let $\mathcal{C}=\dualbar s\Ext^*_B(L^B, L^B)$ be the dg-algebra given by the dual bar construction of the A-infinity algebra $\Ext^*_B(L^B, L^B)$ and $I\subseteq \mathcal{C}$ be the dg-ideal generated by the negative degree part. Let
	\begin{align*}
		B':=(\mathcal{C}/I)_0.
	\end{align*}
Then $B'$ is a basic finite-dimensional algebra and we have an equivalence
\begin{align*}
	F':H^0(\twmod(\Ext^*_B(L^B, L^B)))\rightarrow \modu B'
\end{align*}
where $F'(X, w_X=\sum_{i=1}^m a_i\otimes \varphi_i)$ is the module $X$ with multiplication given by
\begin{align*}
	[\eta]\cdot x:=\sum_{i=1}^m \eta(sa_i)\varphi_i(x)
\end{align*}
for $x\in X$, $\eta\in \dual s\Ext^*_B(L^B, L^B)$, and an element $[\varphi]\in H^0(\twmod(\Ext^*_B(L^B, L^B))((X, w_X), (Y, w_Y)))$ gives rise to a map from $X$ to $Y$ by
\begin{align*}
	\varphi\in &(\twmod(\Ext^*_B(L^B, L^B))((X, w_X), (Y, w_Y)))_0=\Ext^*_B(L^B, L^B)_0\otimes_{L\otimes L^{\op}}\Hom_{\field}(X, Y)\\
	&=\Hom_B(L^B, L^B)\otimes_{L\otimes L^{\op}}\Hom_{\field}(X, Y)\cong L\otimes_{L\otimes L^{\op}}\Hom_{\field}(X, Y)\cong \Hom_L(X, Y).
\end{align*}
In particular, $B'$ is Morita-equivalent to $B$.
\end{theorem}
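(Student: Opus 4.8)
The plan is to deduce the statement from $A$-infinity Koszul duality, in two stages: first produce the equivalence abstractly, then match it with the explicit formula in the statement. First I would record the abstract equivalence. By Kadeishvili's theorem \ref{thm_kadeishvili} in its strictly unital form (Example \ref{example_Ext}, with $R=B$, $M=L^B$ and $T=\id$), the $A$-infinity algebra $\Ext^*_B(L^B,L^B)$ is $A$-infinity quasi-isomorphic, strictly unitally over $L$, to the dg-algebra $\End^*_B(P^\cdot(L^B))$. Hence Proposition \ref{proposition_twmod_equiv} gives a quasi-equivalence between $\twmod(\Ext^*_B(L^B,L^B))$ and $\twmod(\End^*_B(P^\cdot(L^B)))$, so an equivalence after applying $H^0$; composing it with the equivalence $C_{L^B}$ of Theorem \ref{theorem_equivalence_End_F(M)} — and using that $\filt(L^B)=\modu B$, since every finite-dimensional $B$-module has a composition series — yields an equivalence
\begin{align*}
  \Phi\colon H^0(\twmod(\Ext^*_B(L^B,L^B)))\ \xrightarrow{\ \sim\ }\ \modu B .
\end{align*}
In particular the source is abelian with exactly the $n$ simple objects $(L_i^B,0)$; the Morita equivalence of the statement will drop out once this source is identified with $\modu B'$.

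Next I would unwind the dual bar construction $\mathcal{C}=\dualbar s\Ext^*_B(L^B,L^B)$. With the grading conventions of the construction, $\dual s\Ext^1_B(L^B,L^B)$ sits in degree $0$, so the degree-zero part of $\mathcal{C}$ is the tensor algebra $\textup{T}_L(\dual s\Ext^1_B(L^B,L^B))$, i.e.\ the path algebra $\field Q$ of the quiver $Q$ with $n$ vertices and arrow space $\dual\Ext^1_B(L^B,L^B)$, while the degree-zero component of the dg-ideal $I$ is the relation ideal $I'$ built from the higher operations $m_{\geq 2}$; thus $B'=(\mathcal{C}/I)_0=\field Q/I'$ (the identification behind \cite[Proposition 6.3]{KKO}), which is automatically basic, with the $n$ one-dimensional vertex simples $L_i^{B'}$. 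For a twisted \emph{module} $(X,w_X)$ the element $w_X$ lies in $\Ext^1_B(L^B,L^B)\otimes_{L\otimes L^{\op}}\End_{\field}(X)$, i.e.\ it is exactly a representation of the arrows of $Q$ on the finite-dimensional $L$-module $X$; triangularity of $w_X$ is nilpotency of this representation, and the Maurer--Cartan equation $\sum_n m_n^{\add^{\mathbb{Z}}}(w_X^{\otimes n})=0$ translates, under the pairing with $\dual s\Ext^*_B(L^B,L^B)$, into the statement that $I'$ is satisfied; conversely every finite-dimensional $B'$-module yields such a $w_X$, since $\rad(B')$ acts nilpotently on it. So $(X,w_X)\mapsto X$ with the action $[\eta]\cdot x=\sum_i\eta(sa_i)\varphi_i(x)$ is the functor $F'$ of the statement and is essentially bijective on objects, and comparing the degree-zero part of the differential on $\twmod(\Ext^*_B(L^B,L^B))((X,w_X),(Y,w_Y))$ with the condition of $B'$-linearity shows $F'$ is fully faithful.

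Combining the two stages, $F'$ is an equivalence $H^0(\twmod(\Ext^*_B(L^B,L^B)))\simeq\modu B'$; composing with a quasi-inverse of $\Phi$ gives $\modu B\simeq\modu B'$, so $B$ and $B'$ are Morita equivalent and $B'$ is finite-dimensional, while basicness is already visible from $B'=\field Q/I'$. The hard part is the second stage: carrying out the dual bar construction precisely enough to (a) identify $(\mathcal{C}/I)_0$ with the stated quiver-with-relations algebra and (b) check that the Maurer--Cartan equation corresponds, with all Koszul signs, to the module axioms for $[\eta]\cdot x=\sum_i\eta(sa_i)\varphi_i(x)$. This sign bookkeeping is the technical heart of $A$-infinity Koszul duality, and there I would follow \cite[7.7]{Keller} and \cite[Proposition 6.3]{KKO} rather than redo it.
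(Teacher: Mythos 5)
The paper does not prove this theorem at all: it is stated as a quoted result, attributed to \cite[7.7]{Keller} and \cite[Proposition 6.3]{KKO}, and no argument is given. So there is no ``paper proof'' to compare against; what you have written is a proof \emph{outline} at roughly the level of detail of the cited sources, and as an outline it is correct and is the standard $A$-infinity Koszul duality argument. Your stage 1 (Kadeishvili plus Proposition \ref{proposition_twmod_equiv} plus Theorem \ref{theorem_equivalence_End_F(M)}, with $\filt(L^B)=\modu B$) is exactly the chain of equivalences the paper assembles elsewhere as Diagram \eqref{diagram_1}, and stage 2 (identifying $(\mathcal{C}/I)_0$ with a quiver algebra $\field Q/I'$, twisted modules with nilpotent representations satisfying the relations via the Maurer--Cartan equation, and degree-zero closed morphisms with $B'$-linear maps) is the content of \cite[Proposition 6.3]{KKO}. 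You explicitly defer the sign bookkeeping to the references, which is the same level of rigour the paper adopts by citing the theorem outright, so I do not count it as a gap.

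Two small points worth noting. First, your reading that $\dual s\Ext^1$ sits in degree $0$ presumes $|sa|=|a|-1$; the paper's notation section declares $|sx|=|x|+1$, but the paper itself silently uses $|sa_i|=0$ for $a_i\in\Ext^1$ in the proof of Proposition \ref{proposition_Keller_equivariant}, so your convention is the one under which the statement parses --- just be aware of the internal inconsistency. Second, for full faithfulness you should add the one-line observation that the hom complex $\Ext^*_B(L^B,L^B)\otimes_{L\otimes L^{\op}}\Hom_{\field}(X,Y)$ is concentrated in non-negative degrees (as $X,Y$ live in degree zero), so $H^0$ is precisely the space of closed degree-zero morphisms with no boundaries to quotient by; and finite-dimensionality of $B'$ is not visible from the presentation $\field Q/I'$ alone but follows once the equivalence with $\modu B$ identifies $B'$ with the endomorphism algebra of a basic projective generator.
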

\begin{proposition}\label{proposition_Keller_equivariant}
	In the above setting, suppose $G$ acts on $\Ext^*_B(L^B, L^B)$ via strictly unital A-infinity automorphisms. Then $G$ acts on $B'$ via algebra automorphisms and the Morita equivalence
	\begin{align*}
		F':H^0(\twmod(\Ext^*_B(L^B, L^B)))\rightarrow \modu B'
	\end{align*}
	is strongly $G$-equivariant.
\end{proposition}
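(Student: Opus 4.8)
The plan is to transport the $G$-action along the dual bar construction and then to check that the explicit formulas for $F'$ and for the two $G$-actions match term by term. First I would record that the dual bar construction is a contravariant functor on degree-wise finite-dimensional strictly unital A-infinity algebras over $L$: an A-infinity homomorphism $\phi\colon\mathcal{A}\to\mathcal{A}'$ dualizes to a dg-algebra homomorphism $\dualbar s(\phi)\colon\dualbar s\mathcal{A}'\to\dualbar s\mathcal{A}$, compatibly with composition. Writing $\mathcal{A}=\Ext^*_B(L^B,L^B)$ and $\mathcal{C}=\dualbar s\mathcal{A}$, the $G$-action on $\mathcal{A}$ by strictly unital A-infinity automorphisms thus yields dg-algebra automorphisms of $\mathcal{C}$; since such automorphisms are invertible by the $L$-unital version of Lemma \ref{lemma_ainftyhominvertible}, the assignment $\lambda_g:=\dualbar s(g^{-1})$ turns the resulting anti-homomorphism into a genuine $G$-action on the dg-algebra $\mathcal{C}$, and each $\lambda_g$ restricts to the fixed action on $L\subseteq\mathcal{C}$ because each $g$ does so on $L\subseteq\mathcal{A}$. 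Being dg-algebra automorphisms, the $\lambda_g$ preserve degree and commute with $d$, hence preserve the negative-degree part of $\mathcal{C}$ and the dg-ideal $I$ generated by it; passing to $\mathcal{C}/I$ and then to its degree-zero component gives a $G$-action on $B'=(\mathcal{C}/I)_0$ by algebra automorphisms. This establishes the first assertion.

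For the second, I would verify $F'\circ g=g\circ F'$ for each $g\in G$ directly from Theorem \ref{thm_Keller}, where the left-hand $g$ is the autoequivalence $H^0(\tr_g^{\twmod}\circ\twmod(g))$ of $H^0(\twmod(\mathcal{A}))$ from Lemma \ref{lemma_Ginvariant_twmod} and the right-hand $g$ is the autoequivalence of $\modu B'$ induced, as in Example \ref{example_A*G}, by $\lambda_g|_{B'}$. On an object $(X,w_X=\sum_i a_i\otimes\varphi_i)$ the module $F'(g\cdot(X,w_X))$ is $gX$ with $[\eta]$ acting by contracting the twist datum $\tr_g^{\mathbb{Z}}\big(\sum_n\add^{\mathbb{Z}}(g)_n(w_X^{\otimes n})\big)$ against $\eta$, whereas $g\cdot F'(X,w_X)$ is $gX$ with $[\eta]$ acting on $gx$ by $g\big((\lambda_g^{-1}[\eta])\cdot x\big)$, and $\lambda_g^{-1}=\dualbar s(g)$ acts on $\dual s\mathcal{A}$ by precomposition with the bar corestriction $\sum_n s\, g_n(s^{-1})^{\otimes n}$ of $g$. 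These agree because applying $\twmod(g)$ to the twist datum and then pairing with $\eta$ is, by the very construction of $\dualbar s$, transpose to pairing the original twist datum with $\dualbar s(g)(\eta)$ — the components $\add^{\mathbb{Z}}(g)_n$ being dual exactly to the components $s\, g_n(s^{-1})^{\otimes n}$ — while the $\tr_g^{\mathbb{Z}}$ together with the relabelling $X\rightsquigarrow gX$ account for the remaining identifications on both sides. The analogous, easier comparison on morphisms, where $F'$ is the $G$-equivariant identification $\Hom_B(L^B,L^B)\otimes_{L\otimes L^{\op}}\Hom_{\field}(X,Y)\cong\Hom_L(X,Y)$, gives $F'(g\cdot f)=g\cdot F'(f)$. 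Hence $F'$ is strongly $G$-equivariant.

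The main obstacle is precisely this last verification carried out at the level of strict equality: one must align the contravariance of $\dualbar s$ (hence the $g\mapsto g^{-1}$ twist), the composite $\tr_g^{\twmod}\circ\twmod(g)$ defining the $G$-action on $\twmod$, and the sign and shift conventions built into the dual bar construction, so as to see that ``pair with $\eta$ after applying $\twmod(g)$'' literally equals ``pair with $\dualbar s(g)(\eta)$'', yielding strict rather than merely up-to-isomorphism commutativity. Once this bookkeeping is set up the coincidence is formal, being naturality of evaluation with respect to transposition.
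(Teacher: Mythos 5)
Your proposal is correct and follows essentially the same route as the paper: the $G$-action on $B'$ is obtained by applying the (contravariant) dual bar construction to $g^{-1}$, observing that the resulting degree-zero dg-automorphisms preserve the ideal $I$, and the strong equivariance of $F'$ is checked by the same duality computation, pairing the transformed twist datum $\sum_n g_n(w_X^{\otimes n})$ against $\eta$ versus pairing $w_X$ against $\dualbar s(g)(\eta)$ (the paper carries out the sign bookkeeping explicitly, using $|sa_i|=0$). The treatment of morphisms via the identification with $\Hom_L(X,Y)$ also matches.
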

\begin{proof}
	Since $G$ acts on $\Ext^*_B(L^B, L^B)$ via A-infinity automorphisms $g=(g_n)_n$, the functoriality of the dual bar construction \cite[pp. 29-30]{LefHas} implies that $G$ acts on $\mathcal{C}=\dualbar s\Ext^*_B(L^B, L^B)$ via dg-algebra automorphisms given by 
 \begin{align*}
		g(\eta)= \sum_{n=1}^\infty g(\eta)_n,
	\end{align*}
 where $ g(\eta)_n\in \dual (s\Ext^*_B(L^B, L^B))^{\otimes n}$ is given by 
 \begin{align*}
     g(\eta)_n=\eta\circ s\circ g^{-1}_n\circ (s^{-1})^{\otimes n}.
 \end{align*}
 for $\eta\in \Dual s\Ext^n_B(L^B, L^B)), a_1, \dots , a_n\in \Ext^*_B(L^B, L^B)$.
	In particular, any $g\in G$ gives a map of degree zero, and thus maps the negative degree part of $\mathcal{C}$ to the negative degree part, so that $g(I)=I$. Hence there is an induced action on 
	\begin{align*}
		B':=(\mathcal{C}/I)_0.
	\end{align*}
	via algebra automorphisms, given by 
	\begin{align*}
		g([\eta])= \sum_{n=1}^\infty [g(\eta)_n].
	\end{align*}
	Let $(X, w_X)\in H^0(\twmod(\Ext_B^*(L^B, L^B)))$. Write $w_X= \sum_{i=1}^m a_i\otimes \varphi_i$ for $a_1, \dots , a_n\in \Ext^1_B(L^B, L^B)$ and $\varphi_1, \dots \varphi_m\in \End_{\field}(X)$.
	Then $g(X, w_X)=(gX,  w_{gX})$ where
	\begin{align*}
		 w_{gX}=\sum_{n=1}^\infty g_n(w_X^{\otimes n}) =\sum_{n=1}^\infty \sum_{i_1,\dots, i_n\in \{1,\dots m\}} g_n(a_{i_1}\otimes\dots \otimes a_{i_n})\otimes (g\varphi_{i_1}\circ\dots\circ g\varphi_{i_n})
	\end{align*}
  and so
	$F'(g(X, w_X))$ is given by the $L$-module $gX$ together with the multiplication
 \begin{align*}
     [\eta]\cdot gx&=\eta(w_{gX})(gx)\\
     &=\sum_{n=1}^\infty \sum_{i_1,\dots, i_n\in \{1,\dots m\}} \eta(s g_n(a_{i_1}\otimes\dots \otimes a_{i_n}))\otimes (g\varphi_{i_1}\circ\dots\circ g\varphi_{i_n})(gx)\\
      &=\sum_{n=1}^\infty \sum_{i_1,\dots, i_n\in \{1,\dots m\}} \eta(s g_n(a_{i_1}\otimes\dots \otimes a_{i_n}))\otimes g(\varphi_{i_1}\circ\dots\circ \varphi_{i_n}(x))\\
      &=g\sum_{n=1}^\infty \sum_{i_1,\dots, i_n\in \{1,\dots m\}} \eta(s g_n(a_{i_1}\otimes\dots \otimes a_{i_n}))\otimes (\varphi_{i_1}\circ\dots\circ \varphi_{i_n}(x))\\
      &=g\sum_{n=1}^\infty \sum_{i_1,\dots, i_n\in \{1,\dots m\}} (-1)^{\sum_{i=1}^n (n-i)|sa_i|} \eta\circ s\circ  g_n\circ (s^{-1})^{\otimes n} (sa_{i_1}\otimes\dots \otimes sa_{i_n}))\otimes (\varphi_{i_1}\circ\dots\circ \varphi_{i_n}(x))\\
      &=g\sum_{n=1}^\infty \sum_{i_1,\dots, i_n\in \{1,\dots m\}} \eta\circ s\circ  g_n\circ (s^{-1})^{\otimes n} (sa_{i_1}\otimes\dots \otimes sa_{i_n}))\otimes (\varphi_{i_1}\circ\dots\circ \varphi_{i_n}(x))\\
    &=g \sum_{n=1}^\infty \sum_{i_1,\dots, i_n\in \{1,\dots m\}}g^{-1}(\eta)_n(a_{i_1}\otimes\dots \otimes a_{i_n})\otimes (\varphi_{i_1}\circ\dots\circ \varphi_{i_n}(x))\\
    &=g \sum_{n=1}^\infty [g^{-1}(\eta)_n](x)\\
    &=g  ((g^{-1}([\eta]))(x)),
 \end{align*}
	where we have used that $|sa_i|=0$ for $1\leq i\leq n$.
	Thus, $F'(g(X, w_X))$ is the $B'$-module $gX$.\\
	Moreover, if $[x\otimes \varphi]\in H^0(\twmod(\Ext_B^*(L^B, L^B))((X, w_X), (Y, w_Y)))$ then $x\in L\cong \Ext_B^0(L^B, L^B)\subseteq \Ext^*_B(L^B, L^B)$, so that $x\otimes \varphi=1\otimes x\varphi$, since the tensor product is the tensor product over $L\otimes L^{\op}$. Thus we can assume $x=1$ without loss of generality. Since the $G$-action on $L\cong \Ext_B^0(L^B, L^B)\subseteq \Ext^*_B(L^B, L^B)$ is just the $G$-action on $L$ by assumption, we moreover have
	 $$g([1\otimes \varphi])=[g(1)\otimes g\varphi]=[1\otimes g\varphi].$$ 
	 By definition of $F'$, $F'([1\otimes \varphi])=\varphi$ and $F'([1\otimes g\varphi])=g\varphi$. Thus
	\begin{equation*}
		F'(g[1\otimes \varphi])=g\varphi=gF'([1\otimes \varphi]).\qedhere
	\end{equation*}
\end{proof}
\subsection{The proof of the main theorem of the section}
Let $(A, \leq_A)$ be a quasi-hereditary algebra with a $G$-action.
Recall from \cite[Definition 3.1]{mypaper} the following definition:
\begin{definition}\label{definition_Ginvariant}
    The partial order $\leq_A$ is called $G$-invariant if for any two simple $A$-modules $L, L'\in \Sim(A)$ and for every $h,g\in G$ we have
\begin{align*}
    L<_A L'\Leftrightarrow gL<_AhL'.
\end{align*}
\end{definition}
Suppose that $\leq_A$ is $G$-invariant.
In the proof of our main theorem, we would like to assume that  $A$ is basic with maximal semisimple subalgebra $L'\cong L$ and a $G$-action that restricts to a $G$-action on $L'$, and view the induced action on $L$ as the fixed $G$-action on $L$ from the previous subsection.\\ 
The following two results serve to show that we may assume that $A$ is basic.
\begin{proposition}\label{proposition_GequivariantMorita}
    Let $A$ be a finite-dimensional algebra and $G$ be a group acting on $A$. Suppose $P$ is an $A*G$-module such that its restriction ${}_{A|}P$ is a projective generator in $\modu A$.
    Then the Morita equivalence given by 
    \begin{align*}
        F=\Hom_A(P, -):\modu A\rightarrow \modu A'
    \end{align*}
     is weakly $G$-equivariant, where the $G$-action on $A':=\End_A(P)^{\op}$ is given as in Lemma \ref{lemma_Gstructuretransport}.
\end{proposition}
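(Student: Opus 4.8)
The plan is to produce, for each $g\in G$, a natural isomorphism $\alpha_g\colon F\circ g\to g\circ F$ obeying the cocycle identity $g((\alpha_h)_M)\circ(\alpha_g)_{hM}=(\alpha_{gh})_M$ and $\alpha_e=\id$, which by definition makes $F$ weakly $G$-equivariant. Here the $G$-action on $\modu A$ is the one from Example \ref{example_A*G}, and the $G$-action on $\modu A'$ is induced by Lemma \ref{lemma_Gstructuretransport}: since $P$ is an $A*G$-module it is an object of $\modu A$ with a $G$-action $(\tr_g^P)_{g\in G}$, so $G$ acts on $A'=\End_A(P)^{\op}$ by the conjugation automorphisms $\theta\mapsto \tr_g^P\circ g\theta\circ(\tr_g^P)^{-1}$. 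The hypothesis that $P$ restricts to a projective generator of $\modu A$ enters only to guarantee that $F$ is an equivalence, i.e. that $A'$ is Morita equivalent to $A$.

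First I would define $\alpha_g$. Given $M\in\modu A$ and $\varphi\in F(gM)=\Hom_A(P,gM)$, the composite $\varphi\circ\tr_g^P\colon gP\to gM$ is $A$-linear, and since $g$ is an invertible endofunctor of $\modu A$ there is a unique $\psi\in\Hom_A(P,M)$ with $g\psi=\varphi\circ\tr_g^P$; I set $(\alpha_g)_M(\varphi):=g\psi\in g\Hom_A(P,M)=g(F(M))$. This is a bijection, being the composite of precomposition with the isomorphism $\tr_g^P$, the inverse of the bijection $\psi\mapsto g\psi$ from $\Hom_A(P,M)$ to $\Hom_A(gP,gM)$, and the tautological bijection $\Hom_A(P,M)\to g\Hom_A(P,M)$. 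Naturality of $\alpha_g$ in $M$ is then immediate: for $f\colon M\to N$ one checks that both $(\alpha_g)_N\circ F(gf)$ and $gF(f)\circ(\alpha_g)_M$ send $\varphi$ to the formal element $g(f\circ\psi)$, using functoriality of $g$ and $g\psi=\varphi\circ\tr_g^P$.

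The heart of the argument is to check that $(\alpha_g)_M$ is $A'$-linear. Writing elements of $A'=\End_A(P)^{\op}$ as endomorphisms $\theta$ of $P$, the left $A'$-action on $\Hom_A(P,-)$ is precomposition $\theta\cdot\varphi=\varphi\circ\theta$, while the twisted $A'$-action on $g\Hom_A(P,M)$ unwinds, via Example \ref{example_A*G} and Lemma \ref{lemma_Gstructuretransport}, to $\theta\cdot(g\psi)=g(\psi\circ\vartheta)$ with $\vartheta=\tr_{g^{-1}}^P\circ g^{-1}\theta\circ(\tr_{g^{-1}}^P)^{-1}$. Specializing the cocycle relation $\tr_g^P\circ g(\tr_h^P)=\tr_{gh}^P$ to $h=g^{-1}$ gives $g(\tr_{g^{-1}}^P)=(\tr_g^P)^{-1}$, hence $g\vartheta=(\tr_g^P)^{-1}\circ\theta\circ\tr_g^P$, and therefore $\theta\cdot(\alpha_g)_M(\varphi)=g\psi\circ g\vartheta=\varphi\circ\theta\circ\tr_g^P=(\alpha_g)_M(\varphi\circ\theta)$, as required. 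It is precisely this computation for which the conjugation formula defining the $G$-action on $A'$ is tailored, so I expect the $A'$-linearity step to be the main obstacle; the rest is bookkeeping.

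It remains to verify the cocycle condition and $\alpha_e=\id$. The latter is immediate from $\tr_e^P=\id_P$. For the former, $(\alpha_{gh})_M(\varphi)$ is the formal element $gh\omega$ characterized by $gh\omega=\varphi\circ\tr_{gh}^P$, whereas $g((\alpha_h)_M)\circ(\alpha_g)_{hM}$ applied to $\varphi$ first produces $g\psi$ with $g\psi=\varphi\circ\tr_g^P$, then $g(h\chi)$ with $h\chi=\psi\circ\tr_h^P$; applying $g$ and using the cocycle relation for $(\tr_g^P)_g$ again gives $g(h\chi)=g\psi\circ g(\tr_h^P)=\varphi\circ\tr_g^P\circ g(\tr_h^P)=\varphi\circ\tr_{gh}^P$, so the two formal elements coincide. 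This establishes that $F$ is weakly $G$-equivariant.
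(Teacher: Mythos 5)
Your proof is correct and follows essentially the same route as the paper: the natural isomorphism $(\alpha_g)_M$ you define via $g\psi=\varphi\circ\tr_g^P$ agrees with the paper's explicit formula $\varphi\mapsto g[g^{-1}(\varphi)\circ(\tr^P_{g^{-1}})^{-1}]$ because the cocycle relation gives $g^{-1}(\tr_g^P)=(\tr_{g^{-1}}^P)^{-1}$, and your verification of $\alpha_e=\id$ and of the compatibility $g((\alpha_h)_M)\circ(\alpha_g)_{hM}=(\alpha_{gh})_M$ is the same computation. You are in fact somewhat more thorough than the paper, which states the $\alpha_g$ and checks the cocycle condition but leaves the $A'$-linearity and naturality of $(\alpha_g)_M$ implicit.
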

\begin{proof}
Consider the natural isomorphisms
    \begin{align*}
	\alpha_g: \Hom_A(P, -)\circ g \rightarrow  g\circ \Hom_A(P, -)
\end{align*}
given by 
\begin{align*}
		(\alpha_g)_M: \Hom_A(P, gM)\rightarrow g\Hom_A(P, M), \varphi\mapsto g[g^{-1}(\varphi)\circ (\tr^{P}_{g^{-1}})^{-1}],
\end{align*}
where $\tr^P_g:gP\rightarrow P$ is just multiplication by $g$, and $g[\dots]$ denotes formal multiplication with $g$ and not application of the functor $g$.\\
Then for the the unit $e\in G$ we have $\alpha_e=\id$ by definition. Moreover, for $h,g\in G$, $M\in \modu A$ and $\varphi\in  \Hom_A(P, hgM)$ we have
\begin{align*}
    g((\alpha_h)_M)\circ (\alpha_g)_{hM}(\varphi)&=g((\alpha_h)_M)( g[g^{-1}(\varphi)\circ (\tr^{P}_{g^{-1}})^{-1}])\\
    =g[(\alpha_h)_M( g^{-1}(\varphi)\circ (\tr^{P}_{g^{-1}})^{-1})]
    &=gh[h^{-1}g^{-1}(\varphi)\circ h^{-1}((\tr^{P}_{g^{-1}})^{-1})\circ (\tr^{P}_{h^{-1}})^{-1}]\\
    =gh[h^{-1}g^{-1}(\varphi)\circ (\tr^{P}_{h^{-1}}\circ h^{-1}(\tr^{P}_{g^{-1}}))^{-1}]
    &=gh[(gh)^{-1}(\varphi)\circ (\tr^{P}_{(gh)^{-1}})^{-1}]
    =\alpha_{ghM}(\varphi)
\end{align*}
by Example \ref{example_A*G}.
\end{proof}
\begin{theorem}\label{theorem_Gactionbasic}
    Let $A$ be a finite-dimensional algebra and $G$ be a group acting on $A$. Then there is a basic algebra $A'$ with a $G$-action such that $A'$ is Morita equivalent to $A$ and there is a Morita equivalence
    \begin{align*}
        F:\modu A\rightarrow \modu A'
    \end{align*}
    which is weakly $G$-equivariant.
\end{theorem}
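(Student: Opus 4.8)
The plan is to reduce everything to Proposition~\ref{proposition_GequivariantMorita}. Concretely, it suffices to produce a single $A*G$-module $P$ whose restriction ${}_A P$ to $A$ is a \emph{basic} projective generator of $\modu A$: then $A':=\End_A(P)^{\op}$ is Morita equivalent to $A$, it is basic because its indecomposable projectives are the pairwise non-isomorphic indecomposable summands of ${}_A P$, and Proposition~\ref{proposition_GequivariantMorita} equips $A'$ with a $G$-action (as in Lemma~\ref{lemma_Gstructuretransport}) together with a weakly $G$-equivariant Morita equivalence $F=\Hom_A(P,-)\colon\modu A\to\modu A'$. So the entire substance of the theorem is the construction of such a $P$, and after that everything is formal.

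To build $P$ I would proceed orbit by orbit, using the section's standing hypotheses that $G$ is finite and $\chara\field\nmid|G|$. Let $\Lambda$ be the finite set of isomorphism classes of indecomposable projective $A$-modules; since $G$ acts on $\modu A$ by autoequivalences it permutes $\Lambda$. For each $G$-orbit $O$ fix a representative $Q_O$ with setwise stabiliser $H_O\le G$. The key step is to endow $Q_O$ with an $H_O$-equivariant structure in the sense of Example~\ref{example_A*G}, i.e.\ a coherent family of $A$-isomorphisms $\operatorname{tr}_h\colon hQ_O\to Q_O$; equivalently, to choose a primitive idempotent of ``type $Q_O$'' in $A$ that is fixed by $H_O$. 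This is exactly the point at which $\chara\field\nmid|H_O|$ must be used, in order to rigidify the a priori only-up-to-isomorphism data $hQ_O\cong Q_O$ into one satisfying the cocycle condition. Granting this, I would then induce up: the finite direct sum $P_O:=\bigoplus_{gH_O\in G/H_O} gQ_O$ carries a canonical $A*G$-module structure and restricts over $A$ to $\bigoplus_{[Q]\in O}Q$. Taking $P:=\bigoplus_O P_O$ over all orbits gives an $A*G$-module whose restriction to $A$ is multiplicity-free and hits every indecomposable projective, i.e.\ a basic projective generator, as required.

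The main obstacle, and the only non-formal part, is the $H_O$-equivariantisation of $Q_O$ (equivalently the existence of the $G$-stable basic idempotent): this is where the arithmetic hypothesis on $\chara\field$ genuinely enters and where the coherence of the chosen isomorphisms must be handled with care. Once $P$ is in hand, one simply feeds it into Proposition~\ref{proposition_GequivariantMorita}, notes that $A'=\End_A(P)^{\op}$ is basic precisely because $P$ was arranged multiplicity-free, that Morita equivalence is automatic because every indecomposable projective occurs in ${}_A P$, and that the weak $G$-equivariance of $F=\Hom_A(P,-)$ together with the transported $G$-action on $A'$ are supplied directly by that proposition and Lemma~\ref{lemma_Gstructuretransport}.
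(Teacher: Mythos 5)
Your reduction to Proposition \ref{proposition_GequivariantMorita} is exactly the paper's: everything comes down to producing an $A*G$-module $P$ whose restriction ${}_{A|}P$ is a basic projective generator. But your construction of $P$ has a genuine gap at precisely the point you flag and then ``grant'': the $H_O$-equivariantisation of the indecomposable projective $Q_O$. Choosing isomorphisms $\tr_h\colon hQ_O\to Q_O$ satisfying the cocycle condition $\tr_g\circ g(\tr_h)=\tr_{gh}$ is a nontrivial lifting problem. The hypothesis $\chara(\field)\nmid|H_O|$ only kills the part of the obstruction living in the unipotent group $1+\rad\End_A(Q_O)$ (a uniquely $|H_O|$-divisible group); after splitting that off, there remains an obstruction class in $H^2(H_O,\field^\times)$, i.e.\ a Schur-multiplier--type obstruction, and nothing in your argument explains why it vanishes. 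This is the classical ramification phenomenon from Clifford theory, and it is the whole content of the step, not a technicality one can defer.

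The paper's proof is arranged specifically to avoid this. It equivariantises the \emph{simple} modules rather than the indecomposable projectives: for each orbit it invokes \cite[Proposition 1.16]{mypaper} to make $L$ an $A*H_L$-module, forms the induced $A*G$-module $\field G\otimes_{\field H_L}L$, and sums over orbits to get the multiplicity-free semisimple $A*G$-module $L^A$ with ${}_{A|}L^A\cong\bigoplus_{L\in\Sim(A)}L$. It then takes a projective cover $P$ of $L^A$ \emph{in} $\modu A*G$. Because $\chara(\field)\nmid|G|$, restriction to $A$ sends this projective cover to a projective cover of ${}_{A|}L^A$, i.e.\ to a basic projective generator of $\modu A$. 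The $A*G$-structure on $P$ is thus obtained for free from the projective cover in the skew group algebra category, with no need to rigidify isomorphisms between the individual $gQ_O$. If you want to salvage your route, you must either prove the vanishing of the obstruction for indecomposable projectives (which amounts to the same Clifford-theoretic input as the cited result for simples, transported along $\Top$), or switch to the projective-cover-over-$A*G$ argument.
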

\begin{proof}
    Let $L\in \Sim(A)$ and denote by $H_L$ the stabilizer of $L$ in $G$. Then by \cite[Proposition 1.16]{mypaper}, $L$ is an $A*H_L$-module, and $\field G\otimes_{\field H_L}L$ is an $A*G$-module.
    Hence 
    \begin{align*}
       L^A:= \bigoplus_{GL\in \Sim(A)/G}\field G\otimes_{\field H_L}L
    \end{align*}
    is an $A*G$-module. Moreover, as an $A$-module, we have
    \begin{align*}
        {}_{A|}L^A\cong  \bigoplus_{GL\in \Sim(A)/G}{}_{A|}\field G\otimes_{\field H_L}L\cong\bigoplus_{GL\in \Sim(A)/G}\bigoplus_{gH_L\in G/H_L}gL\cong \bigoplus_{GL\in \Sim(A)/G}\bigoplus_{L'\in GL}L'\cong \bigoplus_{L\in \Sim(A)}L,
    \end{align*} so that $[L^A:L]=1$ for all $L\in \Sim(A)$.
    Let $P$ be a projective cover of  $L^A$ in $\modu A*G$. Then its restriction to $\modu A$, ${}_{A|}P$, is a projective cover of $L^A$ in $\modu A$, and thus a basic projective generator. Hence $A':=\End_A(P)^{\op}$ is basic. By Proposition \ref{proposition_GequivariantMorita}, we moreover have a $G$-equivariant Morita equivalence
    \begin{equation*}
         F=\Hom_A(P,-):\modu A\rightarrow \modu A'.\qedhere
    \end{equation*}
\end{proof}
Now, if we assume $A$ is basic, then \cite[Proposition 2.1]{ReitenRiedtmann} tells us that  $A$ contains a maximal semisimple subalgebra $L'$ such that the $G$-action on $A$ restricts to a $G$-action on $L'$.\\
Hence in the following we again fix an action on $L$ and assume that $A$ is basic, with maximal semisimple subalgebra $L'\cong L$ and a $G$-action that restricts to the fixed $G$-action on $L'\cong L$.\\
We want to apply Remark \ref{remark_Gstructure}, which describes how to endow the A-infinity algebra $\Ext^*_A(M, M)$ of an $A*G$-module $M$ with a $G$-action, in the case where $M=\Delta^A=\bigoplus_{i=1}^n \Delta_i$ is the direct sum of standard modules. For this, we need to show the following:
\begin{lemma}\label{lemma_Gactiondelta}
    Suppose $(A, \leq_A)$ is a basic quasi-hereditary algebra with maximal semisimple subalgebra $L$ and a $G$-action that restricts to the fixed $G$-action on $L$, such that $\leq_A$ is $G$-invariant. Let
    $\Delta^A=\bigoplus_{i=1}^n \Delta_i$ be the direct sum of all standard modules of $A$. Then $\Delta^A$ has the structure of an $A*G$-module such that the unit map $L\rightarrow \End_A(P^\cdot(\Delta^A))$ becomes $G$-equivariant, where the $G$-action on $\End_A(P^\cdot(\Delta^A))$ is given as in Remark \ref{remark_Gstructure}.
\end{lemma}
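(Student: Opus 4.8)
The plan is to build the $A*G$-module structure on $\Delta^A$ by inducing up, orbit by orbit, from the stabilizers of the simple modules, to carry the same construction one step further to a projective resolution, and then to read off the equivariance of the unit map from the explicit transport structure on the induced modules. As a first step I would check that $G$ permutes the standard modules compatibly with its permutation of the simples: for every $g\in G$ and $L\in\Sim(A)$ one has $g\Delta^A(L)\cong\Delta^A(gL)$. This is immediate from Definition \ref{definition_standardmodules}, since the autoequivalence $g$ of $\modu A$ sends $P(L)$ to $P(gL)$ and induces isomorphisms $\Hom_A(P_{L'},P_L)\xrightarrow{\sim}\Hom_A(P_{gL'},P_{gL})$, while $G$-invariance of $\leq_A$ (Definition \ref{definition_Ginvariant}) gives $L'\nleq_A L\Leftrightarrow gL'\nleq_A gL$, so that reindexing the defining sum by $L''=gL'$ turns it into the one defining $\Delta^A(gL)$. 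Furthermore, since the fixed $G$-action on $L\cong\field^n$ is the restriction to the given maximal semisimple subalgebra of $A$ of the $G$-action on $A$, it permutes the primitive idempotents $\varepsilon_1,\dots,\varepsilon_n$ by the same permutation $\sigma_g$ of $\{1,\dots,n\}$ by which $G$ permutes $\Sim(A)=\{L_1^A,\dots,L_n^A\}$ and hence, by the previous sentence, also the collection $\{\Delta_1,\dots,\Delta_n\}$.

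Next I would fix an orbit $GL\subseteq\Sim(A)$ with stabilizer $H=H_L$. By \cite[Proposition 1.16]{mypaper}, $L$ is an $A*H$-module, hence so is its projective cover $P(L)$ (the projective cover of $L$ in $\modu A*H$ restricts to $P(L)$ in $\modu A$). The submodule $U:=\sum_{L'\nleq_A L}\sum_{f\in\Hom_A(P_{L'},P_L)}\im(f)\subseteq P(L)$ is $H$-stable, because $H$ fixes $L$ and so, by $G$-invariance of $\leq_A$, permutes the $P(L')$ with $L'\nleq_A L$ among themselves; thus $U$ is an $A*H$-submodule and $\Delta^A(L)=P(L)/U$ inherits an $A*H$-module structure. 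Choosing a projective resolution $P^\cdot_H(\Delta^A(L))\to\Delta^A(L)$ in $\modu A*H$ and applying the exact, projective-preserving functor $\field G\otimes_{\field H}-$ produces an $A*G$-module $\field G\otimes_{\field H}\Delta^A(L)$ and a projective resolution $\field G\otimes_{\field H}P^\cdot_H(\Delta^A(L))$ of it in $\modu A*G$, whose restrictions to $A$ are $\bigoplus_{gH\in G/H}g\Delta^A(L)\cong\bigoplus_{L''\in GL}\Delta^A(L'')$ and a corresponding direct sum of $A$-projective resolutions. Summing over orbit representatives, $M:=\bigoplus_{GL\in\Sim(A)/G}\field G\otimes_{\field H_L}\Delta^A(L)$ is an $A*G$-module with ${}_{A|}M\cong\bigoplus_{L''\in\Sim(A)}\Delta^A(L'')=\Delta^A$; transporting along this isomorphism endows $\Delta^A$ with the sought $A*G$-structure, and $P^\cdot(\Delta^A):={}_{A|}\bigl(\bigoplus_{GL}\field G\otimes_{\field H_L}P^\cdot_{H_L}(\Delta^A(L))\bigr)$ is a projective resolution of $\Delta^A$ in $\modu A$ that decomposes as $\bigoplus_{i=1}^n P^\cdot(\Delta_i)$ in the sense of Remark \ref{remark_long}(2).

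Finally, with these choices the summand $P^\cdot(\Delta_i)$ with $L_i^A=gL$ (for the relevant orbit representative $L$) is the image of $g\bigl(P^\cdot_{H_L}(\Delta^A(L))\bigr)$ under the transport isomorphism $\tr_g$ of $M$, and the explicit formula for the transport structure on an induced module gives $\tr_g\bigl(g(P^\cdot(\Delta_i))\bigr)=P^\cdot(\Delta_{\sigma_g(i)})$. Writing $\pi_i=i_L(\varepsilon_i)\in\End_A^*(P^\cdot(\Delta^A))$ for the idempotent projecting onto $P^\cdot(\Delta_i)$, Lemma \ref{lemma_Gstructuretransport} gives $g\cdot\pi_i=\tr_g\circ g(\pi_i)\circ\tr_g^{-1}$, which is the projection onto $\tr_g(g(P^\cdot(\Delta_i)))=P^\cdot(\Delta_{\sigma_g(i)})$, i.e.\ $\pi_{\sigma_g(i)}$. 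Since $g\cdot\varepsilon_i=\varepsilon_{\sigma_g(i)}$, this equals $i_L(g\cdot\varepsilon_i)$, so $i_L$ is $G$-equivariant, as required.

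I expect the main obstacle to be the bookkeeping in the last step: one must set up the $A*G$-structures on the induced modules and their projective resolutions so that the transport isomorphisms are manifestly compatible with the decomposition $\bigoplus_{i=1}^nP^\cdot(\Delta_i)$ that defines $i_L$, and keep careful track that the permutation $\sigma_g$ appearing in the $G$-action on $L\cong\field^n$, in the $G$-action on $\Sim(A)$, and in the $G$-action on $\{\Delta_1,\dots,\Delta_n\}$ is one and the same. By contrast, checking $g\Delta^A(L)\cong\Delta^A(gL)$ and that $\Delta^A(L)$ is an $A*H_L$-module is routine given \cite[Proposition 1.16]{mypaper} and the $G$-invariance hypothesis.
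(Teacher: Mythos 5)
Your proposal is correct, but it takes a genuinely different route from the paper. The paper exploits basicness head-on: writing $\Delta_i=Ae_i/A\bigl(\sum_{j>i}e_j\bigr)Ae_i$ with $e_i$ the image in $A$ of the unit vector $\varepsilon_i\in L$, it observes that $g$ permutes the $e_i$ and, by $G$-invariance of $\leq_A$, sends the defining ideal of $\Delta_i$ onto that of $\Delta_{g(i)}$; applying $g$ elementwise therefore assembles directly into an $A*G$-structure on $\Delta^A$, and the equivariance of the unit map is read off from the one-line computation $g(u_i)=u_{g(i)}$ for the idempotents $u_i=\id_{\Delta_i}\in\End_A(\Delta^A)$. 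You instead run the induction-from-stabilizers machinery of Theorem \ref{theorem_Gactionbasic}: you first check $g\Delta^A(L)\cong\Delta^A(gL)$ abstractly, give $\Delta^A(L)$ an $A*H_L$-structure via the stability of the trace submodule $U$, and induce up along $\field G\otimes_{\field H_L}-$. Both arguments are sound (your use of the standing assumption $\chara(\field)\nmid|G|$ for the projective-cover restriction and of \cite[Proposition 1.16]{mypaper} matches the paper's own toolkit). What the paper's version buys is brevity and a completely explicit transport map; what yours buys is that the lift from $\End_A(\Delta^A)$ to $\End_A(P^\cdot(\Delta^A))$ is handled more carefully — you construct the $A*G$-resolution and its decomposition into the summands $P^\cdot(\Delta_i)$ explicitly, whereas the paper leaves that step implicit after verifying equivariance on $\End_A(\Delta^A)$. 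The bookkeeping you flag at the end (that the permutation of the $\varepsilon_i$, of $\Sim(A)$, and of the $\Delta_i$ coincide) is exactly the identification the paper makes at the outset, and it does rely on $A$ being basic with $L$ realized inside $A$, so neither argument dispenses with that hypothesis.
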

\begin{proof}
    Denote by $\varepsilon_i$ the $i$-th unit vector in $L=\field^n$ and by $e_i$ its image in $A$. Then, since $G$ acts via algebra automorphisms and thus maps orthogonal principle indecomposable idempotents to orthogonal principle indecomposable idempotents, $G$ acts on $\{\varepsilon_1, \dots , \varepsilon_n\} $ and thus on $\{e_1, \dots, e_n\}$ via permutations. In other words, there is a $G$-action on the set $\{1, \dots n\}$ such that $g(\varepsilon_i)=\varepsilon_{g(i)}$ and $g(e_i)=e_{g(i)}$.\\
    For $1\leq i\leq n$ let $L_i:=\field e_i$ be the simple $A$-module corresponding to $i$, $P_i:=Ae_i$  its projective cover and $\Delta_i:=\Delta(L_i)=Ae_i/A\left(\sum_{j>i}e_j\right)Ae_i$ be the associated standard module.
    Then since $e_j>_Ae_i$ if and only if $g(e_j)>_Ag(e_i)$, $G$ acts on $\Delta^A:=\bigoplus_{i=1}^n \Delta_i$ via $$g\left(xe_i+A\left(\sum_{e_j>_Ae_i}e_j\right)Ae_i\right)=g(x)g(e_i)+A\left(\sum_{j>g(i)}e_j\right)Ae_{g(i)}\in \Delta_{g(i)}$$
    for $xe_i+A\left(\sum_{e_j>_A e_i}e_j\right)Ae_i\in \Delta_i$.\\
    Let $u_i=\id_{\Delta_i}\in \End_A(\Delta^A)$ be the idempotent corresponding to the identity on $\Delta_i$.
    Then 
    \begin{align*}
        g(u_i)(x)=g(u_i(g^{-1}(x)))=0 \textup{ if }x\notin g(\Delta_i)=\Delta_{g(i)}\\
        g(u_i)(x)=g(u_i(g^{-1}(x)))=g(g^{-1}(x))=x \textup{ if }x\in g(\Delta_i)=\Delta_{g(i)}.
    \end{align*}
    Hence $g(u_i)=u_{g(i)}$. In other words, the embedding, 
    \begin{align*}
        L\rightarrow \End_A(\Delta^A), \varepsilon_i\mapsto \id_{\Delta_i}
    \end{align*}
    from (2.) in Remark \ref{remark_long} is $G$-equivariant.
\end{proof}
Now we are in a situation to prove our main result.
\begin{theorem}\label{thm_skew}
	Let $(A, \leq_A)$ be a quasi-hereditary algebra with a $G$-action, such that $\leq_A$ is $G$-invariant. Then there is a quasi-hereditary algebra $(A', \leq_{A'})$ with a $G$-action and a weakly $G$-equivariant Morita equivalence $\modu A\rightarrow \modu A'$ of quasi-hereditary algebras such that $A'$ has a $G$-invariant basic regular exact Borel subalgebra $B'$.
\end{theorem}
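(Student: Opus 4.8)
The plan is to run the existence construction of \cite{KKO} with $G$-equivariance inserted at every stage, relying on the functorial refinements assembled in this section.

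\emph{Step 1: reduction to the basic case.} First I would pass to a basic representative. By Theorem \ref{theorem_Gactionbasic} there is a basic algebra $A_0$ with a $G$-action and a weakly $G$-equivariant Morita equivalence $F_0\colon \modu A\to\modu A_0$. Transporting $\leq_A$ along $F_0$ yields $\leq_{A_0}$ with $(A_0,\leq_{A_0})$ quasi-hereditary and $F_0$ a Morita equivalence of quasi-hereditary algebras; weak $G$-equivariance of $F_0$ forces it to induce a $G$-equivariant bijection on isomorphism classes of simples, so $\leq_{A_0}$ is again $G$-invariant. After replacing $A$ by $A_0$, I may assume $A$ is basic, and by \cite[Proposition 2.1]{ReitenRiedtmann} I fix a maximal semisimple subalgebra $L'\cong L$ of $A$ on which the $G$-action restricts; I take this to be the fixed $G$-action on $L$ of the section.

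\emph{Step 2: $G$-equivariant A-infinity data and reconstruction of $B'$.} Put $\Delta^A=\bigoplus_{i=1}^n\Delta_i$. By Lemma \ref{lemma_Gactiondelta}, $\Delta^A$ carries an $A*G$-module structure making the unit map $L\to\End_A(P^\cdot(\Delta^A))$ $G$-equivariant, so by Remark \ref{remark_Gstructure} the A-infinity algebra $\Ext_A^*(\Delta^A,\Delta^A)$ acquires a $G$-action by strictly unital A-infinity automorphisms restricting to the fixed action on $L$, together with a $G$-equivariant strictly unital A-infinity quasi-isomorphism $f\colon \Ext_A^*(\Delta^A,\Delta^A)\to\End_A^*(P^\cdot(\Delta^A))$; moreover $H^0(\twmod(f))$ is a $G$-equivariant equivalence and, by Proposition \ref{proposition_ginvariant_End_F(M)}, the equivalence $C_{\Delta^A}\colon H^0(\twmod_L(\End_A^*(P^\cdot(\Delta^A))))\to\filt(\Delta^A)$ is weakly $G$-equivariant. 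Next I set $\mathcal{B}:=\trunc_L(\Ext_A^*(\Delta^A,\Delta^A))=L\oplus\Ext_A^{>0}(\Delta^A,\Delta^A)\in\Ainfunit^+$: functoriality of $\trunc_L$ (Proposition \ref{proposition_restriction}) equips $\mathcal{B}$ with a $G$-action by strictly unital A-infinity automorphisms, and naturality of the counit (Proposition \ref{proposition_adjoint_truncation}) makes $\varepsilon\colon\mathcal{B}\to\Ext_A^*(\Delta^A,\Delta^A)$ a $G$-equivariant A-infinity homomorphism. Applying Keller's reconstruction (Theorem \ref{thm_Keller}) to $\mathcal{B}$ produces a basic finite-dimensional algebra $B':=(\dualbar s\mathcal{B}/I)_0$ with $\Ext_{B'}^*(L^{B'},L^{B'})\cong\mathcal{B}$ and an equivalence $F'\colon H^0(\twmod(\mathcal{B}))\to\modu B'$; by Proposition \ref{proposition_Keller_equivariant}, $G$ acts on $B'$ by algebra automorphisms and $F'$ is strongly $G$-equivariant, and the fixed copy of $L$ is a $G$-stable maximal semisimple subalgebra of $B'$.

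\emph{Step 3: construction of $A'$ and verifications.} Composing, the functor
\begin{align*}
\Phi\colon\modu B'&\xrightarrow{(F')^{-1}}H^0(\twmod(\mathcal{B}))\xrightarrow{H^0(\twmod(\varepsilon))}H^0(\twmod(\Ext_A^*(\Delta^A,\Delta^A)))\\
&\xrightarrow{H^0(\twmod(f))}H^0(\twmod(\End_A^*(P^\cdot(\Delta^A))))\xrightarrow{C_{\Delta^A}}\filt(\Delta^A)
\end{align*}
is weakly $G$-equivariant, being a composite of (weakly) $G$-equivariant functors. Following \cite{KKO}, this data gives rise to a quasi-hereditary algebra $(A',\leq_{A'})$ Morita equivalent to $(A,\leq_A)$ together with an embedding $B'\hookrightarrow A'$ realizing $B'$ as a basic regular exact Borel subalgebra, with $\Phi$ realizing the induction functor $A'\otimes_{B'}-$ up to the Morita equivalence $\modu A\to\modu A'$. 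Since the only inputs to this construction are the A-infinity algebra $\Ext_A^*(\Delta^A,\Delta^A)$, its subalgebra $\mathcal{B}=\trunc_L(\Ext_A^*(\Delta^A,\Delta^A))$, and the inclusion $\varepsilon$ — all of which have been made $G$-equivariant in Step 2 — and since the (co)bar and twisted-module constructions involved are functorial, the algebra $A'$ inherits a $G$-action, the embedding $B'\hookrightarrow A'$ becomes $G$-equivariant (so $B'$ is stable as a subset of $A'$), and, because the Morita equivalence $\modu A\to\modu A'$ is assembled from the (weakly) $G$-equivariant functors of this section together with Proposition \ref{proposition_GequivariantMorita}, it is weakly $G$-equivariant; one then transports $\leq_A$ to a partial order $\leq_{A'}$, which is $G$-invariant by the argument of Step 1, and composes with $F_0$ from Step 1 to obtain the required weakly $G$-equivariant Morita equivalence. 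That $B'$ is a basic regular exact Borel subalgebra of $(A',\leq_{A'})$ is then the (non-equivariant) conclusion of \cite{KKO}, so one may also invoke \cite[p. 18]{surveyjulian} together with $A'\otimes_{B'}(B'/\rad(B'))\cong\bigoplus_{L\in\Sim(A')}\Delta^{A'}(L)$.

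\emph{Main obstacle.} The crux is the $G$-equivariance claim in Step 3: one must check that the \cite{KKO} construction of $A'$ and of the unital embedding $B'\hookrightarrow A'$ out of $(\varepsilon\colon\mathcal{B}\to\Ext_A^*(\Delta^A,\Delta^A))$ uses no choices that cannot be made $G$-equivariantly — concretely, that the $G$-action $A'$ inherits restricts on $B'$ to the $G$-action produced by Proposition \ref{proposition_Keller_equivariant} and that the comparison with the induction functor stays $G$-equivariant. In effect this means rerunning the existence argument of \cite{KKO} inside the semisimple monoidal category of $L$-$L$-bimodules with a $G$-action; once this bookkeeping is in place, quasi-heredity, $G$-invariance of $\leq_{A'}$, and the regularity and exactness of $B'$ are all inherited from the classical statements.
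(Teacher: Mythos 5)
There is a genuine gap, and you have in fact named it yourself: your ``main obstacle'' paragraph is precisely the step the proof has to supply, and your Step 3 does not supply it. You assert that ``following \cite{KKO}, this data gives rise to a quasi-hereditary algebra $(A',\leq_{A'})$ \dots together with an embedding $B'\hookrightarrow A'$'' and that the whole construction can be rerun $G$-equivariantly. That would require re-doing the entire existence argument of \cite{KKO} inside the category of $L$-$L$-bimodules with a $G$-action, checking every choice (projective resolutions, the transferred $A_\infty$-structure, the box/bar construction of the embedding) for equivariance — none of which is carried out. A second, related problem: you apply Keller's reconstruction (Theorem \ref{thm_Keller}) and the twisted-module equivalences directly to $\mathcal{B}=\trunc_L(\Ext_A^*(\Delta^A,\Delta^A))$, but the theorem as available here starts from the Ext-algebra of an actual finite-dimensional algebra $B$; that $\mathcal{B}$ is of this form, and that $H^0(\twmod(\mathcal{B}))\simeq\modu B'$ with $B'$ finite-dimensional and basic, is itself part of what needs proving.

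The paper's proof circumvents exactly this. It first invokes the \emph{non-equivariant} KKO theorem as a black box to produce $(R,\leq_R)$ with a regular exact Borel subalgebra $B$ and a Morita equivalence $F_R$. The $G$-action is then transported: Lemma \ref{lemma_Gactiondelta} puts it on $\Ext_A^*(\Delta^A,\Delta^A)$, the isomorphism induced by $F_R$ moves it to $\Ext_R^*(\Delta^R,\Delta^R)$, and the truncation universal property (Proposition \ref{proposition_ainftyhoms}) restricts it to $\Ext_B^*(L^B,L^B)$ so that the induction map $f$ is $G$-equivariant. Equivariant Keller reconstruction then gives $B''$ and a $G$-equivariant equivalence $S$, and — this is the key move you are missing — $A'$ and the embedding $B'\hookrightarrow A'$ are \emph{not} rebuilt via KKO but defined as $\End_A(H(B''))^{\op}\supseteq\End_{B''}(B'')^{\op}$ for the weakly $G$-equivariant functor $H=C_{\Delta^A}\circ H^0(\twmod(h\circ f))\circ S^{-1}$; the Borel, exactness and regularity properties are inherited from $B\subseteq R$ through the commutative diagram of endomorphism algebras (as in Theorem \ref{theorem_conjugation}), and $G$-invariance of $B'$ follows from Lemma \ref{lemma_Gstructuretransport} applied to $H$. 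To repair your argument you would either need to carry out the full equivariant rerun of \cite{KKO}, or restructure it along these lines by reintroducing the intermediate pair $(R,B)$.
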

\begin{proof}
    By Theorem \ref{theorem_Gactionbasic}, we can assume without loss of generality that $A$ is basic. Moreover, by \cite[Proposition 2.1]{ReitenRiedtmann} $A$ has a $G$-invariant maximal semisimple subalgebra $L'\cong L=\field^n$. We fix the $G$-action on $L$ to be the $G$-action induced by the restriction of the $G$-action on $A$.\\
	By \cite[Corollary 1.3]{KKO}, there is a quasi-hereditary algebra $(R, \leq_{R})$ and a Morita equivalence $F_R:\modu R\rightarrow \modu A$ of quasi-hereditary algebras such that $R$ has a regular exact Borel subalgebra $B$.\\
	Since $F_R$ is a Morita equivalence of quasi-hereditary algebras $F_R(\Delta^R)\cong \Delta^A$, so that $F_R$ induces a strictly unital A-infinity isomorphism
	\begin{align*}
		h=(h_n)_n:\Ext^*_A(\Delta^A, \Delta^A)\rightarrow \Ext^*_R(\Delta^R, \Delta^R), [\varphi]\mapsto [F_R(\varphi)].
	\end{align*}
By Lemma \ref{lemma_Gactiondelta}, $\Delta^A$ has the structure of an $A*G$-module such that $G$ acts on $\Ext^*_A(\Delta^A, \Delta^A)$ and this action restricts to the usual action on $L$. This induces an action on $\Ext^*_R(\Delta^R, \Delta^R)$ such that $h$ is $G$-equivariant. Since $h$ is strictly unital, the $G$-action on  $\Ext^*_R(\Delta^R, \Delta^R)$ also restricts to the usual action on $L$. By Example \ref{example_Ext}, the induction functor $R\otimes_B -$ induces a strictly unital A-infinity homomorphism 
\begin{align*}
	f=(f_n)_n:\Ext_B^*(L^B, L^B)\rightarrow \Ext_R^*(\Delta^R, \Delta^R), [\varphi]\mapsto [\id_R\otimes \varphi],
\end{align*}
where $f_1$ is an isomorphism in degree greater than zero, and $\Ext_B^0(L^B, L^B)\cong L$. Since the $G$-action on $\Ext^*_R(\Delta^R, \Delta^R)$ restricts to the usual action on $L$, and is thus strictly unital if we view $\Ext^*_R(\Delta^R, \Delta^R)$ as strictly unital over $L$ via $\iota$ and via $\iota\circ g$, Proposition \ref{proposition_ainftyhoms} implies that we can restrict it to a $G$-action on $\Ext_B^*(L^B, L^B)$ such that $f$ is $G$-equivariant.\\
Moreover, by Theorem \ref{theorem_equivalence_End_F(M)} and the functoriality of $\twmod$ we have a commutative diagram 
% https://q.uiver.app/#q=WzAsNixbMCwwLCJIXjAoXFx0d21vZChcXEV4dF4qX0IoTF5CLCBMXkIpKSkiXSxbMSwwLCJIXjAoXFx0d21vZChcXEV4dF4qX1IoXFxEZWx0YV5SLCBcXERlbHRhXlIpKSkiXSxbMSwxLCJcXGZpbHQoXFxEZWx0YV5SKSJdLFswLDEsIlxcbW9kdSBCIl0sWzIsMSwiXFxmaWx0KFxcRGVsdGFeQSkiXSxbMiwwLCJIXjAoXFx0d21vZChcXEV4dF4qX0EoXFxEZWx0YV5BLCBcXERlbHRhXkEpKSkiXSxbMCwxLCJIXjAoXFx0d21vZChmKSkiXSxbMSw1LCJIXjAoXFx0d21vZChoKSkiXSxbMywyLCJSXFxvdGltZXNfQiAtIl0sWzIsNCwiRl9SIl0sWzAsMywiQ197TF5CfSIsMl0sWzEsMiwiQ197XFxEZWx0YV5SfSIsMl0sWzUsNCwiQ197XFxEZWx0YV5BfSIsMl1d
\[\begin{tikzcd}[ampersand replacement=\&, column sep = huge]
	{H^0(\twmod(\Ext^*_B(L^B, L^B)))} \& {H^0(\twmod(\Ext^*_R(\Delta^R, \Delta^R)))} \& {H^0(\twmod(\Ext^*_A(\Delta^A, \Delta^A)))} \\
	{\modu B} \& {\filt(\Delta^R)} \& {\filt(\Delta^A)}
	\arrow["{H^0(\twmod(f))}", from=1-1, to=1-2]
	\arrow["{H^0(\twmod(h))}", from=1-2, to=1-3]
	\arrow["{R\otimes_B -}", from=2-1, to=2-2]
	\arrow["{F_R}", from=2-2, to=2-3]
	\arrow["{C_{L^B}}"', from=1-1, to=2-1]
	\arrow["{C_{\Delta^R}}"', from=1-2, to=2-2]
	\arrow["{C_{\Delta^A}}"', from=1-3, to=2-3]
\end{tikzcd}\]
where the vertical arrows are equivalences by Theorem \ref{theorem_equivalence_End_F(M)}, and Proposition \ref{proposition_twmod_equiv}, and $C_{\Delta^A}$ is weakly $G$-equivariant by Lemma \ref{lemma_Ginvariant_twmod} and Proposition \ref{proposition_ginvariant_End_F(M)}.\\
Keller's reconstruction theorem \ref{proposition_Keller_equivariant} now yields a basic algebra $B''$ with a $G$-action and a $G$-equivariant equivalence 
\begin{align*}
	S:H^0(\twmod(\Ext^*_B(L^B, L^B)))\rightarrow	\modu B''.
\end{align*}
Choosing quasi-inverses and setting $H:=C_{\Delta^A}\circ H^0(\twmod(h\circ f))\circ S^{-1}$  and $T:=S\circ C_{L^B}^{-1}$ yields a diagram
% https://q.uiver.app/#q=WzAsNCxbMCwwLCJcXG1vZHUgQicnIl0sWzAsMSwiXFxtb2R1IEIiXSxbMSwxLCJcXG1vZHUgUiJdLFsxLDAsIlxcbW9kdSBBIl0sWzAsMywiRyJdLFsyLDMsIkZfUiIsMl0sWzEsMiwiUlxcb3RpbWVzX0IgLSIsMl0sWzEsMCwiXFxwc2kiXV0=
\[\begin{tikzcd}[ampersand replacement=\&]
	{\modu B''} \& {\modu A} \\
	{\modu B} \& {\modu R}
	\arrow["H", from=1-1, to=1-2]
	\arrow["{F_R}"', from=2-2, to=1-2]
	\arrow["{R\otimes_B -}"', from=2-1, to=2-2]
	\arrow["T", from=2-1, to=1-1]
\end{tikzcd}\]
which commutes up to some natural isomorphism $\alpha: F_R\circ (R\otimes_B -)\rightarrow H\circ T$ and where the upper horizontal arrow is weakly $G$-equivariant and the vertical arrows are equivalences.
This gives rise to a commutative diagram 
% https://q.uiver.app/#q=WzAsNSxbMSwwLCJcXEVuZF9BKEdcXGNpcmMgXFxwc2koQikpIl0sWzAsMCwiXFxFbmRfe0InJ30oXFxwc2koQikpIl0sWzAsMiwiXFxFbmRfQihCKSJdLFsxLDIsIlxcRW5kX1IoUlxcb3RpbWVzX0IgQikiXSxbMSwxLCJcXEVuZF9BKEZfUihSXFxvdGltZXNfQiBCKSkiXSxbMiwzLCJcXHZhcnBoaVxcbWFwc3RvIFxcaWRfUlxcb3RpbWVzIFxcdmFycGhpIl0sWzIsMSwiXFx2YXJwaGlcXG1hcHN0byBcXHBzaShcXHZhcnBoaSkiLDJdLFsxLDAsIlxcdmFycGhpXFxtYXBzdG8gRyhcXHZhcnBoaSkiLDJdLFszLDQsIlxcdmFycGhpXFxtYXBzdG8gRl9SKFxcdmFycGhpKSIsMl0sWzQsMCwiXFxyaG9fe1xcYWxwaGFfQn0iLDJdXQ==
\[\begin{tikzcd}[ampersand replacement=\&]
	{\End_{B''}(T(B))} \& {\End_A(H\circ T(B))} \\
	\& {\End_A(F_R(R\otimes_B B))} \\
	{\End_B(B)} \& {\End_R(R\otimes_B B)}
	\arrow["{\varphi\mapsto \id_R\otimes \varphi}", from=3-1, to=3-2]
	\arrow["{\varphi\mapsto T(\varphi)}"', from=3-1, to=1-1]
	\arrow["{\varphi\mapsto H(\varphi)}"', from=1-1, to=1-2]
	\arrow["{\varphi\mapsto F_R(\varphi)}"', from=3-2, to=2-2]
	\arrow["{\rho_{\alpha_B}}"', from=2-2, to=1-2]
\end{tikzcd}\]
where $\alpha_B:F_R(R\otimes_B B)\rightarrow H\circ T(B)$ is the isomorphism obtained from the natural isomorphism $\alpha$ and $\rho_{\alpha_B}$ is conjugation by $\alpha_B$. 
As in Theorem \ref{theorem_conjugation}, we have that the composition
% https://q.uiver.app/#q=WzAsNCxbMSwwLCJcXEVuZF9CKEIpIl0sWzAsMCwiQl57XFxvcH0iXSxbMiwwLCJcXEVuZF9SKFJcXG90aW1lc19CIEIpIl0sWzMsMCwiUl57XFxvcH0iXSxbMSwwLCJiXFxtYXBzdG8gcl9iIl0sWzAsMiwiXFx2YXJwaGlcXG1hcHN0byBcXGlkX1JcXG90aW1lcyBcXHZhcnBoaSJdLFsyLDMsInJfeFxcbWFwc3RvICB4Il1d
\[\begin{tikzcd}[ampersand replacement=\&]
	{B^{\op}} \& {\End_B(B)} \& {\End_R(R\otimes_B B)} \& {R^{\op}}
	\arrow["{b\mapsto r_b}", from=1-1, to=1-2]
	\arrow["{\id_R\otimes -}", from=1-2, to=1-3]
	\arrow["{r_x\mapsto  x}", from=1-3, to=1-4]
\end{tikzcd}\]
is just the canonical embedding $B^{\op}\rightarrow R^{\op}$, so that $\End_B(B)^{\op}$ is a basic regular exact Borel subalgebra of $\End_R(R\otimes_B B)^{\op}$. 
Thus the diagram above shows that $\End_{B''}(T(B))^{\op}$ is a basic regular exact Borel subalgebra $\End_A(H\circ T(B))^{\op}$.\\
Note that since $B$ and $B''$ are both basic projective generators, and $\psi$ is an equivalence, we have an isomorphism $\beta: T(B)\rightarrow B''$. This gives rise to a commutative diagram
% https://q.uiver.app/#q=WzAsNCxbMCwwLCJcXEVuZF97QicnfShCJycpIl0sWzEsMSwiXFxFbmRfQShIXFxjaXJjIFQoQikpIl0sWzAsMSwiXFxFbmRfe0InJ30oVChCKSkiXSxbMSwwLCJcXEVuZF9BKEgoQicnKSkiXSxbMCwzLCJcXHZhcnBoaVxcbWFwc3RvIEgoXFx2YXJwaGkpIl0sWzIsMCwiXFxyaG9fXFxiZXRhIl0sWzEsMywiXFxyaG9fe0goXFxiZXRhKX0iXSxbMiwxLCJcXHZhcnBoaVxcbWFwc3RvIEgoXFx2YXJwaGkpIl1d
\[\begin{tikzcd}[ampersand replacement=\&]
	{\End_{B''}(B'')} \& {\End_A(H(B''))} \\
	{\End_{B''}(T(B))} \& {\End_A(H\circ T(B))}
	\arrow["{\varphi\mapsto H(\varphi)}", from=1-1, to=1-2]
	\arrow["{\rho_\beta}", from=2-1, to=1-1]
	\arrow["{\rho_{H(\beta)}}", from=2-2, to=1-2]
	\arrow["{\varphi\mapsto H(\varphi)}", from=2-1, to=2-2]
\end{tikzcd}\]
so that $B':=\End_{B''}(B'')^{\op}$ is a basic regular exact Borel subalgebra of $A':=\End_A(H(B''))^{\op}$. Moreover, since $B''$ has a $G$-action as a left $B''$-module, and $H$ is weakly $G$-equivariant, there is an induced $G$-action on $H(B'')$ such that the map
\begin{align*}
	\End_{B''}(B'')\rightarrow \End_A(H(B''))
\end{align*}
is $G$-equivariant. In particular, $B'=\End_{B''}(B'')^{\op}$ is a $G$-invariant basic regular Borel subalgebra of $A'=\End_A(H(B''))^{\op}$.
Finally, since $F_R$ is an equivalence and $R$ is a projective generator in $\modu R$, $H(B'')\cong H(T(B))\cong F_R(R\otimes_B B)\cong F_R(R)$ is a projective generator in $\modu A$.
Thus the functor
\begin{align*}
	\Hom_A(H(B''), -):\modu A\rightarrow \modu A'
\end{align*}
is a Morita equivalence, and by \ref{proposition_GequivariantMorita} this Morita equivalence is weakly $G$-equivariant.
\end{proof}
Note that the $G$-action on $A'$ depends on its construction. Given a $G$-action on $A$, the fact that there is a weakly $G$-equivariant Morita equivalence $F:\modu A\rightarrow \modu A'$ is in general not enough to determine the $G$-action on $A'$. In fact, we later give an example showing that in general, $A'$ may admit other $G$-actions which make $F$ weakly $G$-equivariant, but which do not allow for a basic regular exact Borel subalgebra $B$ in $A'$ such that $g(B)=B$ for all $g\in G$.\\
In other words, if we are given a quasi-hereditary algebra $R$ with a basic regular exact Borel subalgebra $B$ and a $G$-action compatible with the partial order $\leq_R$, then there is not necessarily a different basic exact Borel subalgebra $B'$ in $R$ such that $g(B')=B'$ for all $g\in G$. However, there is a different $G$-action on $R$ making the identity functor $\modu R\rightarrow \modu R$ weakly $G$-equivariant, such that $B$ is invariant under this $G$-action.
\begin{corollary}
    Let $R$ be a quasi-hereditary algebra with a basic regular exact Borel subalgebra $B$ and a $G$-action compatible with the partial order $\leq_R$. Then, there is a $G$-action
    \begin{align*}
        G\times R\rightarrow R, (g, r)\mapsto g\star r
    \end{align*} on $R$ such that the identity functor $\modu R\rightarrow (\modu R, \star)$ is weakly $G$-equivariant, and such that $g\star B=B$ for all $g\in G$.
\end{corollary}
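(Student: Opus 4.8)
The plan is to deduce this from Theorem \ref{thm_skew} together with the uniqueness of basic regular exact Borel subalgebras. Since the given $G$-action on $R$ is compatible with $\leq_R$, the order $\leq_R$ is $G$-invariant, so Theorem \ref{thm_skew} applies to $(R,\leq_R)$ and yields a quasi-hereditary algebra $(R',\leq_{R'})$ with a $G$-action, a $G$-invariant basic regular exact Borel subalgebra $B'\subseteq R'$, and a weakly $G$-equivariant Morita equivalence $F:\modu R\rightarrow\modu R'$ of quasi-hereditary algebras. Now $(R,\leq_R)$ and $(R',\leq_{R'})$ are Morita equivalent quasi-hereditary algebras, each carrying a basic regular exact Borel subalgebra ($B$ and $B'$, with matching induced standard modules because $F$ is a Morita equivalence of quasi-hereditary algebras), so by the uniqueness results of Conde \cite{Conde} and Külshammer--Miemietz \cite{Miemietz} there is an algebra isomorphism $\psi:R\rightarrow R'$ with $\psi(B)=B'$. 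Inspecting that uniqueness argument, which reconstructs both the algebra and the comparison equivalence from the A-infinity algebra $\Ext^*_R(\Delta^R,\Delta^R)$, one should moreover be able to choose $\psi$ so that the induced equivalence $\psi_*:\modu R\rightarrow\modu R'$ is isomorphic to $F$ as a functor.

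Given such a $\psi$, I would transport the $G$-action: set $g\star r:=\psi^{-1}\big(g(\psi(r))\big)$ for $g\in G$ and $r\in R$, where on the right $g(-)$ denotes the $G$-action on $R'$. As $\psi$ is an algebra isomorphism and each $g(-):R'\rightarrow R'$ an algebra automorphism, each $g\star(-)$ is an algebra automorphism of $R$ and $g\mapsto g\star(-)$ is a group homomorphism, so $\star$ is a $G$-action on $R$ by automorphisms. Furthermore $g\star B=\psi^{-1}\big(g(\psi(B))\big)=\psi^{-1}\big(g(B')\big)=\psi^{-1}(B')=B$, using $G$-invariance of $B'$ in $R'$; this gives the invariance of $B$.

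It remains to see that the identity functor $\modu R\rightarrow(\modu R,\star)$ is weakly $G$-equivariant. By construction of $\star$, the isomorphism $\psi$ induces a $G$-equivariant equivalence $\psi_*$ from $(\modu R,\star)$ to $\modu R'$ with its $G$-action (strongly equivariant up to the canonical identifications of twisted modules). Composing its quasi-inverse with $F$, which is weakly $G$-equivariant from $\modu R$ with the original action to $\modu R'$, gives a weakly $G$-equivariant equivalence $(\psi_*)^{-1}\circ F:\modu R\rightarrow(\modu R,\star)$. Since $(\psi_*)^{-1}\circ F\cong\mathrm{id}_{\modu R}$ as functors by the end of the first paragraph, and weak $G$-equivariance is stable under natural isomorphism of functors (as recalled in Section \ref{section_skew}), the identity functor is weakly $G$-equivariant, as claimed.

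The hard part will be the refinement in the first paragraph, that $\psi$ can be taken with $\psi_*\cong F$: an arbitrary isomorphism $\psi$ with $\psi(B)=B'$ may realize a different bijection on simple modules than $F$ does, in which case $(\psi_*)^{-1}\circ F$ is a nontrivial Morita self-equivalence of $R$ and the third paragraph fails. Resolving this means tracking how the isomorphism of \cite{Miemietz} (or \cite{Conde}) is extracted from the comparison equivalence rather than using it as a black box; equivalently, one could bypass it by showing directly that $g(-)$ and $g\star(-)$ differ by an inner automorphism of $R$ compatibly with composition in $G$. For fixed $g$ this follows from Theorem \ref{theorem_conjugation} applied to the two basic regular exact Borel subalgebras $B$ and $g(B)$ of $R$ -- they have the same induced standard modules up to relabelling since $\leq_R$ is $G$-invariant -- and the required compatibility is a $G$-cocycle condition on the conjugating units, which is where the hypothesis that $\chara(\field)$ does not divide $|G|$ should be used, via an averaging argument.
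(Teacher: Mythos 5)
Your overall strategy (apply Theorem \ref{thm_skew}, pull the $G$-action on $R'$ back to $R$, then fix up the Borel by conjugation) is the right one, but the step you yourself flag as ``the hard part'' is a genuine gap, and neither of your two suggested repairs is carried out or obviously works. Choosing $\psi:R\rightarrow R'$ with $\psi(B)=B'$ via \cite{Miemietz} gives no control over whether $\psi_*\cong F$, and your fallback --- picking, for each $g$, a unit $a_g$ with $g(B)=a_gBa_g^{-1}$ via Theorem \ref{theorem_conjugation} and then repairing the cocycle condition $a_{gh}\equiv g(a_h)a_g$ by ``averaging'' --- is not a valid argument as stated: the units $a_g$ are only determined up to the normalizer of $B$, the failure of the cocycle condition lives in that normalizer, and there is no averaging procedure on units that produces a genuine group action; the characteristic hypothesis does not obviously rescue this.

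The paper closes the gap by reordering the construction so that no isomorphism $R\rightarrow R'$ compatible with $F$ is ever needed. One first observes (via \cite[Theorem A (4)]{Conde}) that $F(R)\cong R'$ as $R'$-modules, fixes an isomorphism $\varphi:F(R)\rightarrow R'$, and then uses Lemma \ref{lemma_Gstructuretransport}: since $R'$ carries a $G$-action as an object of $\modu R'$ and $F^{-1}$ is weakly $G$-equivariant, structure transport along $F^{-1}$ and $\varphi$ induces a $G$-action $\star'$ on $\End_R(R)^{\op}\cong R$ making a specific algebra isomorphism $R'\rightarrow R$ equivariant. This $\star'$ is automatically a group action (it is transport of structure, so no cocycle issue arises), the image of $B'$ is a $\star'$-invariant basic regular exact Borel subalgebra $B''$ of $R$, and the identity $\modu R\rightarrow(\modu R,\star')$ is weakly $G$-equivariant because it is naturally isomorphic to $F^{-1}\circ F$. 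Only now is Theorem \ref{theorem_conjugation} invoked, \emph{inside} $R$, to write $B''=aBa^{-1}$ for a single unit $a$; setting $g\star x:=a^{-1}\,(g\star'(axa^{-1}))\,a$ conjugates the whole action at once, so $g\star B=B$ and the identity remains weakly $G$-equivariant. I recommend you restructure your argument along these lines: transport the action, not the isomorphism.
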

\begin{proof}
    By Theorem \ref{thm_skew}, there is a quasi-hereditary algebra $R'$ with a $G$-action and a $G$-invariant basic regular exact Borel subalgebra $B'$ and a weakly $G$-equivariant Morita equivalence
    \begin{align*}
        \modu R\rightarrow \modu R'
    \end{align*}
    By \cite[Theorem A (4)]{Conde}, $R$ and $R'$ have the same multiplicites of indecomposable projectives, so that there is an isomorphism $\varphi: F(R)\rightarrow R'$. By Lemma \ref{lemma_Gstructuretransport}, structure transport along $F^{-1}$ induces a $G$-action $\star'$ on $R$ such that 
    \begin{align*}
        R'\rightarrow R, a\mapsto F^{-1}(\varphi^{-1} \circ r_a\circ \varphi)(1_R)
    \end{align*}
    is $G$-equivariant. Hence $R$ has a basic regular exact Borel subalgebra $B'$ such that $g\star' B'=B'$ for all $g\in G$. Now by Theorem \ref{theorem_conjugation}, $B'=aBa^{-1}$ for some $a\in R$. Thus, setting
    \begin{align*}
        g\star x:=a^{-1} g\star'(axa^{-1})a
    \end{align*}
    defines a $G$-action on $R$ such that $g\star B=B$ for all $g\in G$.
    Clearly, the identity functor $(\modu R, \star')\rightarrow (\modu R, \star)$ is weakly $G$-equivariant. Moreover, by definition $F^{-1}:\modu R'\rightarrow (\modu R, \star')$ is weakly $G$-equivariant, and by assumption $F: \modu R\rightarrow \modu R'$ is weakly   $G$-equivariant. Hence the composition $\id:\modu R\rightarrow (\modu R, \star)$ is weakly $G$-equivariant.
\end{proof}
The following lemma describes how the $G$-action on an algebra may vary under the condition that the identity functor is weakly $G$-equivariant.
\begin{lemma}\label{lemma_2Gactions}
    Let $A'$ be an associative algebra and assume $G$ acts on $A'$ via automorphisms
    \begin{align*}
        G\times A'\rightarrow A', (g, a)\mapsto g(a).
    \end{align*}
    Now suppose that there is another $G$-action on $A'$ given by
    \begin{align*}
        G\times A'\rightarrow A', (g, a)\mapsto g\star a,
    \end{align*}
    such that the identity functor
    \begin{align*}
        (\modu A', \star)\rightarrow \modu A'.
    \end{align*}
    is weakly $G$-equivariant, where the $G$-action on the codomain is induced by the first $G$-action on $A'$, and the $G$-action on the domain is induced by the second $G$-action on $A'$. 
    Then there is a map
    \begin{align*}
       \rho: G\rightarrow (A')^{\times}
    \end{align*}
    such that for all $a\in A'$ and $g\in G$
    \begin{align*}
      g\star a=\rho(g)g(a)\rho(g)^{-1}.
    \end{align*}
    Moreover, $\rho(e_G)=1_{A'}$ and $\rho(gh)=\rho(g)g(\rho(h))$ for all $g,h\in G$.
    On the other hand, if 
    \begin{align*}
       \rho: G\rightarrow (A')^{\times}
    \end{align*}
    is a map such that $\rho(e_G)=1_{A'}$ and $\rho(gh)=\rho(g)g(\rho(h))$ for all $g,h\in G$, then
    \begin{align*}
      g\star a:=\rho(g)g(a)\rho(g)^{-1}.
    \end{align*}
    defines a $G$-action on $A$ such that the identity functor $\modu A'\rightarrow  (\modu A', \star)$ is weakly $G$-equivariant.
\end{lemma}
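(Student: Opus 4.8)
The plan is to unwind what weak $G$-equivariance of the identity functor $(\modu A',\star)\to\modu A'$ amounts to, to evaluate the resulting natural isomorphisms on the regular module, and then to extract $\rho$ together with its cocycle identity from naturality and from the coherence condition defining a weakly $G$-equivariant functor. Throughout I would work with the strictified form of the module $G$-actions, so that for the automorphism action the twist ${}^gM$ has the same underlying vector space as $M$ and ${}^gf=f$ on underlying linear maps, and similarly for the $\star$-action; in particular the regular module $A'$, twisted by the automorphism action of $g$, carries the structure $a\cdot x=g^{-1}(a)x$, while twisted by the $\star$-action of $g$ it carries $a\cdot x=(g^{-1}\star a)x$ (cf. Example \ref{example_A*G}). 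By hypothesis there are natural isomorphisms $(\alpha_g)_M\colon M^{\star g}\to{}^gM$ with $\alpha_{e_G}=\id$ and $g\big((\alpha_h)_M\big)\circ(\alpha_g)_{M^{\star h}}=(\alpha_{gh})_M$ for all $g,h\in G$ and all $M$.

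Next I would evaluate at $M=A'$. For each $b\in A'$ the right multiplication $r_b$ is an endomorphism of the regular module in both $\modu A'$ and $(\modu A',\star)$, and since the twist functors act by the identity on underlying linear maps, naturality of $\alpha_g$ along $r_b$ says precisely that $(\alpha_g)_{A'}$ commutes with every $r_b$ as a $\field$-linear map. Hence $(\alpha_g)_{A'}$ is left multiplication by $\rho(g):=(\alpha_g)_{A'}(1_{A'})$, and $\rho(g)$ is a unit since $(\alpha_g)_{A'}$ is bijective (its inverse is again a left multiplication, by the same argument applied to $\alpha_g^{-1}$). Writing out left $A'$-linearity of this left multiplication, regarded as a map $(A')^{\star g}\to{}^gA'$ with the twisted structures above, gives $\rho(g)\,(g^{-1}\star a)=g^{-1}(a)\,\rho(g)$ for all $a$, i.e. the automorphisms $g^{-1}\star(-)$ and $g^{-1}(-)$ of $A'$ differ by conjugation by $\rho(g)$; after replacing $\rho$ by the map $g\mapsto\rho(g^{-1})^{-1}$ if necessary, to match the orientation in the statement, this reads $g\star a=\rho(g)\,g(a)\,\rho(g)^{-1}$, and $\alpha_{e_G}=\id$ gives $\rho(e_G)=1_{A'}$.

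To pin down the cocycle identity I would first compute $(\alpha_g)$ at the twisted module $(A')^{\star h}$: applying naturality of $\alpha_g$ along the $A'$-linear isomorphism $A'\to(A')^{\star h}$, $x\mapsto h^{-1}\star x$, shows that $(\alpha_g)_{(A')^{\star h}}$ is, on underlying spaces, left multiplication by $h^{-1}\star\rho(g)$. Feeding this, together with $g\big((\alpha_h)_{A'}\big)=\ell_{\rho(h)}$ and $(\alpha_{gh})_{A'}=\ell_{\rho(gh)}$, into the coherence condition at $M=A'$ yields $\rho(h)\,(h^{-1}\star\rho(g))=\rho(gh)$, which after the same relabelling becomes exactly $\rho(gh)=\rho(g)\,g(\rho(h))$. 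For the converse, given $\rho\colon G\to(A')^{\times}$ with $\rho(e_G)=1_{A'}$ and $\rho(gh)=\rho(g)g(\rho(h))$, the assignment $g\star a:=\rho(g)g(a)\rho(g)^{-1}$ is an algebra automorphism (a composite of the automorphism $g$ with conjugation by a unit), restricts to the identity for $g=e_G$, and a short expansion using the cocycle identity shows $(g\star-)\circ(h\star-)=(gh)\star-$, so it is a $G$-action; that $\modu A'\to(\modu A',\star)$ is weakly $G$-equivariant then follows by checking that $(\alpha_g)_M$, given on underlying spaces by left multiplication by $\rho(g)^{-1}$, is an $A'$-linear isomorphism ${}^gM\to M^{\star g}$, natural in $M$, with the normalization and coherence conditions — each of which reduces to one of the two properties of $\rho$.

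The only genuine difficulty I anticipate is the bookkeeping: keeping the left/right, $g$-versus-$g^{-1}$, and $\star$-versus-automorphism conventions consistent, and in particular carrying out the computation of $(\alpha_g)$ at the twisted module $(A')^{\star h}$ carefully enough that the coherence condition forces $\rho(gh)=\rho(g)g(\rho(h))$ on the nose, rather than only up to a central unit. Once $(\alpha_g)_{A'}$ has been identified with a left multiplication, everything else is a routine translation.
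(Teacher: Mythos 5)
Your proposal is correct and follows essentially the same route as the paper: both arguments reduce to evaluating the weak-equivariance data on the regular module, identifying the relevant map with multiplication by a unit via $\End_{A'}(A')^{\op}\cong A'$, and reading off the conjugation formula and the cocycle identity (the paper packages the first step through Lemma \ref{lemma_Gstructuretransport} and the induced $G$-action on $\End_{A'}(A')$ rather than arguing naturality against the right multiplications $r_b$ directly, but the underlying computation is the same, as is the verification of the converse). One bookkeeping point to fix when writing it up: in the converse, $A'$-linearity of a map ${}^gM\to M^{\star g}$ forces its component on underlying spaces to be multiplication by $\rho(g^{-1})=g^{-1}(\rho(g))^{-1}$ rather than by $\rho(g)^{-1}$ itself (equivalently, it is the \emph{module} action of $\rho(g)^{-1}$ on the twisted module $gM$, which is how the paper phrases its inverse map $g\star m\mapsto\rho(g)\cdot gm$) --- this is exactly the $g$-versus-$g^{-1}$ hazard you flag yourself, and the linearity check you propose would catch it.
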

\begin{proof}
\begin{enumerate}
    \item Recall that in this setting, there are induced $G$-actions 
    \begin{align*}
         \tr_g:gA\rightarrow A, ga\mapsto g(a).
    \end{align*}
    on $A'\in \modu A'$ with respect to the first $G$-action, and
    \begin{align*}
         \tr_g':g\star A'\rightarrow A', ga\mapsto g\star a.
    \end{align*}
    on $A'\in (\modu A', \star)$ with respect to the second $G$-action.
    Since the identity functor is weakly $G$-equivariant, Lemma \ref{lemma_Gstructuretransport} endows $A'$
    with the structure 
    \begin{align*}
        \tr_g'':gA\rightarrow A, ga\mapsto g*a
    \end{align*}
    of an object with a $G$-action in $\modu A'$, where $\modu A'$ is equipped with the first $G$-action,
    such that the identity map
    \begin{align*}
        \End_A(A')^{\op}\rightarrow \End_A(A')^{\op}
    \end{align*}
    is $G$-equivariant, where the first $G$-action is induced by $\star$ and the second by $*$.
    In other words, we obtain that
    \begin{align*}
        r_{g\star a}=g*(r_a(g^{-1}*-))
    \end{align*}
    for all $a\in A$, $g\in G$. Thus
    \begin{align*}
        g\star a=r_{g\star a}(1_A)=g*(r_a(g^{-1}*1_A))=g*((g^{-1}*1_A)\cdot a)
    \end{align*}
    since $\tr_g''$ is $A$-linear, we have for every $g\in G$ and $a, b\in A$
    \begin{align*}
        g*(ab)=g(a)\cdot (g*b),
    \end{align*}
    so that
     \begin{align*}
        g\star a=g*((g^{-1}*1_A)\cdot a)=g(g^{-1}*1_A)\cdot (g*a)=g(g^{-1}*1_A)\cdot g(a)\cdot (g*1_A).
    \end{align*}
    Therefore, setting $\rho(g)=g(g^{-1}*1_A)$, we obtain
    \begin{align*}
      g\star a=\rho(g)g(a)\rho(g)^{-1}.
    \end{align*}
    Moreover, we clearly have $\rho(e_G)=1_A$, and
    \begin{align*}
        \rho(gh)=gh((gh)^{-1}*1_A)=g(h(h^{-1}*(g^{-1}*1_A)))=g(h(h^{-1}(g^{-1}*1_A)\cdot h^{-1}*1_A))\\
        =g(g^{-1}*1_A)\cdot g(h(h^{-1}*1_A))=\rho(g) g(\rho(h)).
    \end{align*}
    \item Suppose that 
    \begin{align*}
       \rho: G\rightarrow (A')^{\times}
    \end{align*}
    is a map such that $\rho(e_G)=1_{A'}$ and $\rho(gh)=\rho(g)g(\rho(h))$ for all $g,h\in G$. Then for
    \begin{align*}
      g\star a:=\rho(g)g(a)\rho(g)^{-1}.
    \end{align*}
    we have
    \begin{align*}
        e_G\star a=\rho(e_G)e_G(a)\rho(e_G)^{-1}=1_A\cdot a\cdot 1_A^{-1}=a
    \end{align*}
    for every $a\in A$ and
    \begin{align*}
        (gh)\star a=\rho(gh)(a)\rho(gh)^{-1}=\rho(g)g(\rho(h)) g(h(a))g(\rho(h)^{-1})\rho(g)^{-1}\\
        =\rho(g)g(\rho(h)h(x)\rho(h)^{-1})\rho(g)^{-1}=g\star(h\star a)
    \end{align*}
    for all $g,h\in G$, $a\in A$. Hence this defines a $G$-action on $A$.
    Moreover, for every $g\in G$ we have a natural isomorphism $\alpha_g: (g\star -)\rightarrow g(-)$ given by the components
    \begin{align*}
       \alpha_{g, M}: g\star M\rightarrow gM, g\star m\mapsto \rho(g)\cdot gm
    \end{align*}
    such that $\alpha_{e_G}$ is the identity and 
    \begin{align*}
        g(\alpha_{h, M})\circ \alpha_{g, h\star M}(g\star h\star m)&=  g(\alpha_{h, M})(\rho(g)\cdot g\cdot(h\star m))\\
        = g(\alpha_{h, M})(g\cdot (h\star ((h^{-1}\star (g^{-1}(\rho(g))))\cdot m)))&=g\cdot (\rho(h)\cdot h\cdot( (h^{-1}\star (g^{-1}(\rho(g))))\cdot m))\\
        =g(\rho(h))\cdot (gh)\cdot ((h^{-1}\star (g^{-1}(\rho(g))))\cdot m)&=g(\rho(h))\cdot (gh)\cdot((\rho(h^{-1})h^{-1}(g^{-1}(\rho(g)))\rho(h^{-1})^{-1})\cdot m)\\
        =g(\rho(h))\cdot (gh)\cdot(h^{-1}(\rho(h)^{-1}g^{-1}(\rho(g))\rho(h))\cdot m)
        &=\rho(g)g(\rho(h))\cdot (gh)\cdot m\\
        =\rho(gh)\cdot ghm&=\alpha_{gh, M}(m).\qedhere
    \end{align*}
    \end{enumerate}
\end{proof}
We thus obtain the following corollary, describing in which way the $G$-action giving rise to an invariant Borel may vary from the originial $G$-action:
\begin{corollary}
    Let $R$ be a quasi-hereditary algebra with a basic regular exact Borel subalgebra $B$ and a $G$-action compatible with the partial order $\leq_R$. Then, there is a map
    \begin{align*}
        \rho: G\rightarrow R^{\times}
    \end{align*} such that $\rho(e_G)=1_R$ and $\rho(gh)=\rho(g)g(\rho(h))$ for all $g,h\in G$,
    and such that
    \begin{align*}
        g\star a:=\rho(g)g(a)\rho(g)^{-1}
    \end{align*}
    defines a $G$-action on $R$ with $g\star B=B$ for all $g\in G$.
\end{corollary}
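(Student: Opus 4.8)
The plan is to obtain the statement by feeding the $G$-action produced in the corollary immediately preceding Lemma \ref{lemma_2Gactions} into the first half of Lemma \ref{lemma_2Gactions}, so essentially nothing new has to be proved.

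First I would apply that preceding corollary: since $R$ is quasi-hereditary with a basic regular exact Borel subalgebra $B$ and a $G$-action compatible with $\leq_R$, it yields a $G$-action $\star$ on $R$ such that the identity functor $\modu R \to (\modu R, \star)$ is weakly $G$-equivariant and such that $g \star B = B$ for every $g \in G$.

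Next I would reverse the direction of this equivalence. Since weak $G$-equivariance is preserved under passing to (quasi-)inverse functors --- in contrast to strong $G$-equivariance --- and the identity functor is its own inverse, the identity functor $(\modu R, \star) \to \modu R$ is weakly $G$-equivariant as well, now with the original $G$-action on $\modu R$ as codomain and the $\star$-action as domain. This places us precisely in the hypotheses of Lemma \ref{lemma_2Gactions}, applied with $A' = R$, with the given $G$-action in the role of $(g,a) \mapsto g(a)$ and with $\star$ in the role of the second action. The first part of Lemma \ref{lemma_2Gactions} then produces a map $\rho \colon G \to R^\times$ with $\rho(e_G) = 1_R$ and $\rho(gh) = \rho(g)\,g(\rho(h))$ for all $g, h \in G$, satisfying $g \star a = \rho(g)\,g(a)\,\rho(g)^{-1}$ for all $a \in R$. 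Since the $\star$-action was chosen so that $g \star B = B$ for all $g$, this $\rho$ already has all the required properties, and the $G$-action $a \mapsto \rho(g)\,g(a)\,\rho(g)^{-1}$ that it defines --- being literally $\star$ --- stabilises $B$, as claimed.

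The argument is thus a short composition of two already-established results, and I expect no genuine obstacle; the only point demanding a moment's care is the bookkeeping of the two $G$-module structures on $\modu R$ and the check that the relevant equivariance notion (weak, not strong) survives inverting the identity functor, which is exactly the stability property recorded in the remark following the definition of weak $G$-equivariance.
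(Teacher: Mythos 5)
Your proposal is correct and matches the paper exactly: the paper gives no separate proof of this corollary, presenting it as the immediate combination of the preceding corollary (which supplies the $\star$-action with $g\star B=B$ and weak $G$-equivariance of the identity functor) with the first part of Lemma \ref{lemma_2Gactions}. Your remark that weak $G$-equivariance survives inverting the identity functor is precisely the bookkeeping step the paper relies on implicitly.
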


\subsection{Example}
In this subsection, we consider an example. Let $D= \field[x]/x^n$ and let $A$ be its Auslander algebra. Note that since $A$ is an Auslander algebra, it is quasi-hereditary.
Moreover, $A$ is given by the quiver
% https://q.uiver.app/#q=WzAsNCxbMCwwLCIxIl0sWzEsMCwiMiJdLFsyLDAsIlxcZG90cyJdLFszLDAsIm4iXSxbMCwxLCJ4XzEiLDAseyJjdXJ2ZSI6LTF9XSxbMSwwLCJ5XzIiLDAseyJjdXJ2ZSI6LTF9XSxbMiwxLCJ5XzMiLDAseyJjdXJ2ZSI6LTF9XSxbMSwyLCJ4XzIiLDAseyJjdXJ2ZSI6LTF9XSxbMiwzLCJ4X3tuLTF9IiwwLHsiY3VydmUiOi0xfV0sWzMsMiwieV9uIiwwLHsiY3VydmUiOi0xfV1d
\[\begin{tikzcd}[ampersand replacement=\&]
	1 \& 2 \& \dots \& n
	\arrow["{x_1}", curve={height=-6pt}, from=1-1, to=1-2]
	\arrow["{y_2}", curve={height=-6pt}, from=1-2, to=1-1]
	\arrow["{y_3}", curve={height=-6pt}, from=1-3, to=1-2]
	\arrow["{x_2}", curve={height=-6pt}, from=1-2, to=1-3]
	\arrow["{x_{n-1}}", curve={height=-6pt}, from=1-3, to=1-4]
	\arrow["{y_n}", curve={height=-6pt}, from=1-4, to=1-3]
\end{tikzcd}\]
with relations $y_{i+1}x_i=x_{i-1}y_i$ for all $2\leq i\leq n$ and $x_{n-1}y_n=0$.
To ease notation, we often drop the indices for $x$ and $y$ and write $x^k$ for a composition $x_{i+k}x_{i+k-1}\dots x_{i+1}x_i$ and $y^k$ for a composition $y_{i}y_{i+1}\dots y_{i+k-1}y_{i+k}$, where $1\leq i\leq n$.
For $i, j\in \{1, \dots , n\}$, $0\leq a, b\leq n$ with $b-a=j-i$ let
\begin{align*}
    r_{i, j, a, b}:P_i\rightarrow P_j, m\mapsto my^ax^b.
\end{align*}
Then if $i\geq j$, $$(r_{i, j, i-j, 0}, r_{i, j, i-j+1, 1}, r_{i, j, i-j+2, 2},\dots, r_{i, j, n-j, n-i})$$
is a basis for $\Hom_A(P_i, P_j)$, and if $i\leq j$ then 
$$(r_{i, j, 0, j-i}, r_{i, j, 1, j-i+1}, r_{i, j, 2, j-i+2},\dots, r_{i, j, n-j, n-i})$$
is a basis for $\Hom_A(P_i, P_j)$.
Moreover, if $b>n-i$, then $r_{i,j, a, b}=0$.\\
Additionally, for all $1\leq i, j\leq n$, $0\leq a,b\leq n$, $b-a=j-i$ we have
\begin{align*}
    r_{i,j, a, b}\circ r_{i', j', a', b'}=\delta_{j', i}r_{i, j', a+a', b+b'}
\end{align*}
where $\delta$ is the Kronecker delta.
First, we explicitly describe a Morita equivalent algebra $R$ with a regular exact Borel subalgebra $B$. This was constructed according to the procedure given in \cite{KKO}, although we do not refer to this procedure for the verification that $B$ is indeed a regular exact Borel subalgebra of $R$ and that $R$ is Morita equivalent to $A$. Instead, we proceed by defining $B$, $R$ as well as the embedding $\iota:B\rightarrow R$, giving in either case only a somewhat informal reasoning why we define them as we do, and then explicitly check that they are as claimed.
\subsubsection{Defining $B$}
By \cite[Section 4]{mypaper}, the indecomposable projective modules $P_i^A$ of $A$ are of the form 
% https://q.uiver.app/#q=WzAsMjIsWzMsMCwiaSJdLFsyLDEsImktMSJdLFs0LDEsImkrMSJdLFsxLDIsIlxcZG90cyJdLFszLDIsImkiXSxbMiwzLCJcXGRvdHMiXSxbNSwyLCJcXGRvdHMiXSxbNiwzLCJuIl0sWzUsNCwibi0xIl0sWzQsNSwiXFxkb3RzIl0sWzMsNiwiXFxkb3RzIl0sWzIsNSwiXFxkb3RzIl0sWzEsNiwiXFxkb3RzIl0sWzEsNCwiMiJdLFs0LDMsIlxcZG90cyJdLFswLDMsIjEiXSxbMCw1LCIxIl0sWzAsNywiXFxkb3RzIl0sWzIsNywiXFxkb3RzIl0sWzAsOSwiMSJdLFsxLDgsIlxcZG90cyJdLFszLDQsIlxcZG90cyJdLFswLDFdLFsxLDNdLFsyLDRdLFs0LDVdLFswLDJdLFsyLDZdLFs2LDddLFs3LDhdLFs4LDldLFs5LDEwXSxbMTEsMTBdLFsxMSwxMl0sWzQsMTRdLFsxNCw4XSxbMSw0XSxbMTMsMTZdLFsxNSwxM10sWzMsMTVdLFs1LDEzXSxbMTMsMTFdLFsxMiwxN10sWzIwLDE5XSxbMTgsMjBdLFsxNywyMF0sWzEyLDE4XSxbMTAsMThdLFsxNiwxMl0sWzMsNV0sWzUsMjFdLFsyMSwxMV0sWzIxLDldLFsxNCwyMV0sWzYsMTRdXQ==
\[\begin{tikzcd}[ampersand replacement=\&]
	\&\&\& i \\
	\&\& {i-1} \&\& {i+1} \\
	\& \dots \&\& i \&\& \dots \\
	1 \&\& \dots \&\& \dots \&\& n \\
	\& 2 \&\& \dots \&\& {n-1} \\
	1 \&\& \dots \&\& \dots \\
	\& \dots \&\& \dots \\
	\dots \&\& \dots \\
	\& \dots \\
	1
	\arrow[from=1-4, to=2-3]
	\arrow[from=2-3, to=3-2]
	\arrow[from=2-5, to=3-4]
	\arrow[from=3-4, to=4-3]
	\arrow[from=1-4, to=2-5]
	\arrow[from=2-5, to=3-6]
	\arrow[from=3-6, to=4-7]
	\arrow[from=4-7, to=5-6]
	\arrow[from=5-6, to=6-5]
	\arrow[from=6-5, to=7-4]
	\arrow[from=6-3, to=7-4]
	\arrow[from=6-3, to=7-2]
	\arrow[from=3-4, to=4-5]
	\arrow[from=4-5, to=5-6]
	\arrow[from=2-3, to=3-4]
	\arrow[from=5-2, to=6-1]
	\arrow[from=4-1, to=5-2]
	\arrow[from=3-2, to=4-1]
	\arrow[from=4-3, to=5-2]
	\arrow[from=5-2, to=6-3]
	\arrow[from=7-2, to=8-1]
	\arrow[from=9-2, to=10-1]
	\arrow[from=8-3, to=9-2]
	\arrow[from=8-1, to=9-2]
	\arrow[from=7-2, to=8-3]
	\arrow[from=7-4, to=8-3]
	\arrow[from=6-1, to=7-2]
	\arrow[from=3-2, to=4-3]
	\arrow[from=4-3, to=5-4]
	\arrow[from=5-4, to=6-3]
	\arrow[from=5-4, to=6-5]
	\arrow[from=4-5, to=5-4]
	\arrow[from=3-6, to=4-5]
\end{tikzcd}\]
 the standard modules $\Delta_i^A$ are of the form 
 % https://q.uiver.app/#q=WzAsNCxbMywwLCJpIl0sWzIsMSwiaS0xIl0sWzEsMiwiXFxkb3RzIl0sWzAsMywiMSJdLFswLDFdLFsxLDJdLFsyLDNdXQ==
\[\begin{tikzcd}[ampersand replacement=\&]
	\&\&\& i \\
	\&\& {i-1} \\
	\& \dots \\
	1
	\arrow[from=1-4, to=2-3]
	\arrow[from=2-3, to=3-2]
	\arrow[from=3-2, to=4-1]
\end{tikzcd}\] and the projective modules $P_i$ have unique $\Delta$-filtrations 
 % https://q.uiver.app/#q=WzAsNCxbMCwwLCJcXERlbHRhX2kiXSxbMSwxLCJcXERlbHRhX3tpKzF9Il0sWzIsMiwiXFxkb3RzIl0sWzMsMywiXFxEZWx0YV9uIl0sWzEsMl0sWzIsM10sWzAsMV1d
\[\begin{tikzcd}[ampersand replacement=\&]
	{\Delta_i} \\
	\& {\Delta_{i+1}} \\
	\&\& \dots \\
	\&\&\& {\Delta_n}
	\arrow[from=2-2, to=3-3]
	\arrow[from=3-3, to=4-4]
	\arrow[from=1-1, to=2-2]
\end{tikzcd}\]
Here, we have angled the graphic representation of filtration of the standard modules by simple modules and the filtration of the projective modules such that, inserting the former picture into the latter gives exactly the graphic representation of the filtration of the projective modules by simple modules given above.
Additionally, $\Ext_A^k(\Delta_i, \Delta_j)$ is one dimensional if $i<j$ and $k=1$, and given by the extension
$E_{ij}=\begin{pmatrix}
    \Delta_i\\ \Delta_{j}
\end{pmatrix}$ which is the factor module of the image of
\begin{align*}
    (r_{i, j-1, 0, j-i-1}, r_{j, j-1, 1, 0}): P_i\oplus P_j \rightarrow P_{j-1}
\end{align*}
by the image of $r_{j+1, j-1, 2, 0}$.
Moreover, $\Ext_A^k(\Delta_i, \Delta_j)=(0)$ if $i\geq j$ or $k>1$.\\
In particular, by the general construction of $B$ in \cite{KKO}, since there is no $\Ext_A^k(\Delta^A, \Delta^A)$ for $k\geq 2$, there are no relations in $B$, so that $B$ is freely generated over $L$ by $\Ext_A^1(\Delta_i, \Delta_j)^{\op}$.
We can thus define $B$ as follows.
\begin{definition}
    Let $B=kQ'$ where $Q'$ is the quiver with $n$ vertices and an edge $(i,j)$ whenever $i<j$.
\end{definition}
\begin{remark}
    Note that a $\field$-basis of $B$ is given by the $e_i$ and the paths $$(i_{k-1}, i_k)(i_{k-2}, i_{k-1})\dots (i_2, i_3)(i_1, i_2)$$ for $1\leq i_1<\dots <i_k\leq i_n$, $1\leq k\leq n$. For ease of notation, we denote these paths by $(i_1<\dots<i_k)$.
\end{remark}
\subsubsection{Defining $R$}
To describe $R$, we use that $R\cong \End_R(R\otimes_B B)^{\op}$. We thus would first like to describe $R\otimes_B B$, in other words, we would like to find out how 
$Q_i:=R\otimes_B P_i^B$ decomposes into a direct sum of projective modules, using the fact that $R\otimes_B -$ is exact and $R\otimes_B L_i^B\cong \Delta_i^R$ .\\
Therefore, we begin by having a look at the composition series of the projective $B$-modules $P_i^B$.\\
Since $B$ is the path algebra of the full directed graph on $n$ vertices, the Loewy diagram of $P_i^B$ is given by a  labelled Fenwick tree $T_{n, i}$.\\
Recall that the Fenwick tree, sometimes also called a binary indexed tree, $F_k$ is inductively defined by
\begin{enumerate}
    \item $F_1$ consists of a single vertex.
    \item $F_{k}$ is constructed by attaching to the root all Fenwick trees $F_j$ for $j<k$.
\end{enumerate}
Fenwick trees were originally used as a way to compute efficiently sums over subsets of an array $a=(a_k)_{k=1}^K$ of numbers \cite{Ryabko, Fenwick}. In this case, one would associate Fenwick trees $F_1(a), \dots, F_K(a)$ with labelled vertices, where the labels are inductively given by
\begin{enumerate}
    \item The unique vertex of $F_1$ is labelled by $a_1$.
    \item The root of $F_k$ has label $a_k$ and before attaching the subtree $F_k$ to this root, we increase every label therein by $a_k$.
\end{enumerate}
The labelled tree $T_{n, i}$ is isomorphic to $F_{n-i+1}$  as a tree, but its labelling arises in a different way.
More precisely, $(T_{n, i})_{1\leq i\leq n}$ is the family of rooted trees with labeled vertices inductively defined by
\begin{enumerate}
    \item $T_{n, n}=\{n\}$ consists of a single vertex labeled $n$.\\
    \item $T_{n, i-1}$ is given by
    % https://q.uiver.app/#q=WzAsNixbMiwwLCJpIl0sWzAsMSwiVF97biwgaSsxfSJdLFsxLDEsIlRfe24sIGkrMn0iXSxbMiwxLCJcXGRvdHMiXSxbNCwxLCJUX3tuLCBufSJdLFszLDEsIlRfe24sIG4tMX0iXSxbMCwxXSxbMCwyXSxbMCwzXSxbMCw0XSxbMCw1XV0=
\[\begin{tikzcd}[ampersand replacement=\&]
	\&\& i \\
	{T_{n, i+1}} \& {T_{n, i+2}} \& \dots \& {T_{n, n-1}} \& {T_{n, n}}
	\arrow[from=1-3, to=2-1]
	\arrow[from=1-3, to=2-2]
	\arrow[from=1-3, to=2-3]
	\arrow[from=1-3, to=2-5]
	\arrow[from=1-3, to=2-4]
\end{tikzcd}\]
i.e. by attaching to a root labelled $i$ every tree $T_{n, j}$ for $j>1$.
\end{enumerate}
For example, $T_{n, n-3}$ is given by
% https://q.uiver.app/#q=WzAsOCxbMywwLCJuLTMiXSxbMSwxLCJuLTIiXSxbMCwyLCJuLTEiXSxbMCwzLCJuIl0sWzMsMSwibi0xIl0sWzIsMiwibiJdLFszLDIsIm4iXSxbNSwxLCJuIl0sWzAsMV0sWzEsMl0sWzIsM10sWzAsNF0sWzEsNV0sWzQsNl0sWzAsN11d
\[\begin{tikzcd}[ampersand replacement=\&]
	\&\&\& {n-3} \\
	\& {n-2} \&\& {n-1} \&\& n \\
	{n-1} \&\& n \& n \\
	n
	\arrow[from=1-4, to=2-2]
	\arrow[from=2-2, to=3-1]
	\arrow[from=3-1, to=4-1]
	\arrow[from=1-4, to=2-4]
	\arrow[from=2-2, to=3-3]
	\arrow[from=2-4, to=3-4]
	\arrow[from=1-4, to=2-6]
\end{tikzcd}\]
Since binary trees are combinatorially easier to describe, we use a bijection from $(T_{n, i})_i$ to certain binary trees $(B_{n, i})_i$, which is given by adding vertices when necessary. More formally, for any planar rooted tree $T$ with labeled vertices we inductively define a corresponding planar rooted binary tree $\Binary(T)$ with labeled vertices by 
\begin{itemize}
    \item $\Binary(T)=T$ if $T$ is empty or consists of one vertex.
    \item If $T$ is a tree of the form 
    % https://q.uiver.app/#q=WzAsMixbMCwwLCJyIl0sWzAsMSwiVF8xIl0sWzAsMV1d
\[\begin{tikzcd}[ampersand replacement=\&]
	r \\
	{T_1}
	\arrow[from=1-1, to=2-1]
\end{tikzcd}\]
with root $r$ and a single subtree $T_1$ attached to the root, then we define $\Binary(T)$ as the tree 
% https://q.uiver.app/#q=WzAsMixbMSwwLCJyIl0sWzAsMSwiQihUXzEpIl0sWzAsMV1d
\[\begin{tikzcd}[ampersand replacement=\&]
	\& r \\
	{B(T_1)}
	\arrow[from=1-2, to=2-1]
\end{tikzcd}\]
    \item If $T$ is a tree of the form % https://q.uiver.app/#q=WzAsNSxbMSwwLCJyIl0sWzAsMSwiVF8xIl0sWzEsMSwiVF8yIl0sWzIsMSwiXFxkb3RzIl0sWzMsMSwiVF9rIl0sWzAsMV0sWzAsMl0sWzAsNF1d
\[\begin{tikzcd}[ampersand replacement=\&]
	\& r \\
	{T_1} \& {T_2} \& \dots \& {T_k}
	\arrow[from=1-2, to=2-1]
	\arrow[from=1-2, to=2-2]
	\arrow[from=1-2, to=2-4]
\end{tikzcd}\]
with root $r$ and subtrees $T_1, \dots, T_k$, $k\geq 2$, attached to the root, then we define $T'$ as the planar rooted tree with labeled vertices given by 
% https://q.uiver.app/#q=WzAsNCxbMSwwLCIqIl0sWzAsMSwiVF8yIl0sWzEsMSwiXFxkb3RzIl0sWzIsMSwiVF9rIl0sWzAsMV0sWzAsM10sWzAsMl1d
\[\begin{tikzcd}[ampersand replacement=\&]
	\& {*} \\
	{T_2} \& \dots \& {T_k}
	\arrow[from=1-2, to=2-1]
	\arrow[from=1-2, to=2-3]
	\arrow[from=1-2, to=2-2]
\end{tikzcd}\]
and set $\Binary(T)$  to be the tree % https://q.uiver.app/#q=WzAsMyxbMSwwLCJyIl0sWzAsMSwiQihUXzEpIl0sWzIsMSwiQihUJykiXSxbMCwxXSxbMCwyXV0=
\[\begin{tikzcd}[ampersand replacement=\&]
	\& r \\
	{B(T_1)} \&\& {B(T')}
	\arrow[from=1-2, to=2-1]
	\arrow[from=1-2, to=2-3]
\end{tikzcd}\]
\end{itemize}
Let us view $T_{n, i}$ as planar labeled trees, with the planar representation given as in the definition. Then $\Binary(T_{n, i})$ are the binary trees $B_{n, i}$ such that
\begin{enumerate}
    \item $B_{n, i}$ has $n-i+1$ generations.
    \item Every vertex in the second to last generation has a left child, but no right child.
    \item Every vertex that is not in the last or second to last generation has exactly two children.
    \item The root has label $i$.
    \item Every vertex in the $j$-th generation that is a left child of its predecessor has label $j$.
    \item Every vertex that is a right child of its predecessor has label $*$.
\end{enumerate}
In the original interpretation of Fenwick trees as calculacting iteratively all sums over an array of natural numbers of a given set, the associated binary tree instead corresponds to, in every layer, choosing to include or not include the $n$-th element in the sum.\\
In our case, we have that, for example, $B_{n, n-3}$ is the binary tree
% https://q.uiver.app/#q=WzAsMTEsWzQsMCwibi0zIl0sWzIsMSwibi0yIl0sWzAsMywibiJdLFsyLDMsIm4iXSxbNCwzLCJuIl0sWzYsMywibiJdLFsxLDIsIm4tMSJdLFs1LDIsIm4tMSJdLFs3LDIsIioiXSxbNiwxLCIqIl0sWzMsMiwiKiJdLFswLDFdLFsxLDZdLFs2LDJdLFs3LDRdLFs4LDVdLFswLDldLFs5LDddLFs5LDhdLFsxLDEwXSxbMTAsM11d
\[\begin{tikzcd}[ampersand replacement=\&]
	\&\&\&\& {n-3} \\
	\&\& {n-2} \&\&\&\& {*} \\
	\& {n-1} \&\& {*} \&\& {n-1} \&\& {*} \\
	n \&\& n \&\& n \&\& n
	\arrow[from=1-5, to=2-3]
	\arrow[from=2-3, to=3-2]
	\arrow[from=3-2, to=4-1]
	\arrow[from=3-6, to=4-5]
	\arrow[from=3-8, to=4-7]
	\arrow[from=1-5, to=2-7]
	\arrow[from=2-7, to=3-6]
	\arrow[from=2-7, to=3-8]
	\arrow[from=2-3, to=3-4]
	\arrow[from=3-4, to=4-3]
\end{tikzcd}\]
While in the Loewy diagram every vertex corresponds to a simple composition factor of $P_i^B$, in $B_{i, n}$ the vertices marked $*$ correspond to  $(0)$ instead, while the vertices marked with a label $j$ still correspond to composition factors $L_j^B$. We can visualize this by replacing the labelled vertices with the composition factors respectively zeros that they contribute. For example, for $P_{n-3}^B$ we obtain
% https://q.uiver.app/#q=WzAsMTEsWzQsMCwiTF5CX3tuLTN9Il0sWzIsMSwiTF5CX3tuLTJ9Il0sWzAsMywiTF5CX24iXSxbMiwzLCJMXkJfbiJdLFs0LDMsIkxeQl9uIl0sWzYsMywiTF5CX24iXSxbMSwyLCJMXkJfe24tMX0iXSxbNSwyLCJMXkJfe24tMX0iXSxbNywyLCIoMCkiXSxbNiwxLCIoMCkiXSxbMywyLCIoMCkiXSxbMCwxXSxbMSw2XSxbNiwyXSxbNyw0XSxbOCw1XSxbMCw5XSxbOSw3XSxbOSw4XSxbMSwxMF0sWzEwLDNdXQ==
\[\begin{tikzcd}[ampersand replacement=\&]
	\&\&\&\& {L^B_{n-3}} \\
	\&\& {L^B_{n-2}} \&\&\&\& {(0)} \\
	\& {L^B_{n-1}} \&\& {(0)} \&\& {L^B_{n-1}} \&\& {(0)} \\
	{L^B_n} \&\& {L^B_n} \&\& {L^B_n} \&\& {L^B_n}
	\arrow[from=1-5, to=2-3]
	\arrow[from=2-3, to=3-2]
	\arrow[from=3-2, to=4-1]
	\arrow[from=3-6, to=4-5]
	\arrow[from=3-8, to=4-7]
	\arrow[from=1-5, to=2-7]
	\arrow[from=2-7, to=3-6]
	\arrow[from=2-7, to=3-8]
	\arrow[from=2-3, to=3-4]
	\arrow[from=3-4, to=4-3]
\end{tikzcd}\]
We would like to label the paths from the root to a leaf in $B_{n, 1}$ by vectors of length $n$, consisting of numbers $0$ and $1$, such that all entries above $i$ are zero, the $i$-th entry is one and every subsequent entry is one whenever we move into a left subtree and zero whenever we move into a right subtree. Hence the possible set of indices for a tree of the form $B_{n, i}$, $1\leq i\leq n$, is given by 
$I:=\{\alpha\in  \{0,1\}^n: \alpha_n=1\}$. Moreover, if  $s(\alpha):=\min\{1\leq k\leq n: \alpha_k=1\}$ is the first-non-zero entry of $\alpha$, then  $I_i:=\{\alpha\in I: s(\alpha)=i\}$ are the indices corresponding to paths of $B_{n, i}$ for a fixed $i$.\\
Let us consider $Q_i=R\otimes_B P_i^B$. This is a projective $R$-module, and every composition series of $P_i^B$ by simple modules gives rise to a composition series of $R$ by standard modules. We can thus replace the labels in the tree $B_{i, n}$ by the corresponding standard modules to visualize $Q_i$:
% https://q.uiver.app/#q=WzAsMTEsWzQsMCwiXFxEZWx0YV97bi0zfSJdLFs2LDEsIigwKSJdLFsyLDEsIlxcRGVsdGFfe24tMn0iXSxbMCwzLCJcXERlbHRhX24iXSxbMiwzLCJcXERlbHRhX24iXSxbNCwzLCJcXERlbHRhX24iXSxbNiwzLCJcXERlbHRhX24iXSxbMSwyLCJcXERlbHRhX3tuLTF9Il0sWzMsMiwiKDApIl0sWzUsMiwiXFxEZWx0YV97bi0xfSJdLFs3LDIsIigwKSJdLFswLDFdLFswLDJdLFsyLDddLFs3LDNdLFsyLDhdLFs4LDRdLFsxLDldLFs5LDVdLFsxLDEwXSxbMTAsNl1d
\[\begin{tikzcd}[ampersand replacement=\&]
	\&\&\&\& {\Delta_{n-3}} \\
	\&\& {\Delta_{n-2}} \&\&\&\& {(0)} \\
	\& {\Delta_{n-1}} \&\& {(0)} \&\& {\Delta_{n-1}} \&\& {(0)} \\
	{\Delta_n} \&\& {\Delta_n} \&\& {\Delta_n} \&\& {\Delta_n}
	\arrow[from=1-5, to=2-7]
	\arrow[from=1-5, to=2-3]
	\arrow[from=2-3, to=3-2]
	\arrow[from=3-2, to=4-1]
	\arrow[from=2-3, to=3-4]
	\arrow[from=3-4, to=4-3]
	\arrow[from=2-7, to=3-6]
	\arrow[from=3-6, to=4-5]
	\arrow[from=2-7, to=3-8]
	\arrow[from=3-8, to=4-7]
\end{tikzcd}\]
Note, however, that here some of the extensions shown in the tree might split. In fact, since $Q_i$ is projective and any projective $P_i^R$ of $R$ has a unique composition series by standard modules with composition factors $\Delta_i, \Delta_{i+1}, \dots, \Delta_n$, we see that any extension $\Delta_i\rightarrow  \Delta_j$ for $j\neq i+1$ in $Q_i$ does split. We thus obtain the picture 
% https://q.uiver.app/#q=WzAsMTEsWzQsMCwiXFxEZWx0YV97bi0zfSJdLFs2LDEsIigwKSJdLFsyLDEsIlxcRGVsdGFfe24tMn0iXSxbMCwzLCJcXERlbHRhX24iXSxbMiwzLCJcXERlbHRhX24iXSxbNCwzLCJcXERlbHRhX24iXSxbNiwzLCJcXERlbHRhX24iXSxbMSwyLCJcXERlbHRhX3tuLTF9Il0sWzMsMiwiKDApIl0sWzUsMiwiXFxEZWx0YV97bi0xfSJdLFs3LDIsIigwKSJdLFswLDEsIiIsMCx7InN0eWxlIjp7ImJvZHkiOnsibmFtZSI6ImRvdHRlZCJ9fX1dLFswLDJdLFsyLDddLFs3LDNdLFsyLDgsIiIsMCx7InN0eWxlIjp7ImJvZHkiOnsibmFtZSI6ImRvdHRlZCJ9fX1dLFs4LDQsIiIsMCx7InN0eWxlIjp7ImJvZHkiOnsibmFtZSI6ImRvdHRlZCJ9fX1dLFsxLDksIiIsMCx7InN0eWxlIjp7ImJvZHkiOnsibmFtZSI6ImRvdHRlZCJ9fX1dLFs5LDVdLFsxLDEwLCIiLDAseyJzdHlsZSI6eyJib2R5Ijp7Im5hbWUiOiJkb3R0ZWQifX19XSxbMTAsNiwiIiwwLHsic3R5bGUiOnsiYm9keSI6eyJuYW1lIjoiZG90dGVkIn19fV1d
\[\begin{tikzcd}[ampersand replacement=\&]
	\&\&\&\& {\Delta_{n-3}} \\
	\&\& {\Delta_{n-2}} \&\&\&\& {(0)} \\
	\& {\Delta_{n-1}} \&\& {(0)} \&\& {\Delta_{n-1}} \&\& {(0)} \\
	{\Delta_n} \&\& {\Delta_n} \&\& {\Delta_n} \&\& {\Delta_n}
	\arrow[dotted, from=1-5, to=2-7]
	\arrow[from=1-5, to=2-3]
	\arrow[from=2-3, to=3-2]
	\arrow[from=3-2, to=4-1]
	\arrow[dotted, from=2-3, to=3-4]
	\arrow[dotted, from=3-4, to=4-3]
	\arrow[dotted, from=2-7, to=3-6]
	\arrow[from=3-6, to=4-5]
	\arrow[dotted, from=2-7, to=3-8]
	\arrow[dotted, from=3-8, to=4-7]
\end{tikzcd}\]
Here we can see that any path $\alpha$ from the root to a leaf in $B_{n, i}$ gives rise to a projective summand $P_{j(\alpha)}^R$  in $Q_i$ where $j(\alpha):=\max(\{1\leq k\leq n: \alpha_k=0\}\cup\{0\})+1$, consisting of the composition factors corresponding to the part of the path which moves up and to the right from the leaf for as long as possible.\\
We thus define  $P_\alpha:=P_{j(\alpha)}^A\in \modu A$, $Q_i:=\bigoplus_{\alpha\in I_i}P_\alpha$ and $P:=\bigoplus_{i=1}^n Q_i$, and let $R:=\End_A(P)^{\op}$.\\
\subsubsection{The embedding}
    We define an embedding $\iota:B\rightarrow R$ as follows:
    First, denote by  $\varepsilon_i$ is the $i$-th unit vector, and let $\beta_i:=\sum_{j\geq i}\varepsilon_j$.
    Additionally, for $\alpha, \beta\in I$,  $0\leq a, b\leq n$ with $b-a=j(\beta)-j(\alpha)$ let 
    \begin{align*}
        r_{\alpha, \beta, a, b}=r_{j(\alpha), j(\beta), a, b}:P_{\alpha}\rightarrow P_{\beta}.
    \end{align*}
    Then we define
    \begin{align*}
        \iota(e_i)&:=\id_{Q_i}=\sum_{\alpha\in I_i}r_{\alpha, \alpha, 0, 0} \textup{ for }1\leq i\leq n,\\
        \iota((i, j))&:=\sum_{\alpha\in I_j}r_{\alpha, \alpha+\varepsilon_i, 0, 0} \textup{ for }i+1<j\\
        \iota((i, i+1))&:=r_{\beta_{i+1}, \beta_i, 0, 1}+ \sum_{k=i+2}^n r_{\beta_{i+1}, \beta_k+\varepsilon_i, k-i-1, 0}+\sum_{\alpha\in I_{i+1}\setminus{\beta_{i+1}}}r_{\alpha, \alpha+\varepsilon_i, 0, 0}.
    \end{align*}
    In other words,
    \begin{itemize}
        \item for $1\leq i\leq n$, $\iota(e_i)$ is the idempotent corresponding to $Q_i$;
        \item for $1\leq i<i+1<j\leq n$, $\iota((i, j))$ is the embedding of modules $Q_j\rightarrow Q_i$ given, on the corresponding graphs, by mapping $T_{n, j}$ to the subtree $T_{n, j}$ of the root $i$ in $T_{n, i}$ by the identity; and
        \item for $1\leq i<i+1\leq n$,  $\iota((i, i+1))$ consists of a sum of maps from $T_{n, i+1}$ into any subtree  $T_{n, j}$ of the root in $T_{n, i}$ for $i+1\leq j\leq n$.
    \end{itemize}  
    This gives rise to a well-defined algebra homomorphism, since $(\iota_{e_i})_{1\leq i\leq n}$ clearly form a complete set of principal orthogonal idempotents and
    \begin{align*}
        \iota((i, j)) \iota(e_k)=\delta_{i,k}\iota((i,j))=\iota((i,j)e_k)\\
        \iota(e_j)\iota((i, j))=\delta_{j,k}\iota((i,j))=\iota(e_k(i,j)),
    \end{align*}
    where $\delta$ denotes the Kronecker delta.
\subsubsection{The verification}
    Now that we have defined $B$, $R$ and $\iota$, it remains to show that $\iota$ makes $B$ a regular exact Borel subalgebra of $R$. That $R$ is Morita equivalent to $A$ is clear by definition.\\
    Let us begin by introducing a grading on $R$:\\
    \begin{definition}
        For every $\varphi:P_\alpha\rightarrow P_\beta$ in $R$, we set $|\varphi|:=s(\beta)-j(\beta)$.
    This turns $R$ into a $\mathbb{Z}$-graded vector space (however, not into a graded algebra).
    For $r=\sum_{i=1}^k \varphi_i\in R$, $d:=\max\{|\varphi_i|:i=1, \dots k\}$, we call
    \begin{align*}
        \sum_{i \textup{ s.t.} |\varphi_i|=d}\varphi_i
    \end{align*}
    the leading coefficient of $r$.
    \end{definition}
    For every $\alpha\in I$, $j(\alpha)\leq j\leq n$ let 
    \begin{align*}
        f_{\alpha, j}=r_{\alpha, \beta_j, j-j(\alpha), 0}:P_\alpha\rightarrow P_{\beta_j}, m\mapsto my^{j-j(\alpha)}.
    \end{align*}
    Then  for $1\leq k\leq n$,  $1\leq i_1<\dots i_{k-1}<i_k=j$ the leading coefficient of 
    \begin{align*}
        f_{\alpha, j}\cdot \iota((i_1<\dots < i_k))
    \end{align*}    
    is 
    \begin{align*}
        r_{\alpha, \alpha+\sum_{l=1}^{k-1}\varepsilon_{i_l}, j-j(\alpha), j-j\left( \alpha+\sum_{l=1}^{k-1}\varepsilon_{i_l}\right)}:P_\alpha\rightarrow P_{ \alpha+\sum_{l=1}^{k-1}\varepsilon_{i_l}}.
    \end{align*}
    In particular, the leading coefficients of the elements $f_{\alpha, j}\cdot \iota((i_1<\dots < i_k))$, $\alpha\in I$, $j(\alpha)\leq j\leq n$, $1\leq k\leq n$, $1\leq i_1<\dots i_{k-1}<i_k=j $ are linearly independent, so that the elements  $f_{\alpha, j}\cdot \iota((i_1<\dots < i_k))$, $\alpha\in I$, $j(\alpha)\leq j\leq n$, $1\leq k\leq n$, $1\leq i_1<\dots i_{k-1}<i_k=j$ are linearly independent.
    Since $ f_{\alpha, j}\cdot \iota((i_1<\dots < i_k))=0$ for $i_k\neq j$, and $\{(i_1<\dots < i_k)$ for $1\leq k\leq n$, $1\leq i_1<\dots i_{k-1}<i_k=j $ are a $\field$-basis for $B$, we thus obtain that
    \begin{align*}
        R=\bigoplus_{\alpha\in I, j(\alpha)\leq j\leq n}f_{\alpha, j}\iota(B)
    \end{align*}
    and that $f_{\alpha, j}\cong e_j B$ for every $\alpha\in I$, $j(\alpha)\leq j\leq n$.
    In particular, $R$ is projective as a right $B$-module.\\
    Moreover, there is a well-defined right $B$-module homomorphism
    \begin{align*}
       \eta: R\rightarrow B, f_{\beta_j, j}\mapsto e_j, f_{\alpha, j}\mapsto 0 \textup{ if } \alpha\neq \beta_j.
    \end{align*}
    This is clearly a splitting of $\iota$, so that, in particular, $\iota$ is injective and $B$ is a subalgebra of $R$. Moreover,
    \begin{align*}
        \ker(\eta)=\bigoplus_{\alpha\in I,  j(\alpha)\leq j\leq n, \alpha\neq \beta_j }f_{\alpha, j}\iota(B)=\spann_{\field}\{r_{\alpha, \beta, a, b}: a\geq 1\}.
    \end{align*}
    This is a right ideal in $R$, so that $B$ is a normal exact subalgebra of $R$.\\
We want to show that the simple modules of $B$ induce the standard modules of $R$, and that for $i<j$ the unique non-split extension $E_{ij}^B$ between two simples $L_i^B$ and $L_j^B$ of $B$ induces the unique non-split extension  $E_{ij}^R$ between two standard modules  $\Delta_i^R$ and $\Delta_j^R$. These both lead to a similar calculation, which we can generalize to an arbitrary $B$-factor module of $P_i^B$.
Hence suppose $M=BS$ is a $B$-submodule of $P_i^B$ with some generating set $S\subseteq P_i^B$ and consider $R\otimes_B P_i^B/M$.
Then we have
\begin{align*}
    R\otimes_B P_i^B/M&=\End_A(P)\otimes_{\iota} Be_i/BMe_i\\
    &=\{\iota(e_i)\circ f:f\in \End_A(P)\}/\iota(e_i)\circ \iota(M)\\
    &\cong\{\id_{Q_i}\circ f:f\in \End_A(P)\}/(\id_{Q_i}\circ (\field\iota(S))\circ \End_A(P))\\
    &\cong \Hom_A(P, Q_i)/\spann_{\field}\{ \iota(s)\circ r: P\rightarrow Q_i: s\in S, r\in \End_A(P)\}.
\end{align*}
Moreover, since $P$ is projective, every map $f:P\rightarrow Q_i$  whose image is contained in $\sum_{s\in S}\im(s)$ factors through $\bigoplus_{s\in S}s: P^{S}\rightarrow Q_i$, and is thus contained in $\spann_{\field} (\iota(s)\circ r: P\rightarrow Q_i: s\in S, r\in \End_A(P))$.
Hence
\begin{align*}
    R\otimes_B P_i^B/M &\cong \Hom_A(P, Q_i)/\spann_{\field}\{ \iota(s)\circ r: P\rightarrow Q_i: s\in S, r\in \End_A(P)\}\\
    &\cong \Hom_A\left(P, Q_i/\left(\sum_{s\in S}\im(s)\right)\right).
\end{align*}
In particular, for $M=\rad(B)e_i$, we have
\begin{align*}
    R\otimes_B L_i^B=R\otimes_B B/\rad(B)e_i
    \cong \Hom_A(P, Q_i/\left(\sum_{i<j}\im(\iota([i,j]))\right))\\
    \cong \Hom_A(P, P_{\beta_i}/\im(r_{\beta_{i+1}, \beta_i, 0, 1}))
    \cong \Hom_A(P, P_i^A/Ax_i)
    \cong \Hom_A(P, \Delta_i^A)
    \cong \Delta_i^R.
    \end{align*}
    Thus $B$ is a normal exact Borel subalgebra of $R$.\\
    Moreover, consider the extension
\begin{align*}
    E_{ij}^B:= \begin{pmatrix}
        L_i^B\\ L_j^B
    \end{pmatrix}\cong P_i^B/\sum_{(l,k)\neq (i,j)}B(l,k)Be_i
\end{align*}
corresponding to $(i,j)\in B$.
Then, $\sum_{(l,k)\neq (i,j)}B(l,k)Be_i$ is generated as a $B$-module by $(i,k)$, $i<k\neq j$ and $(j, k)(i,j)$, $i<j<k$, so that we have
\begin{align*}
   R\otimes_B E_{ij}\cong  R\otimes_B Be_i/\sum_{(l,k)\neq (i,j)}B(i,k)Be_i
   \cong   \Hom_A\left(P, Q_i/\left(\sum_{i<k\neq j}\im(\iota((i,k)))+\sum_{i<j< k}\im(\iota((j,k)(i,j)))\right)\right).
\end{align*}
Moreover, for $j\neq i+1,$
\begin{align*}
     Q_i/&\left(\sum_{i<k\neq j}\im(\iota((i,k)))+\sum_{i<j< k}\im(\iota((j,k)(i,j)))\right)\\
     &\cong (P_{\beta_i}\oplus P_{\beta_j+\varepsilon_i})/(\im(r_{\beta_{i+1}, \beta_i, 0, 1}+r_{\beta_{i+1}, \beta_j+\varepsilon_i, j-(i+1), 0})+\im(r_{\beta_{j+1}, \beta_j+\varepsilon_i, 0, 1}))\\
     &\cong \begin{pmatrix}\Delta_i^A\\ \Delta_j^A\end{pmatrix}
\end{align*}
and for $j=i+1$,
\begin{align*}
     Q_i/&\left(\sum_{i<k\neq j}\im(\iota((i,k)))+\sum_{i<j< k}\im(\iota((j,k)(i,j)))\right)\\
     &\cong (P_{\beta_i}\oplus P_{\beta_j+\varepsilon_i})/\im(r_{\beta_{i+1}, \beta_i, 0, 1})\\
     &\cong \begin{pmatrix}\Delta_i^A\\ \Delta_{i+1}^A.\end{pmatrix}
\end{align*}
In either case, this is the (up to isomorphism unique) non-split extension $E_{ij}^A$ between $\Delta_i^A$ and $\Delta_j^A$ in $\modu A$.
We thus have an isomorphism
\begin{align*}
    R\otimes_B Be_i/\sum_{k\neq j}B(i,k)Be_i\cong   \Hom_A(P, E_{ij}^A)\cong E_{ij}^R
\end{align*}
where, since $\Hom_A(R, -)$ is an equivalence of quasi-hereditary algebras, $E_{ij}^R$ is the up isomorphism unique extension between $\Delta_i^R$ and $\Delta_j^R$ in $\modu R$.
As mentioned before, there are no higher extensions, neither between simples in $\modu B$ nor between standard modules in $\modu R$. Thus, $B$ is a regular exact Borel subalgebra of $R$.\\
\subsubsection{The $G$-actions}
Let $G=\{e_G, g, \dots, g^{N-1}\}\cong \mathbb{Z}/N\mathbb{Z}$ be a cyclic group such that $\chara(\field)$ does not divide $N$, and let $\xi$ be a primitive $N$-th root of unity in $\field$. Let us consider the $G$-action on $R$ given by $g(y)=\xi y$, $g(x)=x$ and $g(e_i)=e_i$ for $1\leq i\leq n$. \\
To find the $G$-action on $B$, consider the extension between $\Delta_i^A$ and $\Delta^A_j$ for $1\leq i< j$ corresponding to the arrow $(i,j)$ in $B$. For $1\leq k\leq n$ let $P^\cdot(\Delta_k^A)$ be the projective resolution 
% https://q.uiver.app/#q=WzAsNCxbMSwwLCJQX3trKzF9XkEiXSxbMiwwLCJQX2teQSJdLFswLDAsIigwKSJdLFszLDAsIigwKSJdLFswLDEsInJfe2srMSwgaywgMSwgMH0gIl0sWzIsMF0sWzEsM11d
\[\begin{tikzcd}[ampersand replacement=\&]
	{(0)} \& {P_{k+1}^A} \& {P_k^A} \& {(0)}
	\arrow["{r_{k+1, k, 1, 0} }", from=1-2, to=1-3]
	\arrow[from=1-1, to=1-2]
	\arrow[from=1-3, to=1-4]
\end{tikzcd}\]
respectively
% https://q.uiver.app/#q=WzAsMyxbMSwwLCJQX2peQSJdLFswLDAsIigwKSJdLFsyLDAsIigwKSJdLFswLDJdLFsxLDBdXQ==
\[\begin{tikzcd}[ampersand replacement=\&]
	{(0)} \& {P_n^A} \& {(0)}
	\arrow[from=1-2, to=1-3]
	\arrow[from=1-1, to=1-2]
\end{tikzcd}\]
if $k=n$, of $\Delta_k^A$. Then this extension between $i$ and $j$ corresponds in $\Hom_A^1(P^{\cdot}(\Delta_i^A), P^{\cdot}(\Delta_j^A))\cong \Hom_A(P_{i+1}^A, P_j^A)$ to the homomorphism $r_{i+1, j, j-i-1, 0}$. Since $g(r_{i+1, j, j-i-1, 0})=\xi^{j-i-1}r_{i+1, j,j-i-1, 0}$, the induced $G$-action on $B$ is given by
\begin{align*}
    g(e_i)=e_i\textup{ and }\\
    g((i,j))=\xi^{-j+i+1}(i,j)
\end{align*}
for $1\leq i\leq n$ respectively $1\leq i<j\leq n$,
where the change of sign in the exponent is due to the dualizing.\\
Since we have defined $R$ as the opposite of an endomorphism ring in $\modu A$ and not according to its presentation in \cite{KKO}, it is less obvious how the $G$-action on $R$ should be defined. However, if we want it to restrict to the given $G$-action on $B$, there is a natural candidate for it given as follows: \\
Since $G$ fixes the principle orthogonal idempotents $e_i^A$ in $A$, $P_i^A:=Ae_i^A$ naturally obtains the structure of an $A*G$-module.\\
For $\alpha \in I$ let $n(\alpha):=|\{s(\alpha)\leq j\leq n: \alpha_j=0\}|$ be the number of zeros between the first and the last $1$ in the vector $\alpha$. We endow $P_\alpha$ with the structure of an $A*G$-module via 
\begin{align*}
    \tr_g^{P_{\alpha}}:=\xi^{-n(\alpha)}\tr_g^{P_{j(\alpha)}}.
\end{align*}
By Lemma \ref{lemma_Gstructuretransport}, $R=\End_A(P)^{\op}$ obtains thus the structure of an algebra with a $G$-action via
\begin{align*}
    g(\varphi)(x)=g(\varphi(g^{-1}(x)))
\end{align*}
and $\Hom_A(P, -)$ becomes weakly $G$-equivariant.
For $\alpha, \beta\in I$, $0\leq a, b\leq n$ with $b-a=j(\beta)-j(\alpha)$ we have
\begin{align*}
    g(r_{\alpha, \beta, a, b})=\xi^{b-n(\beta)+n(\alpha)}r_{\alpha, \beta, a, b}.
\end{align*}
Hence, for $1\leq i<i+1<j\leq n$
\begin{align*}
    g(\iota((i,j)))&=g\left(\sum_{\alpha\in I_j}r_{\alpha, \alpha+\varepsilon_i, 0, 0}\right)\\
    =\sum_{\alpha\in I_j}\xi^{-n(\alpha+\varepsilon_i)+n(\alpha)}r_{\alpha, \alpha+\varepsilon_i, 0, 0}&=\sum_{\alpha\in I_j}\xi^{-(n(\alpha)+j-i-1)+n(\alpha)}r_{\alpha, \alpha+\varepsilon_i, 0, 0}\\
    =\xi^{-j+i+1}\sum_{\alpha\in I_j}r_{\alpha, \alpha+\varepsilon_i, 0, 0}&=\xi^{-j+i+1}\iota((i,j))=\iota(g((i,j))),
\end{align*}
and for $1\leq i<n$
\begin{align*}
    &g(\iota((i,i+1)))\\
    &=g\left(r_{\beta_{i+1}, \beta_i, 1, 0}+\sum_{j>i+1}r_{\beta_{i+1}, \beta_j+\varepsilon_i, 0, j-i-1}+\sum_{\alpha\in I_{i+1}\setminus \{\beta_{i+1}\}}r_{\alpha, \alpha+\varepsilon_i, 0, 0}\right)\\
   &=g(r_{\beta_{i+1}, \beta_i, 1, 0})+\sum_{j>i+1}g(r_{\beta_{i+1}, \beta_j+\varepsilon_i, 0, j-i-1})+\sum_{\alpha\in I_{i+1}\setminus \{\beta_{i+1}\}}g(r_{\alpha, \alpha+\varepsilon_i, 0, 0})\\
    &=\xi^{-n(\beta_{i+1})+n(\beta_i)}r_{\beta_{i+1}, \beta_i, 1, 0}+\sum_{j>i+1}\xi^{j-i-1-n(\beta_{i+1})+n(\beta_j+\varepsilon_i)}r_{\beta_{i+1}, \beta_j+\varepsilon_i, 0, j-i-1}+\sum_{\alpha\in I_{i+1}\setminus \{\beta_{i+1}\}}\xi^{-n(\alpha+\varepsilon_i)+n(\alpha)}r_{\alpha, \alpha+\varepsilon_i, 0, 0}\\
     &=r_{\beta_{i+1}, \beta_i, 1, 0}+\sum_{j>i+1}r_{\beta_{i+1}, \beta_j+\varepsilon_i, 0, j-i-1}+\sum_{\alpha\in I_{i+1}\setminus \{\beta_{i+1}\}}r_{\alpha, \alpha+\varepsilon_i, 0, 0}\\
     &=\iota((i,i+1))=\iota(g((i,i+1))).
\end{align*}
Finally, for $1\leq i\leq n$
\begin{align*}
    g(\iota(e_i))=\sum_{\alpha\in I_i}g(\id_{\alpha})=\sum_{\alpha\in I_i}g(r_{\alpha, \alpha, 0, 0})\\=\sum_{\alpha\in I_i}r_{\alpha, \alpha, 0, 0}=\iota(e_i)=\iota(g(e_i)).
\end{align*}
Hence $\iota$ is $G$-equivariant, and in particular, $B$ is a $G$-invariant regular exact Borel subalgebra of $R$.\\
Note that the $G$-action on $R$ does not coincide with the perhaps more canonical $G$-action where one instead endows $P_\alpha$ simply with the same $G$-action as the one on $P_{j(\alpha)}$, as the latter would give
\begin{align*}
     g(r_{\alpha, \beta, a, b})=\xi^{b}r_{\alpha, \beta, a, b}.
\end{align*}
The next example was added to illuminate this phenomenon.
\subsection{A Second Example}
In the previous section, the $G$-action on $R$ was chosen carefully to restrict to the $G$-action on $B$. The aim of this subsection is to show that this was indeed necessary. In other words, we give an example of a quasihereditary algebra $R$ with a regular exact Borel subalgebra $B$ and a $G$-action on $R$ compatible with the partial order $\leq_R$ such that $R$ does not have a regular exact Borel subalgebra $B'$ with $g(B')=B'$ for all $g\in G$.\\
Throughout, let $\chara(\field)\neq 2$.
Consider $A=\field Q$, where $Q$ is the quiver
% https://q.uiver.app/#q=WzAsMyxbMCwwLCIxIl0sWzEsMCwiMiJdLFsyLDAsIjMiXSxbMCwxLCJcXGFscGhhIl0sWzIsMSwiXFxiZXRhIiwyXV0=
\[\begin{tikzcd}[ampersand replacement=\&]
	1 \& 2 \& 3
	\arrow["\alpha", from=1-1, to=1-2]
	\arrow["\beta"', from=1-3, to=1-2]
\end{tikzcd}\]
with the natural order $1<_A 2<_A 3$. Let $G=\{1_G, g\}\cong \mathbb{Z}/2\mathbb{Z}$ and define
\begin{align*}
    g(e_i)=e_i \textup{ for }1\leq i\leq 3, \quad
    g(\alpha)=-\alpha, \quad
    g(\beta)=\beta.
\end{align*}
Since this is hereditary, it is in particular quasi-hereditary with the natural order $1<_A 2<_A 3$, and since $G$ fixes all idempotents, $G$ is compatible with $\leq_A$.
Let $P_i=Ae_i$ for $1\leq 2\leq 3$ and let $P_3'\cong P_3$. Let $P:=P_1\oplus P_3'\oplus P_2\oplus P_3$ and $R:=\End_A(P)^{\op}$. Then $R$ has a $\field$-basis consisting of the elements $\id_{P_1}, \id_{P_2}, \id_{P_3}, \id_{P_3}, \id_{P_3'}$ as well as $f_{33'}:P_3\rightarrow P_3'$ an isomorphism, $f_{3'3}:=f_{33'}^{-1}$,
\begin{align*}
    f_{21}:P_2\rightarrow P_1, a\mapsto a\alpha\\
    f_{23}:P_2\rightarrow P_3, a\mapsto a\beta
\end{align*}
and $f_{23'}:=f_{33'}\circ f_{23}$.\\
Let $B=\field Q_B$, where $Q_B$ is the quiver
% https://q.uiver.app/#q=WzAsMyxbMCwwLCIxIl0sWzEsMCwiMiJdLFsyLDAsIjMiXSxbMCwxLCJcXGFscGhhJyIsMl0sWzAsMiwiXFxiZXRhJyIsMCx7ImN1cnZlIjotMn1dXQ==
\[\begin{tikzcd}[ampersand replacement=\&]
	1 \& 2 \& 3
	\arrow["{\alpha'}"', from=1-1, to=1-2]
	\arrow["{\beta'}", curve={height=-12pt}, from=1-1, to=1-3]
\end{tikzcd}\]
and define $\iota:B\rightarrow R$ via 
\begin{equation*}
    \iota(e_1)=\id_{P_1}+\id_{P_3'},\quad
    \iota(e_i)=\id_{P_i}\textup{ for }i=2, 3,\quad
    \iota(\alpha')=f_{21}+f_{23'},\quad
    \iota(\beta')=f_{33'}.
\end{equation*}
It is easy to see that $\iota$ is an algebra homomorphism and that it turns $B$ into a regular exact Borel subalgebra of $R$, see also \cite[Example 52]{markusthuresson}.\\
Since the $G$-action on $A$ fixes $e_i$ for $1\leq i\leq 3$, $P_i$ obtains the structure of an $A*G$-module for $1\leq i\leq 3$. Via structure transport along $f_{33'}$, this induces an $A*G$-module structure on $P_3'$ and thus on $P=P_1\oplus P_{3}'\oplus P_2\oplus P_3$. Thus, $R=\End_A(P)^{\op}$ obtains a $G$-action, such that $\Hom_A(P, -)$ is weakly $G$-equivariant. Explicitly, this $G$-action is given by
\begin{align*}
    g(\id_X)&=\id_X \textup{ for }X\in \{P_1, P_2, P_3, P_3'\}\\
    g(f_{21})&=-f_{21}\\
    g(f_{ij})&=f_{ij} \textup{ for }(i,j)\neq (2,1).
\end{align*}
We want to show that with this $G$-action, $R$ does not have a regular exact Borel subalgebrs $B'$ such that $h(B')=B'$ for all $h\in G$. By Theorem \ref{theorem_conjugation}, any such $B'$ would be of the form $aBa^{-1}$ for some $a\in R^\times$. Hence, the $G$-action
\begin{align*}
    G\times A\rightarrow A, (h,a')\mapsto h*a':=ah(a^{-1}a'a)a^{-1}
\end{align*}
would fix $B$. Moreover, we would have natural isomorphisms
\begin{align*}
    \alpha_h:h*-\rightarrow h, h*M\rightarrow hM, h*m\mapsto h\cdot(h^{-1}(a)a^{-1}m)
\end{align*}
that would make the identity $\modu A\rightarrow (\modu A, *)$ weakly $G$-equivariant.\\
Therefore, we first classify the $G$-actions $(h,x)\mapsto h*x$ on $R$ making  the identity functor weakly $G$-equivariant and fulfilling $h*B=B$ for all $h\in G$.\\
Suppose we have such a $G$-action
\begin{align*}
    G\times A\rightarrow A, (h,a')\mapsto h*a'.
\end{align*}
By Lemma \ref{lemma_2Gactions}, we have a map
\begin{align*}
    \rho: G\rightarrow R^{\times}
\end{align*}
such that $\rho(e_G)=1_R$ and $\rho(hh')=\rho(h)h(\rho(h'))$ for all $h,h'\in G$, and such that
\begin{align*}
    h*a'=\rho(h)h(a')\rho(h)^{-1}
\end{align*}
for all $h\in G$, $a'\in R$.
Since $G$ only has one generator, $\rho$ is determined by $\rho(g)\in A^{\times}$, and 
\begin{align*}
    1_A=\rho(e_G)=\rho(g^2)=\rho(g)g(\rho(g)).
\end{align*}
Hence $\rho(g)^{-1}=g(\rho(g))$.
Write $\rho(g)=r_0+r_1$ for $r_0\in \spann_{\field}( \{\id_{P_i}: (1\leq i\leq 3)\}\cup \{\id_{P_3'}, f_{33'}, f_{3'3}\})$ and $r_1\in \rad(R)=\spann(f_{21}, f_{23}, f_{23'})$. Then we have that $r_0$ is invertible and, since $\rad(R)^2=(0)$,
\begin{align*}
    \rho(g)^{-1}=r_0^{-1}-r_0^{-2}r_1=g(r_0+r_1)=r_0+g(r_1)
\end{align*}
we obtain that $r_0=r_0^{-1}$ and $g(r_1)=-r_0^{-2}r_1=-r_1$. In particular, $r_1=\lambda_{21}f_{21}$ for some $\lambda_{21}\in \field$. Moreover, writing $r_0=\varepsilon_1 \id_{P_1}+\varepsilon_2\id_{P_2}+m$ for $m=m_1\id_{P_3}+m_2f_{3'3}+m_3f_{33'}+m_4\id_{P_3'}\in \spann_{\field}(  \id_{P_3} , \id_{P_3'}, f_{33'}, f_{3'3}) $ gives $\varepsilon_i^2=1$, i.e. $\varepsilon_i\in \{-1, 1\}$, and $m^2=\id_{P_3}+\id_{P_3'}$.\\
Recall that
\begin{align*}
B=\spann_{\field}( \id_{P_1}+\id_{P_3'}, \id_{P_2}, \id_{P_3}, f_{21}+f_{23'}, f_{33'})  \\
g(B)=\spann_{\field}( \id_{P_1}+\id_{P_3'}, \id_{P_2}, \id_{P_3}, -f_{21}+f_{23'}, f_{33'}).
\end{align*}
In particular, by assumption,
\begin{align*}
     \rho(g)\id_{P_2}\rho(g)^{-1}=\rho(g)^{-1}\circ \id_{P_2}\circ \rho(g)=(\varepsilon_2 \id_{P_2}-\lambda_{21}f_{21})\circ \varepsilon_2\id_{P_2}\\=\id_{P_2}-\varepsilon_2\lambda_{21}f_{21}\in B.
\end{align*}
 Thus $\lambda_{21}=0$. Moreover,
\begin{align*}
     \rho(g)f_{33'}\rho(g)^{-1}=\rho(g)^{-1}\circ f_{33'}\circ \rho(g)
     =(m_4 \id_{P_3'}+m_2f_{3'3})\circ f_{33'}\circ (m_1\id_{P_3}+m_2f_{3'3})\\=m_1m_4f_{33'}+m_1m_2\id_{P_3}+m_2m_4\id_{P_3'}+m_2^2f_{3'3}\in B
\end{align*}
so that $m_2m_4=m_2^2=0$. Since $m_2\in \field$, this implies $m_2=0$; and since $m^2=\id_{P_3}+\id_{P_3'}$, we obtain $m_1^2=m_4^2=1$ and $m_1m_3+m_3m_4=0$.
Additionally
\begin{align*}
     \rho(g)(-f_{21}+f_{23'})\rho(g)^{-1}&=\rho(g)^{-1}\circ(-f_{21}+f_{23'})\circ \rho(g)\\
     &=-\rho(g)^{-1}\circ f_{21}\circ \rho(g)+\rho(g)^{-1}\circ f_{23'}\circ \rho(g)\\
     &=-\varepsilon_1\varepsilon_2f_{21}+\varepsilon_2 m_4f_{23'}\\
     &=-\varepsilon_2(\varepsilon_1f_{21}-m_4f_{23'}).
\end{align*}
Hence $m_4=-\varepsilon_1$. Since $m_1^2=m_4^2=1$ and $m_1m_3+m_3m_4=0$ we thus obtain that either $m_1=m_4=-\varepsilon_1$ and $m_3=m_2=0$, or $m_1=\varepsilon_1$, $m_4=-\varepsilon_1$, $m_2=0$ and $m_3\in \field$. Therefore, we have
\begin{align*}
    \rho(g)=\varepsilon_1\id_{P_1}+\varepsilon_2\id_{P_2}-\varepsilon_1 \id_{P_3}-\varepsilon_1\id_{P_3'}
\end{align*}
or
\begin{align*}
    \rho(g)=\varepsilon_1\id_{P_1}+\varepsilon_2\id_{P_2}+\varepsilon_1 \id_{P_3}+m_3f_{33'}-\varepsilon_1\id_{P_3'}
\end{align*}
for some $\varepsilon_1, \varepsilon_2\in \{-1, 1\}$ and $m_3\in \field$.
In both cases, we can multiply $\rho(s)$ with $\varepsilon_1$ without changing the corresponding $G$-action, and thus obtain the cases
\begin{align*}
    \rho(g)=\id_{P_1}+\varepsilon\id_{P_2}-\id_{P_3}-\id_{P_3'}
\end{align*}
and
\begin{align*}
    \rho(g)=\id_{P_1}+\varepsilon\id_{P_2}+\id_{P_3}+\lambda f_{33'}-\id_{P_3'}
\end{align*}
for some $\varepsilon\in \{-1, 1\}$ and $\lambda\in \field$.
We can calculate the induced $G$-action on $R$:
In the first case, we obtain
\begin{align*}
    g*\id_X&=\id_{X} \textup{ for all }X\in \{P_1, P_2, P_3, P_3'\}\\
    g*f_{ij}&=f_{ij} \textup{ for }(i,j)\in \{(3,3'), (3',3)\}\\
    g*f_{ij}&=-\varepsilon f_{ij} \textup{ for }(i,j)\in \{(2,1), (2,3), (2,3')\}
\end{align*}
and in the second case we obtain
\begin{align*}
    g*\id_{P_i}=\id_{P_i} \textup{ for }i=1,2,\quad
    g*\id_{P_3}&=\id_{P_3}+\lambda f_{33'},\quad
    g*\id_{P_3'}=\id_{P_3'}-\lambda f_{33'},\\
    g*f_{21}=-\varepsilon f_{21},\quad
    g*f_{33'}&=-f_{33'},\quad
    g*f_{3'3}=-f_{3'3}-\lambda\id_{P_3'}+\lambda\id_{P_3},\\
    g*f_{23}=\varepsilon f_{23}+\varepsilon \lambda f_{23'}&,\quad
    g*f_{23'}=-\varepsilon f_{23'}.
\end{align*}
It is easy to see that these $G$-actions indeed fulfill $g*B=B$.
Hence there is a regular exact Borel subalgebra $B'$ of $R$ such that $h(B')=B'$ for all $h\in G$, if and only if there is an $a\in R^\times$ such 
\begin{align*}
    g*x=a g(a^{-1}xa)a^{-1}
\end{align*}
for all $x\in R$, and $*$ one of the above $G$-actions.
Therefore, suppose there is such an $a\in R^\times$ and consider the linear maps
\begin{align*}
    \rho_a:&R\rightarrow R, x\mapsto axa^{-1}\\
    g*-:&R\rightarrow R, x\mapsto g*x\textup{ and }\\
    g(-):&R\rightarrow R, x\mapsto g(x).
\end{align*}
Then we have, by assumption, $g*-= \rho_a\circ g(-)\circ \rho_a^{-1}$. Since
the characteristic polynomial is invariant under base change, this implies that the characteristic polynomial of $g*-$ is the same as the characteristic polynomial of $g(-)$.
However, the characteristic polynomial of $g(-)$ is $(t-1)^8(t+1)$, while the characteristic polynomial of $g*-$ is in the first case given by $(t-1)^6(t+\varepsilon)^3$ and in the second case given by $(t-1)^4(t+1)^2(t+\varepsilon)^2(t-\varepsilon)$, and these cannot be identical for any $\varepsilon\in \{-1,1\}$.
\section{Acknowledgements}

First and foremost, the author would like to thank her advisor Julian Külshammer for many helpful discussions. Moreover, the author would like to thank Teresa Conde and Steffen König for their valuable feedback and for sharing their results prior to their publication.
\bibliographystyle{plain}
\bibliography{uniqueness}
\end{document}